\DeclareMathOperator{\heads}{\mathrm{heads}}
\DeclareMathOperator{\csf}{\mathrm{csf}}
\DeclareMathOperator{\rev}{\mathrm{rev}}
\definecolor{Myblue2}{rgb}{0.7,0,1}
\newcommand{\red}{\color{red} }
\newtheorem{remark}{Remark}[section]
\newtheorem{example}{Example}[section]
\title{A unified approach to  Fiedler-like pencils via strong block minimal bases pencils.}
\author{
M. I. Bueno\thanks{Department of Mathematics and College of Creative Studies,
University of California, Santa Barbara, CA 93106, USA
({\tt mbueno@math.ucsb.edu}). The research of M. I. Bueno was partially supported by NSF grant DMS-1358884 and partially supported by Ministerio de Economia y Competitividad of Spain through grants MTM2015-65798-P.}
\and
F. M. Dopico \thanks{Departamento de Matem\'aticas, Universidad Carlos III de Madrid,
        Avda.\ Universidad 30, 28911 Legan\'es, Spain  ({\tt dopico@math.uc3m.es}). The research of F. M. Dopico was partially supported by the Ministerio de Econom\'{i}a y Competitividad of Spain through grant MTM2015-65798-P and MTM2015-68805-REDT.}
\and
J. P\'erez \thanks{Department of Computer Science, KU Leuven, Celestijnenlaan 200A, 3001 Heverlee, Belgium.({\tt javierperez@kuleuven.be}). The research of J. P\'erez was partially supported by KU Leuven Research Council grant OT/14/074 and the Interuniversity Attraction Pole DYSCO, initiated by the Belgian State Science Policy Office}
\and
R. Saavedra \thanks{Dos Pueblos High School, Goleta, CA 93117, USA ({\tt rmsaavedra@umail.ucsb.edu}). The research
of R. Saavedra was supported by NSF grant DMS-1358884.}
\and
B.  Zykoski \thanks{ Department of Mathematics, University of Virginia, Charlottesville, VA 22903, USA  ({\tt bpz8nr@virginia.edu}). The research of B. Zykoski was supported by NSF grant DMS-1358884} }
\begin{document}

\maketitle

\bibliographystyle{plain}

\begin{abstract}
The standard way of solving the polynomial eigenvalue problem associated with a matrix polynomial is to embed the matrix polynomial into a matrix pencil, transforming the problem into an equivalent generalized eigenvalue problem.
Such pencils are known as linearizations. 
Many of the families of  linearizations for matrix polynomials available in the literature are  extensions of the so-called family of Fiedler pencils.
These  families are known as generalized Fiedler pencils, Fiedler pencils with repetition and generalized Fiedler pencils with repetition, or Fiedler-like pencils for simplicity.
The goal of this work is to unify the Fiedler-like pencils approach with the more recent one based on strong block minimal bases pencils introduced in \cite{canonical}.
To this end, we introduce a family of pencils that we have named extended block Kronecker pencils, whose members are, under some generic nonsingularity conditions, strong block minimal bases pencils, and show that, with the exception of the non proper generalized Fiedler pencils, all Fiedler-like pencils belong to this family modulo permutations.
As a consequence of this result, we obtain a  much simpler theory for Fiedler-like pencils than the one available so far.
Moreover, we expect this unification to allow for further developments in the theory of Fiedler-like pencils such as global or local backward error analyses and eigenvalue conditioning analyses of polynomial eigenvalue problems solved via Fiedler-like linearizations.
   \end{abstract}

\begin{keywords} Fiedler pencils, generalized Fiedler pencils, Fiedler pencils with repetition,  generalized Fiedler pencils with repetition, matrix polynomials, strong linearizations,  block minimal bases pencils, block Kronecker pencils, extended block Kronecker pencils, minimal basis, dual minimal bases
\end{keywords}

\begin{AMS} 65F15, 15A18, 15A22, 15A54
\end{AMS}

\pagestyle{myheadings}
\thispagestyle{plain}
\markboth{M. I. Bueno,  F. M. Dopico, J. Perez, R. Saavedra, and B. Zykoski}{ Fiedler-like pencils as strong block minimal bases}

\section{Introduction}

\emph{Matrix polynomials} and their associated \emph{polynomial eigenvalue problems} appear  in many areas of applied mathematics, and they have received in the last years considerable attention.
For example, they are ubiquitous in a wide range of problems in engineering, mechanic, control theory, computer-aided graphic design, etc.
For detailed discussions of different applications of matrix polynomials, we refer the reader to the classical references \cite{Lancaster_book,Kailath,Rosenbrock}, the modern surveys \cite[Chapter 12]{Volker_book} and \cite{NEV,quadratic}  (and their references therein), and the references \cite{Fiedler_Steve,4m-vspace,GoodVibrations}.
For those readers not familiar with the theory of matrix polynomials and polynomial eigenvalue problems, those topics are briefly reviewed in Section \ref{sec:intro}.

The standard way of solving the polynomial eigenvalue problem associated with a matrix polynomial is to \emph{linearize} the polynomial into a matrix pencil (i.e., matrix polynomials of grade 1), known as linearization \cite{DTDM14,Strong_lin,Lancaster_book}.
The linearization process transforms the polynomial eigenvalue problem into an equivalent generalized eigenvalue problem, which, then, can be solved using  mature and well-understood eigensolvers such as the QZ algorithm or the staircase algorithm, in the case of singular matrix polynomials  \cite{QZ,staircase,VanDooren83}.
Ideally, to make a set of linearizations desirable for numerical applications, it should satisfy the following list of properties:
\begin{itemize}
\item[\rm (i)] the linearizations should be \emph{strong linearizations}, regardless whether the matrix polynomial is regular or singular;
\item[\rm (ii)] the linearizations should be easily constructible from the coefficients of the matrix polynomials (ideally, without any matrix operation other than scalar multiplication);
\item[\rm (iii)] eigenvectors of regular matrix polynomials should be easily recovered from those of the linearizations;
\item[\rm (iv)] minimal bases of singular matrix polynomials should be easily recovered from those of the linearizations;
\item[\rm (v)] there should exist simple relations between the minimal indices of singular matrix polynomials and the minimal indices of the linearizations, and such relations should be robust under perturbations;
\item[\rm (vi)] guarantee global backward stability of polynomial eigenvalue problems solved via  linearization.
\end{itemize}
Additionally, some authors like to add to the above list the following property:
\begin{itemize}
\item[\rm (vii)] the linearizations should present one-sided factorizations (as those used in \cite{Framework}), which are useful for performing residual local (i.e., for each particular computed eigenpair) backward error and eigenvalue conditioning  analyses of regular polynomial eigenvalue problems solved by linearizations \cite{BackErrors,Conditioning}.
\end{itemize}
Furthermore, matrix polynomials that appear in applications usually present algebraic structures, which are reflected in their spectra (see \cite[Section 7.2]{DTDM14} or \cite{GoodVibrations}, for example).
If the spectrum of such a polynomial is computed without taking into account the algebraic structure of the polynomial, the rounding errors inherent to numerical computations may destroy qualitative properties of these spectra.
Thus, to the mentioned list of desirable properties that a set of linearizations should satisfy, the following property should be added:
\begin{itemize}
\item[\rm (viii)] the linearizations of a matrix polynomial $P(\lambda)$ should preserve any algebraic structure that $P(\lambda)$ might posses \cite{GoodVibrations},
\end{itemize} 
and property (vi) should be replaced in the structured case by
\begin{itemize}
\item[\rm (vi-b)] guaranteed structured and global backward stability of structured polynomial eigenvalue problems solved via structure-preserving linearizations \cite{structured_canonical}.
\end{itemize}

In practice, the linearizations used to solve polynomial eigenvalue problems are the well known \emph{Frobenius companion forms}. 
It has been proved that these pencils satisfy properties (i)--(vii) (see \cite{DTDM14,DTT16,VanDooren83}, for example).
However, Frobenius companion forms do not preserve the algebraic structure that might be present in the matrix polynomials they are associated with, that is, they do not satisfy property (viii).
This important drawback of the Frobenius companion forms has motivated an intense activity on the theory of linearizations of matrix polynomials, with the  goal of finding linearizations satisfying properties (i)--(vii) that, additionally,  retain whatever structure the matrix polynomial might possess.
See, for example, \cite{AV04,FPR1,GFPR,FPR2,FPR3,DTDM11,FPS16,Philip,symmetric,LP16,
GoodVibrations,ChebyFiedler,RVVD16,ant-vol11}, which is a small sample of recent references in this area.

Although not the first approach (see \cite{Lancaster_sym,Lancaster_poly_book}), the first systematic approach for constructing structure-preserving linearizations was based on pencils belonging to the vector space $\mathbb{DL}(P)$.
This vector space was introduced in \cite{4m-vspace} and further analyzed in \cite{BackErrors,symmetric,Conditioning,GoodVibrations,NNT}. 
The pencils in this vector space are easily constructible from the matrix coefficients of the matrix polynomial, and most of them are strong linearizations when the polynomial is regular.
However, none of these pencils is a strong linearization when the polynomial is singular \cite{singular_lin} (i.e., these linearizations do not satisfy property (i)), which ``questions the utility of such space also when the polynomials are regular but very close to be singular'' \cite{GFPR}. Even more, none of these pencils, when constructed for a symbolic arbitrary matrix polynomial is always a strong linearization of all regular matrix polynomials.

The second systematic approach for constructing structure-preserving linearizations is based on different extensions of \emph{Fiedler pencils}.
Fiedler pencils were introduced in \cite{Fiedler03} for monic scalar polynomials, and then generalized to regular matrix polynomials in \cite{AV04}, to square singular matrix polynomials in \cite{DTDM10}, and to rectangular matrix polynomials in \cite{DTDM12}.
These pencils were baptized as \emph{Fiedler companion pencils} (or Fiedler companion linearizations) in \cite{DTDM10}.
Later on, with the aim of constructing large families of structure-preserving linearizations, the definition of Fiedler companion pencils was extended to include the families of \emph{generalized Fiedler pencils}, \emph{Fiedler pencils with repetition} and \emph{generalized Fiedler pencils with repetition} \cite{AV04,GFPR,ant-vol11}, unifying, in addition, the $\mathbb{DL}(P)$ approach with the Fiedler pencils approach (we recall that the standard basis of $\mathbb{DL}(P)$ consists of Fiedler pencils with repetition \cite{FPR1,ant-vol11}).
Furthermore, the definition of Fiedler pencils has been extended, also, to allow the construction of linearizations for matrix polynomials that are expressed in other non-monomial bases without any conversion to the monomials \cite{Bernstein,ChebyFiedler}.

The family of Fiedler companion linearizations is a remarkable set of pencils, since, as it has been proven in \cite{DTDM10,DTDM12,DTT16}, it satisfies properties (i)--(v) and property (vii). 
However, the proofs of these properties were extremely involved, being the stem of the difficulties the implicit way Fiedler pencils are defined, either in terms of a product of matrices for square polynomials or as the output of a symbolic
algorithm for rectangular ones.
Moreover, the proofs of the extensions of some of these results to the different generalizations of Fiedler pencils mentioned in the previous paragraph are even much more involved, and they only work for square matrix polynomials \cite{recovery_all_Fid,recovery_gen_Fid}.

With the double aim of proving for the first time that Fiedler companion linearizations  provide  global backward stable methods for polynomial eigenvalue problems (i.e., they satisfy property (vi)), and obtaining a simplified theory for them, in \cite{canonical}, the authors introduced the family of \emph{block minimal bases pencils} and the subfamily of block minimal bases pencils named \emph{block Kronecker pencils}.
The introduction of these families of pencils has been an interesting recent advance in the theory of linearizations of matrix polynomials, since the families of block minimal bases pencils and  block Kronecker pencils have been shown to be a fertile source of linearizations satisfying properties (i)-(vii) \cite{canonical}, and also (vi-b) and (viii)  in the structured matrix polynomial case \cite{structured_canonical,LP16,RVVD16}.

The reason why the theory developed for block minimal bases pencils works for Fiedler pencils as well is that, up to permutations of rows and columns, Fiedler pencils  are block Kronecker pencils \cite[Theorem 4.5]{canonical}, a particular type of block minimal bases pencils.
In this work, we extend the result obtained for Fielder pencils in \cite{canonical} to all the other families of Fiedler-like pencils (generalized Fiedler pencils, Fiedler pencils with repetition and generalized Fiedler pencils with repetition).
More explicitly, our main result is that, with rare exceptions, the pencils in all these families  belong, up to permutations, to the family of \emph{extended block Kronecker pencils}, which, under some generic nonsingularity conditions,   are block minimal bases pencils.
This result allows the application of all the tools and machinery developed for block minimal bases pencils to most Fielder-like pencils too.
For example, these tools could be used to  try to determine whether or not solving a polynomial eigenvalue problem via a Fiedler-like linearization is backward stable from the point of view of the polynomial, which is still an open problem (except for the particular case of Fiedler companion pencils), or to perform  local residual backward error and eigenvalue conditioning analyses. 

 We want to remark that the family of extended block Kronecker pencils has been introduced independently in \cite{Philip2016} under the name of block-Kronecker ansatz spaces motivated, as in our case, by the results in \cite{canonical}. However, the goal of \cite{Philip2016} is different than ours. While the goal of \cite{Philip2016} is to construct a new source of strong linearizations of matrix polynomials over the real numbers and to establish connections of the ``block-Kronecker ansatz spaces''  with other ansatz spaces of potential strong linearizations previously available in the literature \cite{DLP}, our goal is to provide a unified approach to all the families of Fiedler-like pencils in any field via the more general concept of strong block minimal bases pencils. We emphasize again in this context that all the extended block Kronecker pencils and, so, all pencils in \cite{Philip2016} that are strong linearizations are particular cases of the (strong) block minimal bases pencils introduced in \cite{canonical} which seem to be a key unifying concept for studying linearizations of matrix polynomials.

 The paper is structured as follows.
In Section \ref{sec:intro}, we introduce the definitions and notation used throughout the paper, and present some basic results needed in other sections.
In Section \ref{sec:block_minimal_bases_pencils}, we review the framework of (strong) block minimal bases pencils and the family of block Kronecker pencils.
We introduce the family of extended block Kronecker pencils that will be used to express all the Fiedler-like pencils into the framework of block minimal bases pencils.
In this section, we also state, informally, our main results, and we illustrate them with some illuminating examples.
In Section  \ref{sec:index_tuples}, we review  the families of (square) Fiedler pencils, generalized Fiedler pencils, Fiedler pencils with repetition and generalized Fiedler pencils with repetition,  introduce the index tuple notation, which is  needed to define and work in an effective way with the Fiedler-like pencils families, and present some auxiliary results used in the subsequent sections. 
In Section 5, we introduce some technical lemmas regarding matrix pencils that are block-permutationally equivalent to extended block Kronecker pencils. These technical results are used to prove the main theorems in the paper but are interesting by themselves.
In Sections \ref{sec:FP_as_minimal_bases_pencils},  \ref{sec:GF_as_block_Kron_pen} and \ref{sec:GFPR_as_minimal_bases_pencils}, we present and prove our main results.
We show that, up to permutations, proper generalized Fiedler pencils, Fiedler pencils with repetition and generalized Fiedler pencils with repetition are extended block Kronecker pencils. 
In particular, we show that proper generalized Fiedler pencils are block Kronecker pencils modulo permutations.
Finally, in Section \ref{sec:conclusion}, we summarize our results.
 
\section{Notation, definitions, and basic results}\label{sec:intro}

Throughout the paper, given two integers  $a$ and $b$,  we denote
$$a:b:=\left\{ \begin{array}{ll} a, a+1,\ldots, b, & \textrm{if $a\leq b$,}\\
\emptyset, & \textrm{if $a>b$.}
 \end{array}\right. $$

We will use $\mathbb{F}$ to denote an arbitrary field, $\mathbb{F}[\lambda]$ to denote the ring of polynomials with coefficients from the field $\mathbb{F}$, and $\mathbb{F}(\lambda)$
to denote the field of rational functions over $\mathbb{F}$.
The algebraic closure of the field $\mathbb{F}$ is denoted by $\overline{\mathbb{F}}$.
The set of $m\times n$ matrices with entries in $\mathbb{F}[\lambda]$ is denoted by $\mathbb{F}[\lambda]^{m\times n}$. 
Any $P(\lambda)\in\mathbb{F}[\lambda]^{m\times n}$ is called an \emph{$m\times n$ matrix polynomial}, or, just a matrix polynomial when its size is clear from the context or is not relevant.
Moreover, when $m=1$ (resp. $n=1$), we refer to $P(\lambda)$ as a \emph{row vector polynomial} (resp. \emph{column vector polynomial}).
A matrix polynomial $P(\lambda)$ is said to be \emph{regular} if it is square and the scalar polynomial $\det P(\lambda)$ is not identically equal to the zero polynomial, and \emph{singular} otherwise. 
If $P(\lambda)$ is regular and $\det P(\lambda)\in\mathbb{F}$, then $P(\lambda)$ is said to be \emph{unimodular}.

A matrix polynomial $P(\lambda)\in\mathbb{F}[\lambda]^{m\times n}$ is said to have \emph{grade} $k$ if it can be expressed
in the form
\begin{equation}\label{pol}
P(\lambda)=\sum_{i=0}^k A_i \lambda^i, \quad \mbox{with} \quad A_0,\hdots,A_k\in \mathbb{F}^{m\times n},
\end{equation}
where any of the coefficients, including $A_k$, can be zero. 
We call the degree of $P(\lambda)$ the maximum integer $d$ such that $A_d\neq 0$. 
The degree of a matrix polynomial $P(\lambda)$ is denoted by $\deg(P(\lambda))$.
Note, in addition, that the degree of $P(\lambda)$ is fixed while the grade of a matrix polynomial, which is always larger than or equal to its degree, is a choice.
A matrix polynomial of grade 1 is called a \emph{matrix pencil}, or, sometimes for simplicity, a pencil. 

For any $k\geq \deg(P(\lambda))$, the \emph{$k$-reversal matrix polynomial} of $P(\lambda)$ is the matrix polynomial
\[
\rev_k P(\lambda):=\lambda^k P(\lambda^{-1}).
\]
Note that the $k$-reversal operation maps matrix polynomials of grade $k$ to matrix polynomials of grade $k$. 
 When the degree of a matrix polynomial $P(\lambda)$ is clear from the context, we write $\rev P(\lambda)$ to denote the $\deg(P(\lambda))$-reversal of $P(\lambda)$.

The \emph{complete eigenstructure} of a regular matrix polynomial consists of its finite and infinite elementary divisors (spectral structure), and for a singular matrix polynomial it consists of its finite and infinite elementary divisors together with its right and left minimal indices (spectral structure and singular structure). The singular structure of matrix polynomials will be briefly reviewed later in the paper.
For more detailed definitions of the spectral  structure of matrix polynomials, we refer the reader to \cite[Section 2]{DTDM14}.
The problem of computing the complete eigenstructure of a matrix polynomial is called the \emph{complete polynomial eigenvalue problem}.

The standard approach to solving a complete polynomial eigenvalue problem is via linearizations.
 A matrix pencil $L(\lambda)$ is a \emph{linearization} of a matrix polynomial $P(\lambda)$ of grade $k$ if, for some $s\geq 0$, there exist two unimodular matrix polynomials $U(\lambda)$ and $V(\lambda)$ such that 
$$U(\lambda) L(\lambda) V(\lambda) = \left[ \begin{array}{cc} I_s & 0 \\ 0 & P(\lambda) \end{array} \right],$$
where $I_s$ denotes the identity matrix of size $s$.
Additionally, a linearization $L(\lambda)$ of $P(\lambda)$ is a \emph{strong linearization} if $\rev_1 L(\lambda)$ is a linearization of $\rev_k P(\lambda)$.
We recall that the key property of any strong linearization $L(\lambda)$ of $P(\lambda)$ is that $P(\lambda)$ and $L(\lambda)$ have the same finite and infinite elementary divisors, and the same numbers of right and left minimal indices.

Minimal bases and minimal indices play an important role in the developments of this work, so we briefly review them here (for a more complete description of minimal bases and their properties see \cite{Forney}).
Any subspace $\mathcal{W}$ of $\mathbb{F}(\lambda)^n$  has bases consisting entirely of vector polynomials. 
The \emph{order} of a vector polynomial basis of $\mathcal{W}$ is defined as the sum of the degrees of its vectors. 
Among all of the possible polynomial bases of $\mathcal{W}$, those with least order are called \emph{minimal bases} of $\mathcal{W}$. 
In general, there are many minimal bases of $\mathcal{W}$, but the ordered list of degrees of the vector polynomials in any of its minimal bases  is always the same. 
This list of degrees is called the list of \emph{minimal indices} of $\mathcal{W}$.

When an $m\times n$ matrix polynomial $P(\lambda)$ is singular, it has nontrivial right and/or left \emph{rational null spaces}:
\begin{align*}
&\mathcal{N}_r(P):=\{x(\lambda)\in \mathbb{F}(\lambda)^{n\times 1} : \; P(\lambda)x(\lambda)=0\}, \\
&\mathcal{N}_{\ell}(P):=\{y(\lambda)^T\in \mathbb{F}(\lambda)^{1\times m} : \; y(\lambda)^T P(\lambda)=0\}.
\end{align*}
The left (resp. right) minimal bases and minimal indices of $P(\lambda)$ are defined as those of the rational subspace $\mathcal{N}_\ell(P)$ (resp. $\mathcal{N}_r(P)$).

In order to give a useful characterization of the minimal bases of a rational subspace, we introduce the concept of \emph{highest degree coefficient matrix}. 
Here and thereafter, by the \emph{$i$th row degree} of a matrix polynomial $Q(\lambda)$ we denote the degree of the $i$th row of $Q(\lambda)$.
\begin{definition}
Let $Q(\lambda) \in \mathbb{F}[\lambda]^{m\times n}$ be a matrix polynomial with row degrees $d_1, d_2, \ldots, d_m$. The \emph{highest row degree coefficient matrix} of $Q(\lambda)$, denoted $Q_h$, is the $m\times n$ constant matrix whose $i$th entry in the $j$th row is the coefficient of $\lambda^{d_j}$ in the $(i,j)$th entry of  $Q(\lambda)$, for $i=1:m$ and $j=1:n$. 
\end{definition}

We illustrate the concept of highest degree coefficient matrix in the following example.
In particular, we show that the highest degree coefficient matrix must not be confused with the leading coefficient of a matrix polynomial.
\begin{example}
Consider the matrix polynomial
\[
Q(\lambda)  = \left[ \begin{matrix} 2 \lambda^2 -3 \lambda & 3- 5 \lambda & \lambda^2\\ \lambda +1 & \lambda +2 & -3\lambda^2   \end{matrix} \right].
\]
The highest row degree coefficient of $Q(\lambda)$ is
$
Q_h = \left[\begin{smallmatrix} 2 & 0 & 1 \\ 0 & 0 & -3  \end{smallmatrix} \right].
$
Notice that, in this case, the highest row degree coefficient of $Q(\lambda)$ coincides with its leading coefficient.
However, this is not always true.
For example, consider the matrix polynomial
\[
P(\lambda)  = \left[ \begin{array}{ccc} 2 \lambda^2 -3 \lambda & 3- 5 \lambda & \lambda^2\\ \lambda +1 & \lambda +2 & 1   \end{array} \right].
\]
Then, the highest row degree coefficient of $P(\lambda)$ is
$
P_h = \left[\begin{smallmatrix} 2 & 0 & 1 \\ 1 & 1 & 0 \end{smallmatrix} \right].
$
Notice that, in this case, the leading coefficient of $P(\lambda)$ and $P_h$ are not equal.
\end{example}

 In this paper, we arrange  minimal bases as the rows of  matrix polynomials. 
Then, we  say that an $m\times n$ matrix polynomial, with $m\leq n$, is a minimal basis if its rows form a minimal basis for the rational space they span.

\begin{theorem}\label{minimal-basis}{\rm \cite[Theorem 2.2]{canonical}}
A matrix polynomial $Q(\lambda)\in \mathbb{F}[\lambda]^{m\times n}$ is a minimal basis if and only if $Q(\lambda_0) \in \overline{\mathbb{F}}^{m\times n}$ has full row rank for all $\lambda_0\in \overline{\mathbb{F}}$ and the highest row degree coefficient matrix $Q_h$ of $Q(\lambda)$ has full rank.
\end{theorem}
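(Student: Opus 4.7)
The plan is to prove both directions of the equivalence by tying each of the two conditions to a concrete ``degree-reducing move'' on polynomial bases: failure of either condition exhibits a unimodular row operation that strictly decreases the total order $d_1+d_2+\cdots+d_m$, while both conditions together block every such move. After applying a permutation, I may assume the rows are ordered so that $d_1\geq d_2\geq\cdots\geq d_m$, and in what follows I write $q_i(\lambda)$ for the $i$th row.

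For the necessity of $Q_h$ having full row rank, suppose $c\in\mathbb F^{1\times m}$ is a nonzero left null vector of $Q_h$; set $d:=\max\{d_i : c_i\neq 0\}$ and form the polynomial row vector $u(\lambda):=(c_1\lambda^{d-d_1},\ldots,c_m\lambda^{d-d_m})$. The coefficient of $\lambda^d$ in $u(\lambda)Q(\lambda)$ is precisely $cQ_h=0$, so $\deg(u(\lambda)Q(\lambda))<d$. Choosing an index $k$ with $d_k=d$ and $c_k\neq 0$ (which exists by construction of $d$), the entry $u_k=c_k$ is a nonzero scalar, so the elementary unimodular row operation that replaces the $k$th row of $Q(\lambda)$ by $c_k^{-1}u(\lambda)Q(\lambda)$ yields a polynomial basis whose $k$th row has degree strictly less than $d_k$, contradicting minimality. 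For the necessity of pointwise full rank, if $Q(\lambda_0)$ drops row rank at some $\lambda_0\in\overline{\mathbb F}$ with minimal polynomial $p(\lambda)\in\mathbb F[\lambda]$, a left null vector of $Q(\lambda_0)$ lifts (via its coordinates in a basis of $\mathbb F[\lambda]/(p)$ over $\mathbb F$) to a polynomial row vector $w(\lambda)\in\mathbb F[\lambda]^{1\times m}$ satisfying $w(\lambda)Q(\lambda)=p(\lambda)v(\lambda)$ for some polynomial row vector $v(\lambda)$; replacing an appropriate row of $Q$ by $v$ via a unimodular substitution again strictly lowers the total order.

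For sufficiency, I establish two lemmas under the combined hypotheses. First, the \emph{predictable degree property}: for every nonzero $r(\lambda)\in\mathbb F[\lambda]^{1\times m}$ one has $\deg(r(\lambda)Q(\lambda))=\max_i(\deg r_i+d_i)$, because the coefficient of the maximal expected power is an $\mathbb F$-linear combination of rows of $Q_h$ weighted by the leading coefficients of the degree-maximizing $r_i$'s, and this combination is nonzero since $Q_h$ has full row rank. Second, the \emph{clearing-denominators lemma}: every polynomial vector in the row span of $Q$ is a polynomial combination of the rows of $Q$. To see this, write a polynomial $v=rQ$ with $r\in\mathbb F(\lambda)^{1\times m}$ in lowest terms as $r=a/b$ with $a\in\mathbb F[\lambda]^{1\times m}$, $b\in\mathbb F[\lambda]$, and $\gcd$ of the entries of $a$ with $b$ equal to $1$. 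For any irreducible factor $p\mid b$ with root $\lambda_0\in\overline{\mathbb F}$, the identity $bv=aQ$ evaluated at $\lambda_0$ yields $a(\lambda_0)Q(\lambda_0)=0$, and pointwise full rank forces $a(\lambda_0)=0$; by minimality of $p$ this means $p\mid a_i$ for each $i$, contradicting the coprimality unless $b$ is a unit.

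Combining these two lemmas, any alternative polynomial basis $\widetilde Q$ of the row span satisfies $\widetilde Q=R(\lambda)Q(\lambda)$ with $R\in\mathbb F[\lambda]^{m\times m}$ and $\det R\neq 0$ (the number of rows agrees because both bases span the same $m$-dimensional $\mathbb F(\lambda)$-subspace of $\mathbb F(\lambda)^{1\times n}$). The Leibniz expansion $\det R=\sum_\sigma \mathrm{sgn}(\sigma)\prod_j R_{j\sigma(j)}\neq 0$ produces a permutation $\sigma$ with $R_{j\sigma(j)}\neq 0$ for every $j$, whence
\[
\sum_j \deg\widetilde q_j \;=\; \sum_j \max_i\bigl(\deg R_{ji}+d_i\bigr) \;\geq\; \sum_j \bigl(\deg R_{j\sigma(j)}+d_{\sigma(j)}\bigr) \;\geq\; \sum_i d_i,
\]
which proves the minimality of $Q$. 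The main obstacle, in my view, is the clearing-denominators lemma: it must handle non-$\mathbb F$-rational $\lambda_0$ through the minimal polynomial, which is clean over perfect fields but requires a bit of care over arbitrary $\mathbb F$ to conclude $p\mid a_i$ from the vanishing of $a_i$ at a single root of $p$. The necessity arguments and the Leibniz-matching step are then essentially routine manipulations.
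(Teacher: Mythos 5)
The paper does not contain a proof of this theorem; it is quoted as \cite[Theorem~2.2]{canonical}, which in turn rests on Forney's classical characterization of minimal bases. Your argument is therefore a self-contained reconstruction rather than an alternative to an in-paper proof, and it follows the standard Forney/Kailath route: the two hypotheses are recognized as the predictable degree property (from full rank of $Q_h$) and the ``no nontrivial right divisor'' coprimeness condition (from pointwise full rank), and minimality is then deduced from the Leibniz-term matching of $\det R$. The overall structure is correct.

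Two points of rigor you should tighten. First, in the necessity argument for pointwise full rank, the phrase ``replacing an appropriate row of $Q$ by $v$ via a unimodular substitution'' is a misnomer: that substitution is not, in general, realized by a polynomial unimodular matrix (that would require some $w_j$ to be a nonzero constant). What you actually have, and what suffices under the paper's definition (minimal bases are compared among all polynomial bases of the span), is an $\mathbb{F}(\lambda)$-linear change of basis whose output is still polynomial. You should also supply the degree bookkeeping you are implicitly invoking: taking representatives $w_i$ with $\deg w_i < \deg p$, choosing $j$ with $w_j \neq 0$ and $d_j = \max\{d_i : w_i \neq 0\}$, one has $\deg v = \deg(wQ) - \deg p \le d_j - 1 < d_j$, and $\{q_1,\dots,q_{j-1},v,q_{j+1},\dots,q_m\}$ remains $\mathbb{F}(\lambda)$-independent because $w_j \neq 0$ (also note $v\neq 0$ since $w\neq 0$ and $Q$ has full rank over $\mathbb{F}(\lambda)$). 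Second, the clearing-denominators step you flag as delicate is in fact clean over any field: an irreducible $p \in \mathbb{F}[\lambda]$ having $\lambda_0 \in \overline{\mathbb{F}}$ as a root \emph{is} (up to scaling) the minimal polynomial of $\lambda_0$ over $\mathbb{F}$, so $a_i(\lambda_0)=0$ forces $p \mid a_i$ with no perfectness hypothesis required. With these touch-ups the proof is complete.
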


Next, we introduce the definition of \emph{dual minimal bases}, a concept that plays a key role in the construction of the matrix pencils in Section \ref{sec:block_minimal_bases_pencils}.
\begin{definition}
Two matrix polynomials $K(\lambda) \in \mathbb{F}[\lambda]^{m_1\times q}$ and $N(\lambda)\in \mathbb{F}[\lambda]^{m_2\times q}$ are called \emph{dual minimal bases} if $m_1+m_2=q$ and $K(\lambda)$ and $N(\lambda)$ are both minimal bases satisfying  $K(\lambda) N(\lambda)^T=0$. 
We also say that $K(\lambda)$ is a dual minimal basis to
$N(\lambda)$, or vice versa.
\end{definition}

We introduce in Example \ref{ex:dual_bases1} a simple pair of dual minimal bases that plays an important role in this paper.
Here and throughout the paper we omit occasionally some, or all, of the entries of a matrix.
\begin{example}\label{ex:dual_bases1}
Consider the following matrix polynomials:
\begin{equation}\label{eq:Ls}
L_s(\lambda):=
\begin{bmatrix}
-1 & \lambda \\
& -1 & \lambda \\
& & \ddots & \ddots \\
& & & -1 & \lambda
\end{bmatrix}\in\mathbb{F}[\lambda]^{s\times (s+1)},
\end{equation}
and
\begin{equation}\label{Lambda}
\Lambda_s(\lambda) :=\begin{bmatrix} \lambda^s & \lambda^{s-1} & \cdots & \lambda & 1  \end{bmatrix}\in\mathbb{F}[\lambda]^{1\times (s+1)}.
\end{equation}
It follows easily from Theorem \ref{minimal-basis} that the matrix polynomials $L_s(\lambda)$ and $\Lambda_s(\lambda)$ are both minimal bases.
Also, since $L_s(\lambda)\Lambda_s(\lambda)^T=0$, we conclude that $L_s(\lambda)$ and $\Lambda_s(\lambda)$ are dual minimal bases.
Additionally, it follows from basic properties of the Kronecker product  that $L_s(\lambda)\otimes I_n$ and $\Lambda_s(\lambda)\otimes I_n$ are also dual minimal bases.
\end{example}
\begin{remark}
The vector polynomial $\Lambda_s(\lambda)$ in \eqref{Lambda} is very well known in the theory of linearizations of matrix polynomials, and plays an essential role, for instance, in  \cite{canonical,structured_canonical,4m-vspace,GoodVibrations}. 
However, notice that in those references $\Lambda_s(\lambda)$ is defined as a column vector, while in this work, for convenience, we define it as a row vector.
\end{remark}

In Section \ref{sec:block_minimal_bases_pencils}, we will  work with minimal bases dual to $\Lambda_s(\lambda)\otimes I_n$ other than $L_s(\lambda)\otimes I_n$.
This  motivates the following definition.
\begin{definition}\label{def:wing_pencil}
Let $n,s\in\mathbb{N}$, and let $\Lambda_s(\lambda)$ be the matrix polynomial in \eqref{Lambda}.
A  matrix pencil $K(\lambda)\in\mathbb{F}[\lambda]^{sn\times (s+1)n}$ is called a \emph{$(s,n)$-wing pencil} or, simply, a wing pencil, if $K(\lambda)(\Lambda_s(\lambda)^T\otimes I_n)=0$.
\end{definition}

We characterize in Theorem \ref{thm:charac_dual-pencils} all the $(s,n)$-wing pencils that are dual minimal bases to the matrix polynomial $\Lambda_s(\lambda)\otimes I_n$. 
 A characterization of the wing pencils can also be found in \cite[Lemma 3]{Philip2016}, a work that was produced independently and simultaneously to our work.
\begin{theorem}\label{thm:charac_dual-pencils}
Let $n,s\in\mathbb{N}$, and let $L_s(\lambda)$ and $\Lambda_s(\lambda)$ be the matrix polynomials in \eqref{eq:Ls} and \eqref{Lambda}, respectively.
Then, a $(s,n)$-wing  pencil $K(\lambda)\in\mathbb{F}[\lambda]^{sn\times (s+1)n}$ is a dual minimal basis to $\Lambda_s(\lambda)\otimes I_n$ if and only if $K(\lambda)=B(L_s(\lambda)\otimes I_n)$ for some nonsingular matrix $B\in\mathbb{F}^{sn\times sn}$.
\end{theorem}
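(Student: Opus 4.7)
Since $K(\lambda)\in\mathbb{F}[\lambda]^{sn\times (s+1)n}$ and $\Lambda_s(\lambda)\otimes I_n\in\mathbb{F}[\lambda]^{n\times (s+1)n}$, the dimension condition $sn+n=(s+1)n$ in the definition of dual minimal bases is automatically met, and the orthogonality condition $K(\lambda)(\Lambda_s(\lambda)^T\otimes I_n)=0$ is exactly the wing-pencil hypothesis. Since $\Lambda_s(\lambda)\otimes I_n$ is already known to be a minimal basis (Example \ref{ex:dual_bases1}), both implications reduce to showing that a wing pencil is a minimal basis if and only if it factors as $B(L_s(\lambda)\otimes I_n)$ with $B$ nonsingular.

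For the ``if'' direction, I will apply Theorem \ref{minimal-basis}. Since $L_s(\lambda)\otimes I_n=-[I_{sn}\ 0]+\lambda[0\ I_{sn}]$, the pencil $K(\lambda)=B(L_s(\lambda)\otimes I_n)$ equals $-[B\ 0]+\lambda[0\ B]$. Because $B$ has no zero rows, every row of $K(\lambda)$ has degree exactly one and the highest row degree coefficient matrix is $K_h=[0\ B]$, which has full row rank. Evaluation at any $\lambda_0\in\overline{\mathbb{F}}$ also preserves full row rank, since $L_s(\lambda_0)\otimes I_n$ has full row rank (being a minimal basis) and $B$ is invertible. Theorem \ref{minimal-basis} then yields the conclusion.

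For the ``only if'' direction, I will first derive the factorization and then its invertibility. Write $K(\lambda)=K_0+\lambda K_1$ and partition each $K_i$ into $s+1$ block columns of width $n$, say $K_0=[C_1,\ldots,C_{s+1}]$ and $K_1=[D_1,\ldots,D_{s+1}]$. Since the $k$th block row of $\Lambda_s(\lambda)^T\otimes I_n$ equals $\lambda^{s+1-k}I_n$, the wing condition becomes the polynomial identity $\sum_{k=1}^{s+1}\lambda^{s+1-k}(C_k+\lambda D_k)=0$. Matching coefficients of $\lambda^j$ for $0\leq j\leq s+1$ yields $C_{s+1}=0$, $D_1=0$, and $C_k=-D_{k+1}$ for $1\leq k\leq s$. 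Setting $B:=[D_2\ D_3\ \cdots\ D_{s+1}]\in\mathbb{F}^{sn\times sn}$ and comparing block columns then gives $K(\lambda)=B(L_s(\lambda)\otimes I_n)$. Finally, $B$ must be nonsingular: otherwise there would exist a nonzero $v\in\mathbb{F}^{sn}$ with $v^TB=0$, whence $v^TK(\lambda)=v^TB(L_s(\lambda)\otimes I_n)=0$, contradicting the linear independence of the rows of the minimal basis $K(\lambda)$.

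The main (and really only) subtlety is keeping track of block-column indices in the coefficient-matching step; once the relations among the $C_k$ and $D_k$ are in hand, both the explicit factorization and the nonsingularity of $B$ follow immediately from Theorem \ref{minimal-basis} and elementary linear algebra.
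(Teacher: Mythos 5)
Your proof is correct and follows essentially the same route as the paper's: for the ``if'' direction, both invoke Theorem \ref{minimal-basis} by checking full row rank of $K(\lambda_0)$ and of the highest row degree coefficient matrix; for the ``only if'' direction, both extract the coefficient relations $[K_1]_{\cdot,1}=0$, $[K_0]_{\cdot,s+1}=0$, $[K_0]_{\cdot,k}=-[K_1]_{\cdot,k+1}$ from the wing condition to build $B$, and both derive nonsingularity of $B$ by contradiction from the full row rank of a minimal basis. The only cosmetic difference is that you organize the coefficient matching by block columns where the paper works block-row by block-row, and your opening reduction (both implications amount to showing a wing pencil is a minimal basis iff it factors as stated) is a nice explicit framing of what the paper does implicitly.
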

\begin{proof}
Assume that $K(\lambda)=B(L_s(\lambda)\otimes I_n)$ for some nonsingular matrix $B\in\mathbb{F}^{sn\times sn}$.
Since the condition $K(\lambda)(\Lambda_s(\lambda)^T\otimes I_n)=0$  holds, the pencil $K(\lambda)$ is a $(s,n)$-wing pencil.
Thus, we only need to prove that $K(\lambda)$ is a minimal basis.
To prove this, we will use Theorem \ref{minimal-basis}.
First, we have to show that $K(\lambda_0)$ has full row rank for any $\lambda_0\in\overline{\mathbb{F}}$.
Indeed, let $\lambda_0\in\overline{\mathbb{F}}$.
Since $L_s(\lambda)\otimes I_n$ is a minimal basis, we know that $L_s(\lambda_0)\otimes I_n$ has full row rank. 
Since the product of a nonsingular matrix by a full row rank matrix is a full row rank matrix, we obtain that $K(\lambda_0)=B(L_s(\lambda_0)\otimes I_n)$ has full row rank. 
Now, we prove that the highest row degree coefficient matrix of $K(\lambda)$ has full row rank as well. 
Writting $L_s(\lambda) = \lambda F+E$, it is not difficult to check that the highest row degree coefficient matrix of $K(\lambda)$ is given by $B(F\otimes I_n)$.
The matrix $B$ is nonsingular and $F\otimes I_n$ has full row rank, therefore the matrix $B(F\otimes I_n)$ has full row rank.
From Theorem \ref{minimal-basis}, we conclude that $K(\lambda)$ is a minimal basis.
Thus, $K(\lambda)$ and $\Lambda_s(\lambda)\otimes I_n$ are dual minimal bases.

Assume now that $K(\lambda)$ and $\Lambda_s(\lambda)\otimes I_n$ are dual minimal bases.
We will show that $K(\lambda)=B(L_s(\lambda)\otimes I_n)$ for some nonsingular matrix $B$.
To this purpose, let us partition the pencil $K(\lambda)$ as a $s\times (s+1)$ block-pencil with blocks $\lambda [K_1]_{ij}+[K_0]_{ij} \in \mathbb{F}[\lambda]^{n\times n}$, for $i=1:s$ and $j=1:s+1$. 
Let
\[
\begin{bmatrix}
\lambda [K_1]_{i1} + [K_0]_{i1} & \lambda [K_1]_{i2} +  [K_0]_{i2} & \cdots & \lambda [K_1]_{i,s+1}+  [K_0]_{i,s+1}
\end{bmatrix}
\]
denote  the $i$th block-row of $K(\lambda)$.
From  $K(\lambda) (\Lambda_s(\lambda)^T \otimes I_n)=0$, we get
\begin{align*}
0=&\sum_{j=0}^s \lambda^j\left(\lambda [K_1]_{i,s+1-j}+[K_0]_{i,s+1-j}\right)\\
=&\lambda^{s+1} [K_1]_{i1} + \sum_{j=1}^s \lambda^j \left([K_0]_{i,s+1-j} + [K_1]_{i,s+2-j}\right) + [K_0]_{i,s+1},
\end{align*}
or equivalently,
\[
[K_1]_{i1}=0, \quad [K_1]_{ij}= - [K_0]_{i,j-1}, \quad  j=2:s+1, \quad [K_0]_{i,s+1}=0,
\]
which shows that $K(\lambda)$ is of the form
\[
-\begin{bmatrix}
[K_0]_{11} & \cdots & [K_0]_{1s} \\
\vdots & \ddots & \vdots \\
[K_0]_{s1} & \cdots & [K_0]_{ss}
\end{bmatrix}
\begin{bmatrix}
-I_n & \lambda I_n \\
 & \ddots & \ddots \\
 & & -I_n & \lambda I_n
\end{bmatrix}=:
B(L_s(\lambda)\otimes I_n).
\]
To finish the proof, we only need to show that $B$ is nonsingular.
The proof proceeds by contradiction.
Assume that $B$ is singular, i.e, there exists a nonzero vector $x\in\mathbb{F}^{sn\times 1}$ such that $x^TB=0$.
Let $\lambda_0\in\overline{\mathbb{F}}$.
Then, we have $x^T K(\lambda_0) = x^T B(L_s(\lambda_0)\otimes I_n)=0$.
Thus, $K(\lambda_0)$ does not have full row rank.
But this contradicts the fact that $K(\lambda)$ is a minimal basis.
Therefore, the matrix $B$ is nonsingular.
\end{proof}
\begin{remark}\label{rem:block_structure}
It is straightforward to check that a pencil $B(\lambda)\in\mathbb{F}[\lambda]^{(q+1)n\times (p+1)n}$  satisfies $B(\lambda)(\Lambda_p(\lambda)^T\otimes I_n)=0$  if and only it is of the form
\begin{equation}\label{gen-wing}
B(\lambda)=\left[ \begin{array}{ccccc} B_{11} & -\lambda B_{11} + B_{12} & -\lambda B_{12} + B_{13} & \cdots & -\lambda B_{1p}\\
B_{21} & -\lambda B_{21} + B_{22} & -\lambda B_{22} + B_{23} & \cdots & -\lambda B_{2p}\\
\vdots & \vdots & \vdots & \ddots & \vdots \\
B_{q+1,1} & -\lambda B_{q+1,1} + B_{q+1,2} & -\lambda B_{q+1,2} + B_{q+1,3} & \cdots & -\lambda B_{q+1,p}\\
\end{array} \right],
\end{equation}
for some matrices $B_{ij}\in \mathbb{F}^{n\times n}$, with  $i=1:q+1$ and $j=1:p$ (see, for example, {\rm \cite[Lemma 3]{Philip2016}}). 
In particular, 
any $(s,n)$-wing pencil has that form, as extracted from the proof of Theorem \ref{thm:charac_dual-pencils}. 
In what follows, any pencil as in (\ref{gen-wing}), with $q+1\neq p$, will be called \emph{generalized wing pencil}.
\end{remark}


As an immediate corollary of Theorem \ref{thm:charac_dual-pencils}, we obtain the following  result for minimal basis $(s,n)$-wing pencils, whose simple proof is omitted.
\begin{corollary}\label{cor:wing_pencils}
Let $M\in\mathbb{F}^{sn\times sn}$ be a nonsingular matrix.
If $K(\lambda)\in\mathbb{F}[\lambda]^{ns\times n(s+1)}$ is a minimal basis $(s,n)$-wing pencil, then $MK(\lambda)$ is also a minimal basis $(s,n)$-wing pencil.
\end{corollary}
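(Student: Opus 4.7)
The plan is to apply Theorem \ref{thm:charac_dual-pencils} directly, which gives a very clean characterization of minimal basis $(s,n)$-wing pencils as exactly those of the form $B(L_s(\lambda)\otimes I_n)$ with $B\in\mathbb{F}^{sn\times sn}$ nonsingular. Since the conclusion is also about being a minimal basis $(s,n)$-wing pencil, the strategy is to show that $MK(\lambda)$ has this same normal form with a new nonsingular matrix in place of $B$.

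First I would invoke Theorem \ref{thm:charac_dual-pencils} on $K(\lambda)$ to write $K(\lambda) = B(L_s(\lambda)\otimes I_n)$ for some nonsingular $B\in\mathbb{F}^{sn\times sn}$. Then, multiplying on the left by $M$, I obtain
\[
MK(\lambda) = (MB)(L_s(\lambda)\otimes I_n).
\]
Since $M$ and $B$ are both nonsingular matrices in $\mathbb{F}^{sn\times sn}$, their product $MB$ is nonsingular. Applying the reverse direction of Theorem \ref{thm:charac_dual-pencils} to $MB$ in place of $B$ yields that $MK(\lambda)$ is a minimal basis dual to $\Lambda_s(\lambda)\otimes I_n$, i.e., a minimal basis $(s,n)$-wing pencil.

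There is essentially no obstacle here: the corollary is immediate from the characterization, and the only thing to verify is that nonsingularity is preserved under matrix multiplication, which is standard. This is why the authors note that the proof is omitted.
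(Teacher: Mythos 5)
Your proof is correct and is precisely the omitted argument the authors have in mind: write $K(\lambda)=B(L_s(\lambda)\otimes I_n)$ via Theorem \ref{thm:charac_dual-pencils}, observe $MK(\lambda)=(MB)(L_s(\lambda)\otimes I_n)$ with $MB$ nonsingular, and apply the theorem again. Nothing is missing.
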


The next theorem is the last auxiliary result about minimal basis wing pencils  that we present in this section.
\begin{theorem}\label{thm:minimal_bases_concatenation}
Let $n,s_1,s_2\in\mathbb{N}$,  and let $K(\lambda)$ and $L(\lambda)$ be, respectively, a minimal basis $(s_1,n)$-wing pencil and a minimal basis $(s_2,n)$-wing pencil.
If the pencils $K(\lambda)$ and $L(\lambda)$ are partitioned as follows
\[
K(\lambda)=\begin{bmatrix}
K_1(\lambda) & k(\lambda)
\end{bmatrix} \quad \mbox{and} \quad
L(\lambda)=\begin{bmatrix}
\ell(\lambda) & L_1(\lambda)
\end{bmatrix},
\]
where $K_1(\lambda)\in\mathbb{F}[\lambda]^{s_1n\times s_1n}$ and $L_1(\lambda)\in\mathbb{F}[\lambda]^{s_2n\times s_2n}$, then the pencil
\[
S(\lambda):=\begin{bmatrix}
K_1(\lambda) & k(\lambda) & 0 \\
0 & \ell(\lambda) & L_1(\lambda)
\end{bmatrix}
\]
is a minimal basis $(s_1+s_2,n)$-wing pencil.
\end{theorem}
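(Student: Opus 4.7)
The natural strategy is to use the structure theorem for minimal basis wing pencils (Theorem \ref{thm:charac_dual-pencils}) in both directions: first to put $K(\lambda)$ and $L(\lambda)$ into a canonical form, then to recognize that the pencil $S(\lambda)$ has exactly the canonical form of a minimal basis $(s_1+s_2,n)$-wing pencil.

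First I would invoke Theorem \ref{thm:charac_dual-pencils} to write $K(\lambda)=B_K(L_{s_1}(\lambda)\otimes I_n)$ and $L(\lambda)=B_L(L_{s_2}(\lambda)\otimes I_n)$ for suitable nonsingular matrices $B_K\in\mathbb{F}^{s_1n\times s_1n}$ and $B_L\in\mathbb{F}^{s_2n\times s_2n}$. Partitioning the canonical pencils themselves as
\[
L_{s_1}(\lambda)\otimes I_n = \begin{bmatrix} \widetilde{K}_1(\lambda) & \widetilde{k}(\lambda)\end{bmatrix}, \qquad
L_{s_2}(\lambda)\otimes I_n = \begin{bmatrix} \widetilde{\ell}(\lambda) & \widetilde{L}_1(\lambda)\end{bmatrix},
\]
in the same shape used in the statement, the uniqueness of the partitions forces $K_1(\lambda)=B_K\widetilde{K}_1(\lambda)$, $k(\lambda)=B_K\widetilde{k}(\lambda)$, $\ell(\lambda)=B_L\widetilde{\ell}(\lambda)$ and $L_1(\lambda)=B_L\widetilde{L}_1(\lambda)$.

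Next I would factor the block matrix $S(\lambda)$ as
\[
S(\lambda)=\begin{bmatrix} B_K & 0 \\ 0 & B_L \end{bmatrix}
\begin{bmatrix} \widetilde{K}_1(\lambda) & \widetilde{k}(\lambda) & 0 \\ 0 & \widetilde{\ell}(\lambda) & \widetilde{L}_1(\lambda)\end{bmatrix}.
\]
The crux is a direct inspection of the inner block matrix. By the explicit bidiagonal form of $L_{s}(\lambda)\otimes I_n$, the column block $\widetilde{k}(\lambda)$ has $\lambda I_n$ in its last block-row and zeros elsewhere, while $\widetilde{\ell}(\lambda)$ has $-I_n$ in its first block-row and zeros elsewhere. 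Hence the middle $n$-column block of the inner matrix has $\lambda I_n$ in block-row $s_1$ and $-I_n$ in block-row $s_1+1$, which is exactly what the $(s_1+1)$-th block-column of $L_{s_1+s_2}(\lambda)\otimes I_n$ looks like. All other block-columns line up by construction, so the inner matrix equals $L_{s_1+s_2}(\lambda)\otimes I_n$.

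Therefore $S(\lambda)=\operatorname{diag}(B_K,B_L)(L_{s_1+s_2}(\lambda)\otimes I_n)$. Since the block-diagonal matrix on the left is nonsingular, Theorem \ref{thm:charac_dual-pencils} (in the ``if'' direction) concludes that $S(\lambda)$ is a minimal basis dual to $\Lambda_{s_1+s_2}(\lambda)\otimes I_n$, i.e., a minimal basis $(s_1+s_2,n)$-wing pencil. The only real work is the bookkeeping that identifies the inner matrix with $L_{s_1+s_2}(\lambda)\otimes I_n$; once that is recognized, the rest is a direct appeal to the previously established characterization.
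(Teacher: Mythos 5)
Your proof is correct and follows essentially the same strategy as the paper's: invoke Theorem \ref{thm:charac_dual-pencils} to write $K(\lambda)=B_K(L_{s_1}(\lambda)\otimes I_n)$ and $L(\lambda)=B_L(L_{s_2}(\lambda)\otimes I_n)$, observe that $S(\lambda)=\operatorname{diag}(B_K,B_L)(L_{s_1+s_2}(\lambda)\otimes I_n)$, and apply Theorem \ref{thm:charac_dual-pencils} again. The only difference is that you spell out the block-column bookkeeping that identifies the inner matrix with $L_{s_1+s_2}(\lambda)\otimes I_n$, which the paper leaves to the reader.
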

\begin{proof}
Set $s:=s_1+s_2$.
By Theorem \ref{thm:charac_dual-pencils}, the pencils $K(\lambda)$ and $L(\lambda)$ can be written, respectively, as $K(\lambda)=B(L_{s_1}(\lambda)\otimes I_n)$ and $L(\lambda)=C(L_{s_2}(\lambda)\otimes I_n)$, for some nonsingular matrices $B\in\mathbb{F}^{s_1n\times s_1n}$ and $C\in\mathbb{F}^{s_2n\times s_2n}$.
Then, notice that the pencil $S(\lambda)$ can be written as
\[
S(\lambda) =
 \begin{bmatrix}
B & 0 \\ 0 & C
\end{bmatrix}
(L_{s}(\lambda)\otimes I_n).
\]
Thus, the desired result follows applying Theorem \ref{thm:charac_dual-pencils} to the pencil $S(\lambda)$.
\end{proof}

\bigskip

In future sections we will work with block-row and block-column permutations of
block-pencils, which will involve the use of block-permutation matrices. 
Next, we introduce the notation used in this paper for block-permutation matrices.
\begin{definition}\label{block-perm}
Let $k,n\in \mathbb{N}$. 
Let $\mathbf{c}=(c_1, c_2, \ldots, c_k)$ be a permutation of the set $\{1:k\}$. 
Then, we call the block-permutation matrix associated with $(\mathbf{c}, n)$, and denote it by $\Pi^n_{\mathbf{c}}$, the $k\times k$ block-matrix whose $(c_i,i)$th block-entry is $I_n$, for $i=1:k$, and having $0_n$ in every other block-entry. 
In particular, we denote by $\boldsymbol{\mathrm{id}}$ the identity permutation given by $\boldsymbol{\mathrm{id}}=(1:k)$. 
\end{definition}
\begin{example}
Let $k=4$ and let $\mathbf{c}=(2, 4,3, 1)$, which is a permutation of $\{1, 2, 3, 4\}$. Then, for every $n\in \mathbb{N}$, 
$$\Pi_{\mathbf{c}}^n = \left[ \begin{array}{cccc} 0 & 0 & 0 & I_n \\ I_n & 0 & 0 & 0\\ 0 & 0 & I_n & 0\\ 0 & I_n & 0 & 0\end{array} \right].$$
\end{example}

A particular block-permutation matrix that will be very useful in the proofs of our main results is the block version of the \emph{sip matrix} (standard involutory permutation).
 Such $sn\times sn$ block-permutation matrix is denoted by $R_{s,n}$ and is defined as follows
\begin{equation}\label{eq:sip}
R_{s,n}:=\begin{bmatrix}
0 & \cdots & I_n \\ \vdots & \iddots & \vdots \\I_n & \cdots & 0
\end{bmatrix}\in\mathbb{F}^{sn\times sn}.
\end{equation}

\begin{remark}
When the scalar $n$ is clear in the context, we will write $\Pi_{\mathbf{c}}$ and $R_s$  instead of $\Pi_{\mathbf{c}}^n$ and $R_{s,n}$, respectively, to simplify the notation.
\end{remark}

Finally, we introduce the concept of \emph{block-transposition} of a block-matrix.
Let $H=[H_{ij}]_{i=1:p, j=1:q}$ be a $p\times q$ block-matrix with block-entries $H_{ij}\in \mathbb{F}^{n\times n}$. 
We define the block-transpose $H^{\mathcal{B}}$ of $H$ as the $q\times p$ block-matrix having the block $H_{ji}$ in the  block-position $(i, j)$, for $1\leq i \leq p$ and $1\leq j \leq q$. 

\section{The block minimal bases pencils framework for Fiedler-like pencils}\label{sec:block_minimal_bases_pencils}

In \cite{canonical}, Fiedler pencils were expressed as block Kronecker pencils, a particular type of block minimal bases pencils. 
In this section we recall the definition of block minimal bases  pencils and introduce the family of extended block Kronecker pencils, which contains the family of block Kronecker pencils. 
The pencils in this new family are block minimal bases pencils under some nonsingularity conditions and allow us to express all Fiedler-like pencils   into the block minimal bases framework.

\subsection{Block minimal bases pencils}

We discuss in this section  the family of block minimal bases pencils, recently introduced in \cite{canonical}, and its main properties.

\begin{definition}{\rm \cite[Definition 27]{canonical}}
A matrix pencil 
\begin{equation}\label{BMBP}
C(\lambda)=\left[ \begin{array}{c|c} M(\lambda) & K_2(\lambda)^T\\ \hline K_1(\lambda) & 0 \end{array} \right]
\end{equation}
is called a \emph{block minimal bases pencil} if $K_1(\lambda)$ and $K_2(\lambda)$  are both minimal bases. If, in addition, the row degrees of $K_1(\lambda)$ are all equal to 1, the row degrees of $K_2(\lambda)$ are all equal to 1, the row degrees of a minimal basis dual to $K_1(\lambda)$ are all equal, and the row degrees of a minimal basis dual to $K_2(\lambda)$ are all equal, then $C(\lambda)$ is called a \emph{strong block minimal bases pencil}. 
\end{definition}

In the following theorem, which will be used in the proofs of some of our results, we show that every block minimal bases pencil is a linearization of at least one matrix polynomial.

\begin{theorem}\label{linearization}{\rm \cite[Theorem 3.3]{canonical}}
Let $K_1(\lambda)$ and $N_1(\lambda)$ be a pair of dual minimal bases, and let $K_2(\lambda)$ and $N_2(\lambda)$ be another pair of dual minimal bases. Let $C(\lambda)$ be a block minimal bases pencil as in (\ref{BMBP}).
 Consider the matrix polynomial 
\begin{equation}\label{eq:Qpolinminbaslin}
Q(\lambda):=N_2(\lambda) M(\lambda) N_1(\lambda)^T.\end{equation}
Then:
\begin{itemize} 
\item[\rm(i)] $C(\lambda)$ is a linearization of $Q(\lambda)$.
\item[\rm(ii)] If $C(\lambda)$ is a strong block minimal bases pencil, then $C(\lambda)$  is a strong linearization of $Q(\lambda)$, considered as a polynomial with grade  $1 + \deg(N_1(\lambda))+\deg(N_2(\lambda))$. 
\end{itemize}
\end{theorem}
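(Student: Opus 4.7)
\medskip
\noindent
\textbf{Proof proposal.} The plan is to exhibit explicit unimodular matrix polynomials $U(\lambda)$ and $V(\lambda)$ that transform $C(\lambda)$ into the block diagonal form $\mathrm{diag}(I_s,Q(\lambda))$. The single enabling fact I will invoke is the polynomial completion property of dual minimal bases: if $K$ and $N$ are dual minimal bases, with $K$ of size $m_1\times q$ and $N$ of size $m_2\times q$, then there exist polynomial matrices $\widehat{K}$ and $\widehat{N}$ such that $\bigl[\begin{smallmatrix} K \\ \widehat{N} \end{smallmatrix}\bigr]\bigl[\,\widehat{K}^T\;\,N^T\,\bigr] = I_q$. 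This identity packages the relations $K\widehat{K}^T=I_{m_1}$, $K N^T=0$, $\widehat{N}\widehat{K}^T=0$, $\widehat{N}N^T=I_{m_2}$ into a single unimodular decomposition of $I_q$, and it is a direct consequence of Theorem~\ref{minimal-basis} (full row rank over $\overline{\mathbb{F}}$ plus full-rank highest row degree coefficient matrix force the existence of a polynomial right inverse, and from there the block identity is routine). I would cite this as a standard result from the minimal bases literature and from \cite{canonical}.

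For part (i), I would first right-multiply $C(\lambda)$ by the unimodular block-diagonal matrix $\mathrm{diag}\bigl(\,[\widehat{K}_1^T\;N_1^T]\,,\,I\bigr)$. Using $K_1\widehat{K}_1^T=I$ and $K_1 N_1^T=0$ this places an $I_{m_1}$ block in the bottom-left corner; elementary left-multiplications then clear the top-left block $M\widehat{K}_1^T$, leaving $\mathrm{diag}(I_{m_1},\widetilde{C}(\lambda))$ after a permutation, where $\widetilde{C}(\lambda)=[\,MN_1^T\;\;K_2^T\,]$. I would then left-multiply $\widetilde{C}$ by the unimodular $\bigl[\begin{smallmatrix} \widehat{K}_2 \\ N_2 \end{smallmatrix}\bigr]$; using $\widehat{K}_2 K_2^T=I$ and $N_2 K_2^T=0$ produces a block with $I_{m_2}$ in the top-right corner and $Q(\lambda)=N_2 M N_1^T$ in the bottom-left. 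One final elementary column operation clears the remaining $\widehat{K}_2 M N_1^T$ block. Composing all these unimodular factors gives the desired equivalence $U(\lambda)C(\lambda)V(\lambda)=\mathrm{diag}(I_s,Q(\lambda))$ with $s=m_1+m_2$, proving that $C(\lambda)$ is a linearization of $Q(\lambda)$.

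For part (ii), I would apply the $\rev_1$ operator to $C(\lambda)$ and rerun the argument of (i) with the reversed dual minimal bases. The strong block minimal bases hypothesis is used precisely here: it forces all row degrees of $K_i$ to equal $1$ and all row degrees of $N_i$ to equal some common value $\varepsilon_i$. Under this uniformity, $\rev_1 K_i$ and $\rev_{\varepsilon_i} N_i$ remain dual (the orthogonality $\rev_1 K_i\,(\rev_{\varepsilon_i} N_i)^T = \lambda^{1+\varepsilon_i}(K_i N_i^T)(\lambda^{-1})=0$ is immediate, and the conditions of Theorem~\ref{minimal-basis} survive reversal because evaluating at $\lambda_0=0$ picks out the highest row degree coefficient matrix, which is full rank). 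Part (i) applied to $\rev_1 C(\lambda)$ with these bases yields that $\rev_1 C(\lambda)$ is a linearization of
\[
\rev_{\varepsilon_2} N_2(\lambda)\cdot \rev_1 M(\lambda)\cdot (\rev_{\varepsilon_1} N_1(\lambda))^T \;=\; \lambda^{1+\varepsilon_1+\varepsilon_2}\,Q(\lambda^{-1}) \;=\; \rev_k Q(\lambda),
\]
with $k=1+\deg(N_1)+\deg(N_2)$, which is exactly what is required for $C(\lambda)$ to be a strong linearization of $Q(\lambda)$ viewed at grade $k$.

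The main obstacle is really the completion lemma for dual minimal bases; once it is available, everything else is careful bookkeeping of block sizes, two rounds of Schur-like block reduction, and the observation that the strong hypothesis on row degrees is exactly what is needed for reversal to preserve duality. I would take care in presentation to keep explicit track of the dimensions $m_1,m_2,q_1,q_2$ and of the permutations used at each step, since the reader can easily lose sight of which block sits where.
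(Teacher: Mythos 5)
Your argument is correct, and it reproduces the proof technique of the cited source \cite[Theorem~3.3]{canonical}: the unimodular completion of a pair of dual minimal bases, a two-stage block reduction that peels off $I_{m_1}$ and then $I_{m_2}$ around the Schur-type block $Q(\lambda)=N_2MN_1^T$, and the observation that the uniform-degree hypothesis in the strong case makes reversal preserve duality so that part~(i) applies to $\rev_1 C(\lambda)$ and yields $\rev_k Q(\lambda)$. Note that the present paper states this theorem as an imported result and gives no proof of its own, so there is no alternative in-paper argument to compare against.
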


Next we show the relationship between the left and right minimal indices of the matrix polynomial $Q(\lambda)$ in
\eqref{eq:Qpolinminbaslin} and those of any of its strong block minimal bases pencil linearizations.
\begin{theorem} {\rm \label{thm:indicesminbaseslin}\cite[Theorem 3.7]{canonical}}
Let $C(\lambda)$ be a strong block minimal bases pencil as in \eqref{BMBP}, let $N_1(\lambda)$ be a minimal basis dual to $K_1 (\lambda)$, let $N_2(\lambda)$ be a minimal basis dual to $K_2 (\lambda)$, and let $Q(\lambda)$ be the matrix polynomial defined in \eqref{eq:Qpolinminbaslin}. 
Then the following hold:
\begin{enumerate}
\item[\rm (a)] If $0 \leq \epsilon_1 \leq \epsilon_2 \leq \cdots \leq \epsilon_s$ are the right minimal indices of $Q(\lambda)$, then
\[
\epsilon_1 + \deg(N_1(\lambda)) \leq \epsilon_2 + \deg(N_1(\lambda)) \leq \cdots \leq \epsilon_s +  \deg(N_1(\lambda))
\]
are the right minimal indices of $C(\lambda)$.
\item[\rm (b)] If $0 \leq \eta_1 \leq \eta_2 \leq \cdots \leq \eta_r$ are the left minimal indices of $Q(\lambda)$, then
\[
\eta_1 + \deg(N_2(\lambda)) \leq \eta_2 + \deg(N_2(\lambda)) \leq \cdots \leq \eta_r +  \deg(N_2(\lambda))
\]
are the left minimal indices of $C(\lambda)$.
\end{enumerate}
\end{theorem}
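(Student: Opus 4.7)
The plan is to establish a degree-preserving bijection between minimal bases of $\mathcal{N}_r(Q)$ and $\mathcal{N}_r(C)$, with a uniform shift of $\deg(N_1(\lambda))$ on the degrees, and then deduce (b) by applying (a) to the block-transpose of $C(\lambda)$. The key tools are the duality relations $K_1 N_1^T=0$ and $K_2 N_2^T=0$, Theorem \ref{minimal-basis}, and the predictable degree property of minimal bases.

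For the lift, given $x(\lambda)\in\mathcal{N}_r(Q)$, I would set $u(\lambda):=M(\lambda)N_1(\lambda)^T x(\lambda)$; then $N_2(\lambda)u(\lambda)=Q(\lambda)x(\lambda)=0$, so $u(\lambda)\in\mathcal{N}_r(N_2)$. Since $K_2(\lambda)$ and $N_2(\lambda)$ are dual minimal bases, a standard Forney-type argument produces a polynomial vector $y(\lambda)$ with $u(\lambda)=-K_2(\lambda)^T y(\lambda)$. Then
\[
z(\lambda):=\begin{bmatrix} N_1(\lambda)^T x(\lambda) \\ y(\lambda) \end{bmatrix}
\]
satisfies $C(\lambda)z(\lambda)=0$, using $K_1 N_1^T=0$ on the bottom block-row. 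Conversely, any $z\in\mathcal{N}_r(C)$ partitioned as $z=\begin{bmatrix}u\\v\end{bmatrix}$ compatibly with $C$ must satisfy $K_1 u=0$, hence $u=N_1(\lambda)^T x(\lambda)$ for a polynomial $x$; the top block-row of $Cz=0$ then forces $Qx=0$, so $x\in\mathcal{N}_r(Q)$.

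Next, given a minimal basis $x_1,\ldots,x_s$ of $\mathcal{N}_r(Q)$ with degrees $\epsilon_1\leq\cdots\leq\epsilon_s$, I would lift it to $z_1,\ldots,z_s$ by the construction above. Since $C(\lambda)$ is a \emph{strong} block minimal bases pencil, all row degrees of $N_1$ equal the common value $\deg(N_1)$, and the predictable degree property (applicable because $N_1$ is a minimal basis, so Theorem \ref{minimal-basis} supplies a full-rank highest row degree coefficient matrix) yields $\deg(N_1^T x_i)=\epsilon_i+\deg(N_1)$. The uniform row-degree hypothesis on $K_2$ and its dual then forces $\deg(y_i)\leq\epsilon_i+\deg(N_1)-1$, so $\deg(z_i)=\epsilon_i+\deg(N_1)$. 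The $z_i$ are linearly independent because their top blocks $N_1^T x_i$ are, and they span $\mathcal{N}_r(C)$ by the converse part of the bijection. Minimality of $\{z_i\}$ follows from the predictable degree property applied in reverse: a polynomial combination of the $z_i$ projects on its top block to the same polynomial combination of the $N_1^T x_i$, so any non-predictable degree drop would violate minimality of $\{x_i\}$. Part (b) follows by applying (a) to $C(\lambda)^{\mathcal{B}}$, since block-transposition exchanges the roles of $(K_1,N_1)$ and $(K_2,N_2)$ and swaps the left and right null spaces.

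The main obstacle is the careful degree accounting for the auxiliary vector $y_i$ and the verification that $\{z_i\}$ is minimal rather than merely a basis; the strong block minimal bases hypothesis, which imposes uniformity of row degrees on $K_1$, $K_2$ and their dual minimal bases, is precisely the ingredient that makes these degree estimates match up cleanly.
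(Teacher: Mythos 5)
The paper does not reprove this theorem; it quotes it from \cite{canonical} (Theorem~3.7 there). Your proposal reconstructs what is, in essence, the argument behind that result: lift $x\in\mathcal{N}_r(Q)$ to $z=\left[\begin{smallmatrix}N_1(\lambda)^Tx\\ y\end{smallmatrix}\right]\in\mathcal{N}_r(C)$, show the lift is a bijection that shifts degrees uniformly by $\deg(N_1(\lambda))$, and get part (b) by transposition. The existence of a polynomial $y$ (via irreducibility of the minimal basis $K_2$), the injectivity and surjectivity of the lift, and the equality $\deg(z_i)=\epsilon_i+\deg(N_1(\lambda))$ are all argued soundly. (One immaterial slip: the predictable degree property of $K_2$ gives $\deg(y_i)\le\epsilon_i+\deg(N_1(\lambda))$, not $\epsilon_i+\deg(N_1(\lambda))-1$; this does not change $\deg(z_i)$.)

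There is, however, a real gap in the minimality step. Your argument shows only that $\{z_i\}$ inherits the predictable degree property from $\{N_1(\lambda)^Tx_i\}$, i.e.\ that the highest column degree coefficient matrix of $Z(\lambda)=\begin{bmatrix}z_1&\cdots&z_s\end{bmatrix}$ has full column rank. By Theorem~\ref{minimal-basis} that is necessary but not sufficient for minimality: one must also check that $Z(\lambda_0)$ has full column rank for every $\lambda_0\in\overline{\mathbb{F}}$. A polynomial basis can satisfy the predictable degree property and still fail to be a minimal basis (e.g.\ $\{\lambda e_1\}$ for $\mathrm{span}(e_1)$). In your construction the missing check is easy to supply---if $Z(\lambda_0)c=0$, the top block gives $N_1(\lambda_0)^TX(\lambda_0)c=0$, and since $N_1(\lambda_0)$ has full row rank and $X=\begin{bmatrix}x_1&\cdots&x_s\end{bmatrix}$ is itself a minimal basis, $c=0$---but without it the proof is incomplete. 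A further small point: for part~(b) you should pass to the ordinary transpose $C(\lambda)^T=\left[\begin{smallmatrix}M(\lambda)^T & K_1(\lambda)^T\\ K_2(\lambda) & 0\end{smallmatrix}\right]$, not the block-transpose $C(\lambda)^{\mathcal{B}}$; the latter is only defined for square blocks, whereas the ordinary transpose works in full generality and swaps $(K_1,N_1)$ with $(K_2,N_2)$ exactly as needed.
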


\subsection{Block Kronecker pencils}

Next, we recall the subfamily of block minimal bases pencils called block Kronecker pencils, which was introduced in \cite{canonical} and  has proven to be fruitful in providing a simple block-structure characterization of Fiedler pencils up to permutation of rows and columns. 
\begin{definition} \label{def:Kronecker_pencils}{\rm \cite[Definition 5.2]{canonical}}
 Let $L_s (\lambda)$ be the matrix pencil defined in \eqref{eq:Ls} and let $M(\lambda)$ be an arbitrary pencil. 
Then any matrix pencil of the form
\begin{equation}
  \label{eq:linearization_general}
  \begin{array}{cl}
  C(\lambda)=
  \left[
    \begin{array}{c|c}
      M(\lambda) &L_{q}(\lambda)^{T}\otimes I_{m}\\\hline
      L_{p}(\lambda)\otimes I_{n}&0
      \end{array}
    \right]&
    \begin{array}{l}
      \left. \vphantom{L_{\mu}^{T}(\lambda)\otimes I_{m}} \right\} {\scriptstyle (q+1)m}\\
      \left. \vphantom{L_{\epsilon}(\lambda)\otimes I_{n}}\right\} {\scriptstyle p n}
    \end{array}\\
    \hphantom{C(\lambda)=}
    \begin{array}{cc}
      \underbrace{\hphantom{L_{\epsilon}(\lambda)\otimes I_{n}}}_{(p+1)n}&\underbrace{\hphantom{L_{\mu}^{T}(\lambda)\otimes I_{m}}}_{q m}
    \end{array}
  \end{array}
  \>,
\end{equation}
is called a $(p,n,q,m)$-block Kronecker pencil or, simply, a {\em block Kronecker pencil}.
\end{definition}

Fiedler pencils are, modulo permutations, block Kronecker pencils (we recall this result in Theorem \ref{FP-col-row}).
In Section \ref{sec:GF_as_block_Kron_pen}, we will show that this result is also true for  the most important subfamily of generalized Fiedler pencils (i.e., proper generalized Fiedler pencils).
However, we will see that this family is not large enough to include the other families of Fiedler-like pencils (Fiedler pencils with repetition and generalized Fiedler pencils with repetition).
 To provide a block minimal bases pencils framework large enough to include all families of Fiedler-like pencils is the goal of the following section, where we introduce the family of \emph{extended block Kronecker pencils}.
 These pencils are block
minimal bases pencils under some generic nonsingularity conditions.

\subsection{Extended block Kronecker pencils and the AS condition}\label{sec:Lambda-dual_pencils}
Here we introduce a new  family of pencils named \emph{extended block Kronecker pencils}, such that most of its members are strong block minimal bases pencils.
As we will show, this family of pencils provides a unified approach to all Fielder-like pencils (Fiedler pencils, generalized Fiedler pencils, Fiedler pencils with repetition and generalized Fiedler pencils with repetition) in the sense that every Fiedler-like pencil is, up to permutations, an extended block
Kronecker pencil. 
 This family was simultaneously introduced in \cite[Definition 1, Theorem 4]{Philip2016}. In this section we point out the connections between the theory that we develop here and the results that were obtained independently in \cite{Philip2016}.

\begin{definition}\label{def:Lambda-dual-pencils}
Let $K_1(\lambda)$ and $K_2(\lambda)$ be, respectively, a $(p,n)$-wing pencil and a $(q,m)$-wing pencil (recall Definition \ref{def:wing_pencil}), and let $M(\lambda)$ be an arbitrary pencil.
A matrix pencil
\begin{equation}\label{LDP}
  \begin{array}{cl}
  C(\lambda)=
  \left[
    \begin{array}{c|c}
      M(\lambda) &K_2(\lambda)^T\\\hline
      K_1(\lambda)&0
      \end{array}
    \right]&
    \begin{array}{l}
      \left. \vphantom{K_2(\lambda)^T} \right\} {\scriptstyle (q+1)m}\\
      \left. \vphantom{K_1(\lambda)}\right\} {\scriptstyle p n}
    \end{array}\\
    \hphantom{C(\lambda)=}
    \begin{array}{cc}
      \underbrace{\hphantom{M(\lambda)}}_{(p+1)n}&\underbrace{\hphantom{K_2(\lambda)^Taa}}_{q m}
    \end{array}
  \end{array}
  \>,
\end{equation}
is called an extended \emph{$(p,n,q,m)$-block Kronecker pencil}, or, simply, an \emph{extended block Kronecker pencil}.
Moreover, the block $M(\lambda)$ is called the \emph{body} of $C(\lambda)$.  
\end{definition}

\begin{remark}
In Definition \ref{def:Lambda-dual-pencils}, we allow the border cases $p=0$ and $q=0$, i.e., pencils of the form 
\[
C(\lambda)=
\left[\begin{array}{c|c}
M(\lambda) & K(\lambda)^T 
\end{array}\right] \quad \mbox{or} \quad 
C(\lambda)=
\left[\begin{array}{c}
M(\lambda) \\ \hline
K(\lambda)
\end{array}\right],
\]
where $K(\lambda)$ is a wing pencil, are also considered extended block Kronecker pencils.
\end{remark}

\begin{remark}
In view of Remark \ref{rem:block_structure} we notice that the extended block Kronecker pencil \eqref{LDP} can be factorized as
\begin{equation}\label{eq:extendedBKP}
\begin{bmatrix}
I_{(q+1)m} & 0 \\ 0 & A
\end{bmatrix}
\left[\begin{array}{c|c}
\lambda M_1+M_0 & L_q(\lambda)^T\otimes I_m \\ \hline
L_p(\lambda)\otimes I_n & 0
\end{array}\right]
\begin{bmatrix}
I_{(p+1)n} & 0 \\ 0 & B
\end{bmatrix},
\end{equation}
for some matrices $A\in\mathbb{F}^{np\times np}$ and $B\in\mathbb{F}^{mq\times mq}$.
Taking $A=I_{np}$ and $B=I_{mq}$ in \eqref{eq:extendedBKP} the extended block Kronecker pencil reduces to a block Kronecker pencil. 
Thus, we obtain that the family of $(p,n,q,m)$-block Kronecker pencils is included in the family of extended $(p,n,q,m)$-block Kronecker pencils.
 The factorization (\ref{eq:extendedBKP}) is equivalent to (18) in \cite{Philip2016}.
\end{remark}


\begin{remark}\label{rem:different_partitions}
When $n=m$,  a pencil may be partitioned in more than one way as an extended $(p,n,q,n)$-block Kronecker pencil.
For example, let $A_0,A_1,A_2,A_3$ be arbitrary $n\times n$ matrices.
 The pencil 
$$C(\lambda)= \left[ \begin{array}{ccc} \lambda A_3+A_2 & A_1 & A_0 \\ A_1 & -\lambda A_1+A_0 & -\lambda A_0 \\ A_0 & -\lambda A_0 & 0 \end{array} \right]$$
can be seen as an extended $(1,n,1,n)$-block Kronecker pencil since it can be partitioned as follows
$$\left[ \begin{array}{cc|c} \lambda A_3+A_2 & A_1 & A_0 \\ A_1 & -\lambda A_1+A_0 & -\lambda A_0 \\ \hline A_0 &- \lambda A_0 & 0 \end{array} \right].$$
But notice that we could also partition $C(\lambda)$ in the next two alternative ways
$$\left[ \begin{array}{c|cc} \lambda A_3+A_2 & A_1 & A_0 \\ A_1 & -\lambda A_1+A_0 & -\lambda A_0 \\ A_0 &- \lambda A_0 & 0 \end{array} \right], \quad \left[ \begin{array}{ccc} \lambda A_3+A_2 & A_1 & A_0 \\ \hline A_1 &- \lambda A_1+A_0 &- \lambda A_0 \\ A_0 &- \lambda A_0 & 0 \end{array} \right].$$
Thus, the pencil $C(\lambda)$ can also be seen as an extended $(0,n,2,n)$-block Kronecker pencil or as an extended $(2,n,0,n)$-block Kronecker pencil.
Some consequences of this ambiguity are studied in Theorem \ref{thm:different_partitions} and Remark \ref{rem:only_for_regular}. 
 The  phenomenon presented in this example is called Superpartition Principle in \cite{Philip2016}.
\end{remark}

As an immediate corollary of Theorems \ref{linearization} and \ref{thm:indicesminbaseslin}, we obtain that, under some generic nonsingularity conditions, extended block Kronecker pencils are strong linearizations of some matrix polynomials, and that the minimal indices of those polynomials and those of the pencils are related by uniform shifts. 
 Part (a) of the following theorem corresponds with Theorem 6 in \cite{Philip2016}.
\begin{theorem}\label{thm:linearization_Lambda}
Let $C(\lambda)$ be an extended block Kronecker pencil as in \eqref{eq:extendedBKP},  with body  $M(\lambda)$. 
Let  
\begin{equation}\label{eq:Qpoli_Lambda}
Q(\lambda):= (\Lambda_q(\lambda)\otimes I_m)M(\lambda)(\Lambda_p(\lambda)^T\otimes I_n)\in\mathbb{F}[\lambda]^{m\times n},
\end{equation}
viewed as a matrix polynomial with grade $p+q+1$.
If $A$ and $B$ are nonsingular, then the following statements hold:
\begin{enumerate}
\item[\rm (a)] The pencil $C(\lambda)$ is a strong linearization of $Q(\lambda)$.
\item[\rm (b1)] If $0\leq \epsilon_1\leq \epsilon_2\leq \cdots \leq \epsilon_s$ are the right minimal indices of $Q(\lambda)$, then
\[
\epsilon_1+p\leq \epsilon_2+p\leq \cdots \leq \epsilon_s+p
\]
are the right minimal indices of $C(\lambda)$.
\item[\rm (b2)] If $0\leq \eta_1\leq \eta_2\leq \cdots \leq \eta_r$ are the left minimal indices of $Q(\lambda)$, then
\[
\eta_1+q\leq \eta_2+q\leq \cdots \leq \eta_r+q
\]
are the left minimal indices of $C(\lambda)$.
\end{enumerate}
\end{theorem}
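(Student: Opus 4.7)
The plan is to deduce the theorem from the two main results already established for strong block minimal bases pencils, namely Theorem \ref{linearization} and Theorem \ref{thm:indicesminbaseslin}. To do this, I need to show that when $A$ and $B$ are nonsingular, the extended block Kronecker pencil $C(\lambda)$ is in fact a strong block minimal bases pencil, with dual minimal bases that I can identify explicitly.

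First, I would perform the multiplication in the factorization \eqref{eq:extendedBKP} to rewrite
\[
C(\lambda) = \left[\begin{array}{c|c} M(\lambda) & (L_q(\lambda)^T\otimes I_m)\,B \\ \hline A\,(L_p(\lambda)\otimes I_n) & 0 \end{array}\right],
\]
so that the two off-diagonal blocks are $K_1(\lambda) := A(L_p(\lambda)\otimes I_n)$ and $K_2(\lambda)^T := (L_q(\lambda)^T\otimes I_m)B$, i.e.\ $K_2(\lambda) = B^T(L_q(\lambda)\otimes I_m)$. Since $A$ and $B$ are nonsingular, Theorem \ref{thm:charac_dual-pencils} applies to both $K_1(\lambda)$ and $K_2(\lambda)$, so $K_1(\lambda)$ is a minimal basis dual to $\Lambda_p(\lambda)\otimes I_n$ and $K_2(\lambda)$ is a minimal basis dual to $\Lambda_q(\lambda)\otimes I_m$. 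This already shows that $C(\lambda)$ is a block minimal bases pencil.

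To upgrade to \emph{strong} block minimal bases pencil, I would check the four row-degree conditions in the definition. The row degrees of $K_1(\lambda) = A(L_p(\lambda)\otimes I_n)$ are all equal to $1$: each row is a nonzero linear combination (because the rows of $A$ are nonzero) of the rows of $L_p(\lambda)\otimes I_n$, whose highest row-degree coefficient matrix $F\otimes I_n$ has full row rank, so no degree drop can occur. The same argument works for $K_2(\lambda)$. The dual minimal bases $\Lambda_p(\lambda)\otimes I_n$ and $\Lambda_q(\lambda)\otimes I_m$ have all row degrees equal to $p$ and $q$ respectively, which are constant. Hence $C(\lambda)$ is a strong block minimal bases pencil.

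At this point part (a) follows immediately from Theorem \ref{linearization}(ii) applied with $N_1(\lambda)=\Lambda_p(\lambda)\otimes I_n$ and $N_2(\lambda)=\Lambda_q(\lambda)\otimes I_m$: the polynomial $Q(\lambda)=N_2(\lambda)M(\lambda)N_1(\lambda)^T$ coincides with \eqref{eq:Qpoli_Lambda} and the predicted grade is $1+\deg(N_1(\lambda))+\deg(N_2(\lambda))=1+p+q$, matching the statement. Parts (b1) and (b2) then follow from Theorem \ref{thm:indicesminbaseslin}, since the shifts are $\deg(N_1(\lambda))=p$ for the right minimal indices and $\deg(N_2(\lambda))=q$ for the left minimal indices. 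I do not anticipate any real obstacle here; the only mildly delicate point is verifying that left-multiplication by the constant nonsingular matrices $A$ and $B^T$ does not drop row degrees, and that is handled by the full row rank of the highest row-degree coefficient matrix of $L_s(\lambda)\otimes I_n$, exactly as in the proof of Theorem \ref{thm:charac_dual-pencils}.
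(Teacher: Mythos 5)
Your proof is correct and follows essentially the same route the paper intends: the paper presents Theorem \ref{thm:linearization_Lambda} as an immediate corollary of Theorems \ref{linearization} and \ref{thm:indicesminbaseslin}, and you simply fill in the verification (via Theorem \ref{thm:charac_dual-pencils}) that the nonsingularity of $A$ and $B$ makes $K_1(\lambda)$ and $K_2(\lambda)$ minimal bases dual to $\Lambda_p(\lambda)\otimes I_n$ and $\Lambda_q(\lambda)\otimes I_m$ with the constant row-degree properties required of a strong block minimal bases pencil. The details you supply are exactly the ones the paper treats as immediate.
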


Theorem \ref{thm:linearization_Lambda} shows that most of the extended block Kronecker pencils as in (\ref{eq:extendedBKP}) are strong linearizations of  the associated  matrix polynomial $Q(\lambda)$ since the nonsingularity conditions on $A$ and $B$ are generic in $\mathbb{F}^{np\times np}\times \mathbb{F}^{qm\times qm}$.

Now, we address the inverse problem, that is, given a matrix polynomial $P(\lambda)$, how to construct extended block Kronecker pencils that are strong linearizations of $P(\lambda)$.
With this goal in mind, we introduce next some useful concepts. 
\begin{definition}\label{AS-def}
Let  $M(\lambda)=\lambda M_1 + M_0\in \mathbb{F}[\lambda]^{(q+1)m \times (p+1)n}$ be a matrix pencil and set $k:=p+q+1$. 
Let us denote by $[M_0]_{ij}$ and $[M_1]_{ij}$ the $(i,j)$th block-entries of $M_0$ and $M_1$, respectively, when $M_1$ and $M_0$ are partitioned as $(q+1)\times (p+1)$ block-matrices with blocks of size $m\times n$. 
We call the \emph{antidiagonal sum of $M(\lambda)$ related to $s\in \{ 0:k\}$} the matrix
\[
\mathrm{AS}(M, s):=\sum_{i+j=k+2-s} [M_1]_{ij} +\sum_{i+j=k+1-s} [M_0]_{ij}.
\]
Additionally, given a matrix polynomial $P(\lambda)=\sum_{i=0}^k A_i \lambda^i \in\mathbb{F}[\lambda]^{m\times n}$, we say that $M(\lambda)$ satisfies the \emph{antidiagonal sum condition (AS condition)} for $P(\lambda)$ if 
\begin{equation}\label{AS-condition}
\mathrm{AS}(M, s)= A_s, \quad s=0:k.
\end{equation}
\end{definition}


The main result of this section is Theorem \ref{thm:Lambda-dual-pencil-linearization}, where we give sufficient conditions on the body  of an extended block Kronecker pencil as in \eqref{eq:extendedBKP} with $A,B$ nonsingular, to be a strong linearization of a given matrix polynomial. 
 It is not hard to see that this result also follows from Theorems 4 and 6 in \cite{Philip2016}.
\begin{theorem}\label{thm:Lambda-dual-pencil-linearization}
Let $P(\lambda)=\sum_{i=0}^k A_i\lambda^i\in\mathbb{F}[\lambda]^{m\times n}$, and let $C(\lambda)$ be an extended $(p,n,q,m)$-block Kronecker pencil as in \eqref{eq:extendedBKP} with $p+q+1=k$ and $A,B$ nonsingular.
If the body $M(\lambda)\in\mathbb{F}[\lambda]^{(q+1)m\times (p+1)n}$ of $C(\lambda)$ satisfies the AS condition for $P(\lambda)$, then $C(\lambda)$ is a strong linearization of $P(\lambda)$, the right minimal indices of $C(\lambda)$ are those of $P(\lambda)$ shifted by $p$, and the left minimal indices of $C(\lambda)$ are those of $P(\lambda)$ shifted by $q$.
\end{theorem}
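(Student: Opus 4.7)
The plan is to derive this theorem as a direct corollary of Theorem \ref{thm:linearization_Lambda} by showing that the AS condition is exactly the condition that forces the associated polynomial $Q(\lambda)$ in \eqref{eq:Qpoli_Lambda} to coincide with the given $P(\lambda)$. Since $C(\lambda)$ is an extended $(p,n,q,m)$-block Kronecker pencil with $A,B$ nonsingular, Theorem \ref{thm:linearization_Lambda}(a) immediately tells us that $C(\lambda)$ is a strong linearization of
\[
Q(\lambda) = (\Lambda_q(\lambda)\otimes I_m)\,M(\lambda)\,(\Lambda_p(\lambda)^T\otimes I_n)
\]
regarded as a matrix polynomial of grade $p+q+1=k$, and parts (b1)--(b2) give the shift statements about the right and left minimal indices of $C(\lambda)$ versus those of $Q(\lambda)$. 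So the whole proof reduces to verifying the identity $Q(\lambda)=P(\lambda)$.

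To verify $Q(\lambda)=P(\lambda)$, first I would write $M(\lambda)=\lambda M_1+M_0$ and partition $M_0,M_1$ as $(q+1)\times(p+1)$ block matrices with blocks $[M_s]_{ij}\in\mathbb{F}^{m\times n}$. Using that $\Lambda_q(\lambda)\otimes I_m$ has block entries $\lambda^{q+1-i}I_m$ and $\Lambda_p(\lambda)^T\otimes I_n$ has block entries $\lambda^{p+1-j}I_n$, a direct expansion gives
\[
Q(\lambda)=\sum_{i,j}\lambda^{k+1-i-j}[M_0]_{ij}+\sum_{i,j}\lambda^{k+2-i-j}[M_1]_{ij}.
\]
Collecting the coefficient of $\lambda^s$ on the right-hand side yields precisely $\sum_{i+j=k+1-s}[M_0]_{ij}+\sum_{i+j=k+2-s}[M_1]_{ij}=\mathrm{AS}(M,s)$ by Definition \ref{AS-def}. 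Therefore the coefficient of $\lambda^s$ in $Q(\lambda)$ equals $\mathrm{AS}(M,s)$ for every $s\in\{0:k\}$, and the AS condition \eqref{AS-condition} tells us this equals $A_s$. Hence $Q(\lambda)=\sum_{s=0}^{k}A_s\lambda^s=P(\lambda)$, as desired.

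With $Q(\lambda)=P(\lambda)$ in hand, Theorem \ref{thm:linearization_Lambda}(a) becomes the statement that $C(\lambda)$ is a strong linearization of $P(\lambda)$, and parts (b1)--(b2) yield the claimed shifts of the right and left minimal indices by $p$ and $q$, respectively. There is no real obstacle here; the only substantive step is the bookkeeping in the antidiagonal-sum computation, and that is essentially forced by the definition of $\mathrm{AS}(M,s)$, which was designed so that the matching between coefficients of $Q(\lambda)$ and coefficients of $P(\lambda)$ is exact. Consequently the proof is a short verification once Theorem \ref{thm:linearization_Lambda} is invoked.
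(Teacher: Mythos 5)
Your proof is correct and follows essentially the same route as the paper's: invoke Theorem \ref{thm:linearization_Lambda}, show that the AS condition forces $Q(\lambda)=P(\lambda)$, and read off the minimal-index shifts from parts (b1)--(b2). The paper's proof dismisses the verification $Q(\lambda)=P(\lambda)$ as ``simple algebraic manipulations,'' and you have simply spelled those out; the expansion and coefficient-matching are accurate.
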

\begin{proof}
By Theorem \ref{thm:linearization_Lambda}, the pencil $C(\lambda)$ is a strong linearization of \eqref{eq:Qpoli_Lambda}.
Some simple algebraic manipulations show that the AS condition implies $Q(\lambda)=P(\lambda)$. 
Moreover, the results for the minimal indices just follow from parts (b1) and (b2) in Theorem \ref{thm:linearization_Lambda}.
\end{proof}

Our main focus in future sections will be on block-pencils whose blocks are square.
In this case, surprisingly, given an extended block Kronecker pencil $C(\lambda)$ whose body satisfies the AS condition for a given matrix polynomial $P(\lambda)\in\mathbb{F}[\lambda]^{n\times n}$, the whole pencil $C(\lambda)$ satisfies also the AS condition but for a shifted version of $P(\lambda)$, as we show next.  
\begin{theorem}\label{CAS}
Let $P(\lambda)= \sum_{i=0}^k A_i \lambda^i \in\mathbb{F}[\lambda]^{n\times n}$  and let $C(\lambda)\in \mathbb{F}[\lambda]^{kn \times kn}$ be an extended block Kronecker pencil as in \eqref{LDP} with $k=p+q+1$ whose body satisfies the AS condition for $P(\lambda)$. 
Then, the pencil $C(\lambda)$ satisfies the AS condition for $\lambda^{k-1} P(\lambda)$, that is, 
$$\mathrm{AS}(C, s)=  B_s,\quad s=0:2k-1,$$
where $B_s=0$, for $s=0:k-2$, and $B_s = A_{s-k+1} $, for $s= k-1:2k-1$. 
\end{theorem}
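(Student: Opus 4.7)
The plan is to decompose the antidiagonal sum $\mathrm{AS}(C,s)$, with $C(\lambda)$ viewed as a $k\times k$ block-pencil with $n\times n$ blocks, into contributions from the four regions of the partition \eqref{LDP}: the body $M(\lambda)$ (upper-left), the pencil $K_2(\lambda)^T$ (upper-right), the pencil $K_1(\lambda)$ (lower-left), and the zero block (lower-right). The zero region contributes $0$ trivially, so the task reduces to analyzing the three remaining contributions.

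For the body, $M(\lambda)$ occupies block-rows $1\!:\!q+1$ and block-columns $1\!:\!p+1$. Comparing the $C$-antidiagonal index (for $C$ the relevant level in Definition \ref{AS-def} is $k_C=2k-1$, so one sums over $i+j=2k+1-s$ and $i+j=2k-s$) with the $M$-intrinsic antidiagonal index (for $M$ the level is $k_M=k$, so one sums over $i+j=k+2-s_M$ and $i+j=k+1-s_M$) and using $p+q+1=k$, one gets the correspondence $s_M=s-k+1$. The range constraint $0\leq s_M\leq k$ is equivalent to $s\in\{k-1,\ldots,2k-1\}$; in this range the body contribution equals $\mathrm{AS}(M,s-k+1)=A_{s-k+1}$ by hypothesis, while for $s\leq k-2$ the relevant $C$-antidiagonals lie strictly outside the region $i+j\leq (q+1)+(p+1)=k+1$, so the body contributes $0$. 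In both ranges this already matches the desired $B_s$.

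The heart of the proof is to show that each of $K_1(\lambda)$ and $K_2(\lambda)^T$ contributes $0$ to every $\mathrm{AS}(C,s)$. By Remark \ref{rem:block_structure} applied to the defining identity $K_1(\lambda)(\Lambda_p(\lambda)^T\otimes I_n)=0$, every $(i,j)$-block of $K_1(\lambda)$ has the wing form $-\lambda B_{i,j-1}+B_{i,j}$ with the conventions $B_{i,0}=B_{i,p+1}=0$. Shifting the row-index by $q+1$ to pass from $C$- to $K_1$-coordinates, the $K_1$ contribution to $\mathrm{AS}(C,s)$ takes the shape
\[
\sum_{i+j=a+1}(-B_{i,j-1})+\sum_{i+j=a}B_{i,j}
\]
for an appropriate level $a=a(s)$, and the change of variable $j\mapsto j-1$ in the first sum converts it into the negative of the second, so the two cancel identically. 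An analogous telescoping for $K_2(\lambda)^T$, read off the wing structure of $K_2(\lambda)$ via its transpose, produces the same cancellation; only the direction and magnitude of the coordinate shift (now on the column side, by $p+1$) differ.

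Summing the three contributions yields $\mathrm{AS}(C,s)=B_s$ for every $s=0,\ldots,2k-1$, which is precisely the AS condition for $\lambda^{k-1}P(\lambda)$, as required. The main obstacle is not conceptual but organizational: keeping consistent the translations among the global $C$-antidiagonal index $s$, the intrinsic levels inside $M$, $K_1$, and $K_2(\lambda)^T$, and the shift $k-1$, so that the wing-pencil telescoping cancels cleanly and the body's AS-hypothesis applied at level $s-k+1$ produces the correct coefficient $A_{s-k+1}$.
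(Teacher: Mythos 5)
Your proof is correct and follows essentially the same strategy as the paper's: decompose $\mathrm{AS}(C,s)$ by linearity into the contributions from the body, $K_1$, $K_2(\lambda)^T$, and the zero block, show the wing contributions vanish via the block structure in Remark~\ref{rem:block_structure}, and translate the antidiagonal index to reduce to the AS condition of $M(\lambda)$. The paper's proof states the wing-pencil cancellation tersely as an immediate consequence of Remark~\ref{rem:block_structure}, whereas you spell out the telescoping ($j\mapsto j-1$) and the index-shift bookkeeping explicitly; this is a more detailed exposition of the same argument rather than a different one.
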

\begin{proof}
Let us write $C(\lambda)=\widetilde{M}(\lambda)+\widetilde{K}_1(\lambda)+\widetilde{K}_2(\lambda)^T$, where
\[
 \widetilde{M}(\lambda):= \left[ \begin{array}{cc} M(\lambda) & 0 \\ 0 & 0 \end{array} \right], \quad 
\widetilde{K}_2(\lambda)^T: = \left[ \begin{array}{cc} 0 & K_2(\lambda)^T \\ 0 & 0 \end{array} \right], \quad 
\widetilde{K}_1(\lambda) := \left[ \begin{array}{cc} 0 & 0 \\ K_1(\lambda) & 0 \end{array} \right], 
\]
Then, due to the linearity of the antidiagonal sum $\mathrm{AS}(\cdot,s)$, we get
\begin{align*}
\mathrm{AS}(C, s) & = \mathrm{AS}(\widetilde{M}, s) + \mathrm{AS}(\widetilde{K}_2^T, s)+ AS(\widetilde{K}_1, s), \quad s=0:2k-1.
\end{align*}
Additionally, because of the  block-structure of the wing pencils explained in Remark \ref{rem:block_structure}, it is clear that   $\mathrm{AS}(\widetilde{K}_1, s)=0$ and $ \mathrm{AS}(\widetilde{K}_2^T, s)=0$, for all $s$. 
Thus, $\mathrm{AS}(C, s)= \mathrm{AS}(\widetilde{M}, s)$ and the result follows taking into account that $M(\lambda)$ satisfies the AS condition for $P(\lambda)$.
\end{proof}

We pointed out in Remark \ref{rem:different_partitions} that a given pencil  $L(\lambda)$ may be partitioned in more that one way as an extended $(p,n,q,n)$-block Kronecker  pencil.
The next result shows that the AS condition of the body is preserved under different representations of $L(\lambda)$ as an extended block Kronecker pencil.  This result corresponds to Theorem 22 in \cite{Philip2016}.
\begin{theorem}\label{thm:different_partitions}
Let $P(\lambda)=\sum_{i=0}^k A_i \lambda^i \in\mathbb{F}[\lambda]^{n\times n}$  and let $C(\lambda)\in \mathbb{F}[\lambda]^{kn \times kn}$ be an extended block Kronecker pencil as in \eqref{LDP} with $k=p+q+1$.
If the body of $C(\lambda)$ satisfies the AS condition for $P(\lambda)$, then the body of any other partition of $C(\lambda)$ as an extended block Kronecker pencil satisfies the AS condition for $P(\lambda)$ as well.
\end{theorem}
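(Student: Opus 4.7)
The plan is to reduce the claim to Theorem \ref{CAS} by observing that the antidiagonal sums $\mathrm{AS}(C, s)$ are an intrinsic property of the pencil $C(\lambda)$ and do not depend on which partition as an extended block Kronecker pencil we use. Suppose that $C(\lambda)$ admits two partitions as extended block Kronecker pencils, with bodies $M(\lambda)$ and $M'(\lambda)$ of block sizes $(q+1)\times(p+1)$ and $(q'+1)\times (p'+1)$ respectively. Since $C(\lambda) \in \mathbb{F}[\lambda]^{kn\times kn}$, both partitions satisfy $p+q+1=p'+q'+1=k$.

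The first step is to apply Theorem \ref{CAS} to the given partition: the hypothesis that $M(\lambda)$ satisfies the AS condition for $P(\lambda)$ yields $\mathrm{AS}(C, s)=B_s$ for all $s=0{:}2k-1$, with $B_s=0$ for $s\le k-2$ and $B_s=A_{s-k+1}$ for $s\ge k-1$. The second step is to rerun the decomposition in the proof of Theorem \ref{CAS} on the second partition: writing $C(\lambda)=\widetilde{M'}(\lambda)+\widetilde{K'_1}(\lambda)+\widetilde{K'_2}(\lambda)^T$ and invoking the structural description of wing pencils from Remark \ref{rem:block_structure}, each wing contributes zero to every antidiagonal sum. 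Hence $\mathrm{AS}(C, s)=\mathrm{AS}(\widetilde{M'}, s)$ for every $s$. This step is the conceptual heart of the argument: it shows that the full antidiagonal profile of $C(\lambda)$ depends only on its body, regardless of which partition we choose.

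The third step is a routine but careful index translation between the antidiagonals of $\widetilde{M'}$ (viewed as a $k\times k$ block pencil inside $C(\lambda)$) and those of $M'$ (viewed as a $(q'+1)\times(p'+1)$ block pencil with its own AS indexing at level $k=p'+q'+1$). The nonzero block entries of $\widetilde{M'}$ occupy exactly positions $(i,j)$ with $1\le i\le q'+1$ and $1\le j\le p'+1$, so $i+j\le k+1$. Comparing the antidiagonal levels $i+j=2k+1-s$ and $i+j=2k-s$ used for $\widetilde{M'}$ with the levels $i+j=k+2-s'$ and $i+j=k+1-s'$ used for $M'$, the substitution $s'=s-k+1$ yields $\mathrm{AS}(\widetilde{M'}, s)=0$ for $s\le k-2$ and $\mathrm{AS}(\widetilde{M'}, s)=\mathrm{AS}(M', s-k+1)$ for $s=k-1{:}2k-1$.

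Combining the three steps, $\mathrm{AS}(M', s-k+1)=\mathrm{AS}(\widetilde{M'}, s)=\mathrm{AS}(C, s)=B_s=A_{s-k+1}$ for $s=k-1{:}2k-1$, i.e.\ $\mathrm{AS}(M', s')=A_{s'}$ for $s'=0{:}k$, which is exactly the AS condition for $P(\lambda)$ applied to $M'$. The only real obstacle is the index bookkeeping in the third step; everything else is a direct reuse of the proof of Theorem \ref{CAS} together with the partition-independent nature of the antidiagonal sums.
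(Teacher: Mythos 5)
Your proposal is correct and follows essentially the same route as the paper's proof: both reduce the claim to Theorem \ref{CAS} by using the linearity of the antidiagonal sum together with the wing-pencil structure (Remark \ref{rem:block_structure}) to show that $\mathrm{AS}(C,s)=\mathrm{AS}(\widetilde{M}',s)$ for the second partition, and then read off the AS condition for the second body. The paper is terser and leaves the index translation $\mathrm{AS}(\widetilde{M}',s)=\mathrm{AS}(M',s-k+1)$ implicit, whereas you carry out this bookkeeping explicitly, but the underlying argument is the same.
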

\begin{proof}
The result is an immediate  consequence of Theorem \ref{CAS}.
Assume that the extended block Kronecker pencil $C(\lambda)$ can be partitioned as in \eqref{LDP} and  also  as follows
\[
C(\lambda)=\left[ \begin{array}{c|c} \widehat{M}(\lambda) & \widehat{K}_2(\lambda)^T \\ \hline \widehat{K}_1(\lambda) & \phantom{\Big{(}} 0 \phantom{\Big{(}} \end{array} \right],
\]
where  $\widehat{K}_1(\lambda)\in\mathbb{F}[\lambda]^{n\widehat{p}\times n(\widehat{p}+1)}$ and $\widehat{K}_2(\lambda)\in\mathbb{F}[\lambda]^{n\widehat{q}\times n(\widehat{q}+1)}$ are wing pencils, $\widehat{M}(\lambda)\in\mathbb{F}[\lambda]^{n(\widehat{q}+1)\times n(\widehat{p}+1)}$, and $\widehat{p}+\widehat{q}+1=k$.
Let
\[
\widetilde{M}(\lambda) := \left[ \begin{array}{cc} \widehat{M}(\lambda) & 0 \\ 0 & 0 \end{array} \right]. 
\]
By Theorem \ref{CAS} and the block-structure of wing pencils explained in Remark  \ref{rem:block_structure}, we obtain
\begin{equation}\label{ASCs}
\mathrm{AS}(C, s)= \mathrm{AS}(\widetilde{M}, s)= \left\{ \begin{array}{cc} 0,  & \textrm{for $s=0:k-2$}\\ 
A_{s+1-k}, & \textrm{for $s=k-1:2k-1$,} \end{array}\right.
\end{equation}
which shows that $\widehat{M}(\lambda)$ satisfies the AS condition for $P(\lambda)$.
 \end{proof}
\begin{remark}\label{rem:only_for_regular}
A pencil $C(\lambda)$ may be partitioned as an extended block Kronecker  pencil that is a strong linearization of  a given  matrix polynomial $P(\lambda)$ in more than one way only if  $P(\lambda)$ is regular.
Otherwise, the application of Theorem \ref{thm:Lambda-dual-pencil-linearization} to the different partitions would give contradictory values for the minimal indices of $C(\lambda)$.
\end{remark}

\begin{example}
Consider the pencil $C(\lambda)$ in Remark \ref{rem:different_partitions}. 
Note that $C(\lambda)$ is an extended block Kronecker pencil with a body satisfying the AS condition for  $P(\lambda)= \sum_{i=0}^3 A_i \lambda^i$ when $C(\lambda)$ is partitioned in any of the three ways presented in that remark. 
 Furthermore, note that the wing pencils in any of the three partitions of $C(\lambda)$ as an extended block Kronecker pencil are minimal bases if and only if $A_0$ is nonsingular.
Therefore, they are strong linearizations of $P(\lambda)$ provided that its trailing coefficient $A_0$ is nonsingular.
In this case, notice that $P(\lambda)$ is regular, in accordance with Remark \ref{rem:only_for_regular}.
\end{example}

The last result in this section is Lemma \ref{lemm:perturbation_body}.
 It shows that, if a pencil satisfying the AS condition for some  matrix polynomial $P(\lambda)$  is perturbed by the addition of a generalized wing pencil, the AS condition is preserved.
\begin{lemma}\label{lemm:perturbation_body}
Let $P(\lambda)=\sum_{i=0}^k A_i\lambda^i \in\mathbb{F}[\lambda]^{n\times n}$, let $p+q+1=k$, let $M(\lambda)\in\mathbb{F}[\lambda]^{(q+1)n\times (p+1)n}$ be a matrix pencil satisfying the AS condition for $P(\lambda)$, and let $B(\lambda)\in\mathbb{F}[\lambda]^{(q+1)n\times (p+1)n}$ be a generalized wing pencil. 
Then, the matrix pencil $M(\lambda)+B(\lambda)$ satisfies the AS condition for $P(\lambda)$.
\end{lemma}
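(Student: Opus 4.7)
The plan is to exploit the linearity of the antidiagonal sum operator $\mathrm{AS}(\cdot,s)$ together with the observation that \emph{every} generalized wing pencil has vanishing antidiagonal sums for every index $s$. Once these two facts are in place, the conclusion is immediate.

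First I would write
\[
\mathrm{AS}(M+B,s) = \mathrm{AS}(M,s) + \mathrm{AS}(B,s), \qquad s = 0:k,
\]
which follows at once from the definition of $\mathrm{AS}(\cdot,s)$, since the operator is just a sum of certain block entries of the pencil coefficients. By assumption, $\mathrm{AS}(M,s) = A_s$, so the proof reduces to showing that $\mathrm{AS}(B,s) = 0$ for all $s \in \{0:k\}$.

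The core step is therefore the vanishing $\mathrm{AS}(B,s) = 0$, which I would derive directly from the explicit block structure \eqref{gen-wing} of a generalized wing pencil. Writing $B(\lambda) = \lambda B_1 + B_0$ and reading off the block entries from \eqref{gen-wing}, one has $[B_1]_{i,1} = 0$, $[B_1]_{i,j} = -B_{i,j-1}$ for $2 \leq j \leq p+1$, $[B_0]_{i,j} = B_{i,j}$ for $1 \leq j \leq p$, and $[B_0]_{i,p+1} = 0$. Substituting these into
\[
\mathrm{AS}(B,s) = \sum_{i+j = k+2-s} [B_1]_{ij} + \sum_{i+j = k+1-s} [B_0]_{ij},
\]
and shifting the summation index $j \mapsto j-1$ in the first sum, I get a perfect cancellation between the two antidiagonal sums, giving $\mathrm{AS}(B,s) = 0$. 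This is essentially the same cancellation already used (without proof) inside the proof of Theorem~\ref{CAS} to conclude $\mathrm{AS}(\widetilde K_1, s) = \mathrm{AS}(\widetilde K_2^T, s) = 0$, so I would simply invoke (or briefly reprove) that observation in the slightly more general setting where the block is not necessarily square and is not padded by zeros.

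The proof contains no genuine obstacle; the only mild nuisance is keeping the bookkeeping of indices straight when verifying the telescoping cancellation, especially handling the border columns $j=1$ and $j=p+1$ correctly. Once that is done, the lemma follows in one line by combining linearity of $\mathrm{AS}(\cdot,s)$, the hypothesis $\mathrm{AS}(M,s) = A_s$, and $\mathrm{AS}(B,s) = 0$.
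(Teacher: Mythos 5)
Your proposal is correct and follows essentially the same route as the paper's proof: both rely on linearity of $\mathrm{AS}(\cdot,s)$ together with the vanishing $\mathrm{AS}(B,s)=0$ for a generalized wing pencil, the latter coming from the block structure in Remark~\ref{rem:block_structure}; the only difference is that you spell out the telescoping cancellation behind $\mathrm{AS}(B,s)=0$ while the paper merely cites the remark.
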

\begin{proof}
Let $\widetilde{M}(\lambda):=M(\lambda)+B(\lambda)$. The block-structure of generalized wing pencils explained in Remark \ref{rem:block_structure}
 implies that $\mathrm{AS}(B,s)=0$, for $s=0:k$.
Therefore, by the linearity of the antidiagonal sum, we get $\mathrm{AS}(\widetilde{M},s)=\mathrm{AS}(M,s)+\mathrm{AS}(B,s)=\mathrm{AS}(M,s)=A_s$.
Thus, the pencil $\widetilde{M}(\lambda)$ satisfies the AS condition for $P(\lambda)$.
\end{proof}


\subsection{Illustrations and informal statements of the main results for Fiedler-like pencils}\label{sec:informal}

The goal of this section is twofold: first, to provide some examples that illustrate the main results of this paper, and, second, to state informally these results in Theorems \ref{thm:informal1} and \ref{thm:informal2}.
For the impatient reader, the precise statements of Theorems \ref{thm:informal1} and \ref{thm:informal2} can be found in Theorems \ref{FP-col-row}, \ref{thm:main:GFP} and  \ref{thm:main_GFPR}, however they require the index tuple notation that will be introduced in Section \ref{sec:index_tuples}.

Let us start by considering the Fiedler  pencil
\[
F_\mathbf{q}(\lambda)=\begin{bmatrix}
\lambda A_6+A_5 & -I_n & 0 & 0 & 0 & 0 \\
A_4 & \lambda I_n & A_3 & -I_n & 0 & 0 \\
-I_n & 0 & \lambda I_n & 0 & 0 & 0 \\
0 & 0 & A_2 & \lambda I_n & A_1 & -I_n \\
0 & 0 & -I_n & 0 & \lambda I_n & 0 \\
0 & 0 & 0 & 0 & A_0 & \lambda I_n
\end{bmatrix}
\]
 associated with a matrix polynomial $P(\lambda)=\sum_{i=0}^6 A_i\lambda^i\in\mathbb{F}[\lambda]^{n\times n}$ (this Fiedler pencil will be formally introduced in Example \ref{ex:FP}).
 Then, it is not difficult to see that we can transform $F_{\mathbf{q}}(\lambda)$ into a block Kronecker pencil via block-row and block- column permutations, given by  permutation matrices $\Pi_{\boldsymbol\ell_1}^n$ and $\Pi_{\mathbf{r}_1}^n$, to obtain
 \[
 (\Pi_{\boldsymbol\ell_1}^n)^\mathcal{B}F_\mathbf{q}(\lambda)\Pi_{\mathbf{r}_1}^n=
\left[\begin{array}{ccc|ccc}
\lambda A_6+A_5 & 0 & 0 & -I_n & 0 & 0 \\
A_4 & A_3 & 0 & \lambda I_n & -I_n & 0 \\
0 & A_2 & A_1 & 0 & \lambda I_n & -I_n \\
0 & 0 & A_0 & 0 & 0 & \lambda I_n \\ \hline
-I_n & \lambda I_n & 0 & 0 & 0 & 0 \\
0 & -I_n & \lambda I_n & 0 & 0 & 0
\end{array}\right]
 \]
 By Theorem \ref{thm:linearization_Lambda}, the above block Kronecker pencil is a strong linearization of
 \[
 \begin{bmatrix}
 \lambda^3 I_n & \lambda^2 I_n & \lambda I_n & I_n
 \end{bmatrix}
 \begin{bmatrix}
 \lambda A_6+A_5 & 0 & 0 \\
 A_4 & A_3 & 0 \\
 0 & A_2 & A_1 \\
 0 & 0 & A_0 
 \end{bmatrix}
 \begin{bmatrix}
 \lambda^2 I_n \\ \lambda I_n \\ I_n
 \end{bmatrix} = P(\lambda).
 \]
 Thus, this is an example of a Fiedler pencil associated with a matrix polynomial $P(\lambda)$ which is block-permutationally equivalent to a block Kronecker pencil. Note that  $ (\Pi_{\boldsymbol\ell_1}^n)^\mathcal{B}F_\mathbf{q}(\lambda)\Pi_{\mathbf{r}_1}^n $ is a strong linearization of $P(\lambda)$.
This result has been shown to be true for any Fiedler pencil \cite[Theorem 4.5]{canonical} (see, also, Theorem \ref{FP-col-row}).
Let us consider, now, the following proper generalized Fiedler pencil
\[
K_{\mathbf{q},\mathbf{z}}(\lambda)=
\begin{bmatrix}
-I_n & \lambda A_6 & 0 & 0 & 0 & 0 \\
\lambda I_n & \lambda A_5+A_4 & -I_n & 0 & 0 & 0 \\
0 & A_3 & \lambda I_n & A_2 & -I_n & 0 \\
0 & -I_n & 0 & \lambda I_n & 0 & 0 \\
0 & 0 & 0 & -I_n & 0 & \lambda I_n \\
0 & 0 & 0 & 0 & \lambda I_n & \lambda A_1+A_0
\end{bmatrix},
\]
associated also with the polynomial $P(\lambda)$ (this pencil will be formally introduced in Example \ref{ex:GFP}).
We can transform $K_{\mathbf{q},\mathbf{m}}(\lambda)$ into a block Kronecker pencil via block-row and block-column permutations, given by block-permutation matrices $\Pi_{\boldsymbol\ell_2}^n$ and $\Pi_{\mathbf{r}_2}^n$, to obtain
\[
(\Pi_{\boldsymbol\ell_2}^n)^\mathcal{B} K_{\mathbf{q},\mathbf{z}}(\lambda) \Pi_{\mathbf{r}_2}^n =
\left[\begin{array}{ccc|ccc}
\lambda A_6 & 0 & 0 & -I_n & 0 & 0 \\
\lambda A_5+A_4 & 0 & 0 & \lambda I_n & -I_n & 0 \\
A_3 & A_2 & 0 & 0 & \lambda I_n & -I_n \\
0 & 0 & \lambda A_1+A_0 & 0 & 0 & \lambda I_n \\ \hline 
-I_n & \lambda I_n & 0 & 0 & 0 & 0 \\
0 & -I_n & \lambda I_n & 0 & 0 & 0
\end{array}\right],
\]
which, by Theorem \ref{thm:linearization_Lambda}, is a strong linearization of
 \[
 \begin{bmatrix}
 \lambda^3 I_n & \lambda^2 I_n & \lambda I_n & I_n
 \end{bmatrix}
 \begin{bmatrix}
 \lambda A_6 & 0 & 0 \\
 \lambda A_5+A_4 & 0 & 0 \\
 A_3 & A_2 & 0 \\
 0 & 0 & \lambda A_1+A_0 
 \end{bmatrix}
 \begin{bmatrix}
 \lambda^2 I_n \\ \lambda I_n \\ I_n
 \end{bmatrix} = P(\lambda).
 \]
  Therefore, this is an example of a proper generalized Fiedler pencil associated with a matrix polynomial $P(\lambda)$ which is block-permutationally equivalent to a block Kronecker pencil that is a strong linearization of $P(\lambda)$.
  This result turns out to be true for all (proper) generalized Fiedler pencils (see Section \ref{sec:GF_as_block_Kron_pen}).
  Since Fiedler pencils are particular examples of proper generalized Fiedler pencils, we can state the following theorem.
\begin{theorem}\label{thm:informal1}
Let $L(\lambda)$ be a proper generalized Fiedler pencil associated with $P(\lambda)$.
 Then, up to permutations, $L(\lambda)$ is a block Kronecker pencil which is a strong linearization of $P(\lambda)$.
\end{theorem}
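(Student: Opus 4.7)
The approach is to directly generalize the second illustration in Section \ref{sec:informal} (and the analogue for plain Fiedler pencils recorded in Theorem \ref{FP-col-row}) to every proper generalized Fiedler pencil. A proper generalized Fiedler pencil $L(\lambda)$ associated with $P(\lambda)=\sum_{i=0}^k A_i\lambda^i$ is built (via the index-tuple formalism to be introduced in Section \ref{sec:index_tuples}) from a product of Fiedler-type factors $M_i$ and $M_{-i}^{-1}$. Each block entry of $L(\lambda)$ is either a \emph{Kronecker block} ($\pm I_n$ or $\pm\lambda I_n$) or a \emph{coefficient block} (of the form $A_j$, $\lambda A_j$, or $\lambda A_j+A_{j-1}$). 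The idea is to use block-row and block-column permutations to collect all Kronecker blocks into an $L$-shaped border and all coefficient blocks into the remaining northwest rectangle, then appeal to the machinery of Section \ref{sec:block_minimal_bases_pencils}.

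First I would read off from the index tuple defining $L(\lambda)$ the set of block-row positions and block-column positions where coefficient blocks occur; call these numbers $q+1$ and $p+1$, so that $p+q+1=k$. Using this data I would define block-permutation matrices $\Pi_{\boldsymbol\ell}^{n}$ and $\Pi_{\mathbf{r}}^{n}$ that move the coefficient block-rows to the top and the coefficient block-columns to the left, and show that
\[
(\Pi_{\boldsymbol\ell}^{n})^{\mathcal{B}}\,L(\lambda)\,\Pi_{\mathbf{r}}^{n}=
\left[\begin{array}{c|c}
M(\lambda) & L_q(\lambda)^{T}\otimes I_n\\\hline
L_p(\lambda)\otimes I_n & 0
\end{array}\right].
\]
Showing that the off-diagonal blocks are exactly $L_p(\lambda)\otimes I_n$ and $L_q(\lambda)^{T}\otimes I_n$, and not merely arbitrary wing pencils, is where the particular definition of \emph{proper} generalized Fiedler pencils is used: the alternating $-I_n/\lambda I_n$ pattern produced by the factors $M_i$, $M_{-i}^{-1}$ is exactly that of $L_s(\lambda)\otimes I_n$ once the rows and columns are rearranged in the correct order.

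Next, I would check that the body $M(\lambda)$ satisfies the AS condition of Definition \ref{AS-def} for $P(\lambda)$. This is an antidiagonal bookkeeping argument: each coefficient $A_s$ is contributed by exactly one Fiedler factor, and, after the permutation, that contribution lands on exactly one antidiagonal of $M(\lambda)$ in the slot prescribed by \eqref{AS-condition}. Invoking Theorem \ref{thm:Lambda-dual-pencil-linearization} would then give at once that the permuted pencil is a strong linearization of $P(\lambda)$ (and even a \emph{block Kronecker} pencil, in the sense of Definition \ref{def:Kronecker_pencils}, because the off-diagonal blocks are literally $L_p\otimes I_n$ and $L_q^{T}\otimes I_n$ rather than dual minimal bases of more general shape), with right and left minimal indices of $P(\lambda)$ shifted by $p$ and $q$ respectively. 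Since $(\Pi_{\boldsymbol\ell}^{n})^{\mathcal{B}}(\cdot)\Pi_{\mathbf{r}}^{n}$ is a strict equivalence of pencils, $L(\lambda)$ itself is a strong linearization of $P(\lambda)$, which is the stated conclusion.

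The main obstacle is combinatorial rather than conceptual: one must describe the permutations $\Pi_{\boldsymbol\ell}^{n}$, $\Pi_{\mathbf{r}}^{n}$ in closed form for every admissible index tuple defining a proper generalized Fiedler pencil, and verify that the resulting $L$-shaped border really has the shift pattern of $L_p\otimes I_n$ and $L_q^{T}\otimes I_n$ (this is much tighter than the generic wing pencil structure described in Remark \ref{rem:block_structure}). The technical lemmas promised for Section 5 about pencils block-permutationally equivalent to extended block Kronecker pencils should carry the weight of this bookkeeping, reducing the argument for an arbitrary proper generalized Fiedler pencil to a finite number of model situations that can be handled uniformly.
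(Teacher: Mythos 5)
Your high-level target and the closing move are right: collect the coefficient blocks into a northwest rectangle, put the Kronecker wings on the border, verify the AS condition, and invoke Theorem~\ref{thm:Lambda-dual-pencil-linearization}. But the step you label the ``main obstacle'' is a genuine gap, not a combinatorial afterthought: you have no mechanism that actually produces the permutations for an arbitrary proper GFP and establishes that the wings carry exactly the $L_p(\lambda)\otimes I_n$ and $L_q(\lambda)^T\otimes I_n$ shift pattern rather than some generic wing pencil. Deferring that to ``the Section~5 lemmas'' does not close the gap, because the relevant Section~5 result (Lemma~\ref{lem:left_multiplication2}) is not a direct characterization of any pencil --- it is tailored to a single inductive step that your proposal never invokes.

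The paper's proof of Theorem~\ref{thm:main:GFP} proceeds by reduction and induction, not by a direct closed-form description. Stage one (Theorem~\ref{thm:main:GFP_aux}) handles the special case $\mathbf{z}=(-k:-h-1)$: Lemma~\ref{lem:splitting} cuts the GFP along the pivot block-row and block-column into a Fiedler pencil $F(\lambda)$ for the truncated polynomial $\sum_{i=0}^{h+1}A_i\lambda^i$ and a simple $\mathbf{z}$-block; Theorem~\ref{FP-col-row} is applied to $F(\lambda)$ and Theorem~\ref{thm:minimal_bases_concatenation} reassembles the wings into a single $L_p$/$L_q$ border. Stage two reaches an arbitrary proper GFP by successively left-multiplying such a base case by $M_{-i_s}^P,\dots,M_{-i_1}^P$ (Remark~\ref{rem:properGFP}) and inducting on $s$ via Lemma~\ref{lem:left_multiplication2}, which shows that each such left-multiplication preserves block-permutational equivalence to a block Kronecker pencil and tracks exactly how the wing/body rows shift. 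Your proposal omits the splitting lemma, the reduction to the Fiedler case, and the induction on the negative-index factors; it is precisely this inductive structure that makes the combinatorics tractable, and without it the claim that the border is literally $L_p\otimes I_n$ and $L_q^T\otimes I_n$ (rather than merely a wing pencil) remains unproved.
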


Let us consider, finally, the following generalized Fiedler pencil with repetition (in particular, it is a Fiedler pencil with repetition)
\[
F_P(\lambda) =
\begin{bmatrix}
0 & 0 & 0 & 0 & -A_6 & \lambda A_6 \\
0 & 0 & 0 & -A_6 & \lambda A_6-A_5 & \lambda A_5 \\
0 & 0 & -A_6 & \lambda A_6-A_5 & \lambda A_5-A_4 & \lambda A_4\\
0 & -A_6 & \lambda A_6-A_5 & \lambda A_5 -A_4 & \lambda A_4-A_3 & \lambda A_3 \\
-A_6 & \lambda A_6-A_5 & \lambda A_5-A_4 & \lambda A_4-A_3 & \lambda A_3-A_2 & \lambda A_2 \\
\lambda A_6 & \lambda A_5 & \lambda A_4 & \lambda A_3 & \lambda A_2 & \lambda A_1+A_0
\end{bmatrix}
\]
associated with the matrix polynomial $P(\lambda)=\sum_{i=0}^6 A_i\lambda^i\in\mathbb{F}[\lambda]^{n\times n}$.
It is clear that there are  no block-row and block-column permutations that transform the above pencil into a block Kronecker pencil.
However, it is not difficult to show that there exist block-permutation matrices, denoted by $\Pi_{\boldsymbol\ell_3}^n$ and $\Pi_{\mathbf{r}_3}^n$, such that 
\begin{align*}
(\Pi_{\boldsymbol\ell_3}^n)^\mathcal{B} L_P(\lambda)\Pi_{\mathbf{r}_3}^n &=
\left[\begin{array}{ccc|ccc}
\lambda A_6-A_5 & \lambda A_5-A_4 & \lambda A_4 & -A_6 & 0 & 0 \\
\lambda A_5-A_4 & \lambda A_4-A_3 & \lambda A_3 & \lambda A_6-A_5 & - A_6 & 0 \\
\lambda A_4-A_3 & \lambda A_3-A_2 & \lambda A_2 & \lambda A_5-A_4 & \lambda A_6-A_5 & -A_6 \\
\lambda A_3 & \lambda A_2 & \lambda A_1+A_0 & \lambda A_4 & \lambda A_5 & \lambda A_6 \\ \hline
-A_6 & \lambda A_6-A_5 & \lambda A_5 & 0 & 0 & 0 \\
0 & -\lambda A_6 & \lambda A_6 & 0 & 0 & 0
\end{array}\right]\\
&=:\left[\begin{array}{c|c}
M(\lambda) & K_2(\lambda)^T \\ \hline
K_1(\lambda) & 0 
\end{array}\right].
\end{align*}
Additionally, notice that 
\[
K_1(\lambda) = 
\begin{bmatrix}
-A_6 & \lambda A_6-A_5 & \lambda A_5 \\ 0 & -A_6 & \lambda A_6
\end{bmatrix}=
\begin{bmatrix}
A_6 & A_5 \\ 0 & A_6
\end{bmatrix}
\begin{bmatrix}
-I_n & \lambda I_n & 0 \\ 0 & -I_n & \lambda I_n
\end{bmatrix}
\]
and 
\[
K_2(\lambda)=
\begin{bmatrix}
-A_6 & 0 & 0 \\
\lambda A_6-A_5 & - A_6 & 0 \\
\lambda A_5-A_4 & \lambda A_6-A_5 & -A_6 \\
\lambda A_4 & \lambda A_5 & \lambda A_6
\end{bmatrix}^T=
\left(
\begin{bmatrix}
-I_n & 0 & 0 \\
\lambda I_n & -I_n & 0 \\
0 & \lambda I_n & -I_n \\
0 & 0 & \lambda I_n
\end{bmatrix}
\begin{bmatrix}
A_6 & 0 & 0 \\
A_5 & A_6 & 0 \\
A_4 & A_5 & A_6
\end{bmatrix}
\right)^T.
\]
Therefore, if the leading coefficient $A_6$ of $P(\lambda)$ is nonsingular, by Theorem \ref{thm:charac_dual-pencils}, the pencils $K_1(\lambda)$ and $K_2(\lambda)$ are both minimal  basis wing pencils.
Thus, the pencil $(\Pi_{\boldsymbol\ell_3}^n)^\mathcal{B} L_P(\lambda)\Pi_{\mathbf{r}_3}^n$ is an extended block Kronecker pencil which, by Theorem \ref{thm:linearization_Lambda}, is a strong linearization of 
\[
\begin{bmatrix}
 \lambda^3 I_n & \lambda^2 I_n & \lambda I_n & I_n
 \end{bmatrix}
\begin{bmatrix}
\lambda A_6-A_5 & \lambda A_5-A_4 & \lambda A_4 \\
\lambda A_5-A_4 & \lambda A_4-A_3 & \lambda A_3 \\
\lambda A_4-A_3 & \lambda A_3-A_2 & \lambda A_2 \\
\lambda A_3 & \lambda A_2 & \lambda A_1+A_0
\end{bmatrix}
 \begin{bmatrix}
 \lambda^2 I_n \\ \lambda I_n \\ I_n
 \end{bmatrix}
=P(\lambda).
\]
We conclude that  the pencil $L_P(\lambda)$ is not permutationally equivalent to a block Kronecker pencil, and it is only permutationally equivalent to a block minimal bases pencil under some nonsingularity conditions ($A_6$ nonsingular). 
In the latter case, though, $L_P(\lambda)$ is an extended block Kronecker pencil, which is a strong linearization of $P(\lambda)$.
This result can be generalized as follows. 

\begin{theorem}\label{thm:informal2}
Let $L(\lambda)$ be a generalized Fiedler pencil with repetition associated with $P(\lambda)$.
Then, up to permutations, $L(\lambda)$ is an extended block Kronecker pencil which, under generic nonsingular conditions,  is a strong linearization of $P(\lambda)$.
\end{theorem}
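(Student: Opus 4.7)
The plan is to reduce the theorem to Theorem \ref{thm:Lambda-dual-pencil-linearization} by exhibiting explicit block permutations that bring an arbitrary generalized Fiedler pencil with repetition (GFPR) into extended block Kronecker form with a body that satisfies the AS condition for $P(\lambda)$. The roadmap follows almost verbatim the computation displayed for $L_P(\lambda)$ in Section \ref{sec:informal}, but carried out uniformly using the index tuple machinery to be introduced in Section \ref{sec:index_tuples}.

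First, using the index tuple notation, I would identify in any GFPR $L(\lambda)$ two disjoint collections of block rows and two disjoint collections of block columns that together carry the purely wing-like data coming from the elementary Fiedler factors $M_i$ and $M_{-i}$ (i.e., the rows and columns whose only nonzero blocks are the $\pm I_n$ and $\pm\lambda I_n$ entries produced by those factors). These correspond to the ``heads'' and ``tails'' of the admissible tuples defining the GFPR. By permuting these block rows to the bottom and these block columns to the right with block-permutation matrices $\Pi_{\boldsymbol{\ell}}^n$ and $\Pi_{\mathbf{r}}^n$, we obtain
\[
(\Pi_{\boldsymbol{\ell}}^n)^{\mathcal{B}} L(\lambda)\, \Pi_{\mathbf{r}}^n = \left[\begin{array}{c|c} M(\lambda) & K_2(\lambda)^T \\ \hline K_1(\lambda) & 0 \end{array}\right],
\]
where $K_1(\lambda)$ and $K_2(\lambda)$ are wing pencils in the sense of Definition \ref{def:wing_pencil}.

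Next, to check that $K_1(\lambda)$ and $K_2(\lambda)$ are minimal basis wing pencils under generic hypotheses, I would use Theorem \ref{thm:charac_dual-pencils}: a direct inspection shows that each can be factored as a block-triangular constant matrix (whose diagonal blocks are the coefficients $A_i$ associated with the relevant wing positions, most notably the leading coefficient $A_k$) times $L_s(\lambda)\otimes I_n$, exactly as in the displayed factorizations of $K_1$ and $K_2$ in Section \ref{sec:informal}. The ``generic nonsingularity conditions'' in the theorem are precisely the nonsingularity of those constant factors; in the pure Fiedler-pencil-with-repetition case this reduces to $A_k$ nonsingular, while in general it also involves the $A_i$'s attached to the inversions $M_{-i}$ occurring in $L(\lambda)$. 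To show that the body $M(\lambda)$ satisfies the AS condition for $P(\lambda)$, I would invoke Theorem \ref{CAS} in reverse: one verifies directly from the explicit GFPR construction that $L(\lambda)$ itself satisfies the AS condition for $\lambda^{k-1} P(\lambda)$, and this property is invariant under block-row and block-column permutations; since the wing blocks $K_1(\lambda)$ and $K_2(\lambda)^T$ contribute zero to every antidiagonal sum by Remark \ref{rem:block_structure}, all the antidiagonal data is concentrated in $M(\lambda)$, which therefore satisfies the AS condition for $P(\lambda)$. Theorem \ref{thm:Lambda-dual-pencil-linearization} then delivers the strong linearization conclusion.

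The main obstacle will be the first step: writing down the permutations $\Pi_{\boldsymbol{\ell}}$ and $\Pi_{\mathbf{r}}$ uniformly for every admissible index tuple and proving that no row or column is forced into two different positions. This is essentially a combinatorial bookkeeping problem that the index tuple machinery of Section \ref{sec:index_tuples} is designed to handle, but doing it in the GFPR setting is nontrivial because one must accommodate simultaneously the repetitions, the inverse factors $M_{-i}$, the central pencil block, and the commutation relations among the $M_j$'s. This combinatorial core is what occupies Sections \ref{sec:FP_as_minimal_bases_pencils}--\ref{sec:GFPR_as_minimal_bases_pencils} and the technical lemmas of Section 5, after which the minimality, AS, and linearization verifications sketched above are comparatively short.
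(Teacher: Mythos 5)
Your high-level plan — permute to an extended block Kronecker form, check the wing blocks are minimal bases via Theorem~\ref{thm:charac_dual-pencils}, verify the AS condition, and invoke Theorem~\ref{thm:Lambda-dual-pencil-linearization} — is the right endgame, and in that respect matches the paper's Theorem~\ref{thm:main_GFPR}. However, two of the steps you treat as manageable are where the real content lies, and one of them as you have formulated it is actually wrong.

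The more serious issue is the AS argument. You claim that the property ``$L(\lambda)$ satisfies the AS condition for $\lambda^{k-1}P(\lambda)$'' is invariant under block-row and block-column permutations. It is not: the antidiagonal sums $\mathrm{AS}(L,s)$ are defined by summing over block antidiagonals $i+j=\mathrm{const}$, and a generic block permutation scrambles these antidiagonals, so the sums change. (Theorem~\ref{thm:different_partitions} is about different \emph{partitions} of the same pencil as an extended block Kronecker pencil, not about permuting the pencil.) So even if you succeeded in verifying that $L(\lambda)$ itself satisfies AS for $\lambda^{k-1}P(\lambda)$ — which is itself not at all immediate from Definition~\ref{defGFPR} — that would not transfer through $\Pi_{\boldsymbol\ell}^{\mathcal B}\, L(\lambda)\,\Pi_{\mathbf r}$. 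The paper sidesteps this by never attempting a one-shot verification: instead, Lemma~\ref{lem:right_multiplication} and Lemma~\ref{lemm:perturbation_body} track the AS condition of the \emph{body of the permuted pencil} through each single multiplication by an elementary matrix $M_x(X)$, using the fact that the perturbation introduced is a generalized wing pencil and therefore AS-neutral. The AS condition is thus preserved inductively, not checked post hoc.

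The second issue is the one you flag yourself: identifying the wing rows/columns and the permutations uniformly for an arbitrary GFPR. The paper does not attack this combinatorial problem head-on. It first uses Lemma~\ref{lem:splitting} to cut $L_P(\lambda)$ into two smaller GFPR's, $F(\lambda)$ (for the nonnegative tuple data $\boldsymbol\ell_q,\mathbf q,\mathbf r_q$) and $G(\lambda)$ (for the negative tuple data $\boldsymbol\ell_z,\mathbf z,\mathbf r_z$), associated with polynomials $Q(\lambda)$ and $Z(\lambda)$ that overlap in the coefficient $A_h$. It then proves the result for $F(\lambda)$ by induction on the length of $\mathbf r_q$ (Theorem~\ref{right-canonical-1}, one right multiplication by $M_x(\cdot)$ at a time via Lemma~\ref{lem:right_multiplication}, with the $\boldsymbol\ell_q$ case handled by block-transposing), deduces the result for $G(\lambda)$ from that of $F(\lambda)$ by a reversal/sip-matrix conjugation (Theorem~\ref{left-canonical}, using \eqref{line3}--\eqref{line4}), and finally glues the two extended block Kronecker forms using Theorem~\ref{thm:minimal_bases_concatenation} and the linearity of the antidiagonal sum. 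This route never has to describe the permutations $\Pi_{\boldsymbol\ell},\Pi_{\mathbf r}$ explicitly or argue about ``heads'' and ``tails'' all at once; it builds them up factor by factor. To make your direct approach rigorous you would essentially have to reconstruct Proposition~\ref{prop: Type I}, Lemma~\ref{lem: column locations} and the full machinery of Section~5 anyway, and you would still need to replace the flawed AS argument with an inductive one.
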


The rest of the paper is dedicated to introduce the needed notation and auxiliary technical results to define all the families of  Fiedler-like pencils, to state in a precise way Theorems \ref{thm:informal1} and \ref{thm:informal2}, and to prove these results.

\section{Index tuples, elementary matrices and Fiedler-like pencils families} \label{sec:index_tuples}

In this section, we introduce the notation needed to define and to work in an effective way with the families of (square) Fiedler pencils, generalized Fiedler pencils, Fiedler pencils with repetition and generalized Fiedler pencils with repetition.
We also present some results that will be used to prove our main theorems.

\subsection{Index tuples, elementary matrices and matrix assignments}

The square Fiedler pencils as well as the generalized Fiedler pencils, Fiedler pencils with repetition and generalized Fiedler pencils with repetition were defined originally by expressing their matrix coefficients as products of the so-called \emph{elementary matrices}, whose definition we recall  next.
\begin{definition}{\rm \cite{GFPR}}
Let $k\geq 2$ be an integer and let $B$ be an arbitrary $n\times n$ matrix. 
We call \emph{elementary matrices} the  following  block-matrices partitioned into $k\times k$ blocks of size $n\times n$:
\begin{equation*}
\,M_{0}(B):=\left[
\begin{tabular}
[c]{c|c}%
$I_{(k-1)n}$ & $0$\\\hline
$0$ & $B$%
\end{tabular}
\ \right]  ,\quad M_{-k}(B):=\left[
\begin{tabular}
[c]{c|c}%
$B$ & $0$\\\hline
$0$ & $I_{(k-1)n}$%
\end{tabular}
\ \right]  , \label{m0-mk}%
\end{equation*}

\begin{equation}
M_{i}(B):=\left[
\begin{tabular}
[c]{c|cc|c}%
$I_{(k-i-1)n}$ & $0$ & $0$ & $0$\\\hline
$0$ & $B$ & $I_{n}$ & $0$\\
$0$ & $I_{n}$ & $0$ & $0$\\\hline
$0$ & $0$ & $0$ & $I_{(i-1)n}$%
\end{tabular}
\ \right]  ,\ \ \ i=1:k-1, \label{mis}%
\end{equation}
\[
M_{-i}(B):=\left[
\begin{tabular}
[c]{c|cc|c}%
$I_{(k-i-1)n}$ & $0$ & $0$ & $0$\\\hline
$0$ & $0$ & $I_{n}$ & $0$\\
$0$ & $I_{n}$ & $B$ & $0$\\\hline
$0$ & $0$ & $0$ & $I_{(i-1)n}$%
\end{tabular}\ \ \ \right] \ \ \ i=1:k-1, 
\]
 and
\[
M_{-0}(B)=M_0(B)^{-1} \quad \mbox{and} \quad M_{k}(B)=M_{-k}(B)^{-1}.
\]
Notice that  both $0$ and $-0$ are used with different meanings.
\end{definition}

\begin{remark}
We note that, for $i=1:k-1$, the matrices $M_i(B)$ and $M_{-i}(B)$ are nonsingular for any $B$. Moreover, $(M_i(B))^{-1}=M_{-i}(-B)$. 
On the other hand, the matrices $M_{0}(B)$ and $M_{-k}(B)$ are nonsingular if and only if $B$ is nonsingular.
\end{remark}

Given a  matrix polynomial $P(\lambda)=\sum_{i=0}^k A_i \lambda^i\in\mathbb{F}[\lambda]^{n\times n}$, we consider the following abbreviated notation:
$$M_i^P := M_i(-A_i), \quad i=0:k-1,$$
and
$$M_{-i}^P:=M_{-i}(A_i), \quad i=1:k.$$ 
When the polynomial $P(\lambda)$ is understood in the context, we will simply write $M_i$ and $M_{-i}$, instead of $M_i^P$ and $M_{-i}^P$ to simplify the notation.

\begin{remark}\label{commutativity}
It is easy to check that the commutativity relation
\begin{equation}
M_{i}(B_1)M_{j}(B_2)=M_{j}(B_2)M_{i}(B_1)\label{commut}%
\end{equation}
holds for any $n\times n$ matrices $B_1$ and $B_2$ if   $||i|-|j||\neq 1$ and  $|i|\neq |j|$. 
\end{remark}

As mentioned earlier, the matrix coefficients of  Fiedler-like pencils are defined as products  of elementary matrices. In order to describe these products and their properties it is convenient to use index tuples, which are introduced in the following definition.
\begin{definition}{\rm \cite[Definition 3.1]{GFPR}}
We call an \emph{index tuple}  a finite ordered sequence of integer numbers.
Each of these integers is called an \emph{index} of the tuple.
\end{definition}

If $\mathbf{t}=(t_1, \ldots, t_r)$ is an index tuple, we denote  $-\mathbf{t}:=(-t_1,\hdots,-t_r)$, and, when $a$ is an integer, we denote $a+\mathbf{t}:=(a+t_1, a+t_2, \ldots, a+t_r)$. 
We call the \emph{reversal index tuple} of $\mathbf{t}$  the index tuple $\rev(\mathbf{t}):=(t_r, \hdots, t_2,  t_1).$

When an index tuple $\mathbf{t}$ is of the form $(a:b)$ for some integers $a$ and $b$, we call $\mathbf{t}$ a \emph{string}.

Additionally, given index tuples $\mathbf{t}_1,\hdots,\mathbf{t}_s$,  we denote by $(\mathbf{t}_1,\hdots,\mathbf{t}_s)$ the index tuple obtained by concatenating the indices in the index tuples $\mathbf{t}_1,\hdots,\mathbf{t}_s$ in the indicated order.

Given an index tuple $\mathbf{t}=(t_1, t_2, \ldots, t_r)$, we call an $n\times n$ \emph{matrix assignment for $\mathbf{t}$}  any ordered collection  $\mathcal{X}=(X_1, X_2,\ldots, X_r)$ of arbitrary $n\times n$ matrices \cite[Definition 4.1]{GFPR}.  
If the size $n$ of the matrices is not important in the context, we simply say that $\mathcal{X}$ is a matrix assignment for $\mathbf{t}$.
In addition, if the indices of $\mathbf{t}$ are in $\{-k:k-1\}$, we  define
\begin{align}\label{prod-elem}
M_{\mathbf{t}}(\mathcal{X}) & := M_{t_1}(X_1) M_{t_2}(X_2) \cdots M_{t_r}(X_r).
\end{align}

Given a matrix polynomial $P(\lambda)\in\mathbb{F}[\lambda]^{n\times n}$, we say that $\mathcal{X}=(X_1, \ldots X_r)$ is the trivial matrix assignment for $\mathbf{t}=(t_1, \ldots, t_r)$ associated with $P(\lambda)$ if 
$M_{t_j}(X_j) = M_{t_j}^P$ for $j=1:r$. Moreover, we define 
\begin{equation}
M^P_{\mathbf{t}} := M_{t_1}^P M_{t_2}^P \cdots M_{t_r}^P.
\end{equation}
When $P(\lambda)$ is clear from the context, we just write $M_{\mathbf{t}}$ instead of $M^P_{\mathbf{t}}$.
If $\mathbf{t}$ is the empty tuple, then $M_{\mathbf{t}}^P$ and $M_{\mathbf{t}}(\mathcal{X})$  are defined to be the identity matrix $I_{kn}$. 

Given a sequence $ \mathcal{X}= (X_1, \ldots, X_r)$ of  matrices,  we  denote $\rev(\mathcal{X}):=(X_r,X_{r-1},\hdots,X_0)$.

If a  tuple $\mathbf{t}$ is expressed as a concatenation of tuples $\mathbf{t}_1,\hdots,\mathbf{t}_s$ (i.e., $\mathbf{t}=(\mathbf{t}_1,\hdots,\mathbf{t}_s)$), and a matrix assignment $\mathcal{X}$ for $\mathbf{t}$ as a concatenation of matrix assignments $\mathcal{X}_1,\hdots,\mathcal{X}_s$ (i.e., $\mathcal{X}=(\mathcal{X}_1,\hdots,\mathcal{X}_s)$), where the number of indices in $\mathbf{t}_i$ is equal to the number of matrices in $\mathcal{X}_i$, for $i=1:s$, we say that \emph{$\mathcal{X}_i$ is the matrix assignment for $\mathbf{t}_i$ induced by $\mathcal{X}$}.

The following simple lemma gives the block-structure of $M_{\mathbf{t}}(\mathcal{X})$ when $\mathbf{t}$ is a string consisting of nonnegative indices.
We omit its proof since the result can be obtained by a direct computation.
\begin{lemma}\label{lem: string product}
Let $\mathbf{t}=(a:b)$ be a string with indices from $\{0:k-1\}$ and let $\mathcal{X} = (X_a,\dots,X_b)$ be an $n\times n$ matrix assignment for $\mathbf{t}$. Then,  the $kn \times kn$ matrix $M_{\mathbf{t}}(\mathcal{X})$  is a block-diagonal matrix  of the form
\begin{equation}\label{eq:string product block1}
M_{\mathbf t}(\mathcal{X}) \coloneqq
I_{n(k-b-1)} \oplus \begin{bmatrix}
 X_b & I_n & \phantom{X} & \phantom{X} & \phantom{X} \\
 X_{b-1} & \phantom{X} & I_n & \phantom{X} & \phantom{X} \\
 \vdots & \phantom{X} & \phantom{X} & \ddots & \phantom{X} \\
 X_a & \phantom{X} & \phantom{X} & \phantom{X} & I_n \\
 I_n & \phantom{X} & \phantom{X} & \phantom{X} & \phantom{X}
\end{bmatrix} \oplus I_{n(a-1)}
\end{equation}
if $a\neq 0$; and of the form 
\begin{equation}\label{eq:string product block2}
M_{\mathbf t}(\mathcal{X}) \coloneqq
I_{n(k-b-1)} \oplus \begin{bmatrix}
 X_b & I_n & \phantom{X} & \phantom{X} & \phantom{X} \\
 X_{b-1} & \phantom{X} & I_n & \phantom{X} & \phantom{X} \\
 \vdots & \phantom{X} & \phantom{X} & \ddots & \phantom{X} \\
 X_1 & \phantom{X} & \phantom{X} & \phantom{X} & I_n \\
 X_0  & \phantom{X} & \phantom{X} & \phantom{X} & \phantom{X}
\end{bmatrix}
\end{equation}
if $a=0$. 
\end{lemma}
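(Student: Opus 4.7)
My plan is to prove the two block-diagonal formulas by induction on the length $\ell := b - a + 1$ of the string $\mathbf{t} = (a:b)$, treating the two cases $a = 0$ and $a \geq 1$ in parallel throughout.

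The base case $\ell = 1$ is immediate: when $b = a \geq 1$, the formula \eqref{eq:string product block1} is just the defining block form of $M_a(X_a)$ with its $2 \times 2$ central block $\left[\begin{smallmatrix} X_a & I_n \\ I_n & 0 \end{smallmatrix}\right]$; when $a = b = 0$, \eqref{eq:string product block2} becomes $I_{(k-1)n} \oplus X_0$, which is exactly $M_0(X_0)$.

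For the inductive step I would factor
\[
M_{\mathbf{t}}(\mathcal{X}) = M_a(X_a) \cdot M_{\mathbf{t}''}(\mathcal{X}''),
\]
where $\mathbf{t}'' := (a+1:b)$ and $\mathcal{X}''$ is the matrix assignment for $\mathbf{t}''$ induced by $\mathcal{X}$, and apply the inductive hypothesis to $M_{\mathbf{t}''}(\mathcal{X}'')$. Since $a + 1 \geq 1$, the hypothesis yields the $a \neq 0$ formula, so the nontrivial block rows and columns of $M_{\mathbf{t}''}(\mathcal{X}'')$ are exactly those indexed by $k-b, k-b+1, \ldots, k-a$, with its bottom-most nontrivial row equal to $(I_n, 0, \ldots, 0)$ relative to its middle block. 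On the other hand, for $a \geq 1$ the elementary matrix $M_a(X_a)$ is nontrivial only at block rows and columns $k-a, k-a+1$, while $M_0(X_0) = I_{(k-1)n} \oplus X_0$ is nontrivial only at block position $(k, k)$. In either case the two factors overlap in exactly one block row, so the product can be computed block-entry by block-entry: for $a \geq 1$, block rows $k-b, \ldots, k-a-1$ of $M_{\mathbf{t}''}(\mathcal{X}'')$ are left unchanged, its last nontrivial row $(I_n, 0, \ldots, 0)$ at position $k-a$ is sent to $(X_a, 0, \ldots, 0, I_n)$ in the enlarged $(b-a+2) \times (b-a+2)$ middle block, and a new bottom row $(I_n, 0, \ldots, 0)$ at position $k-a+1$ is appended, yielding exactly \eqref{eq:string product block1}. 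For $a = 0$, the factor $M_0(X_0)$ only modifies block row $k$, replacing the $I_n$ in block column $k-b$ by $X_0$ and producing exactly \eqref{eq:string product block2}.

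The only real obstacle is notational bookkeeping: one has to track carefully which block row and column of the $k \times k$ structure is indexed by which position in the middle block and verify that left-multiplication by $M_a(X_a)$ extends the nontrivial band of $M_{\mathbf{t}''}(\mathcal{X}'')$ upward by exactly one row and column, with the single overlapping row behaving as described above. Once this indexing is fixed, every block entry in the claimed formulas follows from a one-line computation, and no genuine difficulty remains.
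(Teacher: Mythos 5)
Your proof is correct. The paper itself omits the proof, saying only ``the result can be obtained by a direct computation,'' so there is no paper argument to compare against; your induction on the string length, peeling off the leftmost factor $M_a(X_a)$, is a clean and natural way to organize that direct computation. The bookkeeping you describe checks out: the nontrivial block rows/columns of $M_{\mathbf{t}''}(\mathcal{X}'')$ are $k-b,\dots,k-a$, the nontrivial rows/columns of $M_a(X_a)$ (for $a\geq 1$) are $k-a,k-a+1$, so they overlap in exactly one row; left-multiplying then leaves rows $k-b,\dots,k-a-1$ untouched, replaces the former bottom row $(I_n,0,\dots,0)$ at position $k-a$ by $(X_a,0,\dots,0,I_n)$, and appends a new row $(I_n,0,\dots,0)$ at position $k-a+1$, giving \eqref{eq:string product block1}; and when $a=0$, $M_0(X_0)$ only rescales block row $k$, turning the $I_n$ in column $k-b$ into $X_0$, giving \eqref{eq:string product block2}. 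One small thing worth stating explicitly in a final write-up is that $\mathbf{t}''=(a+1:b)$ always starts with an index $\geq 1$, so the inductive hypothesis is always invoked in the form \eqref{eq:string product block1}.
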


Notice that the matrix $M_{\mathbf t}(\mathcal{X})$ in \eqref{eq:string product block1}--\eqref{eq:string product block2}  is \emph{operation-free}, that is, it is a block-matrix  whose blocks are of the form $0_n$, $I_n$, and the $n\times n$ matrices from the matrix assignment $\mathcal{X}$.
 Note also that the position where each of these blocks lies only depends on $\mathbf{t}$ \cite[Definition 4.5]{GFPR}.
To guarantee this operation-free property for more general index  tuples $\mathbf{t}$, we need the following definition.
\begin{definition}
\label{defSIP}{\rm \cite[Definition 7]{ant-vol11}} Let ${\mathbf{t}}=(i_{1},i_{2},\hdots,i_{r})$
be an index tuple of either nonnegative integers or  negative integers.
 Then, ${\mathbf{t}}$ is said to satisfy the \emph{Successor
Infix Property (SIP)} if for every pair of indices $i_{a},i_{b}\in{\mathbf{t}%
}$, with $1\leq a<b\leq r$, satisfying $i_{a}=i_{b}$, there exists at least
one index $i_{c}=i_{a}+1$ with $a<c<b$.
\end{definition}

\begin{remark}\label{remark:on_SIP}
We note  that any subtuple of consecutive indices of a tuple satisfying the SIP also satisfies the SIP. 
Moreover, the reversal of any tuple satisfying the SIP also satisfies the SIP.
\end{remark}

\begin{theorem}{\rm \cite[Theorem 4.6]{GFPR}}
Let $\mathbf{t}$ be a tuple with indices from either $\{0:k-1\}$ or $\{-k:-1\}$.
 The tuple $\mathbf{t}$ is operation free in the sense of {\rm \cite[Definition 4.5]{GFPR}} if and only if $\mathbf{t}$ satisfies the SIP.
\end{theorem}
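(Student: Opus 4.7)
The plan is to prove both implications separately, working without loss of generality with tuples whose indices lie in $\{0:k-1\}$; the case of indices in $\{-k:-1\}$ follows by symmetry, using the inversions $M_{-i}(B)=M_i(-B)^{-1}$ for $1\leq i\leq k-1$ and the analogous relations at the extreme indices $0$ and $-k$.

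For the direction SIP $\Rightarrow$ operation-free, I would induct on the length $r$ of $\mathbf{t}$. Lemma~\ref{lem: string product} supplies the base-case template for strings. For the inductive step, Remark~\ref{remark:on_SIP} guarantees that the prefix $\mathbf{t}'=(i_1,\ldots,i_{r-1})$ still satisfies SIP, so by the inductive hypothesis $M_{\mathbf{t}'}(\mathcal{X}')$ is an operation-free block-matrix whose non-$0_n$, non-$I_n$ blocks are precisely certain $X_j$'s in positions determined by $\mathbf{t}'$. Right-multiplying by $M_{i_r}(X_r)$ modifies only two block-columns, and a direct computation shows that the new blocks created are again of the form $0_n$, $I_n$, or some $X_j$. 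The decisive use of SIP is that it prevents the affected block-column of $M_{\mathbf{t}'}(\mathcal{X}')$ from containing an $X_j$ in the position where right-multiplication would produce a genuine matrix product $X_j X_r$.

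For the reverse implication I would argue by contrapositive. Assume SIP fails and choose a violating pair $a<b$ with $i_a=i_b=i$, no $i_c=i+1$ strictly between, and $b-a$ minimal. Minimality implies that no intermediate $i_c$ equals $i$ either, so the only indices strictly between $a$ and $b$ that fail to commute with $M_i$ (by Remark~\ref{commutativity}) are those equal to $i-1$. Successively applying the commutativity relations, one rewrites $M_{\mathbf{t}}(\mathcal{X})$ as $L\cdot M_i(X_a)\,W\,M_i(X_b)\cdot R$, where $L$ and $R$ collect the ``commuted-out'' factors and $W$ is a product of matrices of index $i-1$ (together with any factors trapped among them). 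A direct block computation restricted to the three block-rows/columns indexed by $i-1$, $i$, and $i+1$ shows that the central factor $M_i(X_a)\,W\,M_i(X_b)$ contains at least one block of the form $X_a Z X_b+\cdots$ for some nontrivial $Z$, which is manifestly not operation-free. One then checks that the outer factors $L$ and $R$ act on block-rows and block-columns disjoint from the location of this block, so the offending matrix product survives in $M_{\mathbf{t}}(\mathcal{X})$.

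The main obstacle is the reverse direction: the commutation step requires some care to make sure that moving the commutable factors outside the interval $[a,b]$ does not introduce new block interactions, and that the nontrivial matrix product appearing in the central block is not cancelled or absorbed by the outer factors. The forward direction is essentially structural bookkeeping anchored in Lemma~\ref{lem: string product}, while the contrapositive requires the careful commutation-plus-block-computation argument just outlined.
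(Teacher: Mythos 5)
This statement is quoted in the paper with attribution to \cite[Theorem 4.6]{GFPR}; the paper itself contains no proof of it, only the citation, so there is no ``paper's own proof'' to match. Evaluating your proposal on its own merits, the overall strategy (induction for one direction, contrapositive with a minimal SIP-violating pair for the other) is sensible and is in the spirit of the structural machinery the paper does develop (Lemma~\ref{lem: string product}, Lemma~\ref{txSIP}, Lemma~\ref{lem: column locations}), but there are concrete gaps.

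First, your reduction from negative to nonnegative indices via $M_{-i}(B)=M_i(-B)^{-1}$ does not go through: applying it to a product reverses the factor order and replaces the product by an inverse, and the inverse of an operation-free matrix need not be operation-free. The clean reduction is conjugation by the block sip matrix $R_k$ of \eqref{eq:sip}, using the identities $R_k M_i(B) R_k = M_{-k+i}(B)$ and $R_k M_{-i}(B) R_k = M_{k-i}(B)$ (these appear as \eqref{line3}--\eqref{line4} in the paper). Conjugation by a block permutation manifestly preserves operation-freeness and maps a tuple with indices in $\{-k:-1\}$ to the tuple $k+\mathbf{t}$ with indices in $\{0:k-1\}$, while $\mathbf{t}$ satisfies the SIP iff $k+\mathbf{t}$ does.

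Second, in the forward direction you only guard against products $X_j X_r$ but not against sums. Right-multiplying by $M_{i_r}(X_r)$ makes the new $(k-i_r)$th block-column equal to $(\text{old col } k-i_r)\,X_r + (\text{old col } k-i_r+1)$. By Lemma~\ref{txSIP} the SIP hypothesis forces $i_r\notin\heads(\mathbf{t}')$, and then Lemma~\ref{lem: column locations} forces the old $(k-i_r)$th block-column to be $e_j\otimes I_n$, so the product piece is $e_j\otimes X_r$; but you still need the $(j,k-i_r+1)$ block of $M_{\mathbf{t}'}(\mathcal{X}')$ to vanish, or a block of the form $X_r + X_m$ or $X_r + I_n$ would appear. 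That extra vanishing is true in these structured Fiedler products, but it is precisely the inductive invariant one must carry along, and your sketch does not establish it. Third, in the contrapositive direction the commutation step ``move commuting factors outside $[a,b]$'' is not as clean as stated: a factor $M_{i-2}$ trapped between two $M_{i-1}$'s cannot be commuted out past them, so the middle factor $W$ can be a longer cascade than a product of $M_{i-1}$'s, and verifying that the offending block $X_a Z X_b$ both appears and survives the outer multiplications by $L$ and $R$ needs an explicit argument rather than the appeal you make to ``disjoint block-rows and block-columns.'' You are right that the reverse direction is the harder one; as written the argument is a plausible plan rather than a proof.
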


SIP  also guarantees  that the block-transpose of
a product of elementary matrices behaves as the
regular transpose.
\begin{lemma}\label{revB}{\rm \cite[Lemma 4.8]{GFPR}}
Let $k$ be  a positive integer, let $\mathbf{t}$ be an index tuple satisfying the SIP with indices from $\{0:k-1\}$ and let $\mathcal{X}$ be a matrix assignment for  $\mathbf{t}$. 
Then, 
\[
M_{\mathbf{t}}(\mathcal{X})^{\mathcal{B}}= M_{\rev(\mathbf{t})}\left(\rev(\mathcal{X})\right).
\]
\end{lemma}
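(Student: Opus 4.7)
The plan is to proceed by induction on the length $r$ of the index tuple $\mathbf{t}$, preceded by the crucial observation that every elementary matrix with nonnegative index is block-symmetric. Concretely, from the definition in \eqref{mis}, the matrix $M_i(B)$ with $1\le i\le k-1$ has its only nontrivial pattern in the symmetric $2\times 2$ block located at rows/columns $k-i$ and $k-i+1$, containing $B,I_n$ in the top row and $I_n,0$ in the bottom row; block-transposing it fixes $B$ and swaps the two off-diagonal $I_n$'s with themselves, so $M_i(B)^{\mathcal{B}}=M_i(B)$. For $i=0$ the matrix is block-diagonal and the same conclusion is trivial. This disposes of the base case $r=1$, where both sides of the claimed identity equal $M_{t_1}(X_1)$.

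For the inductive step, write $\mathbf{t}=(t_1,\mathbf{t}')$ and $\mathcal{X}=(X_1,\mathcal{X}')$. By Remark \ref{remark:on_SIP}, the tail $\mathbf{t}'$ still satisfies SIP with indices in $\{0:k-1\}$, so the inductive hypothesis applies and gives $M_{\mathbf{t}'}(\mathcal{X}')^{\mathcal{B}}=M_{\rev(\mathbf{t}')}(\rev(\mathcal{X}'))$. If we can establish the product rule
\[
\bigl(M_{t_1}(X_1)\, M_{\mathbf{t}'}(\mathcal{X}')\bigr)^{\mathcal{B}} = M_{\mathbf{t}'}(\mathcal{X}')^{\mathcal{B}}\, M_{t_1}(X_1)^{\mathcal{B}},
\]
then combining it with block-symmetry of $M_{t_1}(X_1)$ and the inductive hypothesis yields
\[
M_{\mathbf{t}}(\mathcal{X})^{\mathcal{B}} = M_{\rev(\mathbf{t}')}(\rev(\mathcal{X}'))\, M_{t_1}(X_1) = M_{\rev(\mathbf{t})}(\rev(\mathcal{X})),
\]
closing the induction.

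The heart of the argument, and the main obstacle, is the reverse-product rule displayed above, since for generic block matrices $A,B$ one has $(AB)^{\mathcal{B}}_{ij}=\sum_k A_{jk}B_{ki}$ whereas $(B^{\mathcal{B}} A^{\mathcal{B}})_{ij}=\sum_k B_{ki}A_{jk}$, and these sums differ unless the blocks commute in each summand. This is exactly where the SIP hypothesis enters decisively. Setting $A:=M_{t_1}(X_1)$ and $B:=M_{\mathbf{t}'}(\mathcal{X}')$, the tuple $\mathbf{t}=(t_1,\mathbf{t}')$ satisfies SIP, so by the operation-free theorem (Theorem 4.6 of \cite{GFPR}, cited after Remark \ref{remark:on_SIP}) every block-entry of $M_{\mathbf{t}}(\mathcal{X})=AB$ is either $0_n$, $I_n$, or one of the $X_j$'s. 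Now $A$ contains a single nontrivial block $X_1$ and the remaining entries are $0_n$ or $I_n$, while $B$ is operation-free with entries in $\{0_n,I_n,X_2,\ldots,X_r\}$. Therefore a term $A_{jk}B_{ki}$ can fail to commute only if it equals $X_1 X_j$ for some $j\ge 2$, but the operation-free form of $AB$ forbids such a product from appearing in any entry; hence every nontrivial summand in $\sum_k A_{jk}B_{ki}$ has at least one factor in $\{0_n,I_n\}$, forcing $A_{jk}B_{ki}=B_{ki}A_{jk}$ termwise.

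Once the product rule is justified, the induction goes through and the lemma follows. The one step requiring genuine care is precisely the extraction of the commutation from SIP via the operation-free property; the rest is bookkeeping with $\rev$ and concatenation, together with the already-verified symmetry $M_i(B)^{\mathcal{B}}=M_i(B)$.
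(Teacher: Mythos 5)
This lemma is cited from \cite[Lemma 4.8]{GFPR} and is not proved in the present paper, so there is no proof here to compare against; your argument must be evaluated on its own terms. Your strategy — first observing that each $M_i(B)$ with $i\ge 0$ is block-symmetric, then inducting on the length of $\mathbf t$ via the reverse-product rule $(AB)^{\mathcal B}=B^{\mathcal B}A^{\mathcal B}$ — is sound, and the base case and the bookkeeping with $\rev$ are correct. The one place where you are imprecise is the justification of the reverse-product rule. You write that ``the operation-free form of $AB$ forbids such a product from appearing in any entry,'' but the would-be product $X_1X_j$ never appears as an \emph{entry} of $AB$; it would appear as a \emph{summand} inside an entry, namely $(AB)_{k-t_1,i}=X_1B_{k-t_1,i}+B_{k-t_1+1,i}$, and the operation-free form of the final sum does not immediately rule out cancellation with the other summand. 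To close this, you need the stronger form of the operation-free property that the paper explicitly invokes after Lemma~\ref{lem: string product}: the position (and identity) of each nontrivial block of $M_{\mathbf t}(\mathcal X)$ depends only on $\mathbf t$, not on the assignment $\mathcal X$. Fixing $\mathcal X'$ and letting $X_1$ vary, the expression $X_1B_{k-t_1,i}(\mathcal X')+B_{k-t_1+1,i}(\mathcal X')$ must remain one fixed symbol from $\{0_n,\pm I_n,\pm X_j\}$, which forces $B_{k-t_1,i}\in\{0_n,I_n\}$; from here the termwise commutation you want follows. (Equivalently, since $(\rev(\mathbf t'),t_1)$ satisfies SIP, $t_1\notin\heads(\rev(\mathbf t'))$ by Lemma~\ref{txSIP}, and the relevant block-row of $M_{\mathbf t'}(\mathcal X')$ contains only $0_n$ and $I_n$.) With that gap filled, the induction closes correctly, so the overall proof is right but the stated justification of the commutation step is a notch too casual as written.
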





Due to the commutativity relations of elementary matrices explained in  Remark \ref{commutativity}, different index tuples $\mathbf{t}_1$ and $\mathbf{t}_2$ may give rise to the same product of elementary matrices, that is, given a matrix assignment $\mathcal{X}$ for both $\mathbf{t}_1$ and $\mathbf{t}_2$, we may have $M_{\mathbf{t}_1}(\mathcal{X})=M_{\mathbf{t}_2}(\mathcal{X})$.
Next, we introduce a canonical form for index tuples such that two index tuples with the same canonical form yield the same product of elementary matrices. 
But, first, notice that given an index tuple $\mathbf{t}=(t_1, t_2, \ldots, t_r)$ and a matrix assignment $\mathcal{X}=(X_1,\hdots,X_r)$ for $\mathbf{t}$, the elementary matrices $M_{t_i}(X_i)$ and $M_{t_{i+1}}(X_{i+1})$ commute whenever $||t_i|- |t_{i+1}|| \neq 1$.
For this reason, here and thereafter, we say that in this situation the \emph{indices $t_i$ and $t_{i+1}$ in $\mathbf{t}$ commute}.
\begin{definition}{\rm \cite[Definition 3.4]{GFPR}}\label{def:equivalence_tuples}
Given two index tuples $\mathbf{t}$ and $\mathbf{t}'$ of nonnegative indices, we say that $\mathbf{t}$ is equivalent to $\mathbf{t}'$ (and write $\mathbf{t} \sim \mathbf{t'}$), if $\mathbf{t}= \mathbf{t'}$ or $\mathbf{t}'$ can be obtained from $\mathbf{t}$ by interchanging a finite number of times two distinct commuting indices in adjacent positions, that is, indices $t_i$ and $t_{i+1}$ such that  $| t_i - t_{i+1}| \neq 1$ and $t_i \neq t_{i+1}$.
\end{definition}
\begin{remark}
The product of elementary matrices is invariant under the equivalence introduced in Definition \ref{def:equivalence_tuples}, i.e.,  given an index tuple $\mathbf{t}$ and a matrix assignment $\mathcal{X}$ for $\mathbf{t}$, if $\mathbf{t} \sim \mathbf{t'}$, then $M_{\mathbf{t}}(\mathcal{X})= M_{\mathbf{t'}}(\mathcal{X})$.
\end{remark}

It turns out that if $\mathbf{t}$ satisfies the SIP property, the index tuple $\mathbf{t}$ is equivalent to  a tuple with a special structure that we introduce in the following definition.  
 \begin{definition}
\label{FSC}{\rm \cite[Theorem 1]{ant-vol11}} Let ${\mathbf{t}}$ be an index tuple
with indices from $\{0:h\}$, $h \geq 0$. 
Then ${\mathbf{t}}$ is said to be in \emph{column standard form}  if
\[
{\mathbf{t}}=\left(  a_{s}:b_s,a_{s-1}:b_{s-1},\hdots,a_{2}:b_{2}%
,a_{1}:b_{1}\right),
\]
with $h\geq b_{s}>b_{s-1}>\cdots>b_{2}>b_{1}\geq0$ and $0\leq a_{j}\leq b_{j}%
$, for all $j=1:s$. We call  each subtuple of consecutive indices
$(a_{i}:b_{i})$ a \emph{string} of ${\mathbf{t}}$.
\end{definition}

In the next lemma, we show the connection between tuples satisfying the SIP and tuples in column standard form.
\begin{lemma}\label{SIP-csf}{\rm \cite[Theorem 2]{ant-vol11}}
Let $\mathbf{t}$ be an index tuple. 
\begin{itemize}
\item[\rm (i)]  If the indices of $\mathbf{t}$ are all  nonnegative integers, then ${\mathbf{t}}$ satisfies the SIP if and only if $\,{\mathbf{t}}$ is equivalent to a tuple in column standard form.
\item[\rm (ii)] If the indices of $\mathbf{t}$ are all negative integers and $a$ is the minimum index in $\mathbf{t}$, then $\mathbf{t}$ satisfies the SIP if and only if $-a+\mathbf{t}$ is equivalent to a tuple in column standard form.
\end{itemize}
\end{lemma}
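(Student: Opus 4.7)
The plan is to establish part (i) directly and reduce part (ii) to it via a uniform shift of the indices.

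For the "if" direction of (i), the first step is to check that every tuple $\mathbf{u}=(a_s:b_s,\ldots,a_1:b_1)$ in column standard form satisfies the SIP. If an index $j$ occurs twice in $\mathbf{u}$, the two copies must lie in different strings, say in strings $p$ and $q$ with $p>q$ (so the first copy is the earlier occurrence); then $b_p>b_q\ge j$, so $j+1\le b_p$ and $j+1$ appears in string $p$ immediately after the first occurrence of $j$, hence strictly between the two copies. It also has to be shown that SIP is preserved under the equivalence $\sim$, which reduces to analyzing a single adjacent commuting swap $(t_i,t_{i+1})\mapsto(t_{i+1},t_i)$: such a swap does not interchange copies of the same index, and for any equal pair of indices the required witness of value $j+1$ either lies outside the positions $\{i,i+1\}$ (and is therefore unaffected by the swap) or else forces $|t_i-t_{i+1}|=1$, contradicting the commuting hypothesis. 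Combining these two observations yields the "if" direction.

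For the "only if" direction of (i), I proceed by strong induction on $b:=\max\mathbf{t}$, with the empty tuple being trivial and the case $b=0$ forcing $\mathbf{t}\in\{\emptyset,(0)\}$ since two copies of $0$ would require the nonexistent witness $1$. In the inductive step, the unique occurrence of $b$ (two copies would require the nonexistent $b+1$ between them) is used as the anchor for building the leftmost string. The construction maintains a contiguous substring $(a:b)$, initially $(b:b)$, and inspects its immediate left neighbor $x$: the cases $x=b$ and $x=b+1$ are impossible; the case $x=j\in\{a,\ldots,b-1\}$ is ruled out via SIP, because between the two copies of $j$ lie only the string elements $a,a+1,\ldots,j-1$, which omit the required witness $j+1$; in the remaining cases, $x=a-1$ extends the string to $(a-1:b)$, and $x\le a-2$ allows a commuting swap of $x$ past every element of the string, shifting the string one position to the left. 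Since extensions strictly enlarge the string (bounded above by $b+1$ elements) and swap moves strictly decrease its leftmost position, the process terminates with the string at the front, producing $\mathbf{t}\sim(a:b,\mathbf{t}')$. The suffix $\mathbf{t}'$ inherits SIP because any required witness for an equal pair in $\mathbf{t}'$ must lie strictly between the two positions of the pair in $\mathbf{t}$ and therefore beyond the front block $(a:b)$; moreover $\max\mathbf{t}'<b$ since $b$ was unique. Applying the inductive hypothesis to $\mathbf{t}'$ gives an equivalent column standard form $(a'_{s'}:b'_{s'},\ldots,a'_1:b'_1)$ with $b'_{s'}<b$, and concatenating with $(a:b)$ completes the induction.

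For part (ii), the plan is to apply part (i) to the shifted tuple $-a+\mathbf{t}$, which has nonnegative indices. Both the SIP condition (defined purely in terms of equality of indices and the presence of a specific successor value between them) and the equivalence $\sim$ (defined in terms of equality and unit differences of indices) are invariant under a uniform shift by the same integer, so $\mathbf{t}$ satisfies SIP if and only if $-a+\mathbf{t}$ does, and the conclusion follows from part (i). The main obstacle I foresee is the case analysis in the inductive step of part (i): combining the extend and swap moves into a single terminating algorithm, and in particular using SIP cleanly to exclude the blocking case $x\in\{a,\ldots,b-1\}$ once the current string has been brought into a contiguous form.
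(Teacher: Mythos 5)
The paper states this lemma with a citation to Theorem~2 of Vologiannidis and Antoniou and supplies no proof of its own, so there is no internal argument to compare against; evaluating your proposal on its own terms, it is correct. For the ``if'' direction of (i), the two observations you isolate are exactly the right ones: in a column standard form the earlier occurrence of a repeated index $j$ lies in a string $(a_p:b_p)$ with $b_p>j$ and is therefore immediately followed by $j+1$; and a single adjacent commuting swap cannot destroy a witness, because the only configurations in which the witness $j+1$ would need to sit at position $i$ or $i+1$ next to a copy of $j$ at the other of these positions force $|t_i-t_{i+1}|=1$, which is excluded for a commuting swap. For the ``only if'' direction, your algorithm that maintains a contiguous block $(a:b)$ anchored at the unique maximum $b$, and at each step either absorbs a left neighbor equal to $a-1$, slides past a left neighbor $\le a-2$ by commuting swaps, or rules out a left neighbor in $\{a,\dots,b-1\}$ via SIP (no $j+1$ lies between the two copies of $j$), terminates since the block's leftmost position strictly decreases with every move; the suffix $\mathbf{t}'$ inherits SIP (this is the standard fact, also noted in the paper, that any consecutive subtuple of a SIP tuple satisfies SIP) and has $\max \mathbf{t}'<b$ because $b$ was unique, so the strong induction on $\max\mathbf{t}$ closes, and prepending $(a:b)$ to a column standard form of $\mathbf{t}'$ yields a column standard form for $\mathbf{t}$. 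Part~(ii) reduces cleanly to part~(i) by the shift $-a+\mathbf{t}$, under which both SIP and the equivalence $\sim$ are invariant since they are defined purely through equalities and unit differences of indices.
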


Lemma \ref{SIP-csf}, together with \cite[Proposition 2.12]{FPR1}, allows us to introduce a canonical form for tuples satisfying the SIP under the equivalence relation introduced in Definition \ref{def:equivalence_tuples}.
\begin{definition}{\rm \cite[Definition 3.9]{GFPR}}
The unique index tuple in column standard form equivalent to an index tuple $\mathbf{t}$ of nonnegative integers satisfying the SIP is called the  \emph{column standard form of $\mathbf{t}$}  and is denoted by $\mathrm{csf}(\mathbf{t})$.
\end{definition}

In the next definition, we introduce some special indices of  tuples of nonnegative integers satisfying the SIP that will play a key role in the rest of the paper.
\begin{definition}\label{def;heads}
For an arbitrary index tuple $\mathbf t$ with $\csf(\mathbf t) = \left(  a_{s}:b_s,\hdots
,a_{1}:b_{1}\right)$, we define $\heads(\mathbf {t}) := \{b_i ~|~ 1 \leq i \leq s\}$. 
Furthermore, we denote by  $\mathfrak{h}(\mathbf{t})$ the cardinality of $\heads(\mathbf{t})$. 
\end{definition}

Given an index tuple $\mathbf{t}$, note that $\mathfrak{h}(\mathbf{t})$ gives not only the cardinality of the set $\heads(\mathbf {t})$, but also the number of strings in $\mathrm{csf}(\mathbf{t})$. 

\begin{example}
The tuples $\mathbf{t}_1=(3:4, 0:2)$ and $\mathbf{t}_2=(1:4, 0:3, 0:2, 1, 0)$ are tuples in column standard form with indices from $\{0:4\}$.
Note that $\heads(\mathbf{t}_1) = \{4, 2\}$ and $\mathfrak{h}(\mathbf{t}_1)=2$.
Similarly, $\heads(\mathbf{t}_2) =\{4, 3, 2, 1, 0\}$ and $\mathfrak{h}(\mathbf{t}_2)= 5$. 
\end{example}

We introduce in Definition \ref{tuple-type} an important distinction between some type of indices relative to a given index tuple.
\begin{definition}\label{tuple-type}
Given an index tuple $\mathbf t$ and an index $x$ such that $(\mathbf{t}, x)$ satisfies the SIP,  we say that $x$ is of \emph{Type~I} relative to $\mathbf t$ if   $\mathfrak{h}(\mathbf t, x)= \mathfrak{h}(\mathbf t)$, and of \emph{Type~II} otherwise. 
That is, $x$ is of Type I relative to $\mathbf{t}$ if $\csf(\mathbf{t}, x)$ has the same number of heads (and, therefore of strings) as $\csf(\mathbf{t})$, and of \emph{Type~II} relative to $\mathbf{t}$ otherwise.
\end{definition}

\begin{example}
Let $\mathbf{t} =(3:5, 2, 0:1)$. Then $3$ is an index of Type I relative to $\mathbf{t}$ since 
$$\csf(\mathbf{t}, 3)= (3:5, 2:3, 0:1)$$
has the same number of strings as $\mathbf{t}$ (recall that $3$ commutes with 0 and 1). 
However, $4$ is an index of Type II relative to $\mathbf{t}$ since
$$\csf(\mathbf{t}, 4) = ( 3:5, 4, 2, 0:1)$$
has more strings than $\mathbf{t}$. 
\end{example}

The following technical lemmas and results are used in future sections.
Here and thereafter we use the following notation.
If $\mathbf{t}$ is an index tuple and $a$ is an index of $\mathbf{t}$, we write $a\in \mathbf{t}$. 
Also, we denote by $e_i$ the $i$th column of the identity matrix of arbitrary size.  

In the first results  we determine the set of heads of the concatenation $(\mathbf{t}_1, \mathbf{t}_2)$ of two index tuples that satisfies the SIP when the set of heads of  either $\mathbf{t}_1$ or $\mathbf{t}_2$ is known. 
\begin{lemma}\label{txSIP}
Let $k$ be a positive integer and let $\mathbf{t}$ be an index tuple with indices from $\{0:k-1\}$.
Let $(a:b)$ be a string with indices from $\{0:k-2\}$.
Then, the tuple $(\mathbf{t},a:b)$ satisfies the SIP property if and only if $\mathbf{t}$ satisfies the SIP and $c\notin\mathrm{heads}(\mathbf{t})$, for all $c\in (a:b)$. 
\end{lemma}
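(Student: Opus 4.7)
My plan is to reduce both directions to an inspection of $\csf(\mathbf{t})=(a_s:b_s,\ldots,a_1:b_1)$, in which the set of heads is literally $\{b_1,\ldots,b_s\}$. The key preliminary observation is that SIP is preserved by the equivalence $\sim$: an index $x+1$ cannot swap with $x$ (their difference is $1$) and two copies of $x$ cannot swap with each other (they are equal), so if some $x+1$ lies between two occurrences of $x$ in a tuple, it still does so after any legal commutations. Hence $\mathbf{t}$ satisfies SIP if and only if $\csf(\mathbf{t})$ does, and $(\mathbf{t},a:b)$ satisfies SIP if and only if $(\csf(\mathbf{t}),a:b)$ does, which lets me work throughout with $\csf(\mathbf{t})$ in place of $\mathbf{t}$.

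For the forward direction, I would first note that $\mathbf{t}$ inherits SIP from $(\mathbf{t},a:b)$ by Remark \ref{remark:on_SIP}. Then I would assume for contradiction that some $c\in (a:b)$ is a head of $\mathbf{t}$, say $c=b_{i_0}$. The last occurrence of $c$ in $\csf(\mathbf{t})$ is the terminal entry of the string $(a_{i_0}:b_{i_0})$, and every subsequent index in $\csf(\mathbf{t})$ lies in a string with head $b_j<c$, hence is strictly less than $c+1$. The portion of $(a:b)$ preceding its own $c$ is $(a:c-1)$, which likewise contains no $c+1$. Therefore no $c+1$ appears between the two occurrences of $c$ in $(\csf(\mathbf{t}),a:b)$, contradicting SIP of that tuple.

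For the backward direction, I would assume $\mathbf{t}$ satisfies SIP and $(a:b)\cap\heads(\mathbf{t})=\emptyset$, and verify SIP for $(\csf(\mathbf{t}),a:b)$ by considering an arbitrary pair of equal indices $x$. Three cases arise: both lying in $\csf(\mathbf{t})$ is immediate from SIP of $\mathbf{t}$; both lying in $(a:b)$ is vacuous since the string $(a:b)$ has pairwise distinct entries; and in the mixed case, combining with SIP of $\csf(\mathbf{t})$ it suffices to exhibit some $x+1$ strictly after the last occurrence of $x$ in $\csf(\mathbf{t})$. Since $a\le x\le b$ and $x$ is not a head, that last occurrence sits inside a string $(a_j:b_j)$ with $x<b_j$, so $x+1$ is the very next entry of the same string, providing the required successor.

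The main obstacle is the preliminary step of checking carefully that SIP is invariant under $\sim$, since everything downstream rests on being able to replace $\mathbf{t}$ by $\csf(\mathbf{t})$ without changing either the SIP status or the heads. Once that invariance is in hand, both directions become short structural observations about column standard form, exploiting only the strict ordering $b_s>\cdots>b_1$ of the heads.
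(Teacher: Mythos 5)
Your proof is correct and takes essentially the same route as the paper: reduce to column standard form and inspect the strings, using that every index to the right of the string ending in a head $b_{i_0}$ is strictly less than $b_{i_0}$, and in the converse direction that a non-head $x$ in a string is immediately followed by $x+1$. The one place you go beyond the paper's text is the preliminary step verifying that SIP is invariant under the equivalence $\sim$ (so that one may pass freely to $\csf(\mathbf{t})$, and back, in the concatenation $(\mathbf{t},a:b)$); the paper invokes ``WLOG $\mathbf{t}$ is in column standard form'' via Lemma~\ref{SIP-csf} without spelling this out, so your extra care there is harmless and slightly tightens the argument.
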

\begin{proof}
 $\Longrightarrow$ Since $(\mathbf{t}, a:b)$ satisfies the SIP, so does $\mathbf{t}$. 
Thus, by Lemma \ref{SIP-csf}, we may assume, without loss of generality, that $\mathbf{t}$ is in column standard form, that is, $\mathbf{t} = (a_s: b_s, \ldots, a_1: b_1)$.
The proof proceeds by contradiction. 
Assume that  $c\in (a:b)$ and $c \in \heads(\mathbf{t})$. 
Then, there exists $1 \leq i \leq s$ such that $c= b_i$. 
Since $\mathbf{t}$ is a tuple in column standard form, the indices in the subtuple $(a_{i-1}: b_{i-1}, \ldots, a_1:b_1)$ of $\mathbf{t}$ are in $\{0: b_i-1\}$. 
Therefore, there is no index between $b_i\in\mathbf{t}$ and $b_i=c\in (a:b)$ equal to $c+1$ which implies that $(\mathbf{t}, a:c)$ does not satisfy the SIP, a contradiction.  

\medskip

\noindent  $\Longleftarrow$ To prove the implication, it is enough to show that,  between $c\in(a:b)$ and the rightmost index equal to $c$ in $\mathbf{t}$, if it exists,  there is an index equal to $c+1$. Let $c$ be such an index.
Since $\mathbf{t}$ satisfies the SIP, without loss of generality, we may assume that $\mathbf{t}=(a_s:b_s,\hdots,a_1:b_1)$, for some nonnegative integers $a_i$, $b_i$ and $i=1:s$, that is, $\mathbf{t}$ is in column standard form.
Let $(a_j:b_j)$ be the rightmost string of $\mathbf{t}$ such that $c\in(a_j:b_j)$. 
Since $c\neq b_i$, for all $i$, the tuple $(a_j:b_j)$ is of the form
\[
(c,c+1,\hdots,b_j) \quad \mbox{or} \quad  (a_j,\hdots, c,c+1,\hdots,b_j) \quad \mbox{or} \quad (a_j,\hdots, c,c+1),
\]
which shows the desired result.
\end{proof}

\medskip

\begin{proposition}\label{prop: Type I}
Let $\mathbf t =(a_s:b_s, \ldots, a_2:b_2, a_1:b_1)$ be a nonempty index tuple in column standard form with indices from $\{0:k-1\}$, for $k \geq 1$. Let $x$ be an index in $\{0:b_s-1\}$ such that $(\mathbf{t}, x)$ satisfies the SIP. Then $x$ is of Type~I relative to $\mathbf t$ if and only if  $x-1 \in \heads(\mathbf{t})$.
In particular,  $x=0$ is always an index of Type II relative to $\mathbf{t}$, for every nonempty index tuple $\mathbf{t}$ in column standard form with nonnegative indices.
\end{proposition}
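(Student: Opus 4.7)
The plan is to directly compute $\csf(\mathbf{t},x)$ by pushing the appended $x$ leftward inside $(\mathbf{t},x)$ via the commutations of Definition \ref{def:equivalence_tuples}, until it settles into column standard form, and then to count the resulting number of strings. A preliminary observation that I would establish first is that under the SIP hypothesis on $(\mathbf{t},x)$ one has $x\notin\heads(\mathbf{t})$: if $x=b_i$, then every index of $\mathbf{t}$ between this occurrence and the appended $x$ lies in the strings $(a_{i-1}:b_{i-1},\ldots,a_1:b_1)$, so it is at most $b_{i-1}<b_i=x$; no index equal to $x+1$ appears between the two copies of $x$, violating SIP.

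For the implication ``$x-1\in\heads(\mathbf{t})\Rightarrow$ Type~I'', I would write $x-1=b_i$. Every index in the strings $(a_{i-1}:b_{i-1},\ldots,a_1:b_1)$ is at most $b_{i-1}\leq b_i-1=x-2$ and hence commutes with $x$. Moving $x$ leftward past these strings gives
\[
(\mathbf{t},x)\sim (a_s:b_s,\ldots,a_{i+1}:b_{i+1},\; a_i:b_i,\,x,\; a_{i-1}:b_{i-1},\ldots,a_1:b_1) = (a_s:b_s,\ldots,a_{i+1}:b_{i+1},\; a_i:x,\; a_{i-1}:b_{i-1},\ldots,a_1:b_1),
\]
where the last equality uses $x=b_i+1$. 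When $i<s$, the inequality $b_{i+1}>x$ follows from $b_{i+1}>b_i=x-1$ together with $x\notin\heads(\mathbf{t})$, so the right-hand tuple is already in CSF. It has the same number $s$ of strings as $\mathbf{t}$, hence $\mathfrak{h}(\mathbf{t},x)=\mathfrak{h}(\mathbf{t})$ and $x$ is of Type~I.

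For the converse, assume $x-1\notin\heads(\mathbf{t})$, and let $j\in\{0,1,\ldots,s-1\}$ be maximal with $b_j<x$, using the convention $b_0:=-1$; note that $j\leq s-1$ because $x<b_s$. Since $b_j\neq x-1$, in fact $b_j\leq x-2$, so every index of the strings $(a_j:b_j,\ldots,a_1:b_1)$ commutes with $x$, and
\[
(\mathbf{t},x)\sim (a_s:b_s,\ldots,a_{j+1}:b_{j+1},\,x,\,a_j:b_j,\ldots,a_1:b_1).
\]
Using again $x\notin\heads(\mathbf{t})$, we have $b_{j+1}>x$ strictly, and by maximality of $j$, $x>b_j$. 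The resulting tuple is therefore in CSF, with heads $b_s>\cdots>b_{j+1}>x>b_j>\cdots>b_1$, so $\mathfrak{h}(\mathbf{t},x)=s+1>\mathfrak{h}(\mathbf{t})$, i.e., $x$ is of Type~II. The ``in particular'' clause is immediate, since $x=0$ always satisfies $x-1=-1\notin\heads(\mathbf{t})$.

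The main subtlety I would expect is correctly handling the degenerate cases $b_{i+1}=x$ in the forward step and $b_{j+1}=x$ in the converse: both would spoil the strict ordering of heads required in the CSF and would lead to an apparent merging of two strings. Both are ruled out precisely by the preliminary observation $x\notin\heads(\mathbf{t})$, and that is the only point of the argument where the full SIP hypothesis on $(\mathbf{t},x)$ — rather than the SIP of $\mathbf{t}$ alone — is actually used.
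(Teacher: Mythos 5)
Your proof is correct and follows essentially the same route as the paper's: first derive $x\notin\heads(\mathbf t)$ from the SIP of $(\mathbf t,x)$ (the paper cites its Lemma \ref{txSIP} for this, whereas you give a short direct argument, and you correctly note this is the only place where the SIP of $(\mathbf t,x)$ rather than of $\mathbf t$ alone is used), then compute $\csf(\mathbf t,x)$ by commuting the appended $x$ leftward past the strings with heads $<x$ and compare head counts. The only small slip is in the converse when $j=0$ and $x=0$: with your convention $b_0:=-1$ one gets $b_0=-1=x-1$, so the intermediate claim ``$b_j\neq x-1$, hence $b_j\leq x-2$'' fails there; but the commutation it is meant to justify is vacuous in that case (no strings to the right of $x$), so the argument is unaffected.
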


\begin{proof}
Since $(\mathbf t, x)$ satisfies the SIP, we have $x \notin \heads(\mathbf{t})$ by Lemma \ref{txSIP}. Therefore, since $x \in \{0:b_s-1\}$, there is either some $1 \leq  \ell < s$ with $b_{\ell+1}>  x > b_{\ell}$  or $b_1 > x$.

Assume that  $x -1 \in \heads({\mathbf{t}} )$. Then, $x = b_j+1$ for some  $1 \leq j \leq s$. Thus, we have
\begin{align*}
(\mathbf t, x)& = (a_s:b_s,\dots, a_j:b_j,\dots, a_1:b_1, b_j+ 1) \\
& \sim (a_s:b_s,\dots,a_j:b_j+1,\dots, a_1:b_1) =\csf(\mathbf{t}, x),
\end{align*}
where the  equivalence holds because $x  > b_j >  b_{j-1}$, and hence $x$ commutes with every index of $\mathbf t$ ocurring to the right of $b_j$. Moreover, the second equality follows from the fact that $b_{j+1} \neq b_j+1$, otherwise, the SIP does not hold.  Since $\csf(\mathbf t, x)$ has the same number of strings (heads) as $\csf(\mathbf t)$, it follows that $x$ is of Type~I relative to $\mathbf t$.

Now suppose that  $x-1 \notin \heads(\mathbf{t})$. If  $b_1 > x $, then
 \begin{align*}
\csf(\mathbf t, x) = (\mathbf{t}, x),
\end{align*}
by definition of column standard form, which implies that $\csf(\mathbf{t}, x)$ has one more string than $\mathbf{t}$. Hence $x$ is not of Type~I relative to $\mathbf t$. \\
\indent If $b_{\ell+1}> x > b_{\ell}$ for some $\ell>0$ and $x-1 \notin \heads(\mathbf{t})$,  then $x > b_\ell+1$.  Therefore, $x$ commutes with  $(a_\ell: b_\ell, \ldots, a_1:b_1)$ since all the indices of this tuple  are smaller than or equal to $b_\ell$. 
Therefore, 
\begin{align*}
(\mathbf t, x) &= (a_s:b_s,\dots,a_\ell:b_\ell,\dots,a_1:b_1, x) \\
&\sim (a_s:b_s, \dots,a_{\ell+1}:b_{\ell+1}, x , a_\ell:b_\ell,\dots, a_1:b_1) = \csf(\mathbf t, x),
\end{align*}
Since the number of strings of $\csf(\mathbf{t}, x)$ is larger than the number of strings of $\mathbf{t}$, the index $x$ is not of Type~I relative to $\mathbf t$ and the result follows.
\end{proof}
\begin{remark}\label{rem: Type I and II heads}
Using the notation of Proposition \ref{prop: Type I}, note that this proposition implies that $\heads(\mathbf t, x) = (\heads(\mathbf t) \setminus \{b_j\}) \cup \{x\}$ when $x$ is an index of Type I relative to $\mathbf t$ and $x=b_j+1$.  Furthermore, the proof of Proposition \ref{prop: Type I} shows that if $x$ is of Type II relative to $\mathbf t$, then $\heads(\mathbf t,x) = \heads(\mathbf t) \cup \{x\}$. 
\end{remark}

\begin{lemma}\label{xtSIP}
Let $k$ be a positive integer and let $\mathbf t$ be an index tuple  containing each index in $\{0: k-1\}$ at least once. 
Let $(a:b)$ be a string with indices from $\{0:k-2\}$ such that $(a:b, \mathbf t)$ satisfies the SIP. 
Then   $\heads(a:b, \mathbf t) = \heads(\mathbf t)$.
\end{lemma}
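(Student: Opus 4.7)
My plan is to reduce the claim to prepending a single index, then analyze that reduced problem via the column standard form. I induct on the length $b-a+1$ of the string. The empty case is trivial. For the inductive step, write $(a:b,\mathbf{t}) = (a,\mathbf{t}')$ with $\mathbf{t}':=(a+1:b,\mathbf{t})$. As a consecutive subtuple of the SIP tuple $(a:b,\mathbf{t})$, the tuple $\mathbf{t}'$ satisfies the SIP by Remark \ref{remark:on_SIP}, and it still contains every index in $\{0:k-1\}$ since $\mathbf{t}$ does. Applying the inductive hypothesis to $(a+1:b)$ and $\mathbf{t}$ gives $\heads(\mathbf{t}')=\heads(\mathbf{t})$, so it remains to prove the statement when the prepended string consists of a single index $x\in\{0:k-2\}$, with tail renamed to $\mathbf{s}$.

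For this single-index case, pass to column standard form $\csf(\mathbf{s})=(a_s:b_s,\ldots,a_1:b_1)$. Since $\mathbf{s}$ contains every index of $\{0:k-1\}$ and $b_s$ is its maximum, $b_s=k-1$, so in particular $x<b_s$. I compute $\csf(x,\mathbf{s})$ by commuting the prepended $x$ rightwards. Recall that $x$ commutes past a string $(a_{j'}:b_{j'})$ precisely when $\{x-1,x,x+1\}\cap(a_{j'}:b_{j'})=\emptyset$. Because $x\in\mathbf{s}$, at least one string meets $\{x-1,x,x+1\}$; let $(a_{j'}:b_{j'})$ be the leftmost such \emph{blocking} string in $\csf(\mathbf{s})$.

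I then perform case analysis on which elements of $\{x-1,x,x+1\}$ belong to the blocking string.
\begin{itemize}
\item If $x\in(a_{j'}:b_{j'})$ (possibly together with $x\pm1$), commuting $x$ rightwards places it next to the occurrence of $x$ inside the string, with only indices from $\{a_{j'},\ldots,x-1\}$ in between; hence no $x+1$ lies between the two $x$'s, and the SIP fails. This case is excluded by hypothesis.
\item If only $x-1\in(a_{j'}:b_{j'})$, then $x=b_{j'}+1$. But then $x+1$ cannot appear in any string strictly to the left of $(a_{j'}:b_{j'})$ in $\csf(\mathbf{s})$ (those were commuted past by $x$ and therefore avoid $\{x-1,x,x+1\}$), nor in $(a_{j'}:b_{j'})$ itself (since $x+1>b_{j'}$), nor in any string to the right (whose heads are strictly less than $b_{j'}$, hence strictly less than $x+1$). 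Thus $x+1\notin\mathbf{s}$, contradicting the assumption that $\mathbf{s}$ contains every index of $\{0:k-1\}$.
\item The only remaining possibility is that $x+1\in(a_{j'}:b_{j'})$ while $x,x-1\notin(a_{j'}:b_{j'})$, which forces $x=a_{j'}-1$. Then the prepended $x$ attaches to the front of the $j'$-th string to form the string $(x:b_{j'})$, and
\[
\csf(x,\mathbf{s})=(a_s:b_s,\ldots,a_{j'+1}:b_{j'+1},\,x:b_{j'},\,a_{j'-1}:b_{j'-1},\ldots,a_1:b_1),
\]
whose heads are exactly $\{b_1,\ldots,b_s\}=\heads(\mathbf{s})$.
\end{itemize}

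This proves $\heads(x,\mathbf{s})=\heads(\mathbf{s})$, which combined with the inductive reduction yields $\heads(a:b,\mathbf{t})=\heads(\mathbf{t})$. The main technical hurdle is the second bullet: this is where the hypothesis that $\mathbf{t}$ contains every element of $\{0:k-1\}$ is essential, forcing $x+1$ to appear somewhere in $\mathbf{s}$ and thereby ruling out $x=b_{j'}+1$. Without this hypothesis, prepending could introduce new heads.
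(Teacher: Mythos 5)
Your proof is correct and reaches the same conclusion, but it takes a genuinely different route from the paper's. The paper handles the whole string $(a:b)$ in one shot: because $(a:b,\csf(\mathbf t))$ satisfies the SIP and $\mathbf t$ contains $b$ and $b+1$, the leftmost occurrence of $b+1$ in $\csf(\mathbf t)$ must precede every occurrence of $b$; this forces that leftmost $b+1$ to sit at a string-start $a_j$, and then every index in the strings to its left is $>b+1$, so the entire block $(a:b)$ commutes past those strings and fuses with $(a_j:b_j)$ into $(a:b_j)$, leaving the heads untouched. You instead induct on the string length to reduce to prepending a single index $x$, then locate the leftmost string of $\csf(\mathbf s)$ that meets $\{x-1,x,x+1\}$ and run a three-way case analysis: $x$ present is killed by the SIP (the prepended $x$ and the $x$ inside the string have no $x+1$ between them), $x-1$ present as head is killed by the fullness hypothesis (then $x+1$ could live nowhere in $\csf(\mathbf s)$), and the surviving case $x=a_{j'}-1$ is the merge. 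Both arguments ultimately rest on the same two hypotheses — the SIP of the prepended tuple and the fullness of $\mathbf t$ — and both locate the same merge point, but the paper goes directly to the leftmost $b+1$ while your case analysis makes explicit exactly which hypothesis rules out which alternative, at the cost of the extra induction and three-case bookkeeping. The inductive reduction is sound because the tail $(a+1:b,\mathbf t)$ still satisfies the SIP (as a consecutive subtuple) and still contains every index of $\{0:k-1\}$.
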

\begin{proof}
Since $(a:b, \mathbf{t})$ satisfies the SIP, so does $\mathbf{t}$. 
Therefore, the tuple $\mathbf{t}$ is equivalent to a unique tuple $\mathrm{csf}(\mathbf{t}) = (a_s: b_s, \ldots, a_1: b_1)$ in column standard form by Lemma \ref{SIP-csf}. 
Because $(a:b, \mathbf t)$, and hence $(a:b, \csf(\mathbf t))$, satisfies the SIP, we know that the leftmost appearance of $b+1$ in $\csf(\mathbf t)$ is farther left than any appearance of $b$. 
We know that such appearances actually occur because each index in $ \{0:k-1\}$ appears in $\mathbf t$ at least once. 
It follows that the leftmost appearance of $b+1$ in $\csf(\mathbf t)$ must be of the form $a_j$ for some $1\leq j \leq s$, since otherwise $b$ would occur to the left of this appearance of  $b+1$. 
Moreover, since $(a:b, \mathbf{t})$ satisfies the SIP, all indices in $(a_s:b_s, \ldots, a_{j+1}: b_{j+1})$  are  larger than  $b+1$. 
Therefore, the indices $a,a+1,\hdots,b$ all commute with the indices in $(a_s:b_s,\hdots,a_{j+1}:b_{j+1})$ and we have
\begin{align*}
(a:b,\csf(\mathbf t)) &= (a:b, a_s:b_s,\dots,a_j:b_j,\dots) \\
&\sim (a_s:b_s,\dots,a:b, a_j:b_j,\dots) \\
&= (a_s:b_s,\dots,a:b_j,\dots),
\end{align*}
which implies $\heads(a:b, \mathbf t) = \heads(\mathbf t)$.
\end{proof}

We end this section with a result  that provides some structural properties concerning the block-columns and block-rows of the product of elementary matrices $M_{\mathbf{t}}(\mathcal{X})$ defined in \eqref{prod-elem}.
\begin{lemma}\label{lem: column locations}
Let  $k$ be a positive integer and let $\mathbf t $ be an index tuple  in column standard form with indices from $\{0:k-1\}$.
Let $\mathcal{X} $ be an $n\times n$ matrix assignment for $\mathbf{t}$,
and let us consider the $kn \times kn$ matrix $M_{\mathbf t}(\mathcal{X})$ as a $k\times k$ block-matrix with blocks of size $n\times n$.  
Then, all block-columns of $M_{\mathbf t}(\mathcal{X})$ 
are of the form $e_i \otimes I_n$ (for some $1 \leq i \leq k$) except for the $(k-h)$th block-column for each $h \in \heads(\mathbf t)$.
Moreover, all block-rows of the $kn \times kn$ matrix $M_{\mathbf t}(\mathcal{X})$ 
are of the form $e_i^T \otimes I_n$ (for some $1 \leq i \leq k$) except for the $(k-h)$th block-row for each $h \in \heads(\mathrm{rev}(\mathbf t))$.
\end{lemma}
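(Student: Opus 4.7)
The plan is to establish the block-column assertion by induction on $|\mathbf{t}|$, and then derive the block-row assertion from it via block-transposition. For the row reduction, Lemma \ref{revB} gives $M_{\mathbf{t}}(\mathcal{X})^{\mathcal{B}} = M_{\rev(\mathbf{t})}(\rev(\mathcal{X}))$, and since $\rev(\mathbf{t})$ satisfies the SIP by Remark \ref{remark:on_SIP}, it is equivalent to $\csf(\rev(\mathbf{t}))$ by Lemma \ref{SIP-csf}. Applying the block-column result to $\csf(\rev(\mathbf{t}))$, whose heads are $\heads(\rev(\mathbf{t}))$, and then block-transposing back yields the block-row claim.

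For the block-column induction, the base case $\mathbf{t}=\emptyset$ is immediate since $M_{\mathbf{t}}=I_{kn}$. In the inductive step, because the statement depends only on the equivalence class of $\mathbf{t}$ and $M_{\mathbf{t}}$ is invariant under this equivalence, I may assume $\mathbf{t}=(a_s\!:\!b_s,\ldots,a_1\!:\!b_1)$ is already in column standard form. I pop off the last index $b_1$ and set $\mathbf{t}''=(a_s\!:\!b_s,\ldots,a_1\!:\!(b_1-1))$ when $a_1<b_1$, or $\mathbf{t}''=(a_s\!:\!b_s,\ldots,a_2\!:\!b_2)$ when $a_1=b_1$; in both subcases $\mathbf{t}''$ is in csf and strictly shorter, so the inductive hypothesis applies. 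Writing $M_{\mathbf{t}}=M_{\mathbf{t}''}\cdot M_{b_1}(X_r)$, right-multiplication by $M_{b_1}(X_r)$ is localized: for $b_1>0$ it only alters the block-columns at positions $k-b_1$ and $k-b_1+1$, and for $b_1=0$ it only alters the block-column at position $k$. Concretely in the $b_1>0$ case, the block-column of $M_{\mathbf{t}}$ at position $k-b_1+1$ equals the block-column of $M_{\mathbf{t}''}$ at position $k-b_1$, while the block-column of $M_{\mathbf{t}}$ at position $k-b_1$ equals the block-column of $M_{\mathbf{t}''}$ at position $k-b_1$ multiplied on the right by $X_r$, plus the block-column of $M_{\mathbf{t}''}$ at position $k-b_1+1$.

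The remainder is case-by-case head-tracking, verified column by column. For any $j$ such that the $j$-th block-column of $M_{\mathbf{t}}$ coincides with that of $M_{\mathbf{t}''}$, the IH combined with the head-difference identities $\heads(\mathbf{t}'')=(\heads(\mathbf{t})\setminus\{b_1\})\cup\{b_1-1\}$ when $a_1<b_1$ and $\heads(\mathbf{t}'')=\heads(\mathbf{t})\setminus\{b_1\}$ when $a_1=b_1$ delivers the correct trivial/non-trivial dichotomy. For the block-column of $M_{\mathbf{t}}$ at position $k-b_1+1$, the IH forces the source column of $M_{\mathbf{t}''}$ at position $k-b_1$ to have the form $e_\pi\otimes I_n$ (since $b_1\notin\heads(\mathbf{t}'')$), establishing triviality. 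For the newly-formed block-column of $M_{\mathbf{t}}$ at position $k-b_1$, the summand $e_\pi\otimes X_r$ contributes a fresh $X_r$ entry at block-row $\pi$; by the operation-free property of $M_{\mathbf{t}}$ guaranteed by the SIP on $\mathbf{t}$, this $X_r$ entry must occupy a block-row distinct from every other nonzero entry of the column (else entries would sum and violate operation-freeness), so the column contains an unmodified $X_r$ block and is not of the form $e_i\otimes I_n$. I anticipate the main obstacle to be this head-tracking across the two subcases together with the operation-freeness-based row-disjointness argument for the two summands contributing to the block-column at position $k-b_1$; in particular, confirming in the $a_1=b_1$ subcase that the two trivial source columns $e_\pi\otimes I_n$ and $e_{\pi'}\otimes I_n$ of $M_{\mathbf{t}''}$ satisfy $\pi\neq\pi'$ requires careful use of the SIP rather than the generic nonsingularity of $M_{\mathbf{t}''}$.
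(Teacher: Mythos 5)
Your proof is essentially correct, and it establishes the lemma by a route that differs from the paper's at the level of granularity: the paper inducts on the number of \emph{strings} in $\mathbf{t}$, peeling off the entire last string $(a_1\!:\!b_1)$ at each step and invoking Lemma~\ref{lem: string product} for the block structure of the stripped-off factor; you instead induct on the total number of \emph{indices}, peeling off a single trailing index $b_1$ and using only the block structure of one elementary matrix $M_{b_1}(X_r)$. Your finer-grained peeling forces a case split ($a_1<b_1$ versus $a_1=b_1$) and correspondingly two different head-difference identities for $\heads(\mathbf{t}'')$, whereas the paper's string-level peeling always removes exactly $\{b_1\}$ from $\heads$; your bookkeeping is heavier but, as far as I can trace, correct. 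Both arguments reduce the block-row claim to the block-column claim via Lemma~\ref{revB} in the same way. One point deserves comment: your final paragraph goes beyond the lemma by arguing that the exceptional block-column at position $k-b_1$ is genuinely \emph{not} of the form $e_i\otimes I_n$. That stronger claim is false for an arbitrary matrix assignment $\mathcal{X}$ (for instance, in the $a_1=b_1$ subcase, taking $X_r=0$ collapses $e_\pi\otimes X_r + e_{\pi'}\otimes I_n$ to $e_{\pi'}\otimes I_n$), and the lemma never asserts it — it only asserts that the \emph{non}-exceptional columns are of the trivial form, which is all that the paper proves and all that is used in Theorem~\ref{FP-col-row}. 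The operation-free property you invoke is a statement about the symbolic expansion of the product and does not prevent a particular numerical assignment from causing a collapse. This side argument can simply be dropped, leaving a complete and correct alternative proof.
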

\begin{proof}
The proof proceeds by induction on the number $s$ of strings of the tuple $\mathbf{t}$.
Assume, first, that $s=1$. 
In this case $\mathbf{t}=(a_1:b_1)$, for some $a_1\leq b_1$.
By Lemma \ref{lem: string product}, it is clear that all block-columns of $M_{t}(\mathcal{X})$ are of the form $e_i \otimes I_n$, except for the $(k-b_1)$th block-column. 
Therefore, the desired result for block-columns holds for $s = 1$. 

 Suppose now that the lemma holds for all $s$ less than some $s_0 > 1$ and assume that $\mathbf{t}$ has  $s_0$ strings. 
 Let $\mathbf t =(a_{s_0}: b_{s_0}, \ldots,  a_1 : b_1)$. 
For $1\leq j \leq s_0$, let $\mathbf{t}_j := (a_j: b_j)$  and let $\mathcal{X}_j $ be the matrix assignment for $\mathbf{t}_j$ induced by $\mathcal{X}$.
 Then 
$$M_{\mathbf t}(\mathcal{X}) = M_{\mathbf{t}_{s_0}}(\mathcal{X}_{s_0}) \cdots M_{\mathbf t_1}(\mathcal{X}_1).$$
By the inductive hypothesis, all block-columns of $M_{\mathbf t_{s_0}}(\mathcal{X}_{s_0}) \cdots M_{\mathbf t_{2}}(\mathcal{X}_{2})$, except for the $(k-b_j)$th block-column, for  $2 \leq j \leq s_0$, are of the form  $e_i \otimes I_n$.
Thus,
\[
M_{\mathbf t_{s_0}}(\mathcal{X}_{s_0}) \cdots M_{\mathbf t_{2}}(\mathcal{X}_{2}) = \begin{bmatrix} C & D \end{bmatrix},
\]
where $C$ is formed by the first $k-b_2$ block-columns of $M_{\mathbf t_{s_0}}(\mathcal{X}_{s_0}) \cdots M_{\mathbf t_{2}}(\mathcal{X}_{2})$, and all block-columns of $D$ are of the form $e_i \otimes I_n$. 
Recall that, since $\mathbf{t}$ is a tuple in column standard form, we have $b_{s_0} > b_{s_0-1}> \cdots > b_1$. Thus, from Lemma \ref{lem: string product}, multiplying 
$
M_{\mathbf t_{s_0}}(\mathcal{X}_{s_0}) \cdots M_{\mathbf t_{2}}(\mathcal{X}_{2})$ by $M_{\mathbf t_1}(\mathcal{X}_1)$ leaves unmodified the columns of $C$. Moreover, taking into account the block-structure of $M_{\mathbf{t}_1}(\mathcal{X}_1)$,  all the block-columns of $
M_{\mathbf t_{s_0}}(\mathcal{X}_{s_0}) \cdots M_{\mathbf t_{1}}(\mathcal{X}_{1})$ from the $(k-b_2+1)$th to the last, with the exception of the  $(k-b_1)$th column, are of the form $e_i \otimes I_n$ for some $i$, and the result  for the block-columns follows.


To finish the proof, note that the block-rows of $M_{\mathbf{t}}(\mathcal{X})$ are the block-columns of $M_{\mathbf{t}}(\mathcal{X})^{\mathcal{B}}$, which, by Lemma \ref{revB}, equals $M_{\mathrm{rev}(\mathbf{t})}(\mathrm{rev}(\mathcal{X}))$. 
Thus, the result for block-rows follows from the result for block-columns applied to $M_{\mathbf{t}}(\mathcal{X})^{\mathcal{B}}$.
\end{proof}


\subsection{Fiedler-like pencils families}

In this section we recall the families of Fiedler pencils, generalized Fiedler pencils, Fiedler pencils with repetition, and generalized Fiedler pencils with repetition (see also  \cite{AV04,GFPR,DTDM10,DTDM12,Fiedler03,ant-vol11}).
Notice that in this work these families are defined in terms of the grades of the matrix polynomials they are associated with, in contrast to \cite{AV04,GFPR,ant-vol11}, where they were originally defined in terms of the degrees.

We start by recalling the definition of Fiedler pencils.
Although they have been defined for both square and rectangular matrix polynomials \cite{DTDM12}, here we focus on the square case.
In this case, Fiedler pencils can be defined via elementary matrices.
\begin{definition}{\rm (Fiedler pencils)}
Let $P(\lambda)=\sum_{i=0}^k A_i\lambda^i\in\mathbb{F}[\lambda]^{n\times n}$, and let $\mathbf{q}$ be a permutation of $\{0:k-1\}$.
Then, the \emph{Fiedler pencil} associated with $P(\lambda)$ and $\mathbf{q}$ is
\[
F_{\mathbf{q}}(\lambda):=\lambda M_{-k}^P-M_{\mathbf{q}}^P.
\]
\end{definition} 
\begin{example}\label{ex:FP}
Let $P(\lambda)=\sum_{i=0}^6 A_i\lambda^i \in\mathbb{F}[\lambda]^{n\times n}$, and let $\mathbf{q}=(0,2,4,1,3,5)$.
Then, the pencil
\begin{equation} \label{eq:exampfiedler}
F_\mathbf{q} (\lambda) =
\begin{bmatrix}
\lambda P_6+P_5 & -I_n & 0 & 0 & 0 & 0 \\
P_4 & \lambda I_n & P_3 & -I_n & 0 & 0 \\
-I_n & 0 & \lambda I_n & 0 & 0 & 0 \\
0 & 0 & P_2 & \lambda I_n & P_1 & -I_n \\
0 & 0 & -I_n & 0 & \lambda I_n & 0 \\
0 & 0 & 0 & 0 & P_0 & \lambda I_n
\end{bmatrix}
\end{equation}
is the Fiedler pencil associated with $P(\lambda)$ and $\mathbf{q}$.
This pencil is the Fiedler pencil  considered in Section \ref{sec:informal}.
\end{example}

Next, we introduce the family of generalized Fiedler pencils and proper generalized Fiedler pencils. 
\begin{definition}\label{def:GFP}{\rm (GFP and proper GFP)}
Let $P(\lambda)=\sum_{i=0}^k A_i\lambda^i\in\mathbb{F}[\lambda]^{n\times n}$, let $\{C_0,C_1\}$ be a partition of $\{0:k\}$, where we allow $C_0$ or $C_1$ to be the empty set, and let $\mathbf{q}$ and $\mathbf{z}$ be permutations of $C_0$ and $-C_1$, respectively.
Then, the \emph{generalized Fiedler pencil (GFP)} associated with  $P(\lambda)$ and  $(\mathbf{q},\mathbf{z})$ is 
\[
K_{\mathbf{q},\mathbf{z}}(\lambda):=\lambda M_{\mathbf{z}}^P-M_{\mathbf{q}}^P.
\]
If $0\in C_0$ and $k\in C_1$, then the pencil $K_{\mathbf{q},\mathbf{z}}(\lambda)$ is said to be a \emph{proper generalized Fiedler pencil (proper GFP)} associated with $P(\lambda)$.
\end{definition}

We show in Example \ref{ex:GFP} one proper generalized Fiedler pencil  associated with a matrix polynomial of grade 6.
\begin{example}\label{ex:GFP}
Let $P(\lambda)=\sum_{i=0}^6 A_i\lambda^i \in\mathbb{F}[\lambda]^{n\times n}$, let $\mathbf{z}=(-1,-6,-5)$ and $\mathbf{q}=(3,4,2,0)$.
Then, the pencil
\[
K_{\mathbf{q},\mathbf{z}}(\lambda) = 
\begin{bmatrix}
-I_n & \lambda A_6 & 0 & 0 & 0 & 0 \\
\lambda I_n & \lambda A_5+A_4 & -I_n & 0 & 0 & 0 \\
0 & A_3 & \lambda I_n & A_2 & -I_n & 0 \\
0 & -I_n & 0 & \lambda I_n & 0 & 0 \\
0 & 0 & 0 & -I_n & 0 & \lambda I_n \\
0 & 0 & 0 & 0 & \lambda I_n & \lambda A_1+A_0
\end{bmatrix}
\]
is the proper generalized Fiedler pencil associated with $P(\lambda)$ and $(\mathbf{q},\mathbf{z})$.
This pencil is the proper generalized Fiedler pencil considered in Section \ref{sec:informal}.
\end{example}

\begin{remark}\label{rem:properGFP}
Notice that, for any given  proper GFP $K_{\mathbf{q},\mathbf{z}}(\lambda)$, there is always an index $h\in\{0:k-1\}$ and a tuple $\mathbf{m}$ such that 
\[
K_{\mathbf{q},\mathbf{z}}(\lambda)=\lambda M_{(\mathbf{m},-k:-h-1)}^P-M_{\mathbf{q}}^P,
\]
where $\widehat{\mathbf{q}}:=(-\rev(\mathbf{m}),\mathbf{q})$ is a permutation of $\{0:h\}$.
We call $(\widehat{\mathbf{q}},h)$ the simple pair associated with $K_{\mathbf{q},\mathbf{z}}(\lambda)$.
\end{remark}

We finish the section by recalling the definition of Fiedler pencils with repetition and generalized Fiedler pencils with repetition.
We note that, although we have assigned a special notation to denote Fiedler and generalized Fiedler pencils, we will not do so with FPR and GFPR since their construction involve too many parameters.
\begin{definition}{\rm (FPR and GPFR)}
\label{defGFPR}
Let $P(\lambda)=\sum_{i=0}^k A_i\lambda^i\in\mathbb{F}[\lambda]^{n\times n}$.
Let $h\in\left\{  0:k-1\right\}$, and let ${\mathbf{q}}$ and $\mathbf{z}$ be permutations of $\left\{  0:h\right\}
$ and $\left\{  -k:-h-1\right\}$, respectively. 
Let ${\boldsymbol\ell}_{q}$ and
${\mathbf{r}}_{q}$ be tuples with indices from $\left\{  0:h-1\right\}  $ such
that $({\boldsymbol\ell}_{q},{\mathbf{q,r}}_{q})$ satisfies the SIP. 
Let
${\boldsymbol\ell}_{z}$ and ${\mathbf{r}}_{z}$ be tuples with indices from
$\left\{  -k:-h-2\right\}  $ such that $({\boldsymbol\ell}_{z},{\mathbf{z,r}}_{z})$ satisfies the SIP. 
Let $\mathcal{X},$ $\mathcal{Y},$ $\mathcal{Z}$ and $\mathcal{W}$ be $n\times n$ matrix assignments for ${\boldsymbol\ell}
_{q},$ $\mathbf{r}_{q},$ ${\boldsymbol\ell}_{z}$ and $\mathbf{r}_{z},$ respectively.
Then, the pencil
\begin{equation}\label{fprgen}
L_P(\lambda)=M_{{\boldsymbol\ell}_{q},{\boldsymbol\ell}_{z}}(\mathcal{X},\mathcal{ Z})(\lambda M^P_{\mathbf{z}}
-M^P_{\mathbf{q}})M_{\mathbf{r}_{z},\mathbf{r}_{q}}(\mathcal{W}, \mathcal{Y})
\end{equation}
is called a \emph{generalized Fiedler pencil with repetition} associated with  $P(\lambda).$
When $\mathcal{X}$, $\mathcal{Z},$ $\mathcal{W}$ and $\mathcal{Y}$ are the trivial matrix  assignments for ${\boldsymbol\ell}_q$, ${\boldsymbol\ell}_z$, $\mathbf{r}_z$ and $\mathbf{r}_q$, respectively, associated with $P(\lambda)$, then $L_P(\lambda)$ is called a \emph{Fiedler pencil with repetition}.
\end{definition}
\begin{remark}
If ${\boldsymbol\ell}_q$, ${\boldsymbol\ell}_z$, $\mathbf{r}_z$ and $\mathbf{r}_q$ are all  empty tuples, then $L_P(\lambda)=K_{\mathbf{q},\mathbf{z}}(\lambda)$, that is, $L_P(\lambda)$ is a special type of GFP. 
In particular,  if $\mathbf{z}=(-k)$, then $L_P(\lambda)=F_{\mathbf{q}}(\lambda)$, that is, $L_P(\lambda)$ is a Fiedler pencil. 
Thus, the family of GFPR contains the Fiedler pencils, the FPR, and a subfamily of the GFP.
\end{remark}

We show in Example \ref{ex:FPR} one Fiedler pencil with repetition associated with a matrix polynomial of grade 6.
\begin{example}\label{ex:FPR}
Let $P(\lambda)=\sum_{i=0}^6 A_i\lambda^i \in\mathbb{F}[\lambda]^{n\times n}$, let $\mathbf{z}=(-6:-1)$,  $\mathbf{q}=(0)$ and $\mathbf{r}_z=(-6:-2,-6:-3,-6:-4,-6:-5,-6)$.
Notice that the index tuple $(\mathbf{z},\mathbf{r}_z)$ satisfies the SIP.
Then, the following pencil
\[
(\lambda M^P_{\mathbf{z}}-M^P_{\mathbf{q}})M_{\mathbf{r}_z}^P =
\begin{bmatrix}
0 & 0 & 0 & 0 & -A_6 & \lambda A_6 \\
0 & 0 & 0 & -A_6 & \lambda A_6-A_5 & \lambda A_5 \\
0 & 0 & -A_6 & \lambda A_6-A_5 & \lambda A_5-A_4 & \lambda A_4\\
0 & -A_6 & \lambda A_6-A_5 & \lambda A_5 -A_4 & \lambda A_4-A_3 & \lambda A_3 \\
-A_6 & \lambda A_6-A_5 & \lambda A_5-A_4 & \lambda A_4-A_3 & \lambda A_3-A_2 & \lambda A_2 \\
\lambda A_6 & \lambda A_5 & \lambda A_4 & \lambda A_3 & \lambda A_2 & \lambda A_1+A_0
\end{bmatrix}
\]
is a Fiedler pencil with repetition associated with $P(\lambda)$. 
This pencil is the Fiedler pencil with repetition considered in Section \ref{sec:informal}.
Moreover, it is also one of the pencils in the standard basis of the vector space $\mathbb{DL}(P)$.
\end{example}

We end this section presenting three structural lemmas for GFPR.
But first, taking into consideration the commutativity relations \eqref{commutativity}, notice that the GFPR associated with a matrix polynomial $P(\lambda)$ given by \eqref{fprgen} can be rewritten as follows
\[L(\lambda) = \lambda [M_{\boldsymbol\ell_q, \mathbf{r}_q }(\mathcal{\mathcal{X}}, \mathcal{\mathcal{Y}})(M_{\boldsymbol\ell_z}(\mathcal{\mathcal{Z}}) M^P_{\mathbf{z}}M_{ \mathbf{r}_z}(\mathcal{W}))] - [M_{\boldsymbol\ell_q}(\mathcal{X})M^P_{\mathbf{q}} M_{\mathbf{r}_q}(\mathcal{Y}) M_{\boldsymbol\ell_z, \mathbf{r}_z}(\mathcal{Z},\mathcal{W})].
\]

In Lemma \ref{lem:block_structure}, we give some information about the block-structure of the factors that appear in the matrix coefficients of the pencil above.
This lemma follows immediately just taking into account the range of indices in the tuples $\boldsymbol\ell_q$, $\boldsymbol\ell_z$, $\mathbf{r}_q$, $\mathbf{r}_z$, $\mathbf{z}$ and $\mathbf{q}$, so its proof is omitted.
\begin{lemma}\label{lem:block_structure}
Let 
$L(\lambda)=M_{{\boldsymbol\ell}_{q},{\boldsymbol\ell}_{z}}(\mathcal{X},\mathcal{ Z})(\lambda M^P_{\mathbf{z}}
-M^P_{\mathbf{q}})M_{\mathbf{r}_{z},\mathbf{r}_{q}}(\mathcal{W}, \mathcal{Y})$
be a GFPR associated with a matrix polynomial $P(\lambda)=\sum_{i=0}^k A_i\lambda^i\in\mathbb{F}[\lambda]^{n\times n}$.
Then, the following holds:
\begin{align*}
& M_{\boldsymbol\ell_q, \mathbf{r}_q }(\mathcal{X}, \mathcal{Y})= I_{n(k-h)} \oplus C_{22}, \quad \mbox{for some } C_{22} \in \mathbb{F}^{nh \times nh},\\
&M_{\boldsymbol\ell_z}(\mathcal{Z}) M_{\mathbf{z}}^PM_{ \mathbf{r}_z}(\mathcal{W}) = C_{11} \oplus I_{nh}, \quad \mbox{for some } C_{11} \in \mathbb{F}^{n(k-h) \times n(h-h)},\\
& M_{\boldsymbol\ell_q}(\mathcal{X}) M_{\mathbf{q}}^PM_{ \mathbf{r}_q}(\mathcal{Y}) =I_{n(k-h-1)} \oplus   D_{22} , \quad \mbox{for some } D_{22} \in \mathbb{F}^{n(h+1) \times n(h+1)}, \quad \mbox{and},\\
& M_{\boldsymbol\ell_z, \mathbf{r}_z}(\mathcal{Z}, \mathcal{W})= D_{11} \oplus I_{n(h+1)} , \quad \mbox{for some }D_{11} \in \mathbb{F}^{n(k-h-1) \times n(k-h-1)}.
\end{align*}
\end{lemma}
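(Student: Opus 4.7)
The proof reduces to verifying four block-diagonal decompositions, each of which follows directly from the explicit block-structure of the elementary matrices $M_i(B)$ and $M_{-i}(B)$ recalled in Section \ref{sec:index_tuples}, combined with the trivial observation that a product of matrices sharing a common block-diagonal partition retains that partition.

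For part (a), since both $\boldsymbol\ell_q$ and $\mathbf{r}_q$ only contain indices from $\{0:h-1\}$, the plan is to verify that for every such $i$ the matrix $M_i(B)$ decomposes as $I_{n(k-h)} \oplus \widetilde B_i$ with $\widetilde B_i$ of size $nh \times nh$. For $i=0$, this is immediate from $M_0(B) = I_{(k-1)n} \oplus B$ since $k-1 \geq k-h$. For $i \in \{1:h-1\}$, the non-trivial $2\times 2$ block of $M_i(B)$ lies in block-rows and block-columns $k-i$ and $k-i+1$, which belong to $\{k-h+1:k\}$, so the first $(k-h)n$ rows and columns of $M_i(B)$ are those of the identity. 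Multiplying all such factors produces a matrix of the claimed form, with $C_{22}$ being the corresponding product of the $nh \times nh$ blocks.

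Parts (b), (c), (d) follow by exactly the same bookkeeping with a different cut-off. In (b), every index $-i$ of $(\boldsymbol\ell_z, \mathbf{z}, \mathbf{r}_z)$ satisfies $i \in \{h+1:k\}$; the bottom-right identity block of $M_{-i}(B)$ has size $(i-1)n \geq hn$ (using $M_{-k}(B) = B \oplus I_{(k-1)n}$ for the boundary case $i=k$), so each factor decomposes as a matrix of size $n(k-h)$ direct sum with $I_{nh}$. In (c), the indices of $(\boldsymbol\ell_q, \mathbf{q}, \mathbf{r}_q)$ lie in $\{0:h\}$, and the same inspection as in (a), but with the cut-off shifted by one, shows every factor has the form $I_{n(k-h-1)} \oplus (\text{block of size } n(h+1))$. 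In (d), the indices $-j$ of $(\boldsymbol\ell_z, \mathbf{r}_z)$ satisfy $j \in \{h+2:k\}$, so the bottom-right identity block $I_{(j-1)n}$ of $M_{-j}(B)$ contains $I_{n(h+1)}$, and each factor has the form $(\text{block of size } n(k-h-1)) \oplus I_{n(h+1)}$.

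There is no real obstacle here; the argument is a direct unwinding of the definitions. The only subtleties worth flagging are the degenerate corner cases, namely $h=0$ (which makes $\boldsymbol\ell_q, \mathbf{r}_q$ empty and $C_{22}$ vacuous in (a), while in (c) $D_{22}$ reduces to a single $n \times n$ block corresponding to $M_0^P$) and $h=k-1$ (which makes $\boldsymbol\ell_z, \mathbf{r}_z$ empty in (b) and (d), and $D_{11}$ a $0\times 0$ matrix in (d)). These are handled by interpreting empty products of elementary matrices as $I_{kn}$ and $0\times 0$ identity summands as vacuous, in accordance with the conventions fixed in Section \ref{sec:index_tuples}.
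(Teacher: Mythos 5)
Your proof is correct and is precisely the verification the paper alludes to and omits: the paper simply states that the lemma ``follows immediately just taking into account the range of indices in the tuples,'' and your argument unwinds exactly that, by locating the nontrivial $2\times 2$ block of each elementary factor $M_{\pm i}(B)$ at block-positions $k-i$ and $k-i+1$ and checking that for the indices occurring in each of the four products these positions stay on one side of the relevant cut-off. The index-range checks are all accurate, the boundary factors $M_0(B)$ and $M_{-k}(B)$ are handled separately as needed, and the degenerate cases $h=0$ and $h=k-1$ are correctly identified and resolved.
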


In the proof of Theorem \ref{thm:main_GFPR} (one of the main results in this paper), we will also use Lemma \ref{lem:structure}, where we present an important structural result for the GFPR, which follows from Lemma \ref{lem:block_structure}.
\begin{lemma}\label{lem:structure}{\rm \cite[Theorem 5.3]{GFPR}}
Let $P(\lambda)$ be an $n\times n$ matrix polynomial of grade $k$ and let
$$L_P(\lambda):=\lambda L_1-L_0=M_{\boldsymbol\ell_q,\boldsymbol\ell_z}(\mathcal{X},\mathcal{Z})(\lambda M_{\mathbf{z}}^P-M_{\mathbf{q}}^P) M_{\mathbf{r}_z,\mathbf{r}_q}(\mathcal{W},\mathcal{Y})$$
be a GFPR associated with $P(\lambda)$. 
Then:
\begin{enumerate}
\item[\rm(a)] $L_{1}=\diag(C_{11},C_{22})$, where $C_{11}$ is a $(k-h)\times(k-h)$ block-matrix which contains the blocks in the matrix assignments for the tuples $\mathbf{z}$,
$\boldsymbol\ell_{z}$, and $\mathbf{r}_{z}$, and $C_{22}$ is an $h\times h$ block-matrix which contains the blocks in the matrix assignments for the tuples $\boldsymbol\ell_{q}$ and
$\mathbf{r}_{q}$.
\item[\rm(b)] $L_{0}=\diag(D_{11},D_{22})$, where $D_{11}$ is a $(k-h-1)\times (k-h-1)$ block-matrix which contains the blocks in the matrix assignments for the tuples $\boldsymbol\ell_{z}$ and $\mathbf{r}_{z}$, and $D_{22}$ is a $(h+1)\times(h+1)$
block-matrix which contains the blocks in the matrix assignments for the tuples $\mathbf{q}$,
$\boldsymbol\ell_{q}$, and $\mathbf{r}_{q}$.
\end{enumerate}
\end{lemma}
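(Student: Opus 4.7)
The plan is to expand the product that defines $L_P(\lambda)$ and read off the matrix coefficients
\begin{align*}
L_1 &= M_{\boldsymbol\ell_q, \boldsymbol\ell_z}(\mathcal{X},\mathcal{Z})\, M_{\mathbf{z}}^P\, M_{\mathbf{r}_z,\mathbf{r}_q}(\mathcal{W},\mathcal{Y}), \\
L_0 &= M_{\boldsymbol\ell_q, \boldsymbol\ell_z}(\mathcal{X},\mathcal{Z})\, M_{\mathbf{q}}^P\, M_{\mathbf{r}_z,\mathbf{r}_q}(\mathcal{W},\mathcal{Y}).
\end{align*}
Then, in each of these two products separately, I would rearrange the elementary factors so that every $\mathbf{q}$-side factor (indices in $\{0:h\}$) ends up grouped together, separated from every $\mathbf{z}$-side factor (indices in $\{-k:-h-1\}$). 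Once that separation is achieved, Lemma \ref{lem:block_structure} identifies each grouped product as a direct sum of a nontrivial block with an identity, and the desired block-diagonal forms in (a) and (b) drop out by a single matrix multiplication.

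To execute (a), I would first write $M_{\boldsymbol\ell_q, \boldsymbol\ell_z}(\mathcal{X},\mathcal{Z}) = M_{\boldsymbol\ell_q}(\mathcal{X})M_{\boldsymbol\ell_z}(\mathcal{Z})$ and $M_{\mathbf{r}_z,\mathbf{r}_q}(\mathcal{W},\mathcal{Y}) = M_{\mathbf{r}_z}(\mathcal{W})M_{\mathbf{r}_q}(\mathcal{Y})$. Because $\boldsymbol\ell_q$ and $\mathbf{r}_q$ have indices in $\{0:h-1\}$ while $\boldsymbol\ell_z$, $\mathbf{z}$ and $\mathbf{r}_z$ have indices in $\{-k:-h-1\}$, absolute values differ by at least $2$ in every such pair, so the commutativity relation \eqref{commut} of Remark \ref{commutativity} applies freely. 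Pushing $M_{\boldsymbol\ell_q}(\mathcal{X})$ rightward past $M_{\boldsymbol\ell_z}(\mathcal{Z})M_{\mathbf{z}}^P$, and $M_{\mathbf{r}_z}(\mathcal{W})$ leftward past $M_{\mathbf{r}_q}(\mathcal{Y})$, yields
\[
L_1 = \bigl[M_{\boldsymbol\ell_z}(\mathcal{Z}) M_{\mathbf{z}}^P M_{\mathbf{r}_z}(\mathcal{W})\bigr]\, \bigl[M_{\boldsymbol\ell_q,\mathbf{r}_q}(\mathcal{X},\mathcal{Y})\bigr] = (C_{11}\oplus I_{nh})\,(I_{n(k-h)}\oplus C_{22}) = \mathrm{diag}(C_{11},C_{22})
\]
by Lemma \ref{lem:block_structure}, with block sizes matching since $(k-h)+h=k$.

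For (b), the $\mathbf{q}$-side range widens to $\{0:h\}$ while the $\mathbf{z}$-side range for $\boldsymbol\ell_z$ and $\mathbf{r}_z$ is $\{-k:-h-2\}$, so absolute values still differ by at least $2$. Hence $M_{\mathbf{q}}^P$ commutes with every factor of $M_{\boldsymbol\ell_z}(\mathcal{Z})$ and $M_{\mathbf{r}_z}(\mathcal{W})$, and $M_{\boldsymbol\ell_q, \mathbf{r}_q}$ commutes with $M_{\boldsymbol\ell_z, \mathbf{r}_z}$. The same kind of rearrangement produces
\[
L_0 = \bigl[M_{\boldsymbol\ell_q}(\mathcal{X}) M_{\mathbf{q}}^P M_{\mathbf{r}_q}(\mathcal{Y})\bigr]\,\bigl[M_{\boldsymbol\ell_z,\mathbf{r}_z}(\mathcal{Z},\mathcal{W})\bigr] = (I_{n(k-h-1)}\oplus D_{22})\,(D_{11}\oplus I_{n(h+1)}) = \mathrm{diag}(D_{11},D_{22}),
\]
again by Lemma \ref{lem:block_structure}, now with $(k-h-1)+(h+1)=k$.

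The one step demanding genuine attention is the index bookkeeping. The delicate pair is $h\in\mathbf{q}$ and $-(h+1)\in\mathbf{z}$: their absolute values differ by exactly $1$, so the corresponding elementary matrices do \emph{not} commute in general. This would be an obstruction if $\mathbf{q}$ and $\mathbf{z}$ had to be grouped together, but they do not, since $\mathbf{q}$ contributes only to $L_0$ and $\mathbf{z}$ only to $L_1$; the two indices therefore never need to be swapped past each other. Once this point is understood, the identification of which matrix assignments populate $C_{11}$, $C_{22}$, $D_{11}$ and $D_{22}$ follows directly from the description of the nontrivial blocks in Lemma \ref{lem:block_structure}.
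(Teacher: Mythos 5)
Your proof is correct and carries out exactly the derivation the paper alludes to when it says Lemma~\ref{lem:structure} ``follows from Lemma~\ref{lem:block_structure}'' (the paper itself only cites \cite[Theorem 5.3]{GFPR} rather than spelling out the argument). The one small expository slip is the claim that one must push $M_{\mathbf{r}_z}(\mathcal{W})$ leftward past $M_{\mathbf{r}_q}(\mathcal{Y})$: it is already on the left, so the only commutation actually needed in part (a) is moving $M_{\boldsymbol\ell_q}(\mathcal{X})$ rightward past the $\mathbf{z}$-block; this does not affect the correctness of your final identity.
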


As an immediate consequence of Lemma  \ref{lem:structure}, we obtain Lemma \ref{lem:splitting}, which will be one of our main tools in future sections.
\begin{lemma}\label{lem:splitting}
Let $P(\lambda)=\sum_{i=0}^k A_i\lambda^i\in\mathbb{F}[\lambda]^{n\times n}$ be a matrix polynomial of  grade $k$.
 Let $h\in \{0:k-1\}$ and let $\mathbf{q}$ be a permutation of $\{0:h\}$. 
 Let $\mathbf{z}$ be a permutation of $\{-k: -h-1\}$ and let $L_P(\lambda)$ be a GFPR as in \eqref{fprgen}.
Then, the pencil $L_P(\lambda)$ can be partitioned as
\[
L_P(\lambda) = \left[ \begin{array}{c|c|c} D_\mathbf{z}(\lambda) & y_\mathbf{z}(\lambda) & 0 \\ \hline x_\mathbf{z}(\lambda) & c(\lambda) & x_\mathbf{q}(\lambda) \\ \hline 0 & y_\mathbf{q}(\lambda) & D_\mathbf{q}(\lambda)   \end{array} \right ],
\]
where $ D_\mathbf{q}(\lambda) \in \mathbb{F}[\lambda]^{nh\times nh}$, $D_\mathbf{z}(\lambda)\in \mathbb{F}[\lambda]^{n(k-h-1) \times n(k-h-1)}$, $c(\lambda)\in\mathbb{F}[\lambda]^{n\times n}$, $x_\mathbf{z}(\lambda)\in \mathbb{F}[\lambda]^{n \times n(k-h-1)}$, $x_\mathbf{q}(\lambda) \in \mathbb{F}[\lambda]^{n \times nh}$, $y_\mathbf{z}(\lambda)\in \mathbb{F}[\lambda]^{n(k-h-1)\times n}$ and $y_\mathbf{q}(\lambda) \in \mathbb{F}[\lambda]^{nh \times n}$,
and where the pencils
\[
F(\lambda) := \left[ \begin{array}{cc} c(\lambda) & x_q(\lambda) \\ y_q(\lambda) & D_q(\lambda)\end{array} \right], \quad \mbox{and} \quad G(\lambda) := \left[ \begin{array}{cc} D_z(\lambda) & y_z(\lambda) \\ x_z(\lambda) & c(\lambda) \end{array} \right],
 \]
are GFPR associated with $Q(\lambda):= \lambda^{h+1} A_{h+1} + \lambda^h A_h + \cdots + \lambda A_1 + A_0$ and $Z(\lambda):= \lambda^{k-h} A_{k} + \lambda^{k-h-1}  A_{k-1} + \cdots +  \lambda A_{h+1}+ A_h$, respectively.
More precisely, we have
\[
F(\lambda) = M_{\boldsymbol\ell_q}(\mathcal{X}) ( \lambda M^Q_{-h-1} - M^Q_{\mathbf{q}}) M_{\mathbf{r}_q}(\mathcal{Y})
\]
and
\[
G(\lambda) = M_{\boldsymbol\ell_z}(\mathcal{Z}) ( \lambda M^Z_{h + \mathbf{z}} - M^Z_{0}) M_{\mathbf{r}_z}(\mathcal{W}).
\]
\end{lemma}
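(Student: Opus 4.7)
The plan is to derive the $3\times 3$ block structure of $L_P(\lambda)$ from Lemma \ref{lem:structure}, and then to identify the two overlapping $2\times 2$ sub-pencils with GFPRs for $Z(\lambda)$ and $Q(\lambda)$ by restricting each factor of the defining product of $L_P(\lambda)$ to the appropriate block-positions.

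First, I would apply Lemma \ref{lem:structure} to obtain $L_1=C_{11}\oplus C_{22}$ with split at block-column $k-h$ and $L_0=D_{11}\oplus D_{22}$ with split at block-column $k-h-1$. Partitioning the $kn\times kn$ pencil $L_P(\lambda)=\lambda L_1-L_0$ into $3\times 3$ blocks with row/column sizes $(n(k-h-1),\,n,\,nh)$, the two different split points force the $(1,3)$ and $(3,1)$ blocks of both $L_1$ and $L_0$ to vanish, yielding the claimed partition with zero corners.

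Next, I would exploit the fact that the indices in $\boldsymbol\ell_q,\mathbf{r}_q\subseteq\{0{:}h-1\}$ and those in $\boldsymbol\ell_z,\mathbf{r}_z\subseteq\{-k{:}-h-2\}$ have absolute values differing by at least $3$, so the two groups of elementary matrices commute. This lets me rewrite $M_{\boldsymbol\ell_q,\boldsymbol\ell_z}(\mathcal X,\mathcal Z)=M_{\boldsymbol\ell_z}(\mathcal Z)\,M_{\boldsymbol\ell_q}(\mathcal X)$, which is itself block-diagonal of the form $\tilde A_z\oplus I_n\oplus \tilde A_q$ under the split $(n(k-h-1),n,nh)$; analogously for the right factor. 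Consequently, the upper-left $(k-h)\times(k-h)$ sub-block of $L_P(\lambda)$ equals $(\tilde A_z\oplus I_n)\cdot N\cdot(\tilde B_z\oplus I_n)$, where $N$ is the upper-left $(k-h)\times(k-h)$ sub-block of $\lambda M^P_{\mathbf z}-M^P_{\mathbf q}$. A direct inspection of the elementary matrices shows that for $\ell\in\{-k{:}-h-1\}$ the restriction of $M^P_\ell$ to the first $k-h$ block-positions coincides with $M^Z_{h+\ell}$ in the $Z$-world of grade $k-h$, so the restriction of $M^P_{\mathbf z}$ is $M^Z_{h+\mathbf z}$; likewise the padded restrictions $\tilde A_z\oplus I_n$ and $\tilde B_z\oplus I_n$ match the factors $M^Z_{h+\boldsymbol\ell_z}(\mathcal Z)$ and $M^Z_{h+\mathbf r_z}(\mathcal W)$ appearing in $G(\lambda)$.

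The main obstacle is to show that the restriction of $-M^P_{\mathbf q}$ to the upper-left $(k-h)\times(k-h)$ equals $-M^Z_0 = -I_{(k-h-1)n}\oplus A_h$. Since every $M^P_q$ with $q\in\{0{:}h\}$ acts as the identity on the first $k-h-1$ block-positions, this reduces to proving $[M^P_{\mathbf q}]_{k-h,k-h}=-A_h$. For this I would write $\csf(\mathbf q)=(a_s{:}h,a_{s-1}{:}b_{s-1},\dots,a_1{:}b_1)$, using that $h$ is the maximum index of $\mathbf q$ and therefore lies in $\heads(\mathbf q)$. Each elementary matrix $M^P_q$ with $q<h$ has its $(k-h)$-th block-column equal to $e_{k-h}\otimes I_n$, since position $(k-h,k-h)$ lies in its identity portion; hence the tail product $M^P_{(a_{s-1}{:}b_{s-1},\dots,a_1{:}b_1)}$ has the same property, so $[M^P_{\mathbf q}]_{k-h,k-h}$ coincides with $[M^P_{(a_s{:}h)}]_{k-h,k-h}$, which Lemma \ref{lem: string product} identifies as $-A_h$. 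An entirely symmetric argument on the lower-right $(h+1)\times(h+1)$ sub-block, dualised via the block-transpose identity of Lemma \ref{revB} to transport the analogous boundary-block computation from columns to rows, identifies that sub-block as the GFPR $F(\lambda)$ of $Q(\lambda)$, completing the proof.
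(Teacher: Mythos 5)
Your proof is correct and is exactly the route the paper has in mind; the authors simply declare the lemma an ``immediate consequence'' of Lemma~\ref{lem:structure} without supplying details, and your argument fills them in properly. You obtain the zero $(1,3)$ and $(3,1)$ corners from the mismatched block-diagonal splits of $L_1$ ($=C_{11}\oplus C_{22}$, split at $k-h$) and $L_0$ ($=D_{11}\oplus D_{22}$, split at $k-h-1$), you correctly use the commutation of the $\boldsymbol\ell_q$-group with the $\boldsymbol\ell_z$-group to split the outer factors into $\tilde A_z\oplus I_n\oplus\tilde A_q$ (a fact which is essentially Lemma~\ref{lem:block_structure}), and the identification of the boundary block $[M^P_{\mathbf q}]_{k-h,k-h}=-A_h$ via the first string $(a_s{:}h)$ of $\csf(\mathbf q)$ and Lemma~\ref{lem: string product} is sound. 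Two small remarks: the same boundary computation admits a slightly shorter route that avoids $\csf$ entirely (in $M^P_{\mathbf q}$ the single factor $M^P_h$ is the only one that touches block position $k-h$, and all other factors have $e_{k-h}\otimes I_n$ as their $(k-h)$th block-column, so $[M^P_{\mathbf q}]_{k-h,k-h}=[M^P_h]_{k-h,k-h}=-A_h$ directly); and for the lower-right block the analogous computation $[M^P_{\mathbf z}]_{k-h,k-h}=A_{h+1}$ is most transparently done directly with the single factor $M^P_{-h-1}$ in the same way, which is simpler than invoking the block-transpose identity of Lemma~\ref{revB}.
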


\section{Matrix pencils block-permutationally equivalent to extended block Kronecker pencils}

As we have mentioned before, the main goal of the paper is to prove that almost all Fiedler-like pencils  are  block-permutationally equivalent to an extended block Kronecker pencil.
To help us with this task,  we introduce in this section some useful notation and some technical results concerning pencils that are block-permutationally equivalent to block Kronecker pencils or to extended block Kronecker pencils.

\begin{definition}\label{def:permutationally_eq}
Let $L(\lambda)\in\mathbb{F}[\lambda]^{kn\times kn}$ be a $k\times k$ block-pencil with block-entries of size $n\times n$.  
Assume that there exist block-permutation matrices  $\Pi_{\boldsymbol\ell}^n$ and $\Pi_{\mathbf{r}}^n$  such that $C(\lambda):=(\Pi_{\boldsymbol\ell}^n)^{\mathcal{B}} L(\lambda) \Pi_{\mathbf{r}}^n$ is an extended $(p,n,q,n)$-block Kronecker pencil with $k=p+q+1$, partitioned as in \eqref{LDP}.
We call $M(\lambda)$ the \emph{body of $L(\lambda)$ }relative to $(\boldsymbol\ell,\mathbf{r}, p, q)$, and we call the \emph{body block-rows} (resp. \emph{body block-columns}) of $L(\lambda)$ relative to $(\boldsymbol\ell,{\mathbf{r}}, p, q)$ the block-rows (resp. block-columns) of $L(\lambda)$ that, after the permutations, occupy the first $q+1$ (resp. $p+1$) block-rows (resp. block-columns) of $C(\lambda)$. 
Additionally, we call the \emph{wing block-rows} (resp. 
\emph{wing block-columns}) of $L(\lambda)$ relative to $(\boldsymbol\ell,\mathbf{r}, p, q)$ the block-rows (resp. block-columns) of $L(\lambda)$ that are not body block-rows (resp. body block-columns) relative to $(\boldsymbol\ell,{\mathbf{r}}, p, q)$. 
\end{definition}

In the following example we illustrate all the notions introduced in  Definition \ref{def:permutationally_eq}.
\begin{example}\label{ex:permutationally_eq}
 Let us consider again the proper GFP associated with the polynomial $P(\lambda)=\sum_{i=0}^6 A_i\lambda^i\in\mathbb{F}[\lambda]^{n\times n}$ considered in Section \ref{sec:informal}, that is, the pencil 
\[
 K_{\mathbf{q},\mathbf{z}}(\lambda)=
\begin{bmatrix}
-I_n & \lambda A_6 & 0 & 0 & 0 & 0 \\
\lambda I_n & \lambda A_5+A_4 & -I_n & 0 & 0 & 0 \\
0 & A_3 & \lambda I_n & A_2 & -I_n & 0 \\
0 & -I_n & 0 & \lambda I_n & 0 & 0 \\
0 & 0 & 0 & -I_n & 0 & \lambda I_n \\
0 & 0 & 0 & 0 & \lambda I_n & \lambda A_1+A_0
\end{bmatrix}.
\]
 As we showed, we can transform  $K_{\mathbf{q},\mathbf{z}}(\lambda)$ into an (extended) block Kronecker pencil via  block-row and block-column  permutations, denoted by  $\Pi_{\boldsymbol\ell_2}^n$ and $\Pi_{\mathbf{r_2}}^n$, to obtain
\[
(\Pi_{\boldsymbol\ell_2}^n)^{\mathcal{B}} K_{\mathbf{q},\mathbf{z}}(\lambda) \Pi_{\mathbf{r_2}}^n =
\left[\begin{array}{ccc|ccc}
\lambda A_6 & 0 & 0 & -I_n & 0 & 0 \\
\lambda A_5+A_4 & 0 & 0 & \lambda I_n & -I_n & 0 \\
A_3 & A_2 & 0 & 0 & \lambda I_n & -I_n \\
0 & 0 & \lambda A_1+A_0 & 0 & 0 & \lambda I_n \\ \hline 
-I_n & \lambda I_n & 0 & 0 & 0 & 0 \\
0 & -I_n & \lambda I_n & 0 & 0 & 0
\end{array}\right],
\]
which is a (2,$n$,3,$n$)-block Kronecker pencil.

Note that the first, second, third and sixth block-rows of  $K_{\mathbf{q},\mathbf{z}}(\lambda)$ are its body block-rows relative to $(\mathbf{\boldsymbol\ell},\mathbf{r},2,3)$, and the second, fourth and sixth block-columns of $K_{\mathbf{q},\mathbf{z}}(\lambda)$ are its body block-columns relative to $(\boldsymbol\ell,\mathbf{r},2,3)$.
Moreover, the fourth and fifth block-rows of  $K_{\mathbf{q},\mathbf{z}}(\lambda)$ are its wing block-rows relative to $(\boldsymbol\ell,\mathbf{r},2,3)$, and the first, third, and fifth block-columns of  $K_{\mathbf{q},\mathbf{z}}(\lambda)$ are its wing block-columns relative to $(\boldsymbol\ell,\mathbf{r},2,3)$.
\end{example}

\begin{remark}\label{rem:permutationally_1}
Let $L(\lambda)\in\mathbb{F}[\lambda]^{kn\times kn}$ be a matrix pencil  block-permutationally equivalent to an extended $(p,n,q,n)$-block Kronecker pencil with $p+q+1=k$, and let us denote by $\Pi_{\mathbf{r}}^n$ and $\Pi_{\boldsymbol\ell}^n$ the block-permutation matrices that transform the pencil $L(\lambda)$ in the extended $(p,n,q,n)$-block Kronecker pencil $(\Pi_{\boldsymbol\ell}^n)^\mathcal{B}L(\lambda)\Pi_{\mathbf{r}}^n$.
Let $\{r_1,\hdots,r_p\}$, $\{r_{p+1},\hdots,r_k\}$, $\{c_1,\hdots,c_q\}$, $\{c_{q+1},\hdots,c_k\}$ be, respectively, the sets of  positions of the wing block-rows, body block-rows, wing block-columns and body block-columns of $L(\lambda)$, all relative to $(\boldsymbol\ell,\mathbf{r},p,q)$, and let $L_1(\lambda):= (\Pi_{\boldsymbol\ell}^n)^\mathcal{B} L(\lambda)$ and $L_2(\lambda):=L(\lambda)\Pi_{\mathbf{r}}^n$.
Then, the following  simple observations will be used freely.
\begin{itemize}
\item[\rm (i)] The pencils $L_1(\lambda)$ and $L_2(\lambda)$ are  block-permutationally equivalent to the extended $(p,n,q,n)$-block Kronecker pencil $(\Pi_{\boldsymbol\ell}^n)^\mathcal{B}L(\lambda)\Pi_{\mathbf{r}}^n$.
\item[\rm (ii)] The sets $\{1:q+1\}$ and $\{q+2:k\}$ are, respectively, the sets of  positions of the  body block-rows and the wing block-rows of $L_1(\lambda)$, both relative to $(\boldsymbol{\mathrm{id}},\mathbf{r},p,q)$.
\item[\rm (iii)] The sets $\{c_1,\hdots,c_q\}$, $\{c_{q+1},\hdots,c_k\}$ are, respectively, the sets of  positions of the wing block-columns and the  body block-columns of $L_1(\lambda)$, both relative to $(\boldsymbol{\mathrm{id}},\mathbf{r},p,q)$.
\item[\rm (iv)] The sets $\{1:p+1\}$ and $\{p+2:k\}$ are, respectively, the sets of  positions of the body block-columns and the  wing block-columns of $L_2(\lambda)$, both relative to $(\boldsymbol\ell,\boldsymbol{\mathrm{id}},p,q)$.
\item[\rm (v)] The sets $\{r_1,\hdots,r_p\}$, $\{r_{p+1},\hdots,r_k\}$ are, respectively, the sets of  positions of the wing block-rows and the body block-rows of $L_2(\lambda)$, both relative to $(\boldsymbol\ell,\boldsymbol{\mathrm{id}},p,q)$.
\end{itemize}
\end{remark}

The following two lemmas will be useful for the proofs of the main results of this  paper.
Their simple proofs are omitted.
\begin{lemma}\label{lem:permutationally_2}
Let $L_1(\lambda),L_2(\lambda)\in\mathbb{F}[\lambda]^{kn\times kn}$ be  matrix pencils, and assume that there exist block-permutation matrices $\Pi_{\mathbf{r}}^n$, $\Pi_{\boldsymbol\ell_1}^n$ and $\Pi_{\boldsymbol\ell_2}^n$ such that $(\Pi_{\boldsymbol\ell_1}^n)^\mathcal{B}L_1(\lambda)\Pi_{\mathbf{r}}^n$ and $(\Pi_{\boldsymbol\ell_2}^n)^\mathcal{B}L_2(\lambda)\Pi_{\mathbf{r}}^n$ are both extended $(p,n,q,n)$-block Kronecker pencils.
Then, the sets of  positions of the body block-columns and the wing block-columns of $L_1(\lambda)$ relative to $(\boldsymbol\ell_1,\mathbf{r},p,q)$ equal, respectively,  the sets of  positions of the body block-columns and the wing block-columns of $L_2(\lambda)$ relative to $(\boldsymbol\ell_2,\mathbf{r},p,q)$.
\end{lemma}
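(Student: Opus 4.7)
The plan is to observe that the set of body (respectively wing) block-columns of a pencil $L(\lambda)$ relative to $(\boldsymbol\ell,\mathbf{r},p,q)$ is determined entirely by the right-permutation $\mathbf{r}$, with no reference to $L(\lambda)$ itself or to the left-permutation $\boldsymbol\ell$. Since by hypothesis the same $\Pi_{\mathbf{r}}^n$ is used on the right for both $L_1(\lambda)$ and $L_2(\lambda)$, the two column-position sets must coincide.

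More concretely, recall from Definition \ref{def:permutationally_eq} that a block-column of $L(\lambda)$ is declared a body (resp. wing) block-column relative to $(\boldsymbol\ell,\mathbf{r},p,q)$ precisely when, after postmultiplication by $\Pi_{\mathbf{r}}^n$, it occupies one of the first $p+1$ (resp. last $q$) block-column positions of $(\Pi_{\boldsymbol\ell}^n)^{\mathcal{B}} L(\lambda)\Pi_{\mathbf{r}}^n$. Since left multiplication by $(\Pi_{\boldsymbol\ell}^n)^{\mathcal{B}}$ only permutes block-rows and leaves the block-column positions unchanged, the classification of block-columns depends only on where the block-permutation matrix $\Pi_{\mathbf{r}}^n$ sends each block-column. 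Writing $\mathbf{r}=(r_1,\ldots,r_k)$, Definition \ref{block-perm} shows that the $j$th block-column of $L(\lambda)$ is moved to block-column position $r_j$ of $L(\lambda)\Pi_{\mathbf{r}}^n$; hence the set of body (resp. wing) block-column positions of $L(\lambda)$ relative to $(\boldsymbol\ell,\mathbf{r},p,q)$ is exactly
\[
\{\,j\in\{1{:}k\}:r_j\in\{1{:}p+1\}\,\}\quad(\text{resp.}\ \{\,j\in\{1{:}k\}:r_j\in\{p+2{:}k\}\,\}),
\]
an expression depending solely on $\mathbf{r}$.

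Applying this identity once to $L_1(\lambda)$ with left-permutation $\boldsymbol\ell_1$ and once to $L_2(\lambda)$ with left-permutation $\boldsymbol\ell_2$ yields the same two sets in both cases, which is the required conclusion. There is essentially no obstacle here: the lemma is a bookkeeping consequence of the definition once one separates the roles of the left and right block-permutation matrices.
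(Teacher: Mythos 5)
Your proof is correct and identifies the right observation: the classification of block-columns of $L(\lambda)$ as body or wing relative to $(\boldsymbol\ell,\mathbf{r},p,q)$ depends only on the right block-permutation $\Pi_{\mathbf{r}}^n$, since left multiplication by $(\Pi_{\boldsymbol\ell}^n)^{\mathcal{B}}$ permutes block-rows but does not move any block-column. The paper omits the proof as ``simple,'' and your argument is the expected one. One minor indexing remark: with the convention of Definition~\ref{block-perm}, $\Pi_{\mathbf{r}}^n$ carries the $r_i$th block-column of $L(\lambda)$ to position $i$ of $L(\lambda)\Pi_{\mathbf{r}}^n$, so the set of body block-column positions is $\{r_1,\dots,r_{p+1}\}$ rather than $\{\,j:r_j\in\{1{:}p+1\}\,\}$; both expressions depend only on $\mathbf{r}$, so the conclusion is unaffected, but the stated formula inverts the permutation.
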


\begin{lemma}\label{lem:permutationally_3}
Let $L_1(\lambda),L_2(\lambda)\in\mathbb{F}[\lambda]^{kn\times kn}$ be matrix pencils, and assume that there exist block-permutation matrices $\Pi_{\mathbf{r}_1}^n$, $\Pi_{\mathbf{r}_2}^n$ and $\Pi_{\boldsymbol\ell}^n$ such that $(\Pi_{\boldsymbol\ell}^n)^\mathcal{B}L_1(\lambda)\Pi_{\mathbf{r}_1}^n$ and $(\Pi_{\boldsymbol\ell}^n)^\mathcal{B}L_2(\lambda)\Pi_{\mathbf{r_2}}^n$ are both extended $(p,n,q,n)$-block Kronecker pencils.
Then, the sets of  positions of the body block-rows and the wing block-rows of $L_1(\lambda)$ relative to $(\boldsymbol\ell,\mathbf{r}_1,p,q)$ equal, respectively,  the sets of  positions of the body block-rows and the wing block-rows of $L_2(\lambda)$ relative to $(\boldsymbol\ell,\mathbf{r}_2,p,q)$.
\end{lemma}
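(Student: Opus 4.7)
The plan is to observe that the body/wing block-row positions of a matrix pencil $L(\lambda)$ relative to $(\boldsymbol\ell, \mathbf{r}, p, q)$ are completely determined by the left block-permutation $\boldsymbol\ell$ (together with the integer $q$) and are independent of the right block-permutation $\mathbf{r}$. Since the hypothesis provides the same $\boldsymbol\ell$ for both $L_1(\lambda)$ and $L_2(\lambda)$, and the pairs $(p,q)$ coincide by assumption, the conclusion is then immediate. This is the exact analog, exchanging rows and columns, of the argument underlying Lemma \ref{lem:permutationally_2}.

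To make this precise, I would first unpack the action of left multiplication by $(\Pi_{\boldsymbol\ell}^n)^{\mathcal{B}}$. Writing $\boldsymbol\ell = (\ell_1, \ldots, \ell_k)$, Definition \ref{block-perm} gives that the $i$-th block-row of $(\Pi_{\boldsymbol\ell}^n)^{\mathcal{B}}$ is $e_{\ell_i}^T \otimes I_n$; consequently, for any $L(\lambda)$, the $i$-th block-row of $(\Pi_{\boldsymbol\ell}^n)^{\mathcal{B}} L(\lambda)$ equals the $\ell_i$-th block-row of $L(\lambda)$. Subsequent right-multiplication by $\Pi_{\mathbf{r}}^n$ merely permutes the columns within each block-row, so the identity of each block-row (viewed as the collection of its blocks up to a column rearrangement) is unaffected.

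Applying this to the extended block Kronecker pencil $C_i(\lambda) := (\Pi_{\boldsymbol\ell}^n)^{\mathcal{B}} L_i(\lambda) \Pi_{\mathbf{r}_i}^n$ (for $i = 1, 2$), the body block-rows of $L_i(\lambda)$ relative to $(\boldsymbol\ell, \mathbf{r}_i, p, q)$ are, by Definition \ref{def:permutationally_eq}, precisely the block-rows of $L_i(\lambda)$ that end up in positions $1, 2, \ldots, q+1$ of $C_i(\lambda)$. By the observation of the previous paragraph, these are exactly the block-rows of $L_i(\lambda)$ located at positions $\ell_1, \ell_2, \ldots, \ell_{q+1}$. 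Similarly, the wing block-rows of $L_i(\lambda)$ relative to $(\boldsymbol\ell, \mathbf{r}_i, p, q)$ sit at positions $\ell_{q+2}, \ldots, \ell_k$. Both sets depend only on $\boldsymbol\ell$ and $q$ and not on $\mathbf{r}_i$; since $\boldsymbol\ell$ and $q$ are shared between $L_1(\lambda)$ and $L_2(\lambda)$, the two sets coincide in each case, proving the lemma.

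There is essentially no obstacle here; the statement is a bookkeeping lemma whose proof reduces to the observation that left and right block-permutations act independently on rows and columns respectively. Indeed, the excerpt explicitly notes that the proofs of Lemma \ref{lem:permutationally_2} and Lemma \ref{lem:permutationally_3} are omitted because they are simple, and the argument above merely formalizes that simplicity.
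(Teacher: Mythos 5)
Your proof is correct and is the intended argument: the paper omits the proof as ``simple,'' and your observation that right block-permutations do not affect which block-row of $L(\lambda)$ lands in which position of $C(\lambda)$, so the row classification depends only on $\boldsymbol\ell$ and $q$, is precisely the bookkeeping the authors had in mind.
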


In the next two subsections we include two  technical lemmas that show how the multiplication of pencils block-permutationally equivalent to an (extended) block Kronecker pencil by elementary matrices, in certain situations, produce new pencils permutationally equivalent to an extended block Kronecker pencil.

\subsection{Product by elementary matrices associated with negative indices}

The proof of the main result for GFP (Theorem \ref{thm:informal1}  or Theorem \ref{thm:main:GFP}) requires one key auxiliary result. 
This result is Lemma \ref{lem:left_multiplication2}, which studies the effect of left-multiplications by elementary matrices associated with negative indices in some relevant situations.

\begin{lemma}\label{lem:left_multiplication2}
Let $P(\lambda)\in\mathbb{F}[\lambda]^{n\times n}$ be a matrix polynomial of grade $k$,  let $L(\lambda)\in\mathbb{F}[\lambda]^{kn\times kn}$ be a pencil  block-permutationally equivalent to a $(p,n,q,n)$-block Kronecker pencil with $p+q+1=k$, i.e., there exist  block-permutation matrices $\Pi_{\boldsymbol\ell}^n,\Pi_{\mathbf{r}}^n$ such that 
 \[
 (\Pi_{\boldsymbol\ell}^n)^\mathcal{B} L(\lambda) \Pi_{\mathbf{r}}^n =
\left[\begin{array}{c|c}
M(\lambda) & L_q(\lambda)^T\otimes I_n \\ \hline
L_p(\lambda)\otimes I_n & 0
\end{array}\right].
 \]
 Assume that  $M(\lambda)$ satisfies the AS condition for $P(\lambda)$.
 Let $\{r_1,\hdots,r_p\}$, $\{r_{p+1},\hdots,r_k\}$, $\{c_1,\hdots,c_q\}$, $\{c_{q+1},\hdots,c_k\}$ be, respectively, the sets of  positions of the wing block-rows, the body block-rows, the wing block-columns and the body block-columns of $L(\lambda)$, all relative to $(\boldsymbol\ell,\mathbf{r},p,q)$.
 Let $x\in\{1:k-1\}$ be such that  $k-x+1\in\{r_1,\hdots,r_p\}$ and $k-x\in\{r_{p+1},\hdots,r_k\}$.
Define $\widetilde{L}(\lambda):=M_{-x}(X)L(\lambda)$, where $X\in \mathbb{F}^{n\times n}$.
Then, the following statements hold.
\begin{enumerate}
\item[\rm (i)] There exists a block-permutation matrix $ \Pi_{\widetilde{\boldsymbol\ell}}^n$ such that
 \begin{equation}\label{eq:left_multiplication_negative}
 (\Pi_{\widetilde{\boldsymbol\ell}}^n)^\mathcal{B} \widetilde{L}(\lambda) \Pi_{\mathbf{r}}^n =
\left[\begin{array}{c|c}
\widetilde{M}(\lambda) & L_q(\lambda)^T\otimes I_n \\ \hline
L_p(\lambda)\otimes I_n & \phantom{\Big{(}} 0 \phantom{\Big{(}}
\end{array}\right]
 \end{equation}
is an extended $(p,n,q,n)$-block Kronecker pencil whose body $\widetilde{M}(\lambda)$ satisfies the AS condition for $P(\lambda)$.
\item[\rm (ii.a)] For $j=1:k$, if $j\neq k-x,k-x+1$, then the $j$th block-row of $\widetilde{L}(\lambda)$   is equal to the $j$th block-row of $L(\lambda)$. 
\item[\rm (ii.b)] The $(k-x)$th block-row of $\widetilde{L}(\lambda)$ is equal to the $(k-x+1)$th block-row of $L(\lambda)$. 
\item[\rm (ii.c)] The set $(\{r_1,\hdots,r_p\}\setminus\{k-x+1\})\cup\{k-x\}$ is the set of  positions of the wing block-rows  of $\widetilde{L}(\lambda)$ relative to $(\widetilde{\boldsymbol\ell},\mathbf{r},p,q)$.
\item[\rm (ii.d)] The set $(\{r_{p+1},\hdots,r_k\}\setminus\{k-x\})\cup\{k-x+1\}$ is the set of  positions of the body block-rows of $\widetilde{L}(\lambda)$ relative to $(\widetilde{\boldsymbol\ell},\mathbf{r},p,q)$.
\item[\rm (iii.a)] The set $\{c_1,\hdots,c_q\}$ is the set of  positions of the wing block-columns of $\widetilde{L}(\lambda)$ relative to $(\widetilde{\boldsymbol\ell},\mathbf{r},p,q)$.
\item[\rm (iii.b)] The set $\{c_{q+1},\hdots,c_k\}$ is the set of  positions of the body block-columns of $\widetilde{L}(\lambda)$ relative to $(\widetilde{\boldsymbol\ell},\mathbf{r},p,q)$.
\end{enumerate} 
\end{lemma}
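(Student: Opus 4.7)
The plan is to compute $\widetilde L(\lambda)=M_{-x}(X)L(\lambda)$ row by row, construct the new permutation $\widetilde{\boldsymbol\ell}$ by a single swap of entries of $\boldsymbol\ell$, and then deduce the AS condition for the new body from Lemma \ref{lemm:perturbation_body}. The block form of $M_{-x}(X)$ shows at once that only the block-rows at positions $k-x$ and $k-x+1$ of $L(\lambda)$ are affected: the new block-row at position $k-x$ equals the old block-row at $k-x+1$, and the new block-row at $k-x+1$ equals the old one at $k-x$ plus $X$ times the old one at $k-x+1$. This directly gives (ii.a) and (ii.b).

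By hypothesis, in the permutation $\boldsymbol\ell=(\ell_1,\ldots,\ell_k)$ the index $k-x$ appears among the first $q+1$ entries (the body positions of $\boldsymbol\ell$) as some $\ell_a$, while $k-x+1$ appears among the last $p$ entries (the wing positions) as some $\ell_b$. I would take $\widetilde{\boldsymbol\ell}$ to be $\boldsymbol\ell$ with $\ell_a$ and $\ell_b$ swapped. This interchange places the row of $\widetilde L(\lambda)$ at position $k-x+1$ into the $a$-th row of $(\Pi_{\widetilde{\boldsymbol\ell}}^n)^{\mathcal{B}}\widetilde L(\lambda)$, and the row at position $k-x$ into the $b$-th row. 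Statements (ii.c), (ii.d) follow at once, and since the column permutation $\mathbf{r}$ is untouched, (iii.a) and (iii.b) also follow.

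For (i), I would check that the wing block of $(\Pi_{\widetilde{\boldsymbol\ell}}^n)^{\mathcal{B}}\widetilde L(\lambda)\Pi_{\mathbf{r}}^n$ coincides with that of $(\Pi_{\boldsymbol\ell}^n)^{\mathcal{B}}L(\lambda)\Pi_{\mathbf{r}}^n$: the rows that now occupy the wing positions are precisely the old wing rows of $L(\lambda)$, merely relabeled, so the wing block remains $[L_p(\lambda)\otimes I_n \mid 0]$, confirming the form \eqref{eq:left_multiplication_negative}. On the body side, every body row of the new pencil coincides with the corresponding body row of the original one, except for the $a$-th, which equals the old $a$-th body row plus $X$ times one row of $L_p(\lambda)\otimes I_n$. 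Hence $\widetilde M(\lambda)=M(\lambda)+\Delta(\lambda)$, where $\Delta(\lambda)$ is supported on the $a$-th block-row with entries $-X$ and $\lambda X$ in two consecutive block-columns and zeros elsewhere.

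The main remaining step, which I expect to be the only non-routine one, is to verify that $\Delta(\lambda)$ is a generalized wing pencil in the sense of \eqref{gen-wing}. This is done by taking all the $B_{ij}$ in \eqref{gen-wing} equal to zero except for a single block equal to $-X$ placed in the appropriate column of the $a$-th block-row, and checking that the induced antidiagonal recurrence reproduces the pattern of $\Delta(\lambda)$ (in particular forcing the $\lambda X$ entry and zeros to the right). Once this structural fact is established, Lemma \ref{lemm:perturbation_body} applied to $M(\lambda)$ and $\Delta(\lambda)$ yields that $\widetilde M(\lambda)$ satisfies the AS condition for $P(\lambda)$, completing the proof of (i).
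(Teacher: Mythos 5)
Your proposal is correct and follows essentially the same route as the paper's proof: compute the effect of $M_{-x}(X)$ on block-rows $k-x$ and $k-x+1$, update $\boldsymbol\ell$ by the transposition exchanging those two positions (your ``swap the entries $\ell_a$ and $\ell_b$'' is exactly the paper's $\Pi^n_{\mathbf t}\Pi^n_{\boldsymbol\ell}$ with $\mathbf t$ the transposition of $k-x$ and $k-x+1$), observe that the body changes only by the addition of $X$ times a row of $L_p(\lambda)\otimes I_n$, which is a generalized wing pencil, and invoke Lemma~\ref{lemm:perturbation_body}. The only cosmetic difference is that the paper deduces the bookkeeping statements (ii.a)--(iii.b) by citing Lemmas~\ref{lem:permutationally_2} and \ref{lem:permutationally_3}, whereas you re-derive them directly from the explicit swap; also, you should establish (i) before asserting (ii.c)--(iii.b), since the latter are stated relative to the extended block Kronecker partition whose existence is the content of (i).
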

\begin{proof}
Notice that the product on the left by the matrix $M_{-x}(X)$ only affects the $(k-x)$th and the $(k-x+1)$th block-rows of $L(\lambda)\Pi_{\mathbf{r}}^n$.
Moreover, since the $(k-x+1)$th block-row of $L(\lambda)$ is a wing block-row relative to $(\boldsymbol\ell,\mathbf{r},p,q)$, the submatrix of $L(\lambda)\Pi_{\mathbf{r}}^n$  consisting of its $(k-x)$th and $(k-x+1)$th block-rows is of the form 
\[
\begin{bmatrix}
R_1(\lambda) & R_2(\lambda) & R_3(\lambda) & R_4(\lambda) & R_5(\lambda) \\
0 & -I_n & \lambda I_n & 0 & 0 
\end{bmatrix},
\]
for some matrix pencils $R_1(\lambda)$, $R_2(\lambda)$, $R_3(\lambda)$, $R_4(\lambda)$ and $R_5(\lambda)$, where some of the block-columns containing a zero block other than the last one may not be present.
This, in turn, implies that the submatrix of $M_{-x}(X)L(\lambda)\Pi_{\mathbf{r}}^n$ formed by its $(k-x)$th and $(k-x+1)$th block-rows is of the form 
\[
\begin{bmatrix}
0 & -I_n & \lambda I_n & 0 & 0 \\
R_1(\lambda) & R_2(\lambda)-X & R_3(\lambda)-\lambda X & R_4(\lambda) & R_5(\lambda) 
\end{bmatrix}.
\]
Therefore, setting
\[
\mathbf{t}:=(1:k-x-1,k-x+1,k-x,k-x+2:k)
\]
and introducing the block-permutation matrix  $\Pi_{\widetilde{\boldsymbol\ell}}^n:=\Pi_{\mathbf{t}}^n\Pi_{\boldsymbol\ell}^n$, it is clear that \eqref{eq:left_multiplication_negative} holds for some pencil $\widetilde{M}(\lambda)$.
Furthermore,  note that $\widetilde{M}(\lambda)=M(\lambda)+(e_t\otimes I_n)\begin{bmatrix} 0 & -I_n & \lambda I_n & 0 & 0 \end{bmatrix}$,  for some $t\in\{0:q+1\}$.
Thus, the fact that the body of $\widetilde{L}(\lambda)$ satisfies the AS condition for $P(\lambda)$ follows from Lemma \ref{lemm:perturbation_body}.
Therefore, part (i) is true.
Parts (ii.a), (ii.b), (ii.c) and (ii.d) follows from applying Lemma \ref{lem:permutationally_3} to $L(\lambda)$ and $(\Pi_{\mathbf{t}}^n)^\mathcal{B}L(\lambda)$, together with the simple fact that left-multiplication by $M_{-x}(X)$ only affects the $(k-x)$th and $(k-x+1)$th block-rows of $L(\lambda)$, and replaces the $(k-x)$th block-row by the $(k-x+1)$th block-row.
Finally, parts (iii.a) and (iii.b) are direct consequences of Lemma \ref{lem:permutationally_2}.
\end{proof}

\subsection{Product by elementary matrices associated with nonnegative indices}

The proof of the main result for GFPR (Theorem \ref{thm:informal2}  or Theorem \ref{thm:main_GFPR}) requires one key auxiliary result. 
This result is Lemma \ref{lem:right_multiplication}, which studies the effect of right-multiplications by elementary matrices associated with nonnegative indices in some relevant situations.

\begin{lemma}\label{lem:right_multiplication}
Let $P(\lambda)\in\mathbb{F}[\lambda]^{n\times n}$ be a matrix polynomial of grade $k$,  let $L(\lambda)\in\mathbb{F}[\lambda]^{kn\times kn}$ be a pencil block-permutationally equivalent to an extended $(p,n,q,n)$-block Kronecker pencil with $p+q+1=k$, i.e., there exist block-permutation matrices $\Pi_{\boldsymbol\ell}^n,\Pi_{\mathbf{r}}^n$ such that 
 \[
 (\Pi_{\boldsymbol\ell}^n)^\mathcal{B} L(\lambda) \Pi_{\mathbf{r}}^n =
\left[\begin{array}{c|c}
M(\lambda) & K_2(\lambda)^T \\ \hline
K_1(\lambda) & 0
\end{array}\right],
 \]
where $K_1(\lambda)$ and $K_2(\lambda)$ are wing pencils, and assume that  $M(\lambda)$ satisfies the AS condition for $P(\lambda)$.
 Let $\{r_1,\hdots,r_p\}$, $\{r_{p+1},\hdots,r_k\}$, $\{c_1,\hdots,c_q\}$ and  $\{c_{q+1},\hdots,c_k\}$ be, respectively, the sets of  positions of the wing block-rows, the body block-rows, the wing block-columns and the body block-columns of $L(\lambda)$, all relative to $(\boldsymbol\ell,\mathbf{r},p,q)$.
 Assume, additionally, that $x$ is an index such that $k-x\in\{c_1,\hdots,c_q\}$.
Define $\widetilde{L}(\lambda):=L(\lambda)M_x(X)$, where $X\in \mathbb{F}^{n\times n}$.
Then, the following statements hold.
\begin{enumerate}
\item[\rm (i)] There exists a block-permutation matrix $ \Pi_{\widetilde{\mathbf{r}}}^n$ such that
 \begin{equation}\label{eq:right_multiplication}
 (\Pi_{\boldsymbol\ell}^n)^\mathcal{B} \widetilde{L}(\lambda) \Pi_{\widetilde{\mathbf{r}}}^n =
\left[\begin{array}{c|c}
\widetilde{M}(\lambda) & \widetilde{K}_2(\lambda)^T \\ \hline
K_1(\lambda) & \phantom{\Big{(}} 0 \phantom{\Big{(}}
\end{array}\right]
 \end{equation}
is an extended  $(p,n,q,n)$-block Kronecker pencil whose body $\widetilde{M}(\lambda)$ satisfies the AS condition for $P(\lambda)$.
\item[\rm (ii.a)] If $x=0$, the $k$th block-column of $\widetilde{L}(\lambda)$ is a wing block-column of $\widetilde{L}(\lambda)$ relative to $(\boldsymbol\ell,\widetilde{\mathbf{r}},p,q)$.
\item[\rm (ii.b)] If $x\neq 0$, then the $(k-x+1)$th block-column of $\widetilde{L}(\lambda)$ is a wing block-column of $\widetilde{L}(\lambda)$ relative to $(\boldsymbol\ell,\widetilde{\mathbf{r}},p,q)$, and it is equal to the $(k-x)$th block-column of $L(\lambda)$.
\item[\rm (ii.c)] If $x\neq 0$, then the $(k-x)$th block-column of $\widetilde{L}(\lambda)$ is a wing block-column of $\widetilde{L}(\lambda)$ relative to $(\boldsymbol\ell,\widetilde{\mathbf{r}},p,q)$ if and only if the $(k-x+1)$th block-column of $L(\lambda)$ is a wing block-column of $L(\lambda)$ relative to $(\boldsymbol\ell,\mathbf{r},p,q)$.
\item[\rm (iii.a)] For $j=1:q$, if $c_j\neq k-x,k-x+1$, then the $c_j$-th block-column of $\widetilde{L}(\lambda)$  is a wing block-column of $\widetilde{L}(\lambda)$  relative to $(\boldsymbol\ell,\widetilde{\mathbf{r}},p,q)$, and it is equal to the $c_j$-th block-column of $L(\lambda)$.
\item[\rm (iii.b)] For $j=q+1:k$, if $c_j\neq k-x+1$, then the $c_j$-th block-column of $\widetilde{L}(\lambda)$  is a body block-column of $\widetilde{L}(\lambda)$ relative to $(\boldsymbol\ell,\widetilde{\mathbf{r}},p,q)$.
\item[\rm (iv.a)] The set $\{r_1,\hdots,r_p\}$ is the set of  positions of the wing block-rows of $\widetilde{L}(\lambda)$ relative to $(\boldsymbol\ell,\widetilde{\mathbf{r}},p,q)$.
\item[\rm (iv.b)] The set $\{r_{p+1},\hdots,r_k\}$ is the set of  positions of the body block-rows of $\widetilde{L}(\lambda)$ relative to $(\boldsymbol\ell,\widetilde{\mathbf{r}},p,q)$.
\end{enumerate} 
\end{lemma}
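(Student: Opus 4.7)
The plan is to work directly with $\widetilde{L}(\lambda) = L(\lambda) M_x(X)$, exploiting the fact that $M_x(X)$ differs from the identity only in a small block: right-multiplication modifies only the $k$th block-column of $L(\lambda)$ when $x = 0$ (replacing $L_k$ by $L_k X$), and only the $(k-x)$th and $(k-x+1)$th block-columns when $x \geq 1$. In the latter case, the new $(k-x+1)$th block-column equals the old $(k-x)$th block-column, while the new $(k-x)$th block-column equals $L_{k-x} X + L_{k-x+1}$. The key structural fact I will use throughout is that a wing block-column of $L(\lambda)$ relative to $(\boldsymbol\ell, \mathbf{r}, p, q)$ has zeros in every wing block-row position $r_1, \dots, r_p$, since under the permutations it maps to a column of $K_2(\lambda)^T$ stacked over zero.

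I would split the argument into three cases. First, $x = 0$: the only affected column is $L_k$, which is wing by hypothesis, so $L_k X$ still has zeros in the wing rows and remains a wing column. Second, $x \geq 1$ and $L_{k-x+1}$ also wing: both new columns are linear combinations of wing columns and hence wing. In both of these cases I would set $\Pi_{\widetilde{\mathbf{r}}}^n := \Pi_{\mathbf{r}}^n$; the body is unchanged, so AS is automatic, while the modified $\widetilde{K}_2(\lambda)^T$ still satisfies $(\Lambda_q(\lambda)\otimes I_n)\widetilde{K}_2(\lambda)^T = 0$ by linearity, hence $\widetilde{K}_2(\lambda)$ is still a wing pencil. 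Third, $x \geq 1$ and $L_{k-x+1}$ a body column: the roles of $k-x$ and $k-x+1$ swap, since the new $(k-x+1)$th column $= L_{k-x}$ is wing while the new $(k-x)$th column inherits body content from $L_{k-x+1}$; here I would obtain $\Pi_{\widetilde{\mathbf{r}}}^n$ from $\Pi_{\mathbf{r}}^n$ by exchanging the two slot assignments of $k-x$ and $k-x+1$.

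Once these permutations are fixed, statements (ii.a)--(iii.b) and (iv.a)--(iv.b) follow mechanically by tracking which block-column occupies which slot, invoking Lemma \ref{lem:permutationally_3} when needed to transfer the permutational setup between $L(\lambda)$ and $\widetilde{L}(\lambda)$. The main obstacle is verifying the AS condition for the new body $\widetilde{M}(\lambda)$ in the third case. There $\widetilde{M}(\lambda) = M(\lambda) + \Delta(\lambda)$, where $\Delta(\lambda)$ is concentrated in the single block-column vacated by $L_{k-x+1}$ and equals a column of $K_2(\lambda)^T$ right-multiplied by $X$. This $\Delta(\lambda)$ is \emph{not} a generalized wing pencil, so Lemma \ref{lemm:perturbation_body} does not apply directly. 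Instead, using the wing structure recalled in Remark \ref{rem:block_structure}, I would write that lone column as $(B_1, -\lambda B_1 + B_2, \dots, -\lambda B_{q-1} + B_q, -\lambda B_q)^T X$ for suitable constant blocks $B_1, \dots, B_q$, and verify directly that for every $s$ the contributions of the $\lambda$-part and of the constant part to $\mathrm{AS}(\Delta, s)$ are respectively $-B_{k+1-s-s_2}X$ and $+B_{k+1-s-s_2}X$ (where $s_2$ is the slot index of the modified column), cancelling pairwise in every index range. This yields $\mathrm{AS}(\Delta, s) = 0$ for all $s$, so $\widetilde{M}(\lambda)$ satisfies the AS condition for $P(\lambda)$, completing part (i).
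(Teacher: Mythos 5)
Your proposal follows essentially the same path as the paper: you split into $x=0$, $x\neq 0$ with the $(k-x+1)$th block-column wing (the paper's Case II), and $x\neq 0$ with the $(k-x+1)$th block-column body (the paper's Case I), and you fix the same permutations $\Pi_{\widetilde{\mathbf{r}}}^n$ in each case (the identity in the first two, a transposition swapping slots $k-x$ and $k-x+1$ composed with $\Pi_{\mathbf{r}}^n$ in the third). The structural housekeeping via Lemmas \ref{lem:permutationally_2} and \ref{lem:permutationally_3} is also the same.

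The one place where your argument diverges from the paper is the AS verification in the third case, and there your treatment is in fact more careful than the paper's. The paper writes $\widetilde{M}(\lambda) = M(\lambda) + R(\lambda)X(e_t^T\otimes I_n)$, observes that $(\Lambda_q(\lambda)\otimes I_n)R(\lambda)X=0$, and then cites Lemma \ref{lemm:perturbation_body}. Strictly read, Lemma \ref{lemm:perturbation_body} requires a generalized wing pencil, i.e.\ a pencil $B(\lambda)$ with $B(\lambda)(\Lambda_p(\lambda)^T\otimes I_n)=0$; the perturbation $R(\lambda)X(e_t^T\otimes I_n)$ is a single left-annihilated block-column padded with zeros, not a right-annihilated pencil, so the hypothesis of the lemma as stated is not met. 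What is true (and what the paper is implicitly using) is that the conclusion $\mathrm{AS}(B,s)=0$ holds equally well for the left-null structure because the antidiagonal sum is invariant under block-transposition. Your direct computation — writing the affected column as $(B_1,\,-\lambda B_1+B_2,\ldots,-\lambda B_{q-1}+B_q,\,-\lambda B_q)^T X$ and checking that the $\lambda$-part contributes $-B_{k+1-s-s_2}X$ and the constant part $+B_{k+1-s-s_2}X$ to each antidiagonal sum — is exactly the verification needed, with correct indices and correct boundary behaviour. So your proposal is correct, and the one step you flagged as a possible obstruction is precisely the step where you have improved upon the paper's exposition.
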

\begin{proof}
First, let us assume that $x=0$.
By the hypotheses of the theorem, the $k$th block-column of $L(\lambda)$ is a wing block-column relative to $(\boldsymbol\ell,\mathbf{r},p,q)$.
This implies that the wing block-rows of $L(\lambda)$ and the block-columns of $L(\lambda)$ other than its $k$th block-column are not changed after the right-multiplication of $L(\lambda)$ by $M_0(X)$.
Thus, setting $\Pi_{\widetilde{\mathbf{r}}}^n=\Pi_{\mathbf{r}}^n$, we obtain that \eqref{eq:right_multiplication} holds with $\widetilde{M}(\lambda)=M(\lambda)$, for some matrix pencil $\widetilde{K}_2(\lambda)\in\mathbb{F}[\lambda]^{nq\times n(q+1)}$.
Moreover, we claim that $\widetilde{K}_2(\lambda)$ is a wing pencil.
This claim follows from the fact that $\widetilde{K}_2(\lambda)$ is obtained by multiplying $K_2(\lambda)$ by a block diagonal matrix whose diagonal blocks are $I_n$ except for the last one, which is equal to $X$, together with Remark \ref{rem:block_structure}.
Thus, part (i) is true when $x=0$.
Part (ii.a) also follows immediately.
Parts (iii.a) and (iii.b) follow from Lemma \ref{lem:permutationally_2} and the fact that the block-columns of $L(\lambda)$ and $\widetilde{L}(\lambda)$, other than their $k$th block-columns, are equal.
Finally, parts  (iv.a) and (iv.b) follow from  Lemma \ref{lem:permutationally_3}.

Let us assume, now, that $x\neq 0$.
By the hypotheses of the theorem, the $(k-x)$th block-column of  $L(\lambda)$ is a wing block-column relative to $(\boldsymbol\ell,\mathbf{r},p,q)$. 
Notice that the right-multiplication of $L(\lambda)$ by the matrix $M_x(X)$ only affects the $(k-x)$th and the $(k-x+1)$th block-columns of  $L(\lambda)$.
Then, to prove all the results, we have to distinguish two cases.

\noindent {\bf Case I}: Assume that the $(k-x+1)$th block-column of $L(\lambda)$ is a body block-column of $L(\lambda)$ relative to $(\boldsymbol\ell,\mathbf{r},p,q)$.
Thus, the $(k-x)$th and the $(k-x+1)$th block-columns of $(\Pi_{\boldsymbol\ell}^n)^\mathcal{B}L(\lambda)$ are, respectively, of the form $\begin{bmatrix} R(\lambda)^T & 0 \end{bmatrix}^T$  and $\begin{bmatrix}  B(\lambda)^T & C(\lambda)^T \end{bmatrix}^T$, with $B(\lambda),R(\lambda)\in\mathbb{F}[\lambda]^{ n(q+1)\times n}$ and $C(\lambda)\in\mathbb{F}[\lambda]^{np\times n}$.
Then, set 
\[
\mathbf{t} := (1:k-x-1,k-x+1,k-x,k-x+2:k),
\]
and let $\widehat{L}(\lambda):=\widetilde{L}(\lambda)\Pi^n_{\mathbf{t}}$.
Notice that the submatrices of $(\Pi_{\boldsymbol\ell}^n)^{\mathcal{B}}L(\lambda)$, $(\Pi_{\boldsymbol\ell}^n)^{\mathcal{B}}\widetilde{L}(\lambda)$ and $(\Pi_{\boldsymbol\ell}^n)^{\mathcal{B}}\widehat{L}(\lambda)$ formed by their $(k-x)$th and $(k-x+1)$th block-columns are, respectively, given by
\begin{align*}
&\begin{bmatrix}
R(\lambda) & B(\lambda) \\ 0 & C(\lambda)
\end{bmatrix}, \quad  
\begin{bmatrix}
R(\lambda) & B(\lambda) \\ 0 & C(\lambda)
\end{bmatrix}
\begin{bmatrix}
X & I_n \\ I_n & 0
\end{bmatrix}=  
\begin{bmatrix}
R(\lambda)X + B(\lambda) & R(\lambda) \\
C(\lambda) & 0 
\end{bmatrix}, \quad \mbox{and} \\
&\begin{bmatrix}
R(\lambda) & B(\lambda) \\ 0 & C(\lambda)
\end{bmatrix}
\begin{bmatrix}
X & I_n \\ I_n & 0
\end{bmatrix}
\begin{bmatrix}
0 & I_n \\ I_n & 0
\end{bmatrix}=
\begin{bmatrix}
R(\lambda) & R(\lambda)X + B(\lambda) \\
0 & C(\lambda)
\end{bmatrix}.
\end{align*}
This implies that all the wing block-columns and the wing block-rows of $L(\lambda)$ relative to $(\boldsymbol\ell,\mathbf{r},p,q)$ remain unchanged after the right-multiplication by $M_x(X)\Pi_{\mathbf{t}}^n$.
Therefore, setting  $ \Pi_{\widetilde{\mathbf{r}}}^n:= \Pi_{\mathbf{t}}^n \Pi_{\mathbf{r}}^n $, we obtain that \eqref{eq:right_multiplication} holds with $\widetilde{K}_2(\lambda)=K_2(\lambda)$ and $\widetilde{M}(\lambda) = M(\lambda)+ R(\lambda)X(e_t^T\otimes I_n)$, for some $t\in\{1:p+1\}$, where $e_t$ denotes the $t$th column of the $(p+1)\times (p+1)$ identity matrix.
Note that $(\Lambda_q(\lambda)\otimes I_n)R(\lambda)X=0$ since $R(\lambda)$ is a block-column of the wing pencil $K_2(\lambda)^T$.
Thus, from Lemma \ref{lemm:perturbation_body}, we obtain that the pencil $\widetilde{M}(\lambda)$ satisfies the AS condition for $P(\lambda)$, and part (i) follows.
Parts  (ii.b), (iii.a) and (iii.b) follow from Lemma \ref{lem:permutationally_2} applied to $L(\lambda)$ and $\widehat{L}(\lambda)$, together with the simple fact that the only difference between $\widehat{L}(\lambda)$ and $\widetilde{L}(\lambda)$  is that their $(k-x)$th and $(k-x+1)$th block-columns are permuted.
Parts (iv.a) and (iv.b) follow from Lemma \ref{lem:permutationally_3} applied to $L(\lambda)$ and $\widetilde{L}(\lambda)$.

\medskip

\noindent {\bf Case II}: Assume that the $(k-x+1)$th block-column of $L(\lambda)$ is a wing block-column relative to $(\boldsymbol\ell,\mathbf{r},p,q)$.
Thus, the $(k-x)$th and the $(k-x+1)$th block-columns of $(\Pi_{\boldsymbol\ell}^n)^\mathcal{B}L(\lambda)$ are, respectively, of the form  $\begin{bmatrix} R_1(\lambda)^T & 0 \end{bmatrix}^T$ and $\begin{bmatrix} R_2(\lambda)^T & 0 \end{bmatrix}^T$, with $R_1(\lambda),R_2(\lambda)\in\mathbb{F}[\lambda]^{n(q+1)\times n}$.
Then, notice that the  submatrix formed by the $(k-x)$th and the $(k-x+1)$th block-columns of the pencil $(\Pi_{\boldsymbol\ell}^n)^\mathcal{B}L(\lambda)M_x(X)$   is given by
\[
\begin{bmatrix}
R_1(\lambda) & R_2(\lambda) \\ 0 & 0
\end{bmatrix}
\begin{bmatrix}
X & I_n \\ I_n & 0
\end{bmatrix} = 
\begin{bmatrix}
R_1(\lambda)X + R_2(\lambda) & R_1(\lambda) \\
0 & 0 
\end{bmatrix}.
\]
Therefore, setting $\Pi_{\widetilde{\mathbf{r}}}^n = \Pi_{\mathbf{r}}^n$, we obtain that \eqref{eq:right_multiplication} holds with $\widetilde{M}(\lambda)=M(\lambda)$ and    $\widetilde{K}_2(\lambda)^T = K_2(\lambda)^T+ R_1(\lambda)X(e_t^T\otimes I_n)$, for some $t\in\{1:q\}$, where $e_t$ denotes the $t$th column of the $q\times q$ identity matrix.
Since $\widetilde{K}_2(\lambda)$ can be written as $BK_2(\lambda)$, for some nonsingular matrix $B$,  by Corollary \ref{cor:wing_pencils}, $\widetilde{K}_2(\lambda)$ is a  wing pencil, and part (i) follows.
Note that the body of $\widetilde{L}(\lambda)$ relative to  $(\boldsymbol\ell,\mathbf{r},p,q)$ and the body of $L(\lambda)$ relative to  $(\boldsymbol\ell,\mathbf{r},p,q)$ are the same, and therefore, the body of $\widetilde{L}(\lambda)$ satisfies the AS condition for $P(\lambda)$.
 Parts (ii.b), (iii.a) and (iii.b) follow from Lemma \ref{lem:permutationally_2} applied to $L(\lambda)$ and $\widetilde{L}(\lambda)$, together with the simple fact that the $(k-x)$th and the $(k-x+1)$th block-columns of $L(\lambda)$ and $\widetilde{L}(\lambda)$  are permuted.
Parts (iv.a) and (iv.b) follow immediately from  Lemma \ref{lem:permutationally_3}. 

Finally, notice that in the proofs of {\bf Case I}  and  {\bf Case II}, we have shown that the $(k-x)$th block-column of $\widetilde{L}(\lambda)$ is a wing block-column (resp. a body block-column) of $\widetilde{L}(\lambda)$ relative to $(\boldsymbol\ell,\widetilde{\mathbf{r}},p,q)$ whenever the $(k-x+1)$th block-column of $L(\lambda)$ is a wing block-column (resp. a body block-column) of $L(\lambda)$ relative to $(\boldsymbol\ell,\mathbf{r},p,q)$. 
Thus, part (ii.c) holds.
\end{proof}

\section{Fiedler pencils as block Kronecker pencils}
\label{sec:FP_as_minimal_bases_pencils}

In \cite[Theorem 4.5]{canonical} it was proven that all Fiedler pencils associated with a matrix polynomial $P(\lambda)$ are block Kronecker pencils with bodies satisfying the AS condition for $P(\lambda)$,  after permuting some of its block-rows and block-columns. 
The main  theorem of this section (Theorem \ref{FP-col-row}) gives a more detailed description of this block Kronecker form for Fiedler pencils that will be useful to show that almost all Fiedler-like pencils associated with $P(\lambda)$ are extended block Kronecker pencils  with bodies satisfying the AS condition for $P(\lambda)$, modulo permutations. 
However, we warn the reader that Theorem \ref{FP-col-row} is only valid when $n=m$, in contrast to \cite[Theorem 4.5]{canonical} which is valid also when $n\neq m$.

We start by relating the notation used in \cite{DTDM10} to work with and construct Fiedler pencils with the index tuple notation introduced in Section \ref{sec:index_tuples}.
 To this end, we recall the notion of consecutions and inversions of a permutation $\mathbf{q}$ of the set $\{0:k-1\}$.
\begin{definition}
Let  $k$ be a positive integer and let $\mathbf{q}$ be a permutation of the set $\{0:k-1\}$. 
For $i\in \{0:k-2\}$, we say that $\mathbf{q}$ has a \emph{consecution} at $i$ if $i$ 
occurs to the left of $i+1$ in $\mathbf{q}$, and that $\mathbf{q}$ has an \emph{inversion} at $i$ if $i$ occurs to the right  of $i+1$.  
\end{definition}

Next, we relate   the consecutions and inverstions of a permutation $\mathbf{q}$ of $\{0:k-1\}$ with the sets $\mathrm{heads}(\mathbf{q})$ and $\mathrm{heads}(\mathbf{\rev (q)})$ (recall that  $\mathfrak{h}(\mathbf{t})$ denotes the cardinality of the set $\mathrm{heads}(\mathbf{t})$).
In the proof of this lemma and in the rest of the section, we use the following notation.
We denote by  $\mathfrak{i}(\mathbf{q})$  and $\mathfrak{c}(\mathbf{q})$ the total number of inversions and consecutions of $\mathbf{q}$, respectively. 
We also denote by $\mathfrak{C}_q$ and $\mathfrak{I}_q$  the set of indices at which the permutation $\mathbf{q}$ has a consecution and an inversion, respectively.
\begin{lemma}\label{lem:prop-cons}
Let $\mathbf{q}$ be a permutation of $\{0:k-1\}$.
Then,
\begin{enumerate}[(i)]
\item $ \mathfrak{i}(\mathbf{q}) = \mathfrak{h}(\mathbf{q}) -1$ and  $ \mathfrak{c}(\mathbf{q})=\mathfrak{h}(\rev(\mathbf{q})) -1
$; \label{cons-1}
\item $k- \mathfrak{C}_q= \{k - h: h\notin \heads(\mathbf{q})\} $ and $ k - \mathfrak{I}_q=\{k-h: h\notin \heads(\rev(\mathbf{q}))\};$\label{cons-2}
\item $\mathfrak{I}_q \cup \mathfrak{C}_q = \{0:k-2\} \quad \textrm{and} \quad \mathfrak{I}_q\cap \mathfrak{C}_q 
= \emptyset.$\label{cons-3}
\end{enumerate}
\end{lemma}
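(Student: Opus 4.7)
The plan is to prove (iii) first by direct definition chasing, then establish (i) by analyzing how the column standard form encodes the inversion pattern of $\mathbf{q}$, and finally derive (ii) as a straightforward consequence of (i) and (iii).

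For (iii), I would simply note that for any $i \in \{0:k-2\}$, both $i$ and $i+1$ occur exactly once in $\mathbf{q}$ (since $\mathbf{q}$ is a permutation), so exactly one of ``$i$ is left of $i+1$'' or ``$i+1$ is left of $i$'' holds. By definition this places $i$ into exactly one of $\mathfrak{C}_q$ and $\mathfrak{I}_q$, which yields the disjoint union.

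The bulk of the work is in (i). The central observation will be that the equivalence in Definition \ref{def:equivalence_tuples} swaps only adjacent indices at distance at least two, which cannot alter the relative order of $i$ and $i+1$ in the tuple; hence $\mathfrak{I}_q = \mathfrak{I}_{\csf(\mathbf{q})}$ and likewise for consecutions. Writing $\csf(\mathbf{q}) = (a_s:b_s,\ldots,a_1:b_1)$, I would first note that since $\mathbf{q}$ is a permutation of $\{0:k-1\}$, the strings partition $\{0:k-1\}$ into consecutive integer blocks, which forces $b_s = k-1$ and so $k-1 \in \heads(\mathbf{q})$. I would then prove the bijection $\heads(\mathbf{q}) \cap \{0:k-2\} = \mathfrak{I}_q$ as follows. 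If $i = b_j$ with $j<s$, then $i+1$ lies in a different string whose head $b_m$ satisfies $b_m \geq i+1 > b_j$, so by the strictly decreasing order of the heads that string appears to the left of $(a_j:b_j)$ in $\csf(\mathbf{q})$, placing $i+1$ before $i$ and giving $i \in \mathfrak{I}_q$. Conversely, if $i \in \{0:k-2\}$ is not a head, then $i$ sits in some string $(a_j:b_j)$ with $i<b_j$, so $i+1 \leq b_j$ belongs to the same string and appears immediately after $i$, giving a consecution. Counting yields $\mathfrak{h}(\mathbf{q}) = \mathfrak{i}(\mathbf{q}) + 1$. The twin identity $\mathfrak{c}(\mathbf{q}) = \mathfrak{h}(\rev(\mathbf{q})) - 1$ then follows from the trivial observation $\mathfrak{c}(\mathbf{q}) = \mathfrak{i}(\rev(\mathbf{q}))$ combined with the first identity applied to $\rev(\mathbf{q})$.

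Part (ii) will be a routine consequence. Combining (iii) with the bijection from (i), we get $\mathfrak{C}_q = \{0:k-2\} \setminus \heads(\mathbf{q})$, and since $k-1 \in \heads(\mathbf{q})$ we may equivalently write this as $\{0:k-1\} \setminus \heads(\mathbf{q})$; applying the map $c \mapsto k-c$ then produces the first identity, and the second follows by the same argument applied to $\rev(\mathbf{q})$. The main obstacle, though not deep, is the careful bookkeeping in the forward direction of the head--inversion bijection: exploiting the strict ordering $b_s > \cdots > b_1$ together with the partition property of the strings to locate the string containing $i+1$ relative to the one ending at $i$.
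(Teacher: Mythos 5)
Your proposal is correct and takes essentially the same route as the paper: the key observation in both is that, for a permutation $\mathbf{q}$ of $\{0:k-1\}$, the inversions are precisely the heads other than $k-1$, and consecutions are the non-heads other than $k-1$. The paper states this observation tersely and derives all three parts at once, whereas you prove the bijection $\heads(\mathbf{q}) \cap \{0:k-2\} = \mathfrak{I}_q$ in careful detail using the partition-into-consecutive-blocks structure of $\csf(\mathbf{q})$ and derive the parts in the order (iii), (i), (ii) — but the mathematical content is the same.
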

\begin{proof}
Note that a permutation $\mathbf{q}$ of the set $\{0:k-1\}$ has an inversion at an index $i$ if and only if  $i \neq k-1$ and $i\in \heads(\mathbf{q})$; and $\mathbf{q}$ has a consecution at $i$ if and only if $i\neq k-1$ and $i \notin \heads(\mathbf{q})$, or equivalently, if $i\neq k-1$ and $i\in \heads(\rev(\mathbf{q}))$. 
Thus, taking into account that $k-1\in \heads(\mathbf{q})\cap  \heads(\rev(\mathbf{q}))$, we get parts \eqref{cons-1}
 and \eqref{cons-2}.
Moreover, part \eqref{cons-3} follows since $\mathbf{q}$ cannot have a consecution and an inversion at the same index and $\mathbf{q}$ has neither a consecution nor an inversion at $k-1$. 
\end{proof}

The next theorem is the main result of this section.
In this theorem, we recall that, modulo  block-permutations, Fiedler pencils are block Kronecker pencils, and identify the body block-rows, the body block-columns, the wing block-rows and the wing block-columns of any Fiedler pencil relative to some  block-permutations that transform it into a block Kronecker pencil.
\begin{theorem}\label{FP-col-row}
Let $P(\lambda)\in\mathbb{F}[\lambda]^{n\times n}$ be a matrix polynomial of degree $k$ as in \eqref{pol}. 
Let $\mathbf{q}$ be a permutation of $\{0:k-1\}$ and let 
$$F_{\mathbf{q}}(\lambda)= \lambda M^P_{-k} - M^P_{\mathbf{q}}$$
be the Fiedler pencil associated with $P(\lambda)$ and $\mathbf{q}$.  
Then, there exist block-permutation matrices $\Pi_{\mathbf{r}}^n$ and $\Pi_{\boldsymbol\ell}^n$ such that 
\begin{equation}\label{Fiedler-dual}
C(\lambda):=(\Pi_{\boldsymbol\ell}^n)^{\mathcal{B}} F_{\mathbf{q}}(\lambda) \Pi_{\mathbf{r}}^n= \left[ \begin{array}{c|c} M(\lambda ) &  L_{\mathfrak{h}(\rev(\mathbf{q}))-1}(\lambda)^T\otimes I_n\\ \hline  L_{\mathfrak{h}(\mathbf{q})-1}(\lambda) \otimes I_n & 0  \end{array} \right],
\end{equation}
is  a $(\mathfrak{h}(\mathbf{q})-1,n,\mathfrak{h}(\rev(\mathbf{q}))-1,n)$-block Kronecker pencil  with body $M(\lambda)$ satisfying the AS condition for $P(\lambda)$.
Moreover, the following statements hold.

\begin{enumerate}[(1)]
\item[\rm (a)] The wing block-columns of  $(\Pi_{\boldsymbol\ell}^n)^{\mathcal{B}} F_{\mathbf{q}}(\lambda)$ relative to $(\boldsymbol{\mathrm{id}}, \mathbf{r}, \mathfrak{h}(\mathbf{q})-1, \mathfrak{h}(\rev(\mathbf{q}) )-1)$ are  of the form $-e_i \otimes I_n + \lambda e_{i+1} \otimes I_n$, for $1\leq i \leq \mathfrak{h}(\rev(\mathbf{q}))-1$, 
and are  located in  positions $k-j$, where $j\in \{0:k-2\}$ and $j \notin \heads(\mathbf{q})$, or, equivalently,  $j\in \{0:k-2\}$ and $(\mathbf{q}, j)$ satisfies the SIP.

\item[\rm (b)] The wing block-rows of $F_{\mathbf{q}}(\lambda) \Pi_{\mathbf{r}}^n$ relative to $(\boldsymbol\ell,\boldsymbol{\mathrm{id}},\mathfrak{h}(\mathbf{q})-1,\mathfrak{h}(\rev(\mathbf{q}) )-1)$ are of the form $-e_i^T \otimes I_n + \lambda e_{i+1}^T \otimes I_n$, for $1\leq i < \mathfrak{h}(\mathbf{q})-1$,  and are  located in  positions $k-j$, where $j\in \{0:k-2\}$ and $j \notin \heads(\rev(\mathbf{q}))$, or, equivalently,  $j\in \{0:k-2\}$ and $(j, \mathbf{q})$ satisfies the SIP.

\item[\rm (c)]  The first block-row and the first block-column of $F_{\mathbf{q}}(\lambda)$ are, respectively, the first body block-row and the first body block-column of $F_{\mathbf{q}}(\lambda)$ relative to $(\boldsymbol\ell,\mathbf{r}, \mathfrak{h}(\mathbf{q})-1,\mathfrak{h}(\rev(\mathbf{q}))-1)$. Moreover, the block-entry of $M(\lambda)$ in position $(1,1)$ equals $\lambda A_k + A_{k-1}$.

\end{enumerate}
\end{theorem}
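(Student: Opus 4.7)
The plan is to first reduce to the case where $\mathbf{q}$ is in column standard form. This is valid because, by the commutativity relations of elementary matrices (Remark \ref{commutativity}) and Lemma \ref{SIP-csf}, we have $M^P_{\mathbf{q}}=M^P_{\csf(\mathbf{q})}$, while both $\heads(\mathbf{q})$ and $\heads(\rev(\mathbf{q}))$ depend only on $\csf(\mathbf{q})$ and $\csf(\rev(\mathbf{q}))$ respectively. Under this reduction, Lemma \ref{lem: column locations} describes $M^P_{\mathbf{q}}$ precisely: its block-column at position $k-j$ equals $e_i\otimes I_n$ for some $i$ whenever $j\notin\heads(\mathbf{q})$, and symmetrically for block-rows with $\heads(\rev(\mathbf{q}))$. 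Combined with $M^P_{-k}=A_k\oplus I_{(k-1)n}$, this pinpoints the wing structure of $F_{\mathbf{q}}(\lambda)=\lambda M^P_{-k}-M^P_{\mathbf{q}}$.

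For the block Kronecker form \eqref{Fiedler-dual} itself and the AS condition of the body $M(\lambda)$, I plan to invoke \cite[Theorem 4.5]{canonical}, which already shows that every Fiedler pencil becomes a block Kronecker pencil with AS-body after suitable block-row and block-column permutations. The identification of the block-Kronecker indices as $p=\mathfrak{h}(\mathbf{q})-1$ and $q=\mathfrak{h}(\rev(\mathbf{q}))-1$ then follows by translating the consecutions/inversions characterization of \cite{canonical} via Lemma \ref{lem:prop-cons}(i), which identifies $\mathfrak{i}(\mathbf{q})$ with $\mathfrak{h}(\mathbf{q})-1$ and $\mathfrak{c}(\mathbf{q})$ with $\mathfrak{h}(\rev(\mathbf{q}))-1$. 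The substantive new content of Theorem \ref{FP-col-row} is the explicit identification of \emph{which} block-rows and block-columns of $F_{\mathbf{q}}(\lambda)$ form the wing parts and which form the body, which is exactly what parts (a), (b), and (c) assert.

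Parts (a) and (b) will then be verified directly from the structural description above. For each $j\in\{0:k-2\}\setminus\heads(\mathbf{q})$, using the csf-string explicit form \eqref{eq:string product block1} one checks that $j$ and $j+1$ lie in the same csf-string of $\mathbf{q}$, which forces the block-column of $M^P_{\mathbf{q}}$ at position $k-j$ to equal $e_{k-j-1}\otimes I_n$; since the block-column of $M^P_{-k}$ at position $k-j\geq 2$ is $e_{k-j}\otimes I_n$, the $(k-j)$th block-column of $F_{\mathbf{q}}(\lambda)$ is exactly $-e_{k-j-1}\otimes I_n+\lambda e_{k-j}\otimes I_n$, of the form asserted in (a). The SIP-reformulation ``$(\mathbf{q},j)$ satisfies the SIP if and only if $j\notin\heads(\mathbf{q})$'' is Lemma \ref{txSIP}. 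Part (b) follows by block-transposing $F_{\mathbf{q}}(\lambda)$ via Lemma \ref{revB} and repeating the argument with $\rev(\mathbf{q})$ in place of $\mathbf{q}$. For part (c), since $k-1$ is the maximum index occurring in $\mathbf{q}$, it must be the top of some string of $\csf(\mathbf{q})$ and of $\csf(\rev(\mathbf{q}))$, so $k-1\in\heads(\mathbf{q})\cap\heads(\rev(\mathbf{q}))$, which shows that the first block-row and block-column of $F_{\mathbf{q}}(\lambda)$ are body pieces; the $(1,1)$ block-entry reads off directly as $\lambda A_k+A_{k-1}$, using $M^P_{-k}=A_k\oplus I_{(k-1)n}$ and that the top entry of the first active column in \eqref{eq:string product block1} is $X_{k-1}=-A_{k-1}$. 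The main point requiring care is the reconciliation of the block-Kronecker dimensions with the consecution/inversion indices used in \cite[Theorem 4.5]{canonical}, which is handled cleanly by Lemma \ref{lem:prop-cons}.
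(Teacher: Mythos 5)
Your overall strategy is sound and matches the paper's in its main ingredients: you invoke \cite[Theorem 4.5]{canonical} together with Lemma~\ref{lem:prop-cons}(i) for the block Kronecker form and the AS condition, and you use Lemma~\ref{lem: column locations} plus Lemma~\ref{txSIP} to locate the wing block-columns and wing block-rows, with the block-transpose trick via Lemma~\ref{revB} handling part~(b). Part~(c) is also argued as in the paper. However, there is a concrete error in your derivation of the \emph{form} of the wing block-columns in part~(a). You claim that for $j\notin\heads(\mathbf{q})$, the $(k-j)$th block-column of $M^P_{\mathbf{q}}$ equals $e_{k-j-1}\otimes I_n$, and hence that the $(k-j)$th block-column of $F_{\mathbf{q}}(\lambda)$ is $-e_{k-j-1}\otimes I_n+\lambda e_{k-j}\otimes I_n$. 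This is true for a single string but \emph{false} for a product of several strings, because left-multiplication by the earlier string matrices shifts which standard block-vector appears. For instance, with $k=6$ and $\mathbf{q}=(0,2,4,1,3,5)$, so $\csf(\mathbf{q})=(4{:}5,\,2{:}3,\,0{:}1)$ and $\heads(\mathbf{q})=\{1,3,5\}$, one computes directly from Example~\ref{ex:FP} that the $4$th block-column of $F_{\mathbf{q}}(\lambda)$ (here $j=2$) is $-e_2\otimes I_n+\lambda e_4\otimes I_n$, \emph{not} $-e_3\otimes I_n+\lambda e_4\otimes I_n$. In particular it is not of the adjacent form $-e_i\otimes I_n+\lambda e_{i+1}\otimes I_n$ before the row permutation.

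The resolution is exactly the one the paper adopts: the assertion ``of the form $-e_i\otimes I_n+\lambda e_{i+1}\otimes I_n$'' in part~(a) is about the wing block-columns of $(\Pi_{\boldsymbol\ell}^n)^{\mathcal{B}}F_{\mathbf{q}}(\lambda)$, and it follows immediately from the block Kronecker shape \eqref{Fiedler-dual} itself — those columns are literally the columns of $\begin{bmatrix} L_q(\lambda)^T\otimes I_n \\ 0\end{bmatrix}$ permuted only in their horizontal placement. What remains to prove is purely the \emph{positions}, and for that your use of Lemma~\ref{lem: column locations} (the $(k-j)$th block-column of $M^P_{\mathbf{q}}$ is $e_i\otimes I_n$ for \emph{some} $i$ precisely when $j\notin\heads(\mathbf{q})$), combined with $M^P_{-k}=A_k\oplus I_{(k-1)n}$ and Lemma~\ref{lem:prop-cons}(iii) to see that these column-positions and the analogous row-positions partition $\{2:k\}$, is correct. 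So the gap is not in your plan but in the specific intermediate claim about $e_{k-j-1}\otimes I_n$; replace that direct computation of the form by reading it off from \eqref{Fiedler-dual} and confine the $M^P_{\mathbf q}$ analysis to the position statement, and the argument is complete.
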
 
\begin{proof}
Let $p:=\mathfrak{h}(\mathbf{q})-1$ and $q:=\mathfrak{h}(\rev(\mathbf{q}))-1$.
By \cite[Theorem 4.5]{canonical}, and taking into account  part (\ref{cons-1}) in Lemma \ref{lem:prop-cons}, there exist  block-permutation matrices  $\Pi_{\boldsymbol\ell}^n$ and $\Pi_{\mathbf{r}}^n$  such that  \eqref{Fiedler-dual} holds with $M(\lambda)$ with the so-called  staircase pattern for $\lambda A_k+A_{k-1},A_{k-2},\hdots,A_0$ (see Definition 4.3 and Theorem 4.5 in \cite{canonical}).
Thus, the Fiedler pencil $F_{\mathbf{q}}(\lambda)$ is  block-permutationally equivalent to a $(p,n,q,n)$-block Kronecker pencil.
Furthermore, it is not difficult to check that the staircase pattern of $M(\lambda)$ implies that it satisfies the AS condition for $P(\lambda)$.

We now prove  parts (a) and (b).
It is clear, by \eqref{Fiedler-dual}, that the wing block-columns of $(\Pi^n_{\boldsymbol\ell})^{\mathcal{B}} F_{\mathbf{q}}(\lambda)$ relative to $(\boldsymbol{\mathrm{id}}, \mathbf{r}, p, q)$ are of the form $-e_i \otimes I_n + \lambda e_{i+1} \otimes I_n$ for some $i\leq q$, and that the wing block-rows of $F_{\mathbf{q}}(\lambda)\Pi_{\mathbf{r}}^n$ relative to $(\boldsymbol\ell, \boldsymbol{\mathrm{id}}, p, q)$ are of the form $-e_i^T \otimes I_n +\lambda  e_{i+1}^T \otimes I_n$, for some $i\leq p$.
This implies the first claim in parts (a) and (b). 

To prove the second claim in parts (a) and (b), notice that the wing block-columns of  $F_{\mathbf{q}}(\lambda)$ relative to $(\boldsymbol\ell, \mathbf{r}, p, q)$ and the wing block-columns of  $(\Pi_{\boldsymbol\ell}^n)^{\mathcal{B}} F_{\mathbf{q}}(\lambda)$ relative to $(\boldsymbol{\mathrm{id}}, \mathbf{r}, p, q)$ are in exactly the same positions.
Similarly, the wing block-rows of $F_{\mathbf{q}}(\lambda)$ relative to $(\boldsymbol\ell, \mathbf{r}, p, q)$ and the wing block-rows of $F_{\mathbf{q}}(\lambda) \Pi_{\mathbf{r}}^n$ relative to $(\boldsymbol\ell,\boldsymbol{\mathrm{id}}, p, q)$ are located in exactly the same positions (see also Remark \ref{rem:permutationally_1}).
Thus, to prove the second claim in parts (a) and (b), we just need to identify the wing block-rows and the wing block-columns of $F_{\mathbf{q}}(\lambda)$ relative to  $(\boldsymbol\ell, \mathbf{r}, p, q)$.
By Lemma \ref{lem: column locations}, the block-columns  (resp. block-rows) of $M^P_{\mathbf{q}}$ of the form $e_i \otimes I_n$ (resp. $e_i^T \otimes I_n$), for some $1\leq i \leq k$, are precisely those  not in positions $k - \heads(\mathbf{q})$ (resp. not in positions $k -\heads(\rev(\mathbf{q})$) or, equivalently, by  part (ii) in Lemma \ref{lem:prop-cons}, those  in positions $k- \mathfrak{C}_{q}$ (resp. $ k- \mathfrak{I}_q$). 
Taking into account  part (c) in Lemma \ref{lem:prop-cons}, the set of positions of the block-columns of the form $e_i \otimes I_n$ and the set of positions of the block-rows  of the form $e_i^T \otimes I_n$ are disjoint and their union is $\{2:k\}$.  
Since  $M^P_{-k} = A_k \oplus I_{n(k-1)}$,  the blocks of $M^P_{-k}$ in positions $(i, i)$, with $i \in \{2:k\}$, are of the form $\lambda I_n$.  
Thus, the block-columns (resp. the block-rows) of $F_{\mathbf{q}}(\lambda)$ of the form $-e_i \otimes I_n + \lambda e_j \otimes I_n$ (resp. $-e_i^T \otimes I_n + \lambda e_j^T \otimes I_n$) for some $i, j\in \{1:k\}$ are precisely those in $k- \mathfrak{C}_q$  (resp. $k-\mathfrak{I}_q$). 
Hence,  the second claim in parts (a) and (b) follows. 
The equivalent condition for the position of the wing block-rows and  the wing block-columns in terms of tuples satisfying the SIP property follows from Lemma \ref{txSIP} and the definition of reversal tuple.

To prove part (c), notice that, since  $k-1$ is the largest index in $\mathbf{q}$, we have $k-1\in \heads(\mathbf{q}) \cap  \heads(\rev(\mathbf{q}))$.  
Therefore,  by parts (b) and (c), the first block-row and the first block-column of $F_{\mathbf{q}}(\lambda)$  are not, respectively, a wing block-row of $F_{\mathbf{q}}(\lambda) \Pi_{\mathbf{r}}^n$ relative to $(\boldsymbol\ell, \boldsymbol{\mathrm{id}}, p, q)$,  or a wing block-column of $(\Pi_{\boldsymbol\ell}^n)^{\mathcal{B}}F_{\mathbf{q}}(\lambda)$ relative to $(\boldsymbol{\mathrm{id}}, \mathbf{r}, p, q)$.
   Since the positions of the wing block-rows (resp. the wing block-columns) of $F_{\mathbf{q}}(\lambda) \Pi_{\mathbf{r}}^n$ (resp. $(\Pi_{\boldsymbol\ell}^n)^{\mathcal{B}}F_{\mathbf{q}}(\lambda)$) are the same as the positions of the wing block-rows (resp. wing block-columns) of $F_\mathbf{q}(\lambda)$,  the first claim   in part (c)  follows. 
The second claim follows from the fact that $M(\lambda)$ follows the staircase pattern for $\lambda A_k+A_{k-1},A_{k-2},\hdots,A_0$. 
\end{proof}

\begin{remark}\label{remark:permutation1}
We note that part (c) in Theorem \ref{FP-col-row} implies that the block-permutations $\Pi_{\mathbf{r}}^n$ and $\Pi_{\boldsymbol\ell}^n$ in \eqref{Fiedler-dual} are, respectively, of the form $I_n\oplus \Pi_{\mathbf{\widetilde{r}}}^n$ and $I_n\oplus \Pi_{\widetilde{\boldsymbol\ell}}^n$, for some permutations $\mathbf{\widetilde{r}}$ and $\widetilde{\boldsymbol\ell}$ of the set $\{1:k-1\}$.
\end{remark}


\section{The GFP as block-Kronecker pencils}
\label{sec:GF_as_block_Kron_pen}

In this section, we start by proving that the proper GFP are, up to permutations, block Kronecker pencils.
The precise statement of this result is  Theorem \ref{thm:main:GFP}, however, we postpone its proof to subsection \ref{sec:proof_main:GFP}. 
The case of nonproper GFP is considered in subsection \ref{nonGFP}.

Recall from Remark \ref{rem:properGFP} the concept of a simple pair associated with a proper GFP.

\begin{theorem}\label{thm:main:GFP}
Let $P(\lambda)=\sum_{i=0}^kA_i\lambda^i \in\mathbb{F}[\lambda]^{n\times n}$ be a matrix polynomial of grade $k$, and let
\[
K_{\mathbf{q},\mathbf{z}}(\lambda)= \lambda M_{\mathbf{z}}^P-M_{\mathbf{q}}^P
\]
be the GFP associated with $P(\lambda)$ and $(\mathbf{q}, \mathbf{z})$. 
Let $(\widehat{\mathbf{q}},h)$ be the simple pair associated with $K_{\mathbf{q},\mathbf{z}}(\lambda)$.
Then, there exist block-permutation matrices $\Pi_{\boldsymbol\ell}^n$ and $\Pi_{\mathbf{r}}^n$ such that
\begin{equation}\label{eq:main_result_GFP}
(\Pi_{\boldsymbol\ell}^n)^{\mathcal{B}}K_{\mathbf{q},\mathbf{z}}(\lambda) \Pi_{\mathbf{r}}^n = 
\left[\begin{array}{c|c}
M(\lambda) & L_{\mathfrak{h}(\rev(\widehat{\mathbf{q}}))+k-h-2}(\lambda)^T \\ \hline
L_{\mathfrak{h}(\widehat{\mathbf{q}})-1}(\lambda) & 0 
\end{array}\right]
\end{equation}
is a $(\mathfrak{h}(\widehat{\mathbf{q}})-1,n,\mathfrak{h}(\rev(\widehat{\mathbf{q}}))+k-h-2,n)$-block Kronecker pencil, where the body $M(\lambda)$ satisfies the AS condition for $P(\lambda)$.
Moreover, setting $p:=\mathfrak{h}(\widehat{\mathbf{q}})-1$ and $q:=\mathfrak{h}(\rev(\widehat{\mathbf{q}}))+k-h-2$, the following statements hold.
\begin{itemize}
\item[\rm (a)] The wing block-columns of $(\Pi_{\boldsymbol\ell}^n)^\mathcal{B}K_{\mathbf{q},\mathbf{z}}(\lambda)$ relative to $(\boldsymbol{\mathrm{id}},\mathbf{r},p,q)$ are of the form $-e_i\otimes I_n + \lambda e_{i+1}\otimes I_n$, for some $1\leq i\leq q$, and are located in positions $j\in\{1:k\}$ such that $(\mathbf{q},k- j)$ and $(\mathbf{z}+k,j-1)$ satisfy the SIP.
\item[\rm (b)] The wing block-rows of $K_{\mathbf{q},\mathbf{z}}(\lambda)\Pi_{\mathbf{r}}^n$ relative to $(\boldsymbol\ell,\boldsymbol{\mathrm{id}},p,q)$ are of the form $-e_i^T\otimes I_n + \lambda e_{i+1}^T\otimes I_n$, for some $1\leq i\leq p$, and are located in positions  $j\in\{1:k\}$ such that  $(k-j,\mathbf{q})$ and $(j-1,\mathbf{z}+k)$ satisfy the SIP.
\end{itemize}
\end{theorem}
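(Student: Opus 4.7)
My plan is to establish the theorem in three stages: a reduction of the proper GFP to a ``canonical'' subfamily via left multiplication, a direct proof for that subfamily, and a transfer to the general case via iterated application of Lemma~\ref{lem:left_multiplication2}. The first stage is the factorization
\[
K_{\mathbf{q},\mathbf{z}}(\lambda) \;=\; M_{\mathbf{m}}^{P}\,K_{\widehat{\mathbf{q}},(-k:-h-1)}(\lambda),
\]
where the right-hand pencil is the canonical case corresponding to $\mathbf{m}$ empty. This identity follows from $M_{\mathbf{m}}^{P} M_{-\rev(\mathbf{m})}^{P} = I_{kn}$ (each $|m_j|\in\{1:h\}\subseteq\{1:k-1\}$, so $M_{m_j}^{P}$ and $M_{-m_j}^{P}$ are mutually inverse) together with $M_{\widehat{\mathbf{q}}}^{P}=M_{-\rev(\mathbf{m})}^{P} M_{\mathbf{q}}^{P}$ coming from $\widehat{\mathbf{q}}=(-\rev(\mathbf{m}),\mathbf{q})$.

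Second, I would prove the theorem for the canonical pencil $K_{\widehat{\mathbf{q}},(-k:-h-1)}(\lambda)$. When $h=k-1$ it reduces to the Fiedler pencil $F_{\widehat{\mathbf{q}}}(\lambda)$, and Theorem~\ref{FP-col-row} directly provides the block Kronecker form, the AS condition on the body, and the wing positions of (a) and (b), noting that $q=\mathfrak{h}(\rev(\widehat{\mathbf{q}}))+k-h-2$ collapses to $\mathfrak{h}(\rev(\widehat{\mathbf{q}}))-1$ in this subcase. When $h<k-1$ I would exploit the explicit block structure of $\lambda M_{-k:-h-1}^{P}-M_{\widehat{\mathbf{q}}}^{P}$: its first $k-h-1$ block-columns already have the wing form $-e_j\otimes I_n+\lambda e_{j+1}\otimes I_n$; block-column $k-h$ collects $A_k,A_{k-1},\ldots,A_{h+1}$ as a staircase; and its lower-right $(h+1)\times(h+1)$ block equals the Fiedler pencil $F^{Q}_{\widehat{\mathbf{q}}}(\lambda)$ of the grade-$(h+1)$ polynomial $Q(\lambda)=\sum_{i=0}^{h+1}A_i\lambda^i$, to which Theorem~\ref{FP-col-row} applies. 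Extending the permutations supplied by Theorem~\ref{FP-col-row} for this embedded Fiedler block to the full pencil, and combining them with the $k-h-1$ wing block-columns already present, yields the claimed $(\mathfrak{h}(\widehat{\mathbf{q}})-1,n,\mathfrak{h}(\rev(\widehat{\mathbf{q}}))+k-h-2,n)$-block Kronecker form. The counts match because $\mathfrak{h}(\widehat{\mathbf{q}})+\mathfrak{h}(\rev(\widehat{\mathbf{q}}))=h+2$ for any permutation of $\{0:h\}$ (by Lemma~\ref{lem:prop-cons}), so the wing-column count $(k-h-1)+(h-\mathfrak{h}(\widehat{\mathbf{q}})+1)$ equals $q$. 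The AS condition on the body follows by a direct antidiagonal count combining the AS condition for $Q(\lambda)$ from the Fiedler subblock with the contribution of the staircase column.

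Finally, writing $\mathbf{m}=(m_1,\ldots,m_r)$, I would set $L_r:=K_{\widehat{\mathbf{q}},(-k:-h-1)}(\lambda)$ and $L_{j-1}:=M_{m_j}^{P}L_j$ for $j=r,\ldots,1$, so that $L_0=K_{\mathbf{q},\mathbf{z}}(\lambda)$. Since $m_j\in\{-h:-1\}$, the matrix $M_{m_j}^{P}=M_{-|m_j|}(A_{|m_j|})$ fits exactly the hypothesis of Lemma~\ref{lem:left_multiplication2} with $x=|m_j|$; iterating preserves both the block Kronecker dimensions $(p,n,q,n)$ and the AS condition on the body. Claims (a) and (b) then follow by tracking the positions of wing block-rows and block-columns using parts (ii)--(iii) of Lemma~\ref{lem:left_multiplication2}, starting from those established for the canonical pencil and translating the final positions into the SIP descriptions via Lemmas~\ref{txSIP} and~\ref{xtSIP}. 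The main obstacles are (i) the canonical case for $h<k-1$, where the row-$(k-h)$ and column-$(k-h)$ contributions of the staircase piece and the embedded Fiedler piece overlap and must be consolidated correctly in the body, and (ii) verifying at each iteration of the third stage that block-row $k-|m_j|+1$ of $L_j$ is a wing block-row and $k-|m_j|$ is a body block-row, which requires an inductive combinatorial check driven by the wing positions established for the canonical pencil and the swaps effected by each multiplication.
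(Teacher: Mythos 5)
Your proposal follows essentially the same route as the paper: reduce via the factorization $K_{\mathbf{q},\mathbf{z}}(\lambda)=M_{\mathbf{m}}^{P}(\lambda M^{P}_{-k:-h-1}-M^{P}_{\widehat{\mathbf{q}}})$, prove the canonical case by splitting off the embedded Fiedler block of the truncated polynomial $Q(\lambda)$ (the paper packages this splitting as Lemma~\ref{lem:splitting} and Theorem~\ref{thm:main:GFP_aux}) and invoking Theorem~\ref{FP-col-row}, and then peel off the elementary factors one at a time by inductive application of Lemma~\ref{lem:left_multiplication2}, with the wing/body-row hypothesis at each step verified through the SIP statements of part~(b). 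The two obstacles you flag at the end are exactly the points where the paper expends its technical effort, and the lemmas you cite (\ref{txSIP}, \ref{xtSIP}, \ref{lem:prop-cons}) are the ones the paper uses to close them.
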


\begin{remark}
We note that Theorem \ref{thm:main:GFP}, together with Theorem  \ref{thm:Lambda-dual-pencil-linearization}, implies that the (extended) block Kronecker pencil $(\Pi_{\boldsymbol\ell}^n)^{\mathcal{B}}K_{\mathbf{q},\mathbf{z}}(\lambda) \Pi_{\mathbf{r}}^n$ in \eqref{eq:main_result_GFP} is a strong linearization of the matrix polynomial $P(\lambda)=\sum_{i=0}^kA_i\lambda^i$.
\end{remark}

We start by showing that Theorem \ref{thm:main:GFP} holds for a subfamily of proper GFP.
\begin{theorem}\label{thm:main:GFP_aux}
Let $P(\lambda)=\sum_{i=0}^k A_i\lambda^i\in\mathbb{F}[\lambda]^{n\times n}$ be a matrix polynomial of grade $k$.
Let $h\in\{1:k-1\}$, let $\mathbf{q}$ be a permutation of the set $\{0:h\}$, and let $K_{\mathbf{q},\mathbf{z}}(\lambda)$ be the following proper generalized Fiedler pencil
\[
K_{\mathbf{q},\mathbf{z}}(\lambda) = \lambda M_{-k:-h-1}^P-M_{\mathbf{q}}^P,
\]
associated with $P(\lambda)$.
Then, Theorem \ref{thm:main:GFP} holds for $K_{\mathbf{q},\mathbf{z}}(\lambda) $.
\end{theorem}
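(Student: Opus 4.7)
The plan is to reduce Theorem~\ref{thm:main:GFP_aux} to Theorem~\ref{FP-col-row} by recognizing a smaller Fiedler pencil inside $K_{\mathbf{q},\mathbf{z}}(\lambda)$. I would introduce the auxiliary polynomial $Q(\lambda):=A_{h+1}\lambda^{h+1}+\cdots+A_0\in\mathbb{F}[\lambda]^{n\times n}$ of grade $h+1$ and first show that the trailing $(h+1)n\times(h+1)n$ principal submatrix of $K_{\mathbf{q},\mathbf{z}}(\lambda)$ is exactly the Fiedler pencil $F_{\mathbf{q}}^{Q}(\lambda)=\lambda M_{-(h+1)}^{Q}-M_{\mathbf{q}}^{Q}$ for $Q(\lambda)$ and the permutation $\mathbf{q}$ (viewed as a permutation of $\{0:h\}$). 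This identification follows from Lemma~\ref{lem: string product} together with a direct multiplication: since every index of $\mathbf{q}$ lies in $\{0:h\}$, the factor $M_{\mathbf{q}}^{P}$ decomposes as $I_{(k-h-1)n}\oplus M_{\mathbf{q}}^{Q}$, and computing $M_{-k:-h-1}^{P}=M_{-k}(A_{k})M_{-k+1}(A_{k-1})\cdots M_{-h-1}(A_{h+1})$ shows that its trailing $(h+1)\times(h+1)$ block equals $A_{h+1}\oplus I_{hn}=M_{-(h+1)}^{Q}$, while its first $k-h$ block-rows carry a staircase wing pattern $-e_i\otimes I_n+\lambda e_{i+1}\otimes I_n$ in the first $k-h-1$ block-columns, together with $\lambda A_{k},\lambda A_{k-1},\ldots,\lambda A_{h+2}$ stacked in the first $k-h-1$ rows of the $(k-h)$-th block-column.

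Next, I apply Theorem~\ref{FP-col-row} to $F_{\mathbf{q}}^{Q}(\lambda)$ to obtain $(h+1)\times(h+1)$ block-permutation matrices $\widetilde{\Pi}_{\boldsymbol\ell}^n$ and $\widetilde{\Pi}_{\mathbf{r}}^n$ that permute $F_{\mathbf{q}}^{Q}(\lambda)$ into a $(\mathfrak{h}(\mathbf{q})-1,n,\mathfrak{h}(\rev(\mathbf{q}))-1,n)$-block Kronecker pencil whose body satisfies the AS condition for $Q(\lambda)$, and which fix the first block-row and block-column (Remark~\ref{remark:permutation1}). These lift to the full $k\times k$ block-permutations by setting $\Pi_{\boldsymbol\ell}^n:=I_{(k-h-1)n}\oplus\widetilde{\Pi}_{\boldsymbol\ell}^n$ and letting $\Pi_{\mathbf{r}}^n$ act as $\widetilde{\Pi}_{\mathbf{r}}^n$ on the last $h+1$ block-columns while sending the first $k-h-1$ block-columns of $K_{\mathbf{q},\mathbf{z}}(\lambda)$ to additional wing positions appended to the wing block-columns of the Fiedler subpencil. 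A counting argument then yields a $(\mathfrak{h}(\widehat{\mathbf{q}})-1,n,\mathfrak{h}(\rev(\widehat{\mathbf{q}}))+k-h-2,n)$-block Kronecker pencil: the $p+1$ body block-columns are inherited directly from the Fiedler subpencil, while the body block-rows include both the $\mathfrak{h}(\rev(\mathbf{q}))$ body block-rows of the Fiedler part and the first $k-h-1$ block-rows of $K_{\mathbf{q},\mathbf{z}}(\lambda)$, giving $q+1=\mathfrak{h}(\rev(\widehat{\mathbf{q}}))+k-h-1$. The AS condition for the body $M(\lambda)$ with respect to $P(\lambda)$ then follows from combining the AS condition for the Fiedler body with the extra rows carrying $\lambda A_{h+2},\ldots,\lambda A_{k}$, either by a direct antidiagonal-sum computation or by invoking Lemma~\ref{lemm:perturbation_body}.

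The main obstacle will be verifying parts~(a) and~(b), which characterize the positions of the wing block-columns and wing block-rows via the joint SIP conditions on $(\mathbf{q},k-j)$ and $(\mathbf{z}+k,j-1)$. Since here $\mathbf{z}+k=(0:k-h-1)$, the verification splits into two cases: for positions $j\in\{1:k-h-1\}$ (the $\mathbf{z}$-wing columns), $(\mathbf{q},k-j)$ trivially satisfies SIP because $k-j>h$ is not an index of $\mathbf{q}$, and $(\mathbf{z}+k,j-1)$ satisfies SIP because the string $(0:k-h-1)$ provides the successor $j$ between the two occurrences of $j-1$; for positions $j\in\{k-h+1:k\}$ (the Fiedler-wing columns), $(\mathbf{z}+k,j-1)$ trivially satisfies SIP because $j-1>k-h-1$ is not in $\mathbf{z}+k$, and $(\mathbf{q},k-j)$ SIP reduces to the condition from Theorem~\ref{FP-col-row}(a) applied to the Fiedler subpencil. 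Position $j=k-h$ is excluded in both characterizations because $(\mathbf{q},h)$ never satisfies SIP (the successor $h+1$ is not in $\mathbf{q}$), consistent with the $(k-h)$-th block-column being the unique body block-column coming from the Fiedler part. The symmetric case analysis for part~(b), together with Lemmas~\ref{txSIP} and~\ref{xtSIP}, will complete the proof.
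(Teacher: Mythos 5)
Your proposal takes essentially the same route as the paper's proof: identify the trailing $(h+1)\times(h+1)$ principal block-submatrix of $K_{\mathbf{q},\mathbf{z}}(\lambda)$ as the Fiedler pencil $\lambda M_{-(h+1)}^{Q}-M_{\mathbf{q}}^{Q}$ for $Q(\lambda)=\sum_{i=0}^{h+1}A_i\lambda^i$, apply Theorem~\ref{FP-col-row} (together with Remark~\ref{remark:permutation1}) to that Fiedler subpencil, extend the permutations to the full size by appending the $k-h-1$ wing columns/rows coming from the $\mathbf{z}$ part, verify the AS condition for the resulting body, and then establish parts (a) and (b) by the SIP case split $j\in\{1:k-h-1\}$ versus $j\in\{k-h+1:k\}$. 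The only cosmetic difference is that the paper invokes Lemma~\ref{lem:splitting} to produce this decomposition formally, whereas you compute it directly; the two are interchangeable.

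Two small points worth tightening before you write this up. First, the claim that $j=k-h$ corresponds to ``the unique body block-column coming from the Fiedler part'' is not accurate: the Fiedler body contributes all $\mathfrak{h}(\mathbf{q})$ body block-columns of the final block Kronecker pencil; what is special about position $k-h$ is that it is the unique column where the $y_{\mathbf{z}}$ structure and the first column of the Fiedler subpencil coincide. Second, the AS check cannot be reduced to Lemma~\ref{lemm:perturbation_body} alone, because neither piece of the final body by itself satisfies the AS condition for $P(\lambda)$; you need the direct antidiagonal-sum computation (as in the paper's split $H(\lambda)=H_1(\lambda)+H_2(\lambda)$) combining the AS condition of the Fiedler body for $Q(\lambda)$ with the antidiagonal contributions of $y_{\mathbf{z}}(\lambda)=[\lambda A_k,\dots,\lambda A_{h+2}]^T$. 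Your ``either/or'' phrasing already anticipates this, but the Lemma~\ref{lemm:perturbation_body} route should simply be dropped.
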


\begin{proof}
Let $\mathbf{z}=(-k:-h-1)$. 
First note that the simple pair associated with $K_{\mathbf{q},\mathbf{z}}(\lambda)$ is $(\mathbf{q}, h)$.
By applying Lemma \ref{lem:splitting} to the pencil $K_{\mathbf{q},\mathbf{z}}(\lambda)$, which is a also a GFPR, we obtain that $K_{\mathbf{q},\mathbf{z}}(\lambda)$ can be partitioned as follows
\[
K_{\mathbf{q},\mathbf{z}}(\lambda)=
\left[\begin{array}{c|c|c}
D_\mathbf{z}(\lambda) & y_\mathbf{z}(\lambda) & 0 \\ \hline
x_\mathbf{z}(\lambda) &  c(\lambda) & x_\mathbf{q}(\lambda) \\ \hline
0 & y_\mathbf{q}(\lambda) & D_\mathbf{q}(\lambda)
\end{array}
\right],
\]
where $D_\mathbf{z}(\lambda)\in\mathbb{F}[\lambda]^{(k-h-1)n\times(k-h-1)n}$, $x_\mathbf{z}(\lambda)\in\mathbb{F}[\lambda]^{n\times (k-h-1)n}$, $y_\mathbf{z}(\lambda)\in\mathbb{F}[\lambda]^{(k-h-1)n\times n}$, $D_\mathbf{q}(\lambda)\in\mathbb{F}[\lambda]^{nh\times nh}$, $x_\mathbf{q}(\lambda)\in\mathbb{F}[\lambda]^{n\times nh}$, $y_\mathbf{q}(\lambda)\in\mathbb{F}[\lambda]^{nh\times n}$ and $c(\lambda)\in\mathbb{F}[\lambda]^{n\times n}$. 
Moreover, Lemma \ref{lem:splitting} also tells us that the pencil
\[
F(\lambda):=
\begin{bmatrix}
 c(\lambda) & x_\mathbf{q}(\lambda) \\
y_\mathbf{q}(\lambda) & D_\mathbf{q}(\lambda)
\end{bmatrix}
\]
is a  Fiedler pencil associated with the matrix polynomial $Q(\lambda):=\lambda^{h+1}A_{h+1}+\lambda^h A_h+\cdots+\lambda A_1+A_0$ since $F(\lambda)= \lambda M_{-h-1}^Q-M_{\mathbf{q}}^Q$.
Similarly, the pencil
\[
G(\lambda):=
\begin{bmatrix}
D_\mathbf{z}(\lambda) & y_\mathbf{z}(\lambda) \\
x_\mathbf{z}(\lambda) &  c(\lambda)
\end{bmatrix}
\]
is a proper GFP associated with the matrix polynomial $Z(\lambda):=\lambda^{k-h}A_k+\lambda^{k-h-1}A_{k-1}+\cdots+\lambda A_{h+1}+A_h$ since  $G(\lambda)=\lambda M_{h-k:-1}^Q-M_0^Q$.
Furthermore, a direct matrix multiplication similar to the computations necessary to prove Lemma \ref{lem: string product} shows that
\[
\begin{bmatrix}
D_\mathbf{z}(\lambda) \\ x_\mathbf{z}(\lambda)
\end{bmatrix} = L_{k-h-1}(\lambda)^T\otimes I_n, \quad \mbox{and} \quad 
\begin{bmatrix}
y_\mathbf{z}(\lambda) \\  c(\lambda)
\end{bmatrix} = 
\begin{bmatrix}
\lambda A_k \\ \lambda A_{k-1} \\ \vdots \\ \lambda A_{h+2} \\ \lambda A_{h+1}+A_h
\end{bmatrix}.
\]

Next, applying Theorem \ref{FP-col-row} to the pencil $F(\lambda)$ and to according Remark \ref{remark:permutation1}, we deduce that $c(\lambda)= \lambda A_{h+1}+A_h$ and obtain that there exist block-permutation matrices $\Pi_{\mathbf{r}_1}=I_n\oplus \Pi_{\widetilde{\mathbf{r}}}$ and $\Pi_{{\boldsymbol\ell_1}}=I_n\oplus\Pi_{\widetilde{\boldsymbol\ell}}$  such that 
\begin{align*}
\Pi_{\boldsymbol\ell_1}^{\mathcal{B}} F(\lambda) \Pi_{\mathbf{r}_1}=&
\begin{bmatrix}
I_n & 0 \\ 0 & \Pi_{\widetilde{\boldsymbol\ell}}^\mathcal{B}
\end{bmatrix}
\begin{bmatrix}
\lambda A_{h+1}+A_h & x_\mathbf{q}(\lambda) \\ y_\mathbf{q}(\lambda) & D_\mathbf{q}(\lambda) 
\end{bmatrix}
\begin{bmatrix}
I_n & 0 \\ 0 & \Pi_{\widetilde{\mathbf{r}}}
\end{bmatrix}  = : \\
&\left[\begin{array}{c|c}
 M(\lambda) & L_{q_1}(\lambda)^T\otimes I_n\\
 \hline  L_p(\lambda)\otimes I_n & 0 
 \end{array}\right] =: \\ 
&
\left[\begin{array}{cc|c} 
\lambda A_{h+1}+A_{h} & m_1(\lambda) & \ell_{q_1}(\lambda)^T\otimes I_n \\
m_2(\lambda) & \widetilde{M}(\lambda) & \widetilde{L}_{q_1}(\lambda)^T\otimes I_n \\ \hline
\ell_p(\lambda)\otimes I_n & \widetilde{L}_p(\lambda)\otimes I_n & \phantom{\Big{(}} 0 \phantom{\Big{(}}
\end{array}\right]
\end{align*}
 is a $(p,n,q_1,n)$-block Kronecker pencil for $Q(\lambda)$, with $p=\mathfrak{h}(\mathbf{q})-1$ and $q_1:=\mathfrak{h}(\rev(\mathbf{q}))-1$.
Then, notice that
 \begin{align}\label{eq:main_GFP_aux_eq1}
 \begin{split}
 \begin{bmatrix}
 I_{(k-h-1)n} \\ & I_n \\ & &  \Pi_{\widetilde{\boldsymbol\ell}}^\mathcal{B}
\end{bmatrix}
\begin{bmatrix}
D_\mathbf{z}(\lambda) & y_\mathbf{z}(\lambda) & 0 \\ 
x_\mathbf{z}(\lambda) & \lambda A_{h+1}+A_h & x_\mathbf{q}(\lambda) \\ 
0 & y_\mathbf{q}(\lambda) & D_\mathbf{q}(\lambda)
\end{bmatrix}
\begin{bmatrix}
I_{(k-h-1)n} \\ & I_n \\ & & \Pi_{\widetilde{\mathbf{r}}}
\end{bmatrix}& = \\
\begin{bmatrix}
D_\mathbf{z}(\lambda) & y_\mathbf{z}(\lambda) & 0 & 0 \\
x_\mathbf{z}(\lambda) & \lambda A_{h+1}+A_h & m_1(\lambda) & \ell_{q_1}(\lambda)^T\otimes I_n \\
0 & m_2(\lambda) & \widetilde{M}(\lambda) & \widetilde{L}_{q_1}(\lambda)^T\otimes I_n \\
0 & \ell_p(\lambda)\otimes I_n & \widetilde{L}_p(\lambda)\otimes I_n & 0
\end{bmatrix}&,
\end{split}
 \end{align}
which is block-permutationally equivalent to the matrix pencil
\begin{equation}\label{eq:main_GFP_aux_eq2}
\left[\begin{array}{cc|cc}
y_\mathbf{z}(\lambda) & 0 & D_\mathbf{z}(\lambda) & 0 \\
\lambda A_{h+1}+A_h & m_1(\lambda) & x_\mathbf{z}(\lambda) & \ell_{q_1}(\lambda)^T\otimes I_n \\
m_2(\lambda) & \widetilde{M}(\lambda) & 0 & \widetilde{L}_{q_1}(\lambda)^T\otimes I_n \\ \hline
\ell_p(\lambda)\otimes I_n & \widetilde{L}_p(\lambda)\otimes I_n & 0 & \phantom{\Big{(}} 0 \phantom{\Big{(}}
\end{array}\right].
\end{equation}
Thus, there exist matrix permutations $\Pi_{\boldsymbol\ell}^n$ and $\Pi_{\mathbf{r}}^n$ such that $(\Pi_{\boldsymbol\ell}^n)^{\mathcal{B}}K_{\mathbf{q},\mathbf{z}}(\lambda) \Pi_{\mathbf{r}}^n $ is a $(p, n, q_1+k-h-1)$-block-Kronecker pencil  with body
 \[
 H(\lambda)=\begin{bmatrix}
 y_{\mathbf{z}}(\lambda) & 0 \\
 \lambda A_{h+1}+A_h & m_1(\lambda) \\
 m_2(\lambda) & \widetilde{M}(\lambda)
 \end{bmatrix}.
 \]
 To prove that the pencil $H(\lambda)$ satisfies the AS condition for $P(\lambda)$, we decompose $H(\lambda)$ as
 \[
\underbrace{\begin{bmatrix}
y_\mathbf{z}(\lambda) & 0 \\ 0 & 0 \\ 0 & 0
\end{bmatrix}}_{H_1(\lambda)}+\underbrace{\begin{bmatrix}
0 & 0 \\ \lambda A_{h+1}+A_h & m_1(\lambda) \\
m_2(\lambda) & \widetilde{M}(\lambda)
\end{bmatrix}}_{H_2(\lambda)},
 \]
 and notice that
 \[
\mathrm{AS}(H,s)=\mathrm{AS}(H_1,s)+\mathrm{AS}(H_2,s) = A_s,
\]
for $s=0:k$, which follows from the structure of  the  block-vector $  y_\mathbf{z}(\lambda)$, the fact that the body of $\Pi_{\boldsymbol\ell_1}^{\mathcal{B}} F(\lambda) \Pi_{\mathbf{r}_1}$  satisfies the AS condition for $Q(\lambda)$, and the linearity of the antidiagonal sum.

To prove part (a) of Theorem \ref{thm:main:GFP}, notice first that Theorem \ref{FP-col-row} implies that the wing block-columns of the Fiedler pencil $F(\lambda)$  relative to $(\boldsymbol\ell_1, \mathbf{r}_1,  p, q_1)$ are in positions $h+1-j$, where $j\in\{0:h-1\}$ and $(\mathbf{q},j)$ satisfies the SIP. 
Then, notice that, for $q:=q_1+k-h-2$,   the wing block-columns of $(\Pi_{\boldsymbol\ell}^n)^{\mathcal{B}}K_{\mathbf{q},\mathbf{z}}(\lambda)$  relative to $(\boldsymbol{\rm id}, \mathbf{r}, p, q)$ are in positions $\{1:k-h-1\}\cup\{(k-h-1)+h+1-i{\red :} \; i\in \{0:h-1\}\mbox{ and } (\mathbf{q}, i) \; \textrm{satisfies  the }SIP\}$, or equivalently, in positions
$$\{1:k-h-1\}\cup\{j: \; j\in \{k-h-1:k\}\mbox{ and } (\mathbf{q}, k-j) \; \textrm{satisfies  the SIP}\}.$$
Note also that, in this case, $\mathbf{z}+k=0:k-h-1$ and $(\mathbf{z}+k, s)$ satisfies the SIP if and only if $s\in \{0:k-h-2\}$. 
 Moreover, we observe that all the wing  block-columns are of the form $-e_i \otimes I_n + \lambda e_{i+1} \otimes I_n$ for some $1 \leq i \leq q$,  which implies part (a).   
 
Finally, to prove part (b), recall from Theorem \ref{FP-col-row} that the wing block-rows of $F(\lambda)$ relative to $(\boldsymbol\ell_1, \mathbf{r}_1, p, q_1)$ are located in positions $h+1-j$, where $j\in\{0:h-1\}$ and $(j,\mathbf{q})$ satisfies the SIP.
Then, notice that, no index $s$ is such that $(s, \mathbf{z}+k)$ satisfies the SIP and  the wing block-rows of $K_{\mathbf{q},\mathbf{z}}(\lambda)\Pi_{\mathbf{r}}^n$  relative to $(\boldsymbol\ell, \boldsymbol{\rm id}, p, q)$ are located in positions $$\{(k-h-1)+h+1-j: \; j\in \{0:h-1\} \mbox{ and } (j, \mathbf{q}) \; \textrm{satisfies  the SIP}\}$$ and have the desired form, which implies part (b). 
\end{proof}


\subsection{Proof of Theorem \ref{thm:main:GFP}}\label{sec:proof_main:GFP}

Armed with Lemma \ref{lem:left_multiplication2} and Theorem  \ref{thm:main:GFP_aux}, we are in a position to prove Theorem \ref{thm:main:GFP}.

\medskip

\begin{proof}{\rm (of  Theorem \ref{thm:main:GFP})}
 Recall from Remark \ref{rem:properGFP} that the GFP $K_{\mathbf{q},\mathbf{z}}(\lambda)$ can be written in the form
\[
K_{\mathbf{q},\mathbf{z}}(\lambda) = M_\mathbf{m}^P(\lambda M_{-k:-h-1}^P-M_{\widehat{\mathbf{q}}}^P),
\]
where $\widehat{\mathbf{q}}=(-\rev(\mathbf{m}),\mathbf{q})$ is a permutation of $\{0:h\}$, and $\mathbf{m}$ is a tuple with indices from $\{-1:-h\}$.
Additionally, let us introduce the notation $\mathbf{m}=(-i_s,-i_{s-1},\hdots,-i_1)$ for the indices of the tuple $\mathbf{m}$, and let us write
\begin{equation}
 \label{eq:GFP_in_proof}
K_{\mathbf{q},\mathbf{z}}(\lambda) =
M_{(-i_s,-i_{s-1},\hdots,-i_1)}^P(\lambda M_{-k:-h-1}^P-M_{(i_1,\hdots,i_{s-1},i_s,\mathbf{q})}^P).
\end{equation}
The proof proceeds by induction on the number $s$ of factors in  $M_{(-i_s,-i_{s-1},\hdots,-i_1)}^P$.
When $s=0$, we have the pencil $\lambda M_{-k:-h-1}^P-M_{\widehat{\mathbf{q}}}^P$.
In this case, the result follows from Theorem \ref{thm:main:GFP_aux}. 
Assume, now, that the result is true for the proper GFP \begin{align*}
L(\lambda):=&M_{(-i_{s-1},\hdots,-i_1)}^P(\lambda M^P_{-k:-h-1}-M^P_{(i_1,\hdots,i_{s-1},i_s,\mathbf{q})})=\\ &\lambda M_{(-i_{s-1},\hdots,-i_1,-k:-h-1)}^P-M_{(i_s,\mathbf{q})}^P,
\end{align*}
 and let us show that the result is true for the pencil $K_{\mathbf{q},\mathbf{z}}(\lambda)=M_{-i_s}^PL(\lambda)$.
Since $(\widehat{\mathbf{q}},h)$ is also a simple pair associated with $L(\lambda)$, the inductive hypothesis implies that there exist block-permutation matrices $\Pi_{\widetilde{\boldsymbol\ell}}^n$ and $\Pi_\mathbf{r}^n$ such that
\begin{equation}
(\Pi_{\widetilde{\boldsymbol\ell}}^n)^{\mathcal{B}}L(\lambda) \Pi_{\mathbf{r}}^n = 
\left[\begin{array}{c|c}
M^\prime(\lambda) & L_{\mathfrak{h}(\rev(\widehat{\mathbf{q}}))+k-h-2}(\lambda)^T\otimes I_n \\ \hline
L_{\mathfrak{h}(\widehat{\mathbf{q}})-1}(\lambda)\otimes I_n & 0 
\end{array}\right],
\end{equation}
is an extended $(p, n, q, n)$-block Kronecker pencil, whose body $M^\prime(\lambda)$ satisfies the AS condition for $P(\lambda)$, where $p=\mathfrak{h}(\widehat{q})-1$ and $q=\mathfrak{h}(\rev(\widehat{\mathbf{q}}))+k-h-2$.
To prove that the result holds as well for $K_{\mathbf{q},\mathbf{z}}(\lambda)$, we will apply Lemma \ref{lem:left_multiplication2}, so we need to start by showing that the  $(k-i_s)$th and  $(k-i_s+1)$th block-rows of $L(\lambda)$ are, respectively, a body block-row and a wing block-row of $L(\lambda)$ relative to $(\boldsymbol{\widetilde{\ell}},\mathbf{r},p,q)$.
First, notice that $(i_s, (i_s,\mathbf{q}))$ does not satisfy the SIP.
Thus, part (b) of the inductive hypothesis implies that the  $(k-i_s)$th block-row of $L(\lambda)$ is a body block-row relative to $(\boldsymbol{\widetilde{\ell}},\mathbf{r},p,q)$.
Next, notice that each index in $\{0:k\}$ appears either in the tuple $(i_{s-1},\hdots,i_1,k:h+1)$ or in the tuple   $(i_s,\mathbf{q})$, and it appears in those tuples at most one time. 
This, in turn, implies that both $(i_s-1,(i_s,\mathbf{q}))$ and $(k-i_s,(k-i_{s-1},\hdots,k-i_1,0:k-h-1))$  satisfy the SIP.
By part (b) of the inductive hypothesis, it follows, then, that the $(k-i_s+1)$th block-row of $L(\lambda)$ is a wing block-row relative to $(\boldsymbol{\widetilde{\ell}},\mathbf{r},p,q)$.
Therefore, by part (i) of Lemma \ref{lem:left_multiplication2}, there exist block-permutation matrices $\Pi_{ \boldsymbol\ell}^n$ and $\Pi_\mathbf{r}^n$ such that \eqref{eq:main_result_GFP} holds with a body $M(\lambda)$ satisfying the AS condition for $P(\lambda)$. 

Now, we show that parts (a) and (b) hold for $K_{\mathbf{q},\mathbf{z}}(\lambda)$. 
 To this end, we introduce the notation $\mathbf{t}:=(k-i_{s-1},\hdots,k-i_1,0:k-h-1)$.
Notice that $\mathbf{z}+k=(\mathbf{m}, -k:-h-1)+k=(k-i_s,\mathbf{t})$.
We start by proving part (a).

Let us denote by $\{ c_1,\hdots,c_q \}$ the set of  positions of the wing block-columns of $L(\lambda)$ relative to $(\boldsymbol{\widetilde{\ell}},\mathbf{r},p,q)$.
From parts (iiia) and (iiib) of  Lemma \ref{lem:left_multiplication2}, we obtain that $\{ c_1,\hdots,c_q \}$ is also the set of positions of the wing block-columns of $K_{\mathbf{q},\mathbf{z}}(\lambda)$ relative to  $( \boldsymbol\ell,\mathbf{r},p,q)$.
Since $L(\lambda)$ and $K_{\mathbf{q},\mathbf{z}}(\lambda)$ have the same  number of wing block-columns  and are located in the same positions,  part (a) for $K_{\mathbf{q},\mathbf{z}}(\lambda)$ follows from part (a) for $L(\lambda)$ (which holds by  the inductive hypothesis) if we prove that  the tuples $(\mathbf{q},k-j)$ and  $((k-i_s,\mathbf{t}),j-1)$ satisfy the SIP  if, respectively, the tuples $((i_s,\mathbf{q}),k-j)$ and $(\mathbf{t},j-1)$ do. 
So, let us assume that $((i_s,\mathbf{q}),k-j)$ and $(\mathbf{t},j-1)$ satisfy the SIP for some $j\in \{1:k\}$. 
The assumptions clearly imply that the tuple $(\mathbf{q},k-j)$ satisfy the SIP, since this is a subtuple of consecutive indices of $((i_s,\mathbf{q}),k-j)$.
Then, notice that, since $(\mathbf{t},j-1)$ satisfies the SIP, since  each different index of $\mathbf{t}$ appears only one time in $\mathbf{t}$, and $k-i_s\notin  \mathbf{t}$, to prove that the tuple $((k-i_s,\mathbf{t}),j-1)$ satisfies the SIP, it is only necessary to check the case $j=k-i_s+1$, that is, we have to prove that $(k-i_s,\mathbf{t},k-i_s)$ satisfies the SIP when  $((i_s,\mathbf{q}),i_s-1)$ and $(\mathbf{t},k-i_s)$ satisfy the SIP.
The proof proceeds by contradiction.
Assume that $((i_s,\mathbf{q}),i_s-1)$ and $(\mathbf{t},k-i_s)$ satisfy the SIP and that $(k-i_s,\mathbf{t},k-i_s)$ does not satisfy the SIP.
The latter assumption implies that $k-i_s+1\notin\mathbf{t}$, which, in turn, implies that $i_s-1\in\mathbf{q}$. 
Since each different index in $(i_s,\mathbf{q})$ appears only once, the statement $i_s-1\in\mathbf{q}$ implies that $((i_s,\mathbf{q}),i_{s}-1)$ does not satisfy the SIP, which  contradicts our assumptions. 
Thus, (a) follows.

Finally, we prove that part (b) is true. 
Let us denote by $\{r_1,\hdots,r_p\}$ the set of  positions of the wing block-rows of $L(\lambda)$ relative to  $(\boldsymbol{\widetilde{\ell}},\mathbf{r},p,q)$, and recall that $k-i_s\notin \{r_1,\hdots,r_p\}$ and $k-i_s+1\in\{r_1,\hdots,r_p\}$.
From parts (ii.c) and (ii.d) of Lemma \ref{lem:left_multiplication2}, we obtain that the set of  positions of the wing block-rows of $K_{\mathbf{q},\mathbf{z}}(\lambda)$ relative to  $( \boldsymbol\ell,\mathbf{r},p,q)$ is $(\{r_1,\hdots,r_p\} \setminus \{k-i_s+1\})\cup\{k-i_s\}$.
Since $L(\lambda)$ and $K_{\mathbf{q},\mathbf{z}}(\lambda)$ have the same number of wing block-rows, part (b) for $K_{\mathbf{q},\mathbf{z}}(\lambda)$  follows from part (b) for $L(\lambda)$ (by the inductive hypothesis) if  the following three statements hold: (i) since $k-i_s$ is the position of a wing  block-row of $K_{\mathbf{q},\mathbf{z}}(\lambda)$ relative to $( \boldsymbol\ell,\mathbf{r},p,q)$, the tuples $(i_s,\mathbf{q})$ and $(k-i_s-1,(k-i_s,\mathbf{t}))$  satisfy the SIP; (ii) since $k-i_s+1$ is not a wing block-row of $K_{\mathbf{q},\mathbf{z}}(\lambda)$ relative to $( \boldsymbol\ell,\mathbf{r},p,q)$, one of the following tuples  $(i_s-1,\mathbf{q})$ and $(k-i_s,(k-i_s,\mathbf{t}))$, or both, does not satisfy the SIP; (iii) when $j\neq k-i_s,k-i_{s}+1$, if the tuples $(k-j,(i_s,\mathbf{q}))$ and $(j-1,\mathbf{t})$ satisfy the SIP, then $(k-j,\mathbf{q})$ and $(j-1,(k-i_s,\mathbf{t}))$ also  satisfy  the SIP.
Since (i) and (ii) are immediate to prove,  we  focus on proving (iii).
Assume that $(k-j,(i_s,\mathbf{q}))$ and $(j-1,\mathbf{t})$ satisfy the SIP, and let $j\neq k-i_s,k-i_s+1$.
First, if $j\neq k-i_s-1$, then the tuple $(i_s,k-j,\mathbf{q})\sim(k-j,i_s,\mathbf{q})$ satisfies the SIP, which, in turn, implies that its subtuple $(k-j,\mathbf{q})$ satisfies the SIP as well.
Second, if $j=k-i_s-1$, the assumption that $(i_s+1,(i_s,\mathbf{q}))$ satisfies the SIP, together with the fact that each different index of $(i_s,\mathbf{q})$ appears only one time, implies that $(i_{s}+1,\mathbf{q})$ satisfies the SIP as well. 
Finally, since $k-i_s\notin\mathbf{t}$ and $j-1\neq k-i_s,k-i_s-1$, we immediately obtain that $(j-1,(k-i_s,\mathbf{t}))$ satisfies the SIP.
\end{proof}

\subsection{GFP that are not proper}\label{nonGFP}
Recall that a GFP as in Definition \ref{def:GFP} is not proper when $0\in C_1$ and/or $k\in C_0$.
Unlike proper GFP,  nonproper GFP associated with a matrix polynomial $P(\lambda)=\sum_{i=0}^k A_i \lambda^i$ are strong linearizations of $P(\lambda)$ only if $A_0$ and/or $A_k$ is nonsingular.
This implies that nonproper GFP are never strong linearizations of singular matrix polynomials.
This drawback makes them the least interesting subfamily of GFP. 
However, nonproper GFP find applications in the problem of constructing symmetric linearizations of symmetric matrix polynomials of even grade \cite{AV04}.
These linearizations can be constructed when the trailing and/or the leading coefficient of the matrix polynomial is  nonsingular. 
For example, consider a symmetric matrix polynomial $P(\lambda)=\sum_{i=0}^4A_i\lambda^i\in \mathbb{F}[\lambda]^{n\times n}$.
If $A_0$ is nonsingular, then the nonproper GFP
\[
\lambda M^P_{(-4,-2-0)}-M^P_{(3,1)}=
\begin{bmatrix}
\lambda A_4+A_3 & -I_n & 0 & 0 \\
-I_n & 0 & \lambda I_n & 0 \\
0 & \lambda I_n & \lambda A_2+A_1 & -I_n \\
0 & 0 & -I_n & -\lambda A_0^{-1}
\end{bmatrix}
\]
is a symmetric strong linearization of $P(\lambda)$. 
On the other hand, if $A_4$ is nonsingular, then the nonproper GFP
\[
\lambda M^P_{(-3,-1)}-M^P_{(4,2,0)}=
\begin{bmatrix}
-A_4^{-1} & \lambda I_n & 0 & 0 \\
\lambda I_n & \lambda A_3+A_2 & -I_n & 0 \\
0 & -I_n & 0 & \lambda I_n \\
0 & 0 & \lambda I_n & \lambda A_1+A_0
\end{bmatrix}
\]
is a symmetric strong linearization of $P(\lambda)$.
Notice that neither of the above nonproper GFP  can be permuted into an extended block Kronecker pencil with a body satisfying the AS condition for $P(\lambda)$.
Therefore, we cannot state a result like Theorem \ref{thm:main:GFP} for GFP that are not proper. 
Nonetheless, we state a weaker result in Theorem \ref{thm:nonproper}. 
Before we state this theorem we give an example.
\begin{example}
Let $P(\lambda)=\sum_{i=0}^4 A_i \lambda^i$,  where $A_0$ and $A_4$ are nonsingular matrices. 
Consider the nonproper GFP given by 
$$ L_1(\lambda)= \lambda M^P_{-0, -2, -4} - M^P_{3, 1}.$$ 
Note that, by the commutativity relations \eqref{commutativity}, we get
$$L_1(\lambda) =( \lambda M^P_{-2, -4} - M^P_{3, 1, 0}) M^P_{-0},$$
where $\lambda M^P_{-2, -4} - M^P_{3, 1,0}$ is a proper GFP. 
Consider now the nonproper GFP
$$L_2(\lambda)= \lambda M^P_{-0, -1} - M^P_{4, 3, 2}.$$
Again, using the commutativity relations, we have
$$L_2(\lambda)= M^P_{-0} M^P_{4} (\lambda M^P_{-4, -1} - M^P_{0, 3, 2}),$$
where $\lambda M^P_{-4, -1} - M^P_{0, 3, 2}$  is also a proper GFP.
 Since the matrices $M_{0}^P$ and $M_{-4}^P M_{0}^P$ are block-diagonal, applying Theorem \ref{thm:main:GFP} to the proper GFP $L_1(\lambda)M_{0}^P$ and $M_{-4}^P M_{0}^PL_2(\lambda)$, we conclude that the non proper GFP considered in this example, are, up to block-permutations and product of nonsingular block-diagonal matrices, extended block-Kronecker pencils.
\end{example}

It is easy to see that, in general, given a nonproper GFP $K_{\mathbf{q},\mathbf{z}}(\lambda)$ associated with $P(\lambda)=\sum_{i=0}^kA_i\lambda^i \in\mathbb{F}[\lambda]^{n\times n}$,  due to the commutativity relations \eqref{commutativity} of the elementary matrices, there exist block-diagonal matrices $L$ and $R$ such that 
\[
K_{\widetilde{\mathbf{q}},\widetilde{\mathbf{z}}}(\lambda):= L K_{\mathbf{q},\mathbf{z}}(\lambda) R
\]
is a proper GFP associated with $P(\lambda)$ and some permutations $\widetilde{\mathbf{q}}$ and $\widetilde{\mathbf{z}}$, where   $R:=\diag(R_1,I_n,\hdots,I_n,R_2)$ and $L:=\diag(L_1,I_n,\hdots,I_n,L_2)$ are  block-diagonal $kn\times kn$ matrices with 
\begin{equation}\label{R1}
R_2:=
\left\{\begin{array}{lr}
A_0, & \mbox{if }-0,-1\in\mathbf{z}\,\,\mbox{ and $-0$ is to the right of $-1$ in $\mathbf{z}$,} \\
A_0, & \mbox{if }-0\in\mathbf{z}\mbox{ and }-1\notin\mathbf{z},\\
I_n, & \mbox{otherwise},
\end{array}\right.
\end{equation}
\begin{equation}\label{L1}
L_2:=
\left\{\begin{array}{lr}
A_0, & \mbox{if }-0,-1\in\mathbf{z}\,\,\mbox{ and $-0$ is to the left of $-1$ in $\mathbf{z}$,} \\
I_n, & \mbox{otherwise},
\end{array}\right.
\end{equation}
\begin{equation}\label{R2}
R_1:=
\left\{\begin{array}{lr}
A_k, & \mbox{if }k,k-1\in\mathbf{q}\,\,\mbox{ and $k$ is to the right of $k-1$ in $\mathbf{z}$,} \\
A_k, & \mbox{if }k\in\mathbf{q}\mbox{ and }k-1\notin\mathbf{q},\\
I_n, & \mbox{otherwise},
\end{array}\right.
\end{equation}
and
\begin{equation}\label{L2}
L_1:=
\left\{\begin{array}{lr}
A_k, & \mbox{if }k,k-1\in\mathbf{q}\,\,\mbox{ and $k$ is to the left of $k-1$ in $\mathbf{z}$,} \\
I_n, & \mbox{otherwise}.
\end{array}\right.
\end{equation}

We call the pencil $K_{\widetilde{\mathbf{q}},\widetilde{\mathbf{z}}}(\lambda)$ above a \emph{proper GFP associated with $K_{\mathbf{q},\mathbf{z}}(\lambda)$}.
Applying Theorem \ref{thm:main:GFP} to a  proper GFP associated with $K_{\mathbf{q},\mathbf{z}}(\lambda)$, we obtain Theorem \ref{thm:nonproper}.
\begin{theorem}\label{thm:nonproper}
Let $P(\lambda)=\sum_{i=0}^kA_i\lambda^i \in\mathbb{F}[\lambda]^{n\times n}$ be a matrix polynomial of grade $k$, and let
\[
K_{\mathbf{q},\mathbf{z}}(\lambda)= \lambda M_{\mathbf{z}}^P-M_{\mathbf{q}}^P,
\]
be a nonproper GFP associated with $P(\lambda)$. 
Let $R:=\diag(R_1,I_n,\hdots,I_n,R_2)$ and $L:=\diag(L_1,I_n,\hdots,I_n,L_2)$ be $kn\times kn$ block-diagonal matrices as defined in \eqref{R1}
--\eqref{L2}.
Then, there exist block-permutation matrices $\Pi_{\boldsymbol\ell}^n$ and $\Pi_{\mathbf{r}}^n$ such that
\begin{equation}
(\Pi_{\boldsymbol\ell}^n)^{\mathcal{B}}LK_{\mathbf{q},\mathbf{z}}(\lambda)R \Pi_{\mathbf{r}}^n = 
\left[\begin{array}{c|c}
M(\lambda) & K_2(\lambda)^T \\ \hline
K_1(\lambda) & 0 
\end{array}\right]
\end{equation}
is a block Kronecker pencil whose body $M(\lambda)$ satisfies the AS condition for $P(\lambda)$.
\end{theorem}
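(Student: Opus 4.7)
My approach is to reduce Theorem \ref{thm:nonproper} to Theorem \ref{thm:main:GFP} by showing that $LK_{\mathbf{q},\mathbf{z}}(\lambda)R$ coincides, possibly up to a sign-rescaling of certain block-columns that is absorbed into the wing pencils, with a proper GFP $K_{\widetilde{\mathbf{q}},\widetilde{\mathbf{z}}}(\lambda)$ associated with $P(\lambda)$. The key structural observation is that $L$ and $R$ are block-diagonal matrices with identity blocks in their interiors, so they act on the pencil only at the first and last block-rows/columns, which is precisely where the ``inverse'' elementary factors $M_k^P = (M_{-k}^P)^{-1}$ (which appear when $k \in \mathbf{q}$) and $M_{-0}^P = (M_0^P)^{-1}$ (which appear when $-0 \in \mathbf{z}$) act nontrivially.

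The heart of the argument is a case analysis based on the commutativity relations in Remark \ref{commutativity}. Whenever $k \in \mathbf{q}$, the factor $M_k^P$ commutes with every $M_j^P$ for $j \in \mathbf{q} \setminus \{k-1\}$, so it can be pushed to one end of $M_{\mathbf{q}}^P$: to the leftmost position when $k-1 \notin \mathbf{q}$ or $k$ lies to the left of $k-1$ in $\mathbf{q}$, and to the rightmost position when $k$ lies to the right of $k-1$. The conditional definitions of $L_1$ and $R_1$ in \eqref{R2}--\eqref{L2} are precisely these two scenarios; when $L_1 = A_k$, the identity $L_1 M_k^P = M_{-k}^P M_k^P = I_n$ (viewed in the top-left $n\times n$ block), combined with $L_1 M_{\mathbf{z}}^P = M_{(-k,\mathbf{z})}^P$, effectively removes $k$ from $\mathbf{q}$ and inserts $-k$ at the left of $\mathbf{z}$. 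A completely analogous analysis, with left and right interchanged, handles $R_1 = A_k$, and the parallel arguments with the pair $\{0,1\}$ and the tuple $\mathbf{z}$ handle $L_2$ and $R_2$. After every applicable substitution, the resulting pair $(\widetilde{\mathbf{q}},\widetilde{\mathbf{z}})$ satisfies $0 \in \widetilde{\mathbf{q}}$ and $-k \in \widetilde{\mathbf{z}}$, so $K_{\widetilde{\mathbf{q}},\widetilde{\mathbf{z}}}(\lambda)$ is a proper GFP.

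Applying Theorem \ref{thm:main:GFP} to this proper GFP then furnishes block-permutation matrices $\Pi_{\boldsymbol\ell}^n$ and $\Pi_{\mathbf{r}}^n$ that transform it into a block Kronecker pencil whose body satisfies the AS condition for $P(\lambda)$; any residual sign-rescaling of a wing block-column arising from the convention $M_0^P = M_0(-A_0)$ versus $R_2 = A_0$ is absorbed by Corollary \ref{cor:wing_pencils} without altering either the wing-pencil form or the body. The main obstacle is the meticulous case-by-case bookkeeping, especially when both nonpropriety conditions hold simultaneously ($k \in \mathbf{q}$ and $-0 \in \mathbf{z}$), together with careful sign tracking; no genuinely new structural ideas beyond those already established in Sections \ref{sec:block_minimal_bases_pencils}--\ref{sec:GF_as_block_Kron_pen} are required.
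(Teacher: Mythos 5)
Your reduction of the nonproper GFP to a proper GFP via commutations and left/right multiplication by $L$ and $R$, followed by an application of Theorem \ref{thm:main:GFP}, is exactly the route the paper takes. However, your treatment of the sign discrepancy between $M_0^P = M_0(-A_0)$ and the displayed value $R_2 = A_0$ fails. With $R_2 = A_0$, one gets $L K_{\mathbf{q},\mathbf{z}}(\lambda) R = K_{\widetilde{\mathbf{q}},\widetilde{\mathbf{z}}}(\lambda)\cdot\mathrm{diag}(I_n,\dots,I_n,-I_n)$, where $K_{\widetilde{\mathbf{q}},\widetilde{\mathbf{z}}}(\lambda)$ is the associated proper GFP, but the negated $k$th block-column of $K_{\widetilde{\mathbf{q}},\widetilde{\mathbf{z}}}(\lambda)$ is in general a \emph{body} block-column, not a wing block-column, so Corollary \ref{cor:wing_pencils} is inapplicable. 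Take the paper's own example $L_1(\lambda)=\lambda M^P_{(-0,-2,-4)}-M^P_{(3,1)}$ with $k=4$: the associated proper GFP $\lambda M^P_{(-2,-4)}-M^P_{(3,1,0)}$ has $4$th block-column $\begin{bmatrix}0 & 0 & A_0 & \lambda I_n\end{bmatrix}^{T}$, which is not of the form $-e_i\otimes I_n+\lambda e_{i+1}\otimes I_n$; equivalently, by Theorem \ref{thm:main:GFP}(a), the position $j=4$ would require $((3,1,0),0)$ to satisfy the SIP, which it does not. Negating that column alters the body (so the AS condition for $P(\lambda)$ is lost) and also destroys the wing-pencil structure of $L_p(\lambda)\otimes I_n$.

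The intended values are $R_2 = -A_0$ and $L_2 = -A_0$; with that choice the relevant half of $R$ (or $L$) is exactly $M_0^P$, so $L K_{\mathbf{q},\mathbf{z}}(\lambda) R$ is a proper GFP on the nose and Theorem \ref{thm:main:GFP} applies directly, with no residual sign to absorb. This is consistent with the paper's own worked examples, which display the multipliers $M_0^P$ and $M_{-4}^P M_0^P$ whose last $n\times n$ diagonal block is $-A_0$; so the displayed formulas \eqref{R1} and \eqref{L1} contain a sign typo, and your proof should simply use the corrected values rather than invoke Corollary \ref{cor:wing_pencils}. Separately, a minor slip: when $k-1\notin\mathbf{q}$, the paper's formula \eqref{R2} places $A_k$ in $R_1$, i.e., pushes $M_k^P$ to the right end, whereas you describe pushing it left; both choices are legitimate since $M_k^P$ then commutes with all other factors of $M_{\mathbf{q}}^P$, but as written your case analysis does not match the stated definitions.
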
 

Thus, Theorem \ref{thm:nonproper} says that GFP  that are not proper are block Kronecker pencils, up to permutation and product by  nonsingular  block-diagonal matrices.

\section{The GFPR as extended block Kronecker pencils}\label{sec:GFPR_as_minimal_bases_pencils}

We prove in this section that all GFPR associated with a matrix polynomial $P(\lambda)$ are, up to permutations of block-rows and block-columns, extended block Kroneckers pencils with bodies satisfying the AS condition for $P(\lambda)$. 
This result is stated in Theorem \ref{thm:main_GFPR}, which is one of the main results of this paper.
However, we postpone its proof to Section \ref{sec:thm:main_GFPR}.

\begin{theorem}\label{thm:main_GFPR}
Let $P(\lambda)=\sum_{i=0}^k A_i\lambda^i \in\mathbb{F}[\lambda]^{n\times n}$ be a matrix polynomial of grade $k$, and let 
\begin{equation}\label{LP-gen}
L_P(\lambda)=M_{{\boldsymbol\ell}_{q},{\boldsymbol\ell}_{z}}(\mathcal{X},\mathcal{ Z})(\lambda M^P_{\mathbf{z}}
-M^P_{\mathbf{q}})M_{\mathbf{r}_{z},\mathbf{r}_{q}}(\mathcal{W}, \mathcal{Y})
\end{equation}
be a GFPR associated with $P(\lambda)$.
  Then, there exist block-permutation matrices $\Pi_{\boldsymbol\ell}^n,\Pi_{\mathbf{r}}^n$ such that 
 \[
 (\Pi_{\boldsymbol\ell}^n)^\mathcal{B} L_P(\lambda) \Pi_{\mathbf{r}}^n =
\left[\begin{array}{c|c}
M(\lambda) & K_2(\lambda)^T \\ \hline
K_1(\lambda) & 0
\end{array}\right]
 \]
 is an extended $(\mathfrak{h}(\mathbf{q})+\mathfrak{h}(k+\mathbf{z})-2,n,\mathfrak{h}(\rev(\mathbf{q}))+\mathfrak{h}(\rev(k+\mathbf{z}))-2,n)$-block Kronecker pencil, whose body $M(\lambda)$ satisfies the AS condition for $P(\lambda)$.
 Moreover, if $K_1(\lambda)$ and $K_2(\lambda)$ are minimal bases, then $ (\Pi_{\boldsymbol\ell}^n)^\mathcal{B} L_P(\lambda) \Pi_{\mathbf{r}}^n$  is a strong linearization of $P(\lambda)$.
\end{theorem}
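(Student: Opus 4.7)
The plan is to build on Theorem \ref{thm:main:GFP} (which handles the proper GFP case) by incorporating the additional left and right multiplications by products of elementary matrices associated with $\boldsymbol\ell_q, \boldsymbol\ell_z, \mathbf{r}_z, \mathbf{r}_q$. First, using the commutativity relations in Remark \ref{commutativity}, note that any index in $\{0:h-1\}$ commutes with any index in $\{-k:-h-2\}$ (since the absolute values differ by at least $3$), so the GFPR can be rewritten as
\[
L_P(\lambda) = M_{\boldsymbol\ell_q}(\mathcal{X})\, M_{\boldsymbol\ell_z}(\mathcal{Z})\,\bigl(\lambda M^P_{\mathbf{z}}-M^P_{\mathbf{q}}\bigr)\,M_{\mathbf{r}_z}(\mathcal{W})\,M_{\mathbf{r}_q}(\mathcal{Y}),
\]
isolating the inner proper GFP $K_{\mathbf{q},\mathbf{z}}(\lambda)$. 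Applying Theorem \ref{thm:main:GFP} to this inner pencil produces block-permutations $\Pi^n_{\boldsymbol\ell_0}, \Pi^n_{\mathbf{r}_0}$ which turn it into a block Kronecker pencil whose body satisfies the AS condition for $P(\lambda)$, with wing block-row and block-column positions characterized explicitly through SIP conditions.

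Next, I would proceed by a four-stage induction on the lengths of $\mathbf{r}_q$, then $\mathbf{r}_z$, then $\boldsymbol\ell_z$, then $\boldsymbol\ell_q$, applying one elementary factor at a time. The right-multiplications by factors from $M_{\mathbf{r}_q}(\mathcal{Y})$ (nonnegative indices) are handled directly by Lemma \ref{lem:right_multiplication}. For the other three stages I would first establish three companion lemmas: a version of Lemma \ref{lem:right_multiplication} for left-multiplication by $M_x(X)$ with $x\geq 0$, and versions of Lemma \ref{lem:left_multiplication2} for left- and right-multiplication by $M_{-x}(X)$. Each of these can be proven either by repeating the argument of the proof already given, or by a block-transposition argument using Lemma \ref{revB} to reduce to the cases already treated.

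At each inductive step, the key check is that the elementary matrix being applied acts on a wing block-column (resp.\ wing block-row) of the current pencil, so that the hypotheses of the relevant multiplication lemma are met. This is precisely where the SIP conditions on $(\boldsymbol\ell_q, \mathbf{q}, \mathbf{r}_q)$ and $(\boldsymbol\ell_z, \mathbf{z}, \mathbf{r}_z)$ enter: combined with parts (a) and (b) of Theorem \ref{thm:main:GFP}, and with the classification of Type~I versus Type~II indices in Proposition \ref{prop: Type I} and Remark \ref{rem: Type I and II heads}, they guarantee that the positions acted upon are wing positions. The dimensions $\mathfrak{h}(\mathbf{q})+\mathfrak{h}(k+\mathbf{z})-2$ and $\mathfrak{h}(\rev(\mathbf{q}))+\mathfrak{h}(\rev(k+\mathbf{z}))-2$ of the resulting wing pencils emerge from tracking, inductively, how the sets of heads change under concatenation with Type~I versus Type~II indices at each stage. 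The AS condition is preserved at every step by the conclusion of each multiplication lemma, which in turn rests on Lemma \ref{lemm:perturbation_body}.

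For the final assertion about strong linearization, once $C(\lambda) := (\Pi^n_{\boldsymbol\ell})^{\mathcal{B}} L_P(\lambda)\Pi^n_{\mathbf{r}}$ has been placed in the desired extended block Kronecker form with body satisfying the AS condition, the hypothesis that $K_1(\lambda)$ and $K_2(\lambda)$ are minimal bases coincides, via Theorem \ref{thm:charac_dual-pencils}, with the nonsingularity of the matrices $A$ and $B$ in the factorization \eqref{eq:extendedBKP}; Theorem \ref{thm:Lambda-dual-pencil-linearization} then delivers the strong linearization conclusion. I expect the principal obstacle to be neither any single lemma nor the reduction step itself, but rather the bookkeeping: one must track simultaneously, across each of the four induction stages, the evolving sets of wing-row and wing-column positions and verify that each newly applied elementary matrix lands on a wing position predicted by the appropriate SIP condition. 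An attractive alternative, worth pursuing if the direct bookkeeping becomes unwieldy, is a recursive proof using Lemma \ref{lem:splitting} to split $L_P(\lambda)$ into the two smaller GFPRs $F(\lambda)$ and $G(\lambda)$ associated respectively with $Q(\lambda)$ and $Z(\lambda)$, combined with Theorem \ref{thm:minimal_bases_concatenation} to assemble the wing pencils of the two halves.
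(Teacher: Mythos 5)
Your primary four-stage induction is a genuinely different route from the paper's, while the "attractive alternative" you append at the end is essentially the paper's actual proof. The paper does use Lemma \ref{lem:splitting} to split $L_P(\lambda)$ into the two GFPRs $F(\lambda)$ (for $Q(\lambda)$) and $G(\lambda)$ (for $Z(\lambda)$), handles $F(\lambda)$ via the one-sided Theorem \ref{right-canonical-1} (which internalizes the induction on $\boldsymbol\ell_q,\mathbf{r}_q$), handles $G(\lambda)$ via Theorem \ref{left-canonical} (obtained from Theorem \ref{right-canonical-1} by $R_k$-conjugation and reversal rather than by new negative-index lemmas), and reassembles with Theorem \ref{thm:minimal_bases_concatenation} together with a direct check of the AS condition by linearity. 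What the splitting buys is exactly what makes your primary route costly: the $\mathbf{q}$-side and $\mathbf{z}$-side analyses are completely decoupled, so only Lemma \ref{lem:right_multiplication} (plus block-transposition for $\boldsymbol\ell_q$) is ever needed, and the wing-position bookkeeping reduces to a single set of heads, namely $\heads(\boldsymbol\ell_q,\mathbf{q},\mathbf{r}_q)$, on each side.

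Two concrete concerns with your primary route, both addressable but not trivial. First, with your proposed ordering ($\mathbf{r}_q$ first), after stage 1 the intermediate pencils are already \emph{extended} block Kronecker pencils, but Lemma \ref{lem:left_multiplication2} is stated (and proved) only for pure block Kronecker pencils --- its argument explicitly uses that the wing rows have the form $\begin{bmatrix} 0 & -I_n & \lambda I_n & 0 \end{bmatrix}$. You would need either to prove a genuinely extended version of that lemma (the underlying mechanism does generalize, since only the wing-pencil property of the relevant block-row/column is used, but this is new work), or to reorder the stages to do the negative-index multiplications $\mathbf{r}_z,\boldsymbol\ell_z$ first, on the still-pure block Kronecker pencil, and the nonnegative ones afterward. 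Second, parts (a) and (b) of Theorem \ref{thm:main:GFP} characterize wing positions of the \emph{initial} proper GFP via SIP conditions on $(\mathbf{q},k-j)$ and $(\mathbf{z}+k,j-1)$; at each inductive step you would need an updated characterization in terms of the growing tuples $(\boldsymbol\ell_q,\mathbf{q},\mathbf{r}_q)$ and $(\boldsymbol\ell_z,\mathbf{z},\mathbf{r}_z)$ simultaneously, which is a heavier version of what Theorem \ref{right-canonical-1}(b),(c) does on each side separately. None of this is a fatal flaw, but it is substantially more machinery than the splitting route, which the paper adopts precisely because it sidesteps both issues.
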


\begin{remark}
We note that the fact that  $ (\Pi_{\boldsymbol\ell}^n)^\mathcal{B} L_P(\lambda) \Pi_{\mathbf{r}}^n$ is a strong linearization of $P(\lambda)$ if $K_1(\lambda)$ and $K_2(\lambda)$ are minimal bases follows from Theorem   \ref{thm:Lambda-dual-pencil-linearization}. 
Thus, in this section, we focus on proving the first claim in Theorem \ref{thm:main_GFPR}.
\end{remark}

The main idea  behind the proof of  Theorem \ref{thm:main_GFPR} is that right  multiplications by elementary matrices  preserve, in some relevant cases, the property of being  block-permutationally equivalent to an extended block Kronecker pencil whose body satisfies the AS condition for some $P(\lambda)$, as we showed in Lemma \ref{lem:right_multiplication}. With the help of this result, next we present and prove Theorems \ref{right-canonical-1} and \ref{left-canonical}, which will be key to prove Theorem \ref{thm:main_GFPR}.
Theorems \ref{right-canonical-1} and \ref{left-canonical} can be seen as particular instances of the general result in Theorem \ref{thm:main_GFPR} together with some structural information concerning  block-rows and block-columns of the particular GFPR they focus on.

\begin{theorem}\label{right-canonical-1}
Let $P(\lambda)\in\mathbb{F}[\lambda]^{n\times n}$ be a matrix polynomial  of grade $k\geq 2$.
Let $\mathbf{q}$ be a permutation of $\{0:k-1\}$ and let 
\[
L_P(\lambda)= M_{\boldsymbol\ell_q}(\mathcal{X})(\lambda M^P_{-k} - M^P_{\mathbf{q}}) M_{\mathbf{r}_q}(\mathcal{Y}),
\]
be a GFPR associated with $P(\lambda)$. 
Then, there exist block-permutation matrices   $\Pi_{\boldsymbol\ell}^n$ and $\Pi_{\mathbf{r}}^n$ such that 
\begin{equation}\label{r-case}
(\Pi_{\boldsymbol\ell}^n)^{\mathcal{B}} L_P(\lambda)\Pi_{\mathbf{r}}^n= \left [ \begin{array}{c|c} M(\lambda) &  K_2(\lambda)^T \\ \hline K_1(\lambda) & 0 \end{array} \right],
\end{equation}
is an extended $(\mathfrak{h}(\mathbf{q})-1,n,\mathfrak{h}(\rev(\mathbf{q}))-1,n)$-block Kronecker pencil, whose body $M(\lambda)$ satisfies the AS condition for $P(\lambda)$.
Moreover,  the following statements hold.
\begin{enumerate}[(1)]
\item[\rm(a)] The first block-row and the first block-column of $L_P(\lambda)$ are, respectively,  the first body block-row and the first body block-column of $L_P(\lambda)$ relative to $(\boldsymbol\ell,\mathbf{r}, \mathfrak{h}(\mathbf{q})-1,\mathfrak{h}(\rev(\mathbf{q}) )-1)$. 
Moreover, the block-entry of $M(\lambda)$ in position $(1,1)$ equals $\lambda A_k + A_{k-1}$.

\item[\rm (b)] The wing block-columns  of $(\Pi_{\boldsymbol\ell}^n)^{\mathcal{B}} L_P(\lambda)$ relative to $(\boldsymbol{\mathrm{id}}, \mathbf{r},  \mathfrak{h}(\mathbf{q})-1,\mathfrak{h}(\rev(\mathbf{q}) )-1)$ of the form $-e_i \otimes I_n + \lambda e_{i+1} \otimes I_n$, for $1\leq  i \leq  \mathfrak{h}(\rev(\mathbf{q}))-1$, are precisely those located in positions $k-j$, where $j \in \{0:k-2\} $ and $j \notin \heads(\boldsymbol\ell_q, \mathbf{q}, \mathbf{r}_q)$, or, equivalently, $j \in \{0:k-2\}$ and $(\boldsymbol\ell_q, \mathbf{q}, \mathbf{r}_q, j)$ satisfies the SIP.

\item[\rm (c)] The wing block-rows of $L_P(\lambda)\Pi_{\mathbf{r}}^n$ relative to $(\boldsymbol\ell, \boldsymbol{\mathrm{id}},  \mathfrak{h}(\mathbf{q})-1,\mathfrak{h}(\rev(\mathbf{q}) )-1)$  of the form $-e_i^T \otimes I_n + \lambda e_{i+1}^T \otimes I_n$, for $1\leq i \leq  \mathfrak{h}(\mathbf{q})-1$, are precisely those located in positions $k-j$, where $j \in \{0:k-2\} $ and $j \notin \heads(\rev(\mathbf r_q), \rev(\mathbf{q}), \rev(\boldsymbol\ell_q))$, or, equivalently, $j \in \{0:k-2\}$ and $(\rev(\mathbf r_q), \rev(\mathbf{q}), \rev(\boldsymbol\ell_q),j)$ satisfies the SIP.


\end{enumerate}
\end{theorem}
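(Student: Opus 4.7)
The plan is to prove Theorem \ref{right-canonical-1} by a double induction on the lengths of $\mathbf{r}_q$ and $\boldsymbol\ell_q$, using Lemma \ref{lem:right_multiplication} as the engine for each elementary-matrix multiplication and the block-transposition identity of Lemma \ref{revB} to transfer right-sided results to the left. The base case $\boldsymbol\ell_q=\mathbf{r}_q=\emptyset$ leaves $L_P(\lambda)$ equal to the Fiedler pencil $\lambda M^P_{-k}-M^P_{\mathbf{q}}$, so Theorem \ref{FP-col-row} together with Remark \ref{remark:permutation1} establishes directly the extended block Kronecker form, the AS condition on the body, and assertions (a), (b), (c). The SIP characterizations in (b) and (c) agree with the head-exclusion statements of Theorem \ref{FP-col-row} (a), (b) by Lemma \ref{txSIP}.

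For the right induction, write $\mathbf{r}_q=(r_1,\ldots,r_s)$ and $L^{(i)}(\lambda):=M_{\boldsymbol\ell_q}(\mathcal{X})(\lambda M^P_{-k}-M^P_{\mathbf{q}})M_{r_1}(Y_1)\cdots M_{r_i}(Y_i)$. Assuming the statement for $L^{(i-1)}(\lambda)$, the SIP of $(\boldsymbol\ell_q,\mathbf{q},\mathbf{r}_q)$ and Lemma \ref{txSIP} yield $r_i\notin\heads(\boldsymbol\ell_q,\mathbf{q},r_1,\ldots,r_{i-1})$; combined with the inductive hypothesis (b) this means the $(k-r_i)$th block-column of $L^{(i-1)}(\lambda)$ is a wing block-column, the exact hypothesis of Lemma \ref{lem:right_multiplication}. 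Applying the lemma yields the extended block Kronecker form of $L^{(i)}(\lambda)$ with body still satisfying the AS condition for $P(\lambda)$. The inductive step for part (b) is then obtained by matching Proposition \ref{prop: Type I} (with Remark \ref{rem: Type I and II heads}) to the column-update statements of Lemma \ref{lem:right_multiplication} (ii.a)-(ii.c) and (iii.a)-(iii.b): when $r_i$ is of Type~I the head $r_i-1$ is replaced by $r_i$, matching position $k-r_i+1$ flipping body $\to$ wing while $k-r_i$ flips wing $\to$ body; when $r_i$ is of Type~II only $r_i$ is adjoined to the heads, matching only $k-r_i$ flipping wing $\to$ body. Part (c) is preserved by parts (iv.a)-(iv.b) of Lemma \ref{lem:right_multiplication}.

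For the left induction, each left-multiplication by $M_{\ell_j}(X_j)$ with $\ell_j\in\{0:k-2\}$ is converted via Lemma \ref{revB} and the self-block-symmetry $M_{\ell_j}(X_j)^{\mathcal{B}}=M_{\ell_j}(X_j)$ into a right-multiplication of the block-transposed pencil. Because block-transposition maps an extended block Kronecker pencil to an extended block Kronecker pencil (wing pencils remain wing pencils after the appropriate relabeling by Theorem \ref{thm:charac_dual-pencils} and the structural description of Remark \ref{rem:block_structure}, and the AS condition is preserved by linearity of the antidiagonal sum), Lemma \ref{lem:right_multiplication} applies in the block-transposed picture. Translating back by a second block-transposition and tracking the wing block-rows through the reversed SIP tuple $(\rev(\mathbf{r}_q),\rev(\mathbf{q}),\rev(\boldsymbol\ell_q),j)$, together with Remark \ref{remark:on_SIP}, produces the claimed form in part (c). Part (a) is then immediate: any elementary matrix $M_x(X)$ with $x\in\{0:k-2\}$ only touches block-rows and block-columns in positions $\{k-x,k-x+1\}\subseteq\{2:k\}$, so the first block-row and first block-column coincide with those of the base Fiedler pencil and remain body block-rows/columns, and the $(1,1)$-entry $\lambda A_k+A_{k-1}$ is inherited from Theorem \ref{FP-col-row}(c).

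The main obstacle I foresee is the careful bookkeeping in the right-induction step, where one must verify that the Type~I/Type~II dichotomy of Proposition \ref{prop: Type I} mirrors precisely every column-swap and column-update outcome in Lemma \ref{lem:right_multiplication} (ii.a)-(ii.c); the correspondence is clean but demands a patient case analysis. A secondary, more technical subtlety is the left-induction step, where one must establish in passing that block-transposition preserves the extended block Kronecker form (with the roles of $K_1$ and $K_2$ swapped and the AS condition intact), a statement not recorded as a standalone lemma in the excerpt.
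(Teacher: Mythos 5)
Your proposal is correct and follows essentially the same route as the paper's own proof: both use the Fiedler pencil base case (Theorem \ref{FP-col-row}), an induction on the length of $\mathbf{r}_q$ driven by Lemma \ref{lem:right_multiplication} together with Lemma \ref{txSIP} to place the $(k-r_i)$th block-column among the wing block-columns, the Type~I/Type~II dichotomy of Proposition \ref{prop: Type I} and Remark \ref{rem: Type I and II heads} to track how the set of ``special'' wing block-columns evolves, and a block-transposition argument (via Lemma \ref{revB}) to carry the right-sided result over to nonempty $\boldsymbol\ell_q$, noting along the way that block-transposition sends an extended $(p,n,q,n)$-block Kronecker pencil to an extended $(q,n,p,n)$-one and preserves the AS condition because it maps each block antidiagonal into itself. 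The one organizational difference is that the paper performs a single global block-transposition of the already-proved right-multiplication result rather than converting each left factor individually; this is cleaner because the reversal identity $M_{\mathbf{t}}(\mathcal X)^{\mathcal B}=M_{\rev(\mathbf t)}(\rev(\mathcal X))$ of Lemma \ref{revB} is a statement about the whole operation-free product, not a per-factor formula, but the underlying idea is the one you describe.
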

\begin{proof}
Let $p:=\mathfrak{h}(\mathbf{q})-1$ and $q:= \mathfrak{h}(\rev(\mathbf{q}))-1$. 
We prove the result by induction on the number of indices in $\boldsymbol\ell_q$ and in $\mathbf{r}_q$. 
If both $\boldsymbol\ell_q$ and $\mathbf{r}_q$ are empty, then $L_P(\lambda)$ is a Fiedler pencil and the result follows by Theorem \ref{FP-col-row}.

Assume that $\boldsymbol\ell_q$ is such that the result holds  for $\boldsymbol\ell_q$ and for tuples $\mathbf{r}_q'$ with at most $t$ indices, with $t \geq 0$. 
The case of Fiedler pencils above demonstrates that some such $\boldsymbol\ell_q$ and $t$ exist, namely, $\boldsymbol\ell_q = \varnothing$ and $t=0$. 
Now, suppose that $\mathbf{r}_q=(\mathbf{r}_q^\prime, x)$ has $t+1$ indices, where $x\in \{0: k-2\}$.
Let $\mathcal{Y}=(\mathcal{Y}^\prime, Y_0)$ be an $n\times n$ matrix assignment for $\mathbf{r}_q$, where $\mathcal{Y}^\prime$ and $Y_0$ are, respectively,  the matrix assignments for $\mathbf{r}_q^\prime$ and  $x$ induced by $\mathcal{Y}$, and let  $L^\prime(\lambda):= M_{\boldsymbol\ell_q}(\mathcal X)(\lambda M^P_{-k} - M^P_{\mathbf{q}}) M_{\mathbf{r}_q^\prime}(\mathcal{Y}^\prime)$.
Since $\mathbf{r}_q^\prime$ has $t$ indices, by the inductive hypothesis, there exist block-permutation matrices $\Pi^n_{\mathbf{r}^\prime}$ and $\Pi^n_{\boldsymbol\ell}$ such that
\begin{equation}\label{eq:thm_aux1}
(\Pi^n_{\boldsymbol\ell})^{\mathcal{B}} L^\prime(\lambda) \Pi^n_{\mathbf{r}^\prime} =
\left[\begin{array}{c|c}
M^\prime(\lambda) & K_2^\prime(\lambda)^T \\ \hline
K_1(\lambda) & 0
\end{array}\right]
\end{equation} 
is an extended $(p,n,q,n)$-block Kronecker pencil whose body $M^\prime(\lambda)$ satisfies the AS condition  for $P(\lambda)$.
Since the index tuple $(\boldsymbol\ell_q,\mathbf{q},\mathbf{r}^\prime_q,x)$ satisfies the SIP, by (b) applied to $L'(\lambda)$, the $(k-x)$th block-column of $L^\prime(\lambda)$ is one of its wing block-columns relative to $(\boldsymbol\ell,\mathbf{r}^\prime,p,q)$.
Thus, by Lemma \ref{lem:right_multiplication}, there exists a  block-permutation matrix $\Pi_{\mathbf{r}}^n$ such that \eqref{r-case} holds,  with $M(\lambda)$ satisfying the AS condition for $P(\lambda)$.

Now, we prove part (a). 
By (iv.b) of Lemma \ref{lem:right_multiplication}, together with part (a) for $L'(\lambda)$ (by the inductive hypothesis), we deduce that the first block-row of $L_P(\lambda)$ is a body block-row relative to $(\boldsymbol\ell,\mathbf{r}, p,q)$.
Since $x\leq k-2$,  by (a) for  $L'(\lambda)$ and by  part (iii.b)  of Lemma \ref{lem:right_multiplication}, the first block-column of $L_P(\lambda)$ is  a body block-column  relative to $(\boldsymbol\ell,\mathbf{r}, p,q)$. 
Moreover, since the right-multiplication by the matrix $M_x(Y_0)$ does not affect the first block entry of $L^\prime(\lambda)$ because $x\leq k-2$, the block-entry of $M(\lambda)$ in position $(1,1)$ equals $\lambda A_k + A_{k-1}$.
Therefore, part (a) is true for $L_P(\lambda)$.

Next, we prove part (b). 
We have to distinguish several cases.

Assume, first, that $x=0$. Then,
\begin{align}\label{U1-U2}
(\Pi^n_{\ell})^{\mathcal{B}}L_P(\lambda)=( \Pi^n_{\ell})^{\mathcal{B}} L ^\prime(\lambda)  M_{ 0}(Y_0) & = \left[\begin{array}{c|c} N_1(\lambda) & u(\lambda) \end{array}\right]  (I_{n(k-1)}\oplus Y_0 )\\
&=\left[ \begin{array}{c|c} N_1(\lambda) & u(\lambda)  Y_0 \end{array} \right],
\end{align}
where $N_1(\lambda)$  consists of the first $k-1$ block-columns  of $(\Pi^n_{\ell})^{\mathcal{B}} L^\prime(\lambda)$ and, hence, $u(\lambda)$ is the $k$th block-column of $( \Pi^n_{\ell})^{\mathcal{B}} L^\prime(\lambda)$.
Part (iii.a) in Lemma \ref{lem:right_multiplication} implies that the wing block-columns of $L_P(\lambda)$ relative to $(\boldsymbol\ell,\mathbf{r},p,q)$ and the wing block-columns of $L^\prime(\lambda)$ relative to $(\boldsymbol\ell,\mathbf{r}^\prime,p,q)$, other than the  $k$th block-column, are equal and are located at the same positions. 
Thus, the wing block-columns of $(\Pi^n_{\boldsymbol\ell})^\mathcal{B}L_P(\lambda)$ relative to $(\boldsymbol{\mathrm{id}},\mathbf{r},p,q)$ and the wing block-columns of $(\Pi^n_{\boldsymbol\ell})^\mathcal{B}L^\prime(\lambda)$ relative to $(\boldsymbol{\mathrm{id}},\mathbf{r}^\prime,p,q)$, other than the $k$th block-column, are equal and are located at the same positions as well.  

Since $0$ is an index of Type II relative to $(\boldsymbol\ell_q,\mathbf{q},\mathbf{r}_q^\prime)$ by Proposition \ref{prop: Type I}, and in view of Remark \ref{rem: Type I and II heads}, we have
\[
\mathrm{heads}(\boldsymbol\ell_q,\mathbf{q},\mathbf{r}_q^\prime, x)=\mathrm{heads}(\boldsymbol\ell_q,\mathbf{q},\mathbf{r}_q^\prime)\cup\{x\}.
\]
This, together with Lemma \ref{txSIP} and part (iii.a) of Lemma \ref{lem:right_multiplication}, implies that, for part (b)  to hold for $L_P(\lambda)$, we only need to show that the $k$th block-column of $( \Pi^n_{\ell})^{\mathcal{B}} L_P(\lambda)$ is not of the form $-e_i \otimes I_n + \lambda e_{i+1} \otimes I_n$. 
Notice  that the induction hypothesis and (b) imply that the $k$th block-column of $(\Pi^n_{\boldsymbol\ell})^\mathcal{B}L^\prime(\lambda)$ is  of the form $-e_{i}\otimes I_n+\lambda e_{i+1}\otimes I_n$, for some $1\leq i\leq q$, which, in turns, implies that the $k$th block-column of $(\Pi^n_{\boldsymbol\ell})^\mathcal{B}L_P(\lambda)$ is not generically  of the form $-e_{i}\otimes I_n+\lambda e_{i+1}\otimes I_n$.

Assume, now, that $x\neq 0$. 
We have
\begin{align}\label{U1-U2}
 ( \Pi^n_{\ell})^{\mathcal{B}} L_P(\lambda)& =(\Pi^n_{\ell})^{\mathcal{B}} L^\prime(\lambda)  M_{x}(Y_0)\\ & = \left[\begin{array}{c|c|c} N_1(\lambda) & U(\lambda) & N_2(\lambda) \end{array}\right]  (I_{n(k-x-1)}\oplus \left[ \begin{array}{cc} Y_0 & I_n \\ I_n & 0 \end{array} \right] \oplus I_{n(x-1)})\\
&=\left[ \begin{array}{c|c|c} N_1(\lambda) & U(\lambda) \cdot \left[ \begin{array}{cc} Y_0 & I_n \\ I_n & 0 \end{array} \right]  & N_2(\lambda) \end{array} \right],
\end{align}
where $N_1(\lambda)$ and $N_2(\lambda)$ consist, respectively, of the block-columns $1:k-x-1$ and $k-x+2 : k$ of $(\Pi^n_{\ell})^{\mathcal{B}} L^\prime(\lambda)$. 
Let us denote by $u_1(\lambda)$ and $u_2(\lambda)$ the first and second block-columns of $U(\lambda)$, which are, respectively, the $(k-x)$th  and the $(k-x+1)$th block-columns of  $(\Pi^n_{\ell})^{\mathcal{B}} L^\prime(\lambda) $. 
Since $({\boldsymbol\ell}_q, \mathbf{q}, \mathbf{r}_q^\prime, x)$ satisfies the SIP, by the inductive hypothesis, the $(k-x)$th block-column of $(\Pi^n_{\ell})^{\mathcal{B}} L^\prime(\lambda)$, that is,  $u_1(\lambda)$,  is a wing block-column of $(\Pi^n_{\ell})^{\mathcal{B}} L^\prime(\lambda)$ relative to $(\mathbf{id},\mathbf{r}^\prime, p, q)$ of the form $-e_i \otimes I_n + \lambda e_{i+1} \otimes I_n$, with $i \leq q$. 
Observe also that the second block-column $u_2^\prime(\lambda) $ of $U(\lambda) \left[ \begin{smallmatrix} Y_0 & I_n \\ I_n & 0 \end{smallmatrix} \right] $, which is the  $(k-x+1)$th block-column of  $( \Pi^n_{\ell})^{\mathcal{B}} L_P(\lambda)$,  equals $u_1(\lambda)$ and, therefore, it is of the form  $-e_i \otimes I_n + \lambda e_{i+1} \otimes I_n$, with $i \leq q$.   
Moreover, the first block-column $u_1^\prime(\lambda)$ of  $U(\lambda) \left[ \begin{smallmatrix} Y_0 & I_n \\ I_n & 0 \end{smallmatrix} \right] $, which is the   $(k-x)$th block-column of $( \Pi^n_{\ell})^{\mathcal{B}} L_P(\lambda)$, equals $u_2(\lambda)+u_1(\lambda) Y_0$. 
Thus, the  $(k-x)$th block-column of $( \Pi^n_{\ell})^{\mathcal{B}} L_P(\lambda)$ is, generically, not of the form $-e_i \otimes I_n + \lambda e_{i+1} \otimes I_n$.

We consider two cases, namely, the index $x$ is a Type I or a Type II index relative to the tuple $(\boldsymbol\ell_q,\mathbf{q},\mathbf{r}_q^\prime)$.

Case I: Assume that $x$ is a Type I index relative to $(\boldsymbol\ell_q,\mathbf{q},\mathbf{r}_q^\prime)$.  
In view of Remark \ref{rem: Type I and II heads}, we have
\[
\mathrm{heads}(\boldsymbol\ell_q,\mathbf{q},\mathbf{r}_q^\prime,x)=(\mathrm{heads}(\boldsymbol\ell_q,\mathbf{q},\mathbf{r}_q^\prime)\cup\{x\})\setminus \{x-1\}.
\]
This, together with Lemma \ref{txSIP} and part (iii.a) of Lemma \ref{lem:right_multiplication} implies that, for (b) to hold for $L_P(\lambda)$ we only need to show that the $(k-x)$th block-column of $( \Pi^n_{\ell})^{\mathcal{B}} L_P(\lambda)$ is not of the form $-e_i \otimes I_n + \lambda e_{i+1} \otimes I_n$ (which we already proved above) and  the $(k-x+1)$th block-column is a wing column of the form $-e_i \otimes I_n + \lambda e_{i+1} \otimes I_n$,  which follows from (ii.b) in Lemma \ref{lem:right_multiplication} and the comments above. 

Case II: Assume that $x$ is a Type II index relative to $(\boldsymbol\ell_q,\mathbf{q},\mathbf{r}_q^\prime)$. 
In view of Remark \ref{rem: Type I and II heads}, we have
\[
\mathrm{heads}(\boldsymbol\ell_q,\mathbf{q},\mathbf{r}_q^\prime,x)=\mathrm{heads}(\boldsymbol\ell_q,\mathbf{q},\mathbf{r}_q^\prime)\cup\{x\}.
\]
This, together with Lemma \ref{txSIP} and part (iii.a) of Lemma \ref{lem:right_multiplication}, implies that, for part (b)  to hold for $L_P(\lambda)$, we only need to show that the $(k-x)$th block-column of $( \Pi^n_{\ell})^{\mathcal{B}} L_P(\lambda)$ is not of the form $-e_i \otimes I_n + \lambda e_{i+1} \otimes I_n$, which we already proved above.

Finally, we prove part (c).
Notice that part (iv.a) in Lemma \ref{lem:right_multiplication} implies that the wing block-rows of $L_P(\lambda)$ relative to $(\boldsymbol\ell,\mathbf{r},p,q)$ and the wing block-rows of $L^\prime(\lambda)$ relative to  $(\boldsymbol\ell,\mathbf{r}^\prime ,p,q)$ are located at the same positions.
This, in turn, implies that the wing block-rows of $L^\prime(\lambda)\Pi^n_{\mathbf{r}^\prime}$ relative to $(\boldsymbol\ell,\boldsymbol{\mathrm{id}},p,q)$ and the wing block-rows of $L_P(\lambda)\Pi^n_{\mathbf{r}}$ relative to  $(\boldsymbol\ell,\boldsymbol{\mathrm{id}} ,p,q)$ are located also at the same positions.
Moreover, in view of \eqref{r-case} and \eqref{eq:thm_aux1}, the wing block-rows of  $L^\prime(\lambda)\Pi^n_{\mathbf{r}^\prime}$ and $L_P(\lambda)\Pi^n_{\mathbf{r}}$ are equal. 
Additionally,  Lemma \ref{xtSIP}  implies $\mathrm{heads}(x,\rev(\mathbf{r}^\prime_q),\rev(\mathbf{q}),\rev(\boldsymbol\ell_q)) = \mathrm{heads}(\rev(\mathbf{r}^\prime_q),\rev(\mathbf{q}),\rev(\boldsymbol\ell_q))$.
Thus, $j\in\mathrm{heads}(x,\rev(\mathbf{r}^\prime_q),\rev(\mathbf{q}),\rev(\boldsymbol\ell_q))$ if and only if $j\in\mathrm{heads}(\rev(\mathbf{r}^\prime_q),\rev(\mathbf{q}),\rev(\boldsymbol\ell_q))$. 
This, together with Lemma \ref{txSIP}, implies part (c).

It now follows that, when this lemma holds for a GFPR of the form $M_{\boldsymbol\ell_q}(\mathcal{X})(\lambda M^P_{-k} - M^P_{\mathbf q})$, then it also holds for $M_{\boldsymbol\ell_q}(\mathcal{X})(\lambda M^P_{-k} - M^P_{\mathbf q})M_{\mathbf r_q}(\mathcal Y)$ for arbitrary $\mathbf r_q$ such that $(\boldsymbol\ell_q, \mathbf q, \mathbf r_q)$ satisfies the SIP and arbitrary matrix assignment $\mathcal{Y}$. 
Since this lemma holds for Fiedler pencils $\lambda M^P_{-k} - M^P_{\mathbf q}$, we have proven that it holds for $(\lambda M^P_{-k} - M^P_{\mathbf q})M_{\mathbf r_q}(\mathcal Y)$ for arbitrary $\mathbf r_q$ such that $(\mathbf q, \mathbf r_q)$ satisfies the SIP and arbitrary  $\mathcal{Y}$.
So we still have to show that the lemma holds for $M_{\boldsymbol\ell_q}(\mathcal{X})(\lambda M^P_{-k} - M^P_{\mathbf q})M_{\mathbf r_q}(\mathcal Y)$, when $\boldsymbol\ell_q \neq \emptyset$.
To this end, let  $Q(\lambda)$ be a GFPR associated with $P(\lambda)$ for which this lemma holds.  
We now show that this lemma also holds for $Q(\lambda)^{\mathcal{B}}$. 

Let $\Pi^n_{\boldsymbol\ell}$ and $\Pi^n_{\mathbf{r}}$ be block-permutation matrices such that $(\Pi^n_{\boldsymbol\ell})^{\mathcal B} Q(\lambda) \Pi^n_{\mathbf{r}}$ is an extended $(p,n,q,n)$-block Kronecker pencil whose body satisfies the AS condition for  $P(\lambda)$  partitioned as in \eqref{LDP}. 
Then, 
\begin{align*}
((\Pi^n_{\boldsymbol\ell})^{\mathcal B} Q(\lambda)\Pi^n_{\mathbf{r}})^{\mathcal B} &= (\Pi^n_{\mathbf{r}})^{\mathcal{B}} Q(\lambda)^{\mathcal{B}} \Pi^n_{\boldsymbol\ell} \\
&= \left [ \begin{array}{c|c} M(\lambda)^{\mathcal B} &  K_1(\lambda)^{\mathcal B} \\ \hline (K_2(\lambda)^{T})^{\mathcal B} & \phantom{\Big{(}} 0 \phantom{\Big{(}} \end{array} \right].
\end{align*}

It is not difficult to show that the pencil above is an extended $(q,n,p,n)$-block Kronecker pencil.
Moreover, since block-transposition maps each antidiagonal of a block-matrix into itself, the pencil $M(\lambda)^{\mathcal B}$ satisfies the AS condition for $P(\lambda)$, as $M(\lambda)$ does. 
Property (a) holds for $Q(\lambda)^{\mathcal{B}}$ because its block-rows and block-columns are, respectively, the block-columns and the block-rows of $Q(\lambda)$, for which property (a) holds by assumption.
 Furthermore, properties (b) and (c) for $Q(\lambda)$ imply, respectively, that properties (c) and (b) hold for $ Q(\lambda)^{\mathcal B}$. 
Now, since this lemma holds for $Q(\lambda) = (\lambda M^P_{-k} - M^P_{\mathbf q}) M_{\mathbf r_q}(\mathcal Y)$ as noted above, it also holds for $Q(\lambda)^{\mathcal B} = M_{\rev(\mathbf r_q)}(\rev(\mathcal Y))(\lambda M^P_{-k} - M^P_{\rev(\mathbf q)})$ (recall Lemma \ref{revB}). 
Consider some arbitrary $\boldsymbol\ell_q$ such that $( \boldsymbol\ell_q, \mathbf q, \mathbf r_q)$ satisfies the SIP and an arbitrary matrix assignment $\mathcal{X}$ for $\boldsymbol\ell_q$. 
By the comments above, this lemma also holds for $M_{\rev(\mathbf r_q)}(\rev(\mathcal{Y}))(\lambda M^P_{-k} - M^P_{\rev(\mathbf q)})M_{\rev(\boldsymbol\ell_q)}(\rev(\mathcal X))$, and hence  for $M_{\boldsymbol\ell_q}(\mathcal X)(\lambda M^P_{-k} - M^P_{\mathbf q})M_{\mathbf r_q}(\mathcal Y)$. 
This establishes the lemma for arbitrary $\boldsymbol\ell_q$ and $\mathbf r_q$.
\end{proof}

\begin{remark}\label{remark:permutation2}
We note that part (c) in Theorem \ref{right-canonical-1} implies that the block-permutations $\Pi_{\mathbf{r}}^n$ and $\Pi_{\boldsymbol\ell}^n$ in \eqref{r-case} are, respectively, of the form $I_n\oplus \Pi_{\mathbf{\widetilde{r}}}^n$ and $I_n\oplus \Pi_{\widetilde{\boldsymbol\ell}}^n$, for some permutations $\mathbf{\widetilde{r}}$ and $\widetilde{\boldsymbol\ell}$ of the set $\{1:k-1\}$.
\end{remark}

Another auxiliary result to prove Theorem \ref{thm:main_GFPR} is Theorem \ref{left-canonical}.
In the proof of this result, we will make use of the following properties of elementary matrices:
\begin{align}
&R_{k,n}M_i(B) R_{k,n}  = M_{-k+i}(B), \quad \textrm{for $i=0:k-1$ and arbitrary $B$},\label{line3} \\
&R_{k,n} M_{-i}(B) R_{k,n} = M_{k-i}(B),\quad \textrm{ for $i=1:k$ and arbitrary $B$},\label{line4}
\end{align}
where $R_{s,n}$ is the block sip matrix defined in \eqref{eq:sip}.
Moreover, to keep the notation simple, we will omit the second index of $R_{s,n}$ and just write $R_s$, since it is going to remain constant and equal to $n$ throughout the whole proof.

\begin{theorem}\label{left-canonical}
Let $P(\lambda)=\sum_{i=0}^k A_i \lambda^i\in\mathbb{F}[\lambda]^{n\times n}$ be a matrix polynomial of  grade $k \geq 2$. 
Let $\mathbf{z}$ be a permutation of $\{-k:-1\}$ and let 
\[
L_P(\lambda)= M_{ \boldsymbol\ell_z}(\mathcal{Z}) (\lambda M^P_{\mathbf{z}} - M^P_{0}) M_{ \mathbf{r}_z}(\mathcal{W}),
\]
be a GFPR associated with $P(\lambda)$. 
Then, there exist block-permutation matrices $\Pi_{\boldsymbol\ell}^n$ and $\Pi_{\mathbf{r}}^n$ such that 
\begin{equation}
\label{z-canonical}
  \begin{array}{cl}
  C(\lambda):=(\Pi_{\boldsymbol\ell}^n)^{\mathcal{B}} L_P(\lambda)\Pi_{\mathbf{r}}^n=
  \left[
    \begin{array}{c|c}
0 & L_1(\lambda) \\ \hline \phantom{\Big{(}} L_2(\lambda)^T \phantom{\Big{(}} \phantom{\Big{(}}&  N(\lambda)  
      \end{array}
    \right]&
    \begin{array}{l}
      \left. \vphantom{L_{1}(\lambda)} \right\} {\scriptstyle (\mathfrak{h}(k+\mathbf{z})-1)n}\\
      \left. \vphantom{\phantom{\Big{(}} L_2(\lambda)^T}\right\} {\scriptstyle \mathfrak{h}(\rev(k+\mathbf{z}))n}
    \end{array}\\
    \hphantom{C(\lambda):=(\Pi_{\boldsymbol\ell}^n)^{\mathcal{B}} L(\lambda)\Pi_{\mathbf{r}}^n=}
    \begin{array}{cc}
      \underbrace{\hphantom{L_2(\lambda)^T}}_{(\mathfrak{h}(\rev(k+\mathbf{z}))-1)n}&\underbrace{\hphantom{N(\lambda)}}_{\mathfrak{h}(k+\mathbf{z})n}
    \end{array}
  \end{array}
  \>,
\end{equation}
where the pencil $N(\lambda)$ satisfies the AS condition for $P(\lambda)$, and where $L_1(\lambda)$ and $L_2(\lambda)$ are wing pencils. 
Moreover, the last block-row of $C(\lambda)$ is the last block-row of $(\Pi^n_{\boldsymbol\ell})^{\mathcal{B}}L_P(\lambda) $, and the last block-column of $C(\lambda)$ is the last block-column of  $L_P(\lambda)\Pi^n_{\mathbf{r}}$, respectively. 
Additionally, the block-entry of $N(\lambda)$ in position $(\mathfrak{h}(\rev(k+\mathbf{z})), \mathfrak{h}(k+\mathbf{z}))$ equals $\lambda A_1+A_0$.
\end{theorem}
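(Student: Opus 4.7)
The plan is to reduce Theorem~\ref{left-canonical} to the already-proven Theorem~\ref{right-canonical-1} via the ``reflection'' operation $L(\lambda)\mapsto -R_k\rev_1(L(\lambda))R_k$, which is an involution on $kn\times kn$ matrix pencils. The key observation is that this operation intertwines the two ``Fiedler-type'' GFPR forms: it transforms $L_P(\lambda)$, built from $P(\lambda)$ and a permutation $\mathbf z$ of $\{-k:-1\}$, into a GFPR of the \emph{auxiliary polynomial} $\widehat P(\lambda):=-\rev_k P(\lambda)=\sum_{i=0}^k(-A_{k-i})\lambda^i$, built from a permutation of $\{0:k-1\}$, which falls precisely within the scope of Theorem~\ref{right-canonical-1}.

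Concretely, introduce the bijection $\sigma(-j):=k-j$ for $j\in\{1:k\}$ and $\sigma(0):=-k$. Using the identities~\eqref{line3}--\eqref{line4}, a direct calculation yields $R_k M^P_{-j} R_k=M^{\widehat P}_{k-j}$ for $j\in\{1:k\}$ and $R_k M^P_0 R_k=M^{\widehat P}_{-k}$. Since $\sigma$ acts on negative indices as the shift $x\mapsto k+x$, it preserves the SIP, and hence
\[
\widetilde L(\lambda):=-R_k\rev_1(L_P(\lambda))R_k=M_{\sigma(\boldsymbol\ell_z)}(\mathcal Z)\bigl(\lambda M^{\widehat P}_{-k}-M^{\widehat P}_{\sigma(\mathbf z)}\bigr)M_{\sigma(\mathbf r_z)}(\mathcal W)
\]
is a GFPR of $\widehat P$ in the form of Theorem~\ref{right-canonical-1}, with Fiedler-type tuple $\sigma(\mathbf z)=k+\mathbf z$. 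That theorem produces block-permutation matrices $\Pi_1$ and $\Pi_2$ such that
\[
E(\lambda):=(\Pi_1)^{\mathcal{B}}\widetilde L(\lambda)\Pi_2=\left[\begin{smallmatrix}M'(\lambda) & K_2'(\lambda)^T \\ K_1'(\lambda) & 0\end{smallmatrix}\right]
\]
is an extended $(p,n,q,n)$-block Kronecker pencil with $p=\mathfrak h(k+\mathbf z)-1$, $q=\mathfrak h(\rev(k+\mathbf z))-1$, body $M'(\lambda)$ satisfying the AS condition for $\widehat P(\lambda)$, $(1,1)$-block entry $\lambda\widehat A_k+\widehat A_{k-1}=-\lambda A_0-A_1$, and, by Remark~\ref{remark:permutation2}, $\Pi_i=I_n\oplus\Pi_i'$ for $i=1,2$.

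Because the reflection is an involution, $L_P(\lambda)=-R_k\rev_1(\widetilde L(\lambda))R_k$. Setting $\Pi_{\boldsymbol\ell}:=R_k\Pi_1 R_k$ and $\Pi_{\mathbf r}:=R_k\Pi_2 R_k$, the identity $R_k^2=I_{kn}$ gives $C(\lambda)=(\Pi_{\boldsymbol\ell})^{\mathcal{B}}L_P(\lambda)\Pi_{\mathbf r}=-R_k\rev_1(E(\lambda))R_k$. Conjugation by $R_k$ reverses the $(q+1)+p$ row blocks and the $(p+1)+q$ column blocks of $\rev_1(E)$, moving its body and zero block into opposite corners and producing the pattern $\left[\begin{smallmatrix}0 & L_1\\ L_2^T & N\end{smallmatrix}\right]$ with block sizes matching~\eqref{z-canonical}, where $L_1=-R_p\rev_1(K_1')R_{p+1}$, $L_2=-R_q\rev_1(K_2')R_{q+1}$, and $N=-R_{q+1}\rev_1(M')R_{p+1}$ (each sip matrix having the appropriate block dimensions). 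Four direct verifications then complete the proof: (i) $L_1$ and $L_2$ are wing pencils---writing $K_i'=B_i(L_{s_i}\otimes I_n)$ via Theorem~\ref{thm:charac_dual-pencils} and applying the clean identity $\rev_1(L_s(\lambda))=-R_{s,1}L_s(\lambda)R_{s+1,1}$ yields $L_i=(R_{s_i,n}B_iR_{s_i,n})(L_{s_i}\otimes I_n)$; (ii) $N$ satisfies the AS condition for $P$---index bookkeeping using $p+q+1=k$ gives $\mathrm{AS}(N,s)=-\mathrm{AS}(M',k-s)=-\widehat A_{k-s}=A_s$ for $s=0{:}k$; (iii) the last block-row and block-column claim---since $\Pi_i=I_n\oplus\Pi_i'$, one has $R_k(I_n\oplus\Pi_i')R_k=(R_{k-1}\Pi_i'R_{k-1})\oplus I_n$, so both $\Pi_{\boldsymbol\ell}$ and $\Pi_{\mathbf r}$ fix the last block of $L_P(\lambda)$; (iv) the $(q+1,p+1)$-block entry of $N$ equals $-\rev_1([M']_{1,1})=-\rev_1(-\lambda A_0-A_1)=\lambda A_1+A_0$.

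The main technical obstacle is the meticulous tracking of signs and reversals in the reflection operation. The sign flip in $-R_k\rev_1(\cdot)R_k$ is essential, since without it the middle factor of $\widetilde L(\lambda)$ would be $\lambda M^{\widehat P}_{\sigma(\mathbf z)}-M^{\widehat P}_{-k}$, which does not have the form required by Theorem~\ref{right-canonical-1}. Once the reduction is correctly set up, the remaining verifications all rely on the single clean algebraic identity $\rev_1(L_s(\lambda))=-R_{s,1}L_s(\lambda)R_{s+1,1}$, which simultaneously ensures that reflection carries wing pencils to wing pencils and that it converts the AS condition for $\widehat P$ into the AS condition for $P$.
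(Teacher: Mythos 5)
Your proof is correct and takes essentially the same approach as the paper's: you reflect $L_P(\lambda)$ via $-R_k\rev(\cdot)R_k$ into a GFPR of the auxiliary polynomial $\widehat P(\lambda)=-\rev_k P(\lambda)$ with Fiedler-type tuple $k+\mathbf z$, apply Theorem \ref{right-canonical-1}, and reflect back, with the same verifications of the wing-pencil structure, the AS condition, and the last block-row/column. One citation can be tightened: in step (i) you invoke Theorem \ref{thm:charac_dual-pencils} to write $K_i'=B_i(L_{s_i}\otimes I_n)$, but that theorem gives nonsingular $B_i$ only when $K_i'$ is a \emph{minimal basis}, which Theorem \ref{right-canonical-1} does not guarantee; the factorization with a possibly singular $B_i$ (all you actually need here) is provided by Remark \ref{rem:block_structure} instead.
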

\begin{proof}
 Let $\widehat{P}(\lambda) := \rev(-P(\lambda))=\sum_{i=0}^k -A_{k-i} \lambda^i$ and let $L_P(\lambda)=: \lambda L_1 - L_0$. 
 Let us consider the pencil
\begin{align*}
\widehat{L}(\lambda) :=& R_k  \rev( - L_P(\lambda) )R_k = \lambda R_k L_0 R_k - R_k L_1R_k=\\
&\lambda R_k M_{\boldsymbol\ell_z, \mathbf{r}_z}(\mathcal{Z}, \mathcal{W}) M^P_{0} R_k - R_k M_{\boldsymbol\ell_z}(\mathcal{Z}) M^P_{\mathbf{z}} M_{\mathbf{r}_z}(\mathcal{W})R_k.
\end{align*}

Since $\mathbf{z}$ is a permutation of $\{-k:-1\}$, there exists a permutation $\mathbf{q}$ of $\{0:k-1\}$ such that $\mathbf{z}=-k+\mathbf{q}$. 
Taking into account \eqref{line3}--\eqref{line4} and the fact that $R_k$ is nonsingular with $R_k^{-1}=R_k$, it is not difficult to see that
\[
\widehat{L}(\lambda)= M_{k+\boldsymbol\ell_z}(\mathcal{Z}) ( \lambda M_{-k}^{\widehat{P}} - M_{\mathbf{q}}^{\widehat{P}}) M_{k+\mathbf{r}_z}(\mathcal{W}),
\]
which is a GFPR associated with $\widehat{P}(\lambda)$. 
Notice that if $(\boldsymbol\ell_z, \mathbf{z}, \mathbf{r}_z)$ satisfies the SIP, so does $k+(\boldsymbol\ell_z, \mathbf{z}, \mathbf{r}_z)$. 
By Theorem \ref{right-canonical-1}, there exist block-permutation matrices $\Pi^n_{\boldsymbol\ell^\prime}$ and $\Pi^n_{\mathbf{r}^\prime}$ such that 
\begin{equation}\label{partitionb}
(\Pi^n_{\boldsymbol\ell^\prime})^{\mathcal{B}} \widehat{L}(\lambda) \Pi^n_{\mathbf{r}^\prime}= \left[ \begin{array}{c|c} M(\lambda) & K_2(\lambda)^T \\ \hline K_1(\lambda) & 0 \end{array} \right].
\end{equation}
is an extended $(\mathfrak{h}(\mathbf{q})-1,n,\mathfrak{h}(\rev(\mathbf{q}))-1,n)$-block Kronecker pencil, whose body $M(\lambda)$ satisfies the AS condition for $\widehat{P}(\lambda)$.
Then, notice that
\begin{align*}
- \rev(R_k (\Pi^n_{\boldsymbol\ell^\prime})^{\mathcal{B}} \widehat{L}(\lambda) \Pi^n_{\mathbf{r}^\prime}R_k) &= - \rev[R_k (\Pi^n_{\boldsymbol\ell^\prime})^{\mathcal{B}} R_k \rev(-L(\lambda)) R_k \Pi^n_{\mathbf{r}^\prime} R_k]\\
&= \rev[(R_k (\Pi^n_{\boldsymbol\ell^\prime})^{\mathcal{B}} R_k) \rev(L(\lambda)) (R_k \Pi^n_{\mathbf{r}^\prime} R_k)] \\
&=(R_k (\Pi^n_{\boldsymbol\ell^\prime})^{\mathcal{B}} R_k) L(\lambda) (R_k \Pi^n_{\mathbf{r}^\prime} R_k)\\
&=( \Pi^n_{\boldsymbol\ell})^{\mathcal{B}} L_P(\lambda) \Pi^n_{\mathbf{r}},
\end{align*}
where $\Pi^n_{\boldsymbol\ell}:=R_k \Pi^n_{\boldsymbol\ell^\prime} R_k$ and $\Pi^n_{\mathbf{r}}:=R_k \Pi^n_{\mathbf{r}'} R_k$ are block-permutation matrices. 
On the other hand, setting $p:=\mathfrak{h}(\mathbf{q})-1$ and $q:=\mathfrak{h}(\rev(\mathbf{q}))-1$, from \eqref{partitionb}, we get
\begin{align*}
-\rev(R_k (\Pi^n_{\boldsymbol\ell^\prime})^{\mathcal{B}} \widehat{L}(\lambda) \Pi^n_{\mathbf{r}^\prime} R_k) & = - \rev\left ( R_k \left[ \begin{array}{c|c} M(\lambda) & K_2(\lambda)^T \\ \hline K_1(\lambda) & 0 \end{array} \right] R_k \right)\\
&=  \left [\begin{array}{cc} 0 & -\rev(R_pK_1(\lambda)R_{p+1}) \\ -\rev(R_qK_2(\lambda)R_{q+1})^T & -\rev(R_{q+1}M(\lambda)R_{p+1}) \end{array}  \right ].
\end{align*}
Note that, if $K(\lambda)$ is a  wing pencil, then 
$ -\rev(R K(\lambda) R) $ is also a wing  pencil.
 Thus, letting $N(\lambda):=-\rev(R_{q+1}M(\lambda)R_{p+1})$,  $L_1(\lambda):=-\rev(R_pK_1(\lambda)R_{p+1}) $ and  $L_2(\lambda):=-\rev(R_qK_2(\lambda)R_{q+1})$, we obtain that \eqref{z-canonical} holds.
 
Writing $M(\lambda)=\lambda H_1+H_0$, next, we show that $N(\lambda)= -\lambda R_{q+1}H_0R_{p+1}-R_{q+1}H_1R_{p+1}$ satisfies the AS condition for $P(\lambda)$.
As shown above, the pencil $M(\lambda)$ is a $(q+1)\times (p+1)$ block-pencil with blocks of size $n\times n$, since $(\Pi^n_{\boldsymbol\ell^\prime })^{\mathcal{B}} \widehat{L}(\lambda) \Pi^n_{\mathbf{r}^\prime}$ is  an extended $(p, n, q, n)$-block Kronecker pencil. 
Moreover, $p+q+1=k$ and $M(\lambda)$ satisfies the AS condition for $\widehat{P}(\lambda)$. 
We observe that $[R_{q+1}M(\lambda)R_{p+1}]_{ij} = [M(\lambda)]_{q+2-i, p+2-j}$.  
Thus, using the notation in Definition \ref{AS-def}, we have
\begin{align*}
AS(N, s)& = \sum_{i+j=k+2-s} [-R_{q+1}H_0R_{p+1}]_{ij} + \sum_{i+j=k+1-s} [-R_{q+1}H_1R_{p+1}]_{ij} \\
&= -\left\{\sum_{i+j=k+2-s} [H_0]_{q+2-i, p+2-j} + \sum_{i+j=k+1-s} [H_1]_{q+2-i, p+2-j} \right\}\\
&= -\left\{ \sum_{i+j=k+1-(k-s)} [H_0]_{i, j} + \sum_{i+j=k+2-(k-s)} [H_1]_{i, j} \right\}=- (-A_{k-(k-s)})=A_s,\\
\end{align*} 
where the third equality follows using the fact that $p+q+1=k$ and the fourth equality follows from the fact that $M(\lambda)$ satisfies the AS condition for $\widehat{P}(\lambda)$. 

Finally, notice that, by part (a) in Theorem \ref{right-canonical-1}, the first block-row  and the first block-column of the pencil $\widehat{L}(\lambda)$ are, respectively,  the first block-row and the first block-column  of the pencil $(\Pi^n_{\boldsymbol\ell^\prime})^{\mathcal{B}} \widehat{L}(\lambda) \Pi^n_{\mathbf{r}^\prime}$, and the block-entry in position $(1,1)$ of $M(\lambda)$ is $- \lambda A_{0}-A_1$. 
Since $N(\lambda)=-\rev(R_{q+1}M(\lambda)R_{p+1})$, the block-entry in position $(q+1, p+1)$ of $N(\lambda)$ is $\lambda A_1 + A_0$.  
Moreover, since $L_P(\lambda) = - \rev(R_k \widehat{L}(\lambda) R_k)$ and $C(\lambda)=( \Pi^n_{\boldsymbol\ell})^{\mathcal{B}}L_P(\lambda) \Pi^n_{\mathbf{r}} = - \rev [R_k (\Pi^n_{\boldsymbol\ell^\prime})^{\mathcal{B}} \widehat{L}(\lambda) \Pi^n_{\mathbf{r}^\prime} R_k]$, the claim about the last block-row and the last block-column of $L_P(\lambda)$ and $C(\lambda)$ follows.
\end{proof}

\begin{remark}
We will refer to any pencil of the form \eqref{z-canonical} as a \emph{reversed} extended block Kronecker pencil.
\end{remark}

\begin{remark}\label{remark:permutation3}
We note that  Theorem \ref{right-canonical-1} implies that the block-permutations $\Pi_{\mathbf{r}}^n$ and $\Pi_{\boldsymbol\ell}^n$ in \eqref{z-canonical}  are, respectively, of the form $\Pi_{\mathbf{\widetilde{r}}}^n\oplus I_n$ and $\Pi_{\widetilde{\boldsymbol\ell}}^n\oplus I_n$, for some permutations $\mathbf{\widetilde{r}}$ and $\widetilde{\boldsymbol\ell}$ of the set $\{1:k-1\}$.
\end{remark}

\subsection{Proof of Theorem \ref{thm:main_GFPR}}\label{sec:thm:main_GFPR}

Armed with Lemma  \ref{lem:splitting} together with Theorems \ref{right-canonical-1} and \ref{left-canonical}, we are in a position to prove Theorem \ref{thm:main_GFPR}.

\medskip

\begin{proof}(of Theorem \ref{thm:main_GFPR})
Let $P(\lambda)=\sum_{i=0}^k A_i\lambda^i\in\mathbb{F}[\lambda]^{n\times n}$ be a matrix polynomial of degree $k$.
 Let $h\in \{0:k-1\}$ and let $\mathbf{q}$ be a permutation of $\{0:h\}$. 
 Let $\mathbf{z}$ be a permutation of $\{-k: -h-1\}$ and let $L_P(\lambda)$ be a GFPR as in \eqref{fprgen}.
By Lemma \ref{lem:splitting}, the pencil $L_P(\lambda)$ can be written in the following form:
\[
L_P(\lambda) = \left[ \begin{array}{c|c|c} D_\mathbf{z}(\lambda) & y_\mathbf{z}(\lambda) & 0 \\ \hline x_\mathbf{z}(\lambda) & c(\lambda) & x_\mathbf{q}(\lambda) \\ \hline 0 & y_\mathbf{q}(\lambda) & D_\mathbf{q}(\lambda)   \end{array} \right ],
\]
where $ D_\mathbf{q}(\lambda) \in \mathbb{F}[\lambda]^{nh\times nh}$, $D_\mathbf{z}(\lambda)\in \mathbb{F}[\lambda]^{n(k-h-1) \times n(k-h-1)}$, $c(\lambda)\in \mathbb{F}[\lambda]^{n\times n}$, $x_\mathbf{z}(\lambda)\in \mathbb{F}[\lambda]^{n \times n(k-h-1)}$, $x_\mathbf{q}(\lambda) \in \mathbb{F}[\lambda]^{n \times nh}$, $y_\mathbf{z}(\lambda)\in \mathbb{F}[\lambda]^{n(k-h-1)\times n}$ and $y_\mathbf{q}(\lambda) \in \mathbb{F}[\lambda]^{nh \times n}$. 
In addition, let us denote
\[
F(\lambda) := \left[ \begin{array}{cc}  c(\lambda) & x_\mathbf{q}(\lambda) \\ y_\mathbf{q}(\lambda) & D_\mathbf{q}(\lambda)\end{array} \right], \quad \mbox{and} \quad G(\lambda) := \left[ \begin{array}{cc} D_\mathbf{z}(\lambda) & y_\mathbf{z}(\lambda) \\ x_\mathbf{z}(\lambda) &  c(\lambda) \end{array} \right].
 \]
Then, by Theorem \ref{lem:splitting}, we get that
\[
F(\lambda) = M_{\boldsymbol\ell_q}(\mathcal{X}) ( \lambda M^Q_{-h-1} - M^Q_{\mathbf{q}}) M_{\mathbf{r}_q}(\mathcal{Y})
\]
is a GFPR associated with $Q(\lambda):= \lambda^{h+1} A_{h+1} + \lambda^h A_h + \cdots + \lambda A_1 + A_0$. 
Thus, from part (a) in Theorem \ref{right-canonical-1}, we obtain that $c(\lambda)=\lambda A_{h+1}+A_h$.
Also by Theorem \ref{right-canonical-1}  and according to Remark \ref{remark:permutation2}, there exist block-permutation matrices $\Pi_{\mathbf{r}_1}=I_n\oplus \Pi_{\widetilde{\mathbf{r}}_1}$ and $\Pi_{\boldsymbol\ell_1}=I_n\oplus\Pi_{\widetilde{\boldsymbol\ell}_1}$  such that 
\begin{align*}
\Pi_{\boldsymbol\ell_1}^{\mathcal{B}} F(\lambda) \Pi_{\mathbf{r}_1}=&
\begin{bmatrix}
I_n & 0 \\ 0 & \Pi_{\widetilde{\boldsymbol\ell}_1}^{\mathcal{B}}
\end{bmatrix}
\begin{bmatrix}
\lambda A_{h+1}+A_h & x_\mathbf{q}(\lambda) \\ y_\mathbf{q}(\lambda) & D_\mathbf{q}(\lambda) 
\end{bmatrix}
\begin{bmatrix}
I_n & 0 \\ 0 & \Pi_{\widetilde{\mathbf{r}}_1}
\end{bmatrix} = \\
&\begin{bmatrix}
\lambda A_{h+1}+A_h & x_\mathbf{q}(\lambda)\Pi_{\widetilde{\mathbf{r}}_1}  \\ 
\Pi_{\widetilde{\boldsymbol\ell}_1}^{\mathcal{B}} y_\mathbf{q}(\lambda) & \Pi_{\widetilde{\boldsymbol\ell}_1}^{\mathcal{B}}D_\mathbf{q}(\lambda) \Pi_{\widetilde{\mathbf{r}}_1}
\end{bmatrix} =
\left[\begin{array}{c|c}
 M(\lambda) & K_2(\lambda)^T\\
 \hline  K_1(\lambda) & 0 
 \end{array}\right] =: \\
 &
\left[\begin{array}{cc|c} 
\lambda A_{h+1}+A_{h} & m_1(\lambda) & k_2(\lambda)^T \\
m_2(\lambda) & \widetilde{M}(\lambda) & \widetilde{K}_2(\lambda)^T \\ \hline
k_1(\lambda) & \widetilde{K}_1(\lambda) & \phantom{\Big{(}} 0 \phantom{\Big{(}}
 \end{array}\right]
\end{align*}
 is an extended $(\mathfrak{h}(\mathbf{q})-1,n,\mathfrak{h}(\rev(\mathbf{q}))-1,n)$-block Kronecker pencil for $Q(\lambda)$.
 
Furthermore,  by Theorem \ref{lem:splitting},  the pencil
\[
G(\lambda) = M_{\boldsymbol\ell_z}(\mathcal{Z}) ( \lambda M^Z_{h + \mathbf{z}} - M^Z_{0}) M_{\mathbf{r}_z}(\mathcal{W})
\]
is a GFPR associated with $Z(\lambda):= \lambda^{k-h} A_{k} + \lambda^{k-h-1}  A_{k-1} + \cdots +  \lambda A_{h+1}+ A_h$.
 By Theorem \ref{left-canonical}  and according to Remark \ref{remark:permutation3}, there exist block-permutation matrices $\Pi_{\mathbf{r}_2}= \Pi_{\widetilde{\mathbf{r}}_2}\oplus I_n$ and $\Pi_{\boldsymbol\ell_2}=\Pi_{\widetilde{\boldsymbol\ell}_2}\oplus I_n$ such that 
\begin{align*}
\Pi_{\boldsymbol\ell_2}^{\mathcal{B}} G(\lambda)\Pi_{\mathbf{r}_2}= &
\begin{bmatrix}
\Pi_{\widetilde{\boldsymbol\ell}_2}^{\mathcal{B}} & 0 \\
0 & I_n
\end{bmatrix}
\begin{bmatrix}
D_\mathbf{z}(\lambda) & y_\mathbf{z}(\lambda) \\ x_\mathbf{z}(\lambda) & \lambda A_{h+1}+A_h
\end{bmatrix}
\begin{bmatrix}
\Pi_{\widetilde{\mathbf{r}}_2} & 0 \\ 0 & I_n
\end{bmatrix} = \\
&\begin{bmatrix}
\Pi_{\widetilde{\boldsymbol\ell_2}}^{\mathcal{B}}D_\mathbf{z}(\lambda)\Pi_{\widetilde{\mathbf{r}}_2} & \Pi_{\widetilde{\boldsymbol\ell_2}}^{\mathcal{B}}y_\mathbf{z}(\lambda) \\
x_\mathbf{z}(\lambda)\Pi_{\widetilde{\mathbf{r}}_2} & \lambda A_{h+1}+A_h
\end{bmatrix}=
\left[ \begin{array} {c|c} 0 & L_1(\lambda) \\ \hline 
\phantom{\Big{(}} L_2(\lambda)^T \phantom{\Big{(}} & N(\lambda) \end{array} \right] =:\\
&\left[\begin{array}{c|cc}
0 & \widetilde{T}_1(\lambda) & t_1(\lambda) \\ \hline
\widetilde{T}_2(\lambda)^T & \widetilde{N}(\lambda) & \phantom{\Big{(}} n_1(\lambda) \phantom{\Big{(}} \\
t_2(\lambda)^T & n_2(\lambda) & \lambda A_{h+1}+A_h
\end{array}\right]
\end{align*}
is a reversed block Kronecker pencil for $Z(\lambda)$.
Then, notice that
\begin{align*}
&\begin{bmatrix}
\Pi_{\widetilde{\boldsymbol\ell}_2}^{\mathcal{B}} \\ & I_n \\ & & \Pi_{\widetilde{\boldsymbol\ell}_1}^{\mathcal{B}}
\end{bmatrix}
\begin{bmatrix}
D_\mathbf{z}(\lambda) & y_\mathbf{z}(\lambda) & 0 \\ 
x_\mathbf{z}(\lambda) & \lambda A_{h+1}+A_h & x_\mathbf{q}(\lambda) \\
0 & y_\mathbf{q}(\lambda) & D_\mathbf{q}(\lambda)
\end{bmatrix}
\begin{bmatrix}
\Pi_{\widetilde{\mathbf{r}}_2} \\ & I_n \\ & & \Pi_{\widetilde{\mathbf{r}}_1} 
\end{bmatrix} = \\
&\begin{bmatrix}
0 & \widetilde{T}_1(\lambda) & t_1(\lambda) & 0 & 0 \\
\widetilde{T}_2(\lambda)^T & \widetilde{N}(\lambda) & n_1(\lambda) & 0 & 0 \\
t_2(\lambda)^T & n_2(\lambda) & \lambda A_{h+1}+A_h & m_1(\lambda) & k_2(\lambda)^T \\
0 & 0 & m_2(\lambda) & \widetilde{M}(\lambda) & \widetilde{K}_2(\lambda)^T \\
0 & 0 & k_1(\lambda) & \widetilde{K}_1(\lambda) & 0
\end{bmatrix},
\end{align*}
which is block-permutationally equivalent to the pencil
\[
\left[\begin{array}{ccc|cc}
\widetilde{N}(\lambda) & n_1(\lambda) & 0 & \widetilde{T}_2(\lambda)^T & 0 \\
n_2(\lambda) & \lambda A_{h+1}+A_h & m_1(\lambda) & t_2(\lambda)^T & k_2(\lambda)^T \\
0 & m_2(\lambda) & \widetilde{M}(\lambda) & 0 & \widetilde{K}_2(\lambda)^T \\ \hline
\widetilde{T}_1(\lambda) & t_1(\lambda) & \phantom{\Big{(}} 0 \phantom{\Big{(}} & 0 & 0 \\
0 & k_1(\lambda) & \widetilde{K}_1(\lambda) & 0 & 0
\end{array}\right]=:
\left[\begin{array}{c|c}
H(\lambda) & S_2(\lambda)^T \\ \hline
S_1(\lambda)^T & \phantom{\Big{(}} 0 \phantom{\Big{(}}
\end{array}\right],
\]
where $S_1(\lambda)$ and $S_2(\lambda)$ are wing pencils by Theorem \ref{thm:minimal_bases_concatenation}.
Thus, to finish the proof, we only need to check that the pencil $H(\lambda)$ satisfies the AS condition for $P(\lambda)$.
Indeed, decomposing the pencil $H(\lambda)$ as follows
\[ 
\underbrace{\begin{bmatrix}
\widetilde{N}(\lambda) & n_1(\lambda) & 0 \\
n_2(\lambda) & \lambda A_{h+1}+A_h & 0 \\
0 & 0 & 0 
\end{bmatrix}}_{=:H_1(\lambda)}+
\underbrace{\begin{bmatrix}
0 & 0 & 0 \\
0 & \lambda A_{h+1}+A_h & m_1(\lambda) \\
0 & m_2(\lambda) & \widetilde{M}(\lambda) 
\end{bmatrix}}_{=:H_2(\lambda)}-
\underbrace{\begin{bmatrix}
0 & 0 & 0 \\
0 & \lambda A_{h+1}+A_h & 0 \\
0 & 0 & 0 
\end{bmatrix}}_{=:H_3(\lambda)},
\]
and using the linearity of the block antidiagonal sum, we  obtain
\[
\mathrm{AS}(H,s)=\mathrm{AS}(H_1,s)+\mathrm{AS}(H_2,s)-\mathrm{AS}(H_3,s), \quad s=0:k.
\]
Finally, using that $M(\lambda)$ and $N(\lambda)$ satisfy the AS condition for $Q(\lambda)$ and $Z(\lambda)$, respectively, it follows easily from the equation above that the pencil $H(\lambda)$  satisfies the AS condition for $P(\lambda)$.
\end{proof}

\medskip

We end this section with an interesting remark.
Given a GFPR, the tuples $\mathbf{q}$ and $\mathbf{z}$ in  \eqref{fprgen}  may not be uniquely defined.
When this  situation occurs, Theorem \ref{thm:main_GFPR} may lead to express the GFPR as an extended block Kronecker pencil in more that one way.  
We illustrate this phenomenon in Example \ref{ex:nonunique}, where we apply Theorem \ref{thm:main_GFPR} to the pencils in the standard basis of $\mathbb{DL}(P)$  (which consists of GFPR \cite{GFPR,4m-vspace}) associated with a matrix polynomial $P(\lambda)$ of degree 3. 
Moreover, when appropriate, we give the different ways in which the same pencil can be expressed as an extended block Kronecker pencil and show that each of those representations corresponds to a distinct factorization \eqref{fprgen} of the matrix pencil .

\begin{example}\label{ex:nonunique}
Here, we use the symbol $\sim$ to denote that two pencils are permutationally equivalent.
Let $P(\lambda)= \sum_{i=0}^3 A_i \lambda^i\in\mathbb{F}[\lambda]^{n\times n}$.
Let us denote by $D_1(\lambda, P)$, $D_2(\lambda, P)$ and $D_3(\lambda, P)$ the pencils in the standard basis of $\mathbb{DL}(P)$.
We recall that the pencil $D_1(\lambda, P)$ is expressed as a product of elementary matrices as follows
\[
D_1(\lambda, P)=:
\begin{bmatrix}
\lambda A_3+A_2 & A_1 & A_0 \\
A_1 & -\lambda A_1+A_0 & -\lambda A_0 \\
A_0 & -\lambda A_0 & 0
\end{bmatrix}=\lambda M_{-3,0:1, 0} - M_{0:2, 0:1, 0}.
\]
Next, we include three different  representations  of $D_1(\lambda, P)$ as an extended block Kronecker pencil (coherent with Theorem \ref{thm:main_GFPR}) and the corresponding factorization \eqref{fprgen} of $D_1(\lambda, P)$ in each case.
\begin{align*}
&\left[ \begin{array}{cc|c} \lambda A_3+A_2 & A_1 & A_0 \\ A_1 & -\lambda A_1+A_0 & -\lambda A_0\\ \hline A_0 & -\lambda A_0 & 0   \end{array} \right] \sim M_0( \lambda M_{-3} - M_{1:2, 0})M_{1,0}, \\
& \left[ \begin{array}{c|cc} \lambda A_3+A_2 & A_1 & A_0 \\ A_1 & -\lambda A_1+A_0 & -\lambda A_0\\  A_0 & -\lambda A_0 & 0   \end{array} \right] \sim ( \lambda M_{-3} - M_{0:2}) M_{0:1, 0}\quad \mbox{and} \\
&\left[ \begin{array}{ccc} \lambda A_3+A_2 & A_1 & A_0 \\ \hline A_1 & -\lambda A_1+A_0 & -\lambda A_0\\  A_0 & -\lambda A_0 & 0   \end{array} \right] \sim M_{0:1,0}( \lambda M_{-3} - M_{2, 1, 0}).
\end{align*}
Now, we recall that the pencil $D_2(\lambda, P)$ is expressed as a product of elementary matrices as follows 
\[
D_2(\lambda, P):=
\left[ \begin{array}{ccc} -A_3 & \lambda A_3 & 0 \\ \lambda A_3 & \lambda A_2 +A_1 & A_0\\ 0 & A_0 & -\lambda A_0  \end{array} \right]=
 \lambda M_{0,-3:-2, -3} - M_{-3,0:1, 0}.
 \]
The possible representations of the pencil $D_2(\lambda, P)$ as an extended block Kronecker pencil, together with the corresponding factorization of $D_2(\lambda, P)$,  are
\begin{align*}
& \left[ \begin{array}{cc|c}  \lambda A_3 & \lambda A_2 +A_1 & A_0\\ 0 & A_0 & -\lambda A_0 \\\hline -A_3 & \lambda A_3 & 0  \end{array} \right] \sim  M_{-3} (\lambda M_{-2, -3} - M_{0:1}) M_0, \quad \mbox{and}\\
&\left[ \begin{array}{c|cc}  \lambda A_3 & 0 & -A_3\\ \lambda A_2+A_1 & A_0 & \lambda A_3 \\ A_0 & -\lambda A_0 & 0  \end{array} \right] \sim  (\lambda M_{-3: -2} - M_{0:1}) M_{-3, 0}.
\end{align*}
Finally, we recall that the pencil $D_3(\lambda, P)$ can be expressed as a product of elementary matrices as follows
\[
D_3(\lambda, P):=  \left[ \begin{array}{ccc} 0 & -A_3 & \lambda A_3\\ -A_3 & \lambda A_3-A_2 & \lambda A_2 \\ \lambda A_3 & \lambda A_2 & \lambda A_1 +A_0  \end{array} \right] = \lambda M_{-3:-1, -3:-2, -3} - M_{0, -3:-2, -3}.
\]
Two possible representations of $D_3(\lambda, P)$ as an extended block Kronecker pencil, together with the corresponding factorization of $D_3(\lambda, P)$,  are
\begin{align*}
&\left[ \begin{array}{cc|c} \lambda A_3 -A_2 & \lambda A_2 & - A_3\\ \lambda A_2 & \lambda A_1+ A_0 & \lambda A_3 \\ \hline -A_3 & \lambda A_3 & 0  \end{array} \right] \sim  M_{-3} ( \lambda M_{-2:-1, -3} - M_{0} ) M_{-2:-3}, \quad \mbox{and} \\
&\left[ \begin{array}{c|cc} \lambda A_3  & -A_3 & 0\\ \lambda A_2 & \lambda A_3 - A_2 & - A_3 \\ 
\lambda A_1+A_0  & \lambda A_2 &  \lambda A_3  \end{array} \right] \sim  ( \lambda M_{-3:-1} - M_{0} ) M_{-3:-2, -3}.
\end{align*}

Using Theorem \ref{thm:Lambda-dual-pencil-linearization}, it is easy to see that , if $A_0$ is nonsingular, then $D_1(\lambda, P)$ is a strong linearization of $P(\lambda)$, if $A_3$ is nonsingular, then $D_3(\lambda, P)$ is a strong linearization of $P(\lambda)$ and, if both $A_0$ and $A_3$ are nonsingular, then $D_2(\lambda, P)$ is a strong linearization of $P(\lambda)$. 
Thus, these pencils are only strong linearizations when $P(\lambda)$ is regular, which is coherent with Remark \ref{rem:only_for_regular}.
\end{example}

\section{Conclusions}\label{sec:conclusion}

In the last decade, many new families of linearizations for a matrix polynomial have been constructed. 
The families of square Fiedler-like pencils, that include the Fiedler pencils, the generalized Fiedler pencils, the Fiedler pencils with repetition and the generalized Fiedler pencils with repetition, consist of pencils with good properties
but whose definition involves products of the so-called elementary matrices.
 This fact is a disadvantage when proving theorems about these pencils since the proofs can become very involved. 
 Also, each of these families, although related, are studied separately in the literature.
Recently, a new family of pencils associated with a matrix polynomial $P(\lambda)$, called the block minimal bases pencils, was introduced. 
The pencils in this family are defined in terms of their block-structure and it is straightforward to determine when they are strong linearizations of $P(\lambda)$.
 A subfamily of this family, called the block-Kronecker pencils was also identified and it was proven that the Fiedler pencils are in this family, up to permutations of rows and
columns. 
In this paper, we provide a unified approach to the study of Fiedler-like pencils via block minimal bases pencils. 
We show that not all Fiedler-like pencils are block minimal bases pencils, and provide a generalization of the family of block-Kronecker pencils that we call the extended
block-Kronecker pencils.
 We also prove that, with the exception of the non proper GFP, all Fiedler-like pencils are extended block Kronecker pencils, up to permutations of rows and columns, and that, under some generic nonsingularity conditions, they are minimal bases pencils, again up to permutations.

 \vspace{0.5cm}
\noindent  \textbf{Acknowledgements.}
 We would like to express our gratitude to Heike Fa{$\ss$}bender and Philip Saltenberger for pointing out the connections of some results in Section 3 in this paper with some results in their paper ``Block-Kronecker Ansatz Spaces for Matrix Polynomials'', which was written independently. 
We would also like to thank NSF for the support to the ``UCSB Summer Research Experiences for Undergraduates'' program that funded two undergraduate students, R. Saavedra and B. Zykoski, to work on this project during Summer 2016. Additionally, we thank the Young Mathematicians Conference (Ohio, USA, August, 2016) for supporting the participation of these students in that conference where they presented the results of this paper.


\end{document}